\numberwithin{equation}{section}
\def\today{\number\day\space\ifcase\month\or   January\or February\or
   March\or April\or May\or June\or   July\or August\or September\or
   October\or November\or December\fi\   \number\year}
\theoremstyle{definition}
\newtheorem{thm}{Theorem}[section]
\newtheorem{lem}[thm]{Lemma}
\newtheorem{prp}[thm]{Proposition}
\newtheorem{dfn}[thm]{Definition}
\newtheorem{cor}[thm]{Corollary}
\newtheorem{rmk}[thm]{Remark}
\newtheorem{ntn}[thm]{Notation}
\newtheorem{exa}[thm]{Example}
\newtheorem{qst}[thm]{Question}
\newcommand{\beq}{\begin{equation}}
\newcommand{\eeq}{\end{equation}}
\newcommand{\beqr}{\begin{eqnarray*}}
\newcommand{\eeqr}{\end{eqnarray*}}
\newcommand{\bal}{\begin{align*}}
\newcommand{\eal}{\end{align*}}
\newcommand{\bei}{\begin{itemize}}
\newcommand{\eei}{\end{itemize}}
\newcommand{\af}{\alpha}
\newcommand{\bt}{\beta}
\newcommand{\gm}{\gamma}
\newcommand{\dt}{\delta}
\newcommand{\ep}{\varepsilon}
\newcommand{\et}{\eta}
\newcommand{\ch}{\chi}
\newcommand{\io}{\iota}
\newcommand{\ld}{\lambda}
\newcommand{\sm}{\sigma}
\newcommand{\kp}{\kappa}
\newcommand{\ph}{\varphi}
\newcommand{\ps}{\psi}
\newcommand{\rh}{\rho}
\newcommand{\om}{\omega}
\newcommand{\ta}{\tau}
\newcommand{\Q}{{\mathbb{Q}}}
\newcommand{\Z}{{\mathbb{Z}}}
\newcommand{\R}{{\mathbb{R}}}
\newcommand{\C}{{\mathbb{C}}}
\newcommand{\N}{{\mathbb{Z}}_{> 0}}
\newcommand{\Nz}{{\mathbb{Z}}_{\geq 0}}
\newcommand{\inv}{{\mathrm{inv}}}
\newcommand{\id}{{\mathrm{id}}}
\newcommand{\sint}{{\mathrm{int}}}
\newcommand{\supp}{{\mathrm{supp}}}
\newcommand{\spn}{{\mathrm{span}}}
\newcommand{\card}{{\mathrm{card}}}
\newcommand{\Aut}{{\mathrm{Aut}}}
\newcommand{\Ad}{{\mathrm{Ad}}}
\newcommand{\ran}{{\mathrm{ran}}}
\newcommand{\andeqn}{\,\,\,\,\,\, {\mbox{and}} \,\,\,\,\,\,}
\newcommand{\ts}[1]{{\textstyle{#1}}}
\newcommand{\ds}[1]{{\displaystyle{#1}}}
\newcommand{\ssum}[1]{{\ts{ {\ds{\sum}}_{#1} }}}
\newcommand{\sssum}[2]{{\ts{ {\ds{\sum}}_{#1}^{#2} }}}
\newcommand{\Dirlim}{\varinjlim}
\newcommand{\Wolog}{Without loss of generality}
\newcommand{\ifo}{if and only if}
\newcommand{\ca}{C*-algebra}
\newcommand{\hm}{homomorphism}
\newcommand{\fd}{finite dimensional}
\newcommand{\ct}{continuous}
\newcommand{\cfn}{continuous function}
\newcommand{\cms}{compact metric space}
\newcommand{\hme}{homeomorphism}
\newcommand{\mh}{minimal homeomorphism}
\newcommand{\cp}{crossed product}
\newcommand{\rpn}{representation}
\newcommand{\msp}{measure space}
\newcommand{\sft}{$\sm$-finite}
\newcommand{\sfm}{$\sm$-finite measure space}
\newcommand{\XBM}{(X, {\mathcal{B}}, \mu)}
\newcommand{\YCN}{(Y, {\mathcal{C}}, \nu)}
\newcommand{\cB}{{\mathcal{B}}}
\newcommand{\cC}{{\mathcal{C}}}
\newcommand{\LLp}{L (L^p (X, \mu))}
\newcommand{\OP}[2]{{\mathcal{O}}_{#1}^{#2}}
\newcommand{\MP}[2]{M_{#1}^{#2}}
\renewcommand{\S}{\subset}
\newcommand{\ov}{\overline}
\newcommand{\SM}{\setminus}
\newcommand{\I}{\infty}
\newcommand{\E}{\varnothing}
\newcommand{\Lem}[1]{Lemma~\ref{#1}}
\newcommand{\Def}[1]{Definition~\ref{#1}}
\title[$L^p$~operator crossed products]{Crossed products
   of $L^p$~operator algebras
   and the K-theory of Cuntz algebras on $L^p$~spaces}
\author{N.~Christopher Phillips}
\date{24~September 2013}
\address{Department of Mathematics, University  of Oregon,
       Eugene OR 97403-1222, USA.}
\email[]{ncp@darkwing.uoregon.edu}
\subjclass[2010]{Primary 46H05;
 Secondary 19A49, 19K99, 46H35, 46M20, 47L10.}
\thanks{This material is based upon work supported by the
  US National Science Foundation under Grants
  DMS-0701076 and DMS-1101742.
  It was also partially supported
  by the Research Institute for Mathematical Sciences
  of Kyoto University through a visiting professorship
  in 2011--2012.}
\begin{document}

\begin{abstract}
For $p \in [1, \infty),$ we define and study
full and reduced crossed products
of algebras of operators on $\sigma$-finite $L^p$~spaces
by isometric actions of second countable locally compact groups.
We give universal properties for both crossed products.
When the group is abelian,
we prove the existence of a dual action on the
full and reduced $L^p$~operator crossed products.
When the group is discrete,
we construct a conditional expectation to the original algebra
which is faithful in a suitable sense.
For a free action of a discrete group
on a compact metric space~$X,$
we identify all traces on the reduced $L^p$~operator crossed product,
and if the action is also minimal we show that
the reduced $L^p$~operator crossed product is simple.
We prove that the full and reduced $L^p$~operator crossed products
of an amenable $L^p$~operator algebra by a discrete amenable group
are again amenable.
We prove a Pimsner-Voiculescu exact sequence
for the K-theory of reduced $L^p$~operator crossed products by~$\Z.$
We show that the $L^p$ analogs ${\mathcal{O}}_d^p$
of the Cuntz algebras~${\mathcal{O}}_d$
are stably isomorphic to reduced $L^p$~operator crossed products
of stabilized $L^p$~UHF algebra by~${\mathbb{Z}},$
and show that
$K_0 \big( {\mathcal{O}}_d^p \big) \cong \Z / (d - 1) \Z$
and $K_1 \big( {\mathcal{O}}_d^p \big) = 0.$
\end{abstract}

\maketitle

\indent
This paper is an initial investigation of
full and reduced crossed products
of algebras of operators on $L^p$~spaces
by isometric actions of locally compact groups,
for $p \in [1, \infty).$
The original motivation was the computation of the K-theory
of the analogs of Cuntz algebras on $L^p$~spaces,
introduced in~\cite{PhLp1}.
The result is the same as in the C*~case:
$K_0 \big( \OP{d}{p} \big) \cong \Z / (d - 1) \Z$
and $K_1 \big( \OP{d}{p} \big) = 0.$
The choice of material in this paper is largely dictated
by what is needed for this calculation,
but we carry the work out in the greatest generality
not requiring significant extra work,
and we present some results unrelated to this computation
but which can be done with little additional work.
As steps toward the computation,
we prove that on the reduced $L^p$~operator crossed product
by a countable group,
there is a conditional expectation to the original algebra
which is faithful in a suitable sense,
and we prove a Pimsner-Voiculescu exact sequence
for the K-theory of reduced $L^p$~operator crossed products by~$\Z.$
We also construct the dual action on the
full and reduced $L^p$~operator crossed products
by a second countable locally compact abelian group.
The main unrelated results are as follows.
Let $G$ be a countable discrete group.
If $X$ is a free minimal compact metrizable $G$-space,
then the reduced $L^p$~operator crossed product by the corresponding
action on $C (X)$ is simple.
If $X$ is a free compact metrizable $G$-space,
not necessarily minimal,
then the traces on the reduced $L^p$~operator crossed product
are in one to one correspondence with the
invariant Borel probability measures on~$X.$
If $G$ is amenable and acts on an $L^p$~operator algebra
which is amenable as a Banach algebra,
then both the full and reduced $L^p$~operator crossed products
are amenable Banach algebras.

We also mention a separate result of Pooya and Hejezian~\cite{PH}.
Let $p \in (1, \I),$
let $G$ be a Powers group,
and let $\af \colon G \to \Aut (A)$
be an isometric action of~$G$ on an
$L^p$~operator algebra $A$
such that $A$ is $G$-simple.
Then the reduced $L^p$~operator crossed product
of $A$ by~$G$ is simple.
This is not true for $p = 1,$
as follows from Proposition~\ref{P_3217_L1OpGpAlg}.

We originally hoped to compute $K_* \big( \OP{d}{p} \big)$
directly,
without developing the theory of $L^p$~operator crossed products.
However, the methods of the original computation,
in~\cite{Cu2},
do not seem to work.
See the discussion at the beginning of Section~\ref{Sec_OpdCP}.

There are many interesting questions which we do not address.
We exclude the case $p = \I$ in most of the paper,
since it seems to require special treatment.
Whenever convenient, we restrict to discrete groups.
We make no effort to decide when the full and reduced
$L^p$~operator crossed products are the same,
beyond the simple observation that they are at least
isomorphic when the group is finite.
We do not investigate the crossed product by the dual action
(Takai duality).
We do not try to give any general criteria for simplicity
of crossed products;
even for actions on spaces,
our result is weaker than in the C*~case,
where one only needs the action to be essentially free.
There are many other interesting questions
which we do not even mention.

This paper is organized as follows.
In Section~\ref{Sec_LpOpAlg},
we define $L^p$~operator algebras
(closed subalgebras of $L (L^p (X, \mu))$),
give some examples,
and prove a few elementary facts.
Section~\ref{Sec_GAlg}
contains the definition of (isometric) $G$-$L^p$ operator algebras,
that is, $L^p$~operator algebras
with (isometric) actions of the group~$G,$
and their covariant \rpn{s}.
We give examples of $G$-$L^p$ operator algebras,
and prove the existence of regular covariant \rpn{s}.
Full and reduced $L^p$~operator crossed products
are introduced in Section~\ref{Sec_CP}.
Analogs of the usual universal properties for C*~crossed products
are given.
When the group is abelian,
dual actions are defined
on both the full and reduced $L^p$~operator crossed products.
Section~\ref{Sec_DiscCP}
contains the construction of the conditional expectation
on a reduced $L^p$~operator crossed product by a discrete group.
In Section~\ref{Sec_FpCX},
we prove the results mentioned above on $L^p$~operator crossed products
by free actions on compact spaces,
and on amenability of $L^p$~operator crossed products.
Section~\ref{Sec_PV} contains the proof of the analog
of the Pimsner-Voiculescu exact sequence for the K-theory
of reduced $L^p$~operator crossed products by~$\Z.$
In Section~\ref{Sec_OpdCP},
we show how to realize the stabilization of the algebra $\OP{d}{p}$
as the reduced $L^p$~operator crossed product of an
action of $\Z$ on a stabilized $L^p$~UHF algebra,
and we combine this realization with the
Pimsner-Voiculescu exact sequence to compute
$K_* \big( \OP{d}{p} \big).$
In Section~\ref{Sec_Problems}, we state a few of the many problems
left open in this paper.

We warn the reader that many facts
which are either automatic
or well known for \ca{s}
are false, unknown, or require additional work
for $L^p$~operator algebras.

Integration of Banach space values \cfn{s}
with compact support will be taken to be as in Section~2.5
of~\cite{DDW}.
For more details, see Section~1.5 of~\cite{Wl}.

I am grateful to Guillermo Corti\~{n}as,
Mar\'{\i}a Eugenia Rodr\'{\i}guez,
and Sanaz Pooya for finding a number of misprints
in earlier drafts of this paper.

\section{$L^p$~operator algebras}\label{Sec_LpOpAlg}

\indent
Crossed products will be taken with respect to the category
of norm closed subalgebras
of algebras of the form $\LLp,$
for a fixed value of~$p.$
We call such algebras $L^p$~operator algebras.
We will mostly restrict to nondegenerate algebras
and \sfm{s}.
In this section,
we present the basic definitions
related to $L^p$~operator algebras
and some examples.

\begin{dfn}\label{D-LpOpAlg}
Let $A$ be a Banach algebra,
and let $p \in [1, \I].$
We say that $A$ is an {\emph{$L^p$~operator algebra}}
if there is a measure space $\XBM$ such that
$A$ is isometrically isomorphic to a norm closed subalgebra
of $\LLp.$
\end{dfn}

We do not assume that $A$ is unital.

When $p = 2,$ an $L^p$~operator algebra
is a Banach algebra which is isometrically isomorphic
to a norm closed but not necessarily selfadjoint subalgebra
of the bounded operators on some Hilbert space.
(We do not know if there is a useful $L^p$~analog
of a C*-algebra.)
Results such as the characterization in~\cite{BRS}
suggest that nonselfadjoint operator algebras
are better behaved when matrix norms are included in the structure.
In the $L^p$~situation,
there is an obvious way to define matrix norms.
In~\cite{LM} there is a representation theorem
for matrix normed operator algebras on the collection
of quotients of subspaces of $L^p$~spaces,
for a fixed value of~$p.$
However, for $p \neq 2,$
this collection is much bigger than the collection of $L^p$~spaces,
and therefore does not meet our needs.
We do not need matrix norms for the purposes of this paper,
essentially because the algebras we consider have
a unique $L^p$~operator matrix normed structure.
Therefore we do not pursue this idea here.
We do make scattered remarks.
For example, as discussed at the beginning
of Section~\ref{Sec_CP},
the completely isometric theory of crossed products
is essentially a special case of what we do here.

When $p = \I,$
it may well be more appropriate to consider
norm closed subalgebras
of spaces of the form $L (C_0 (X))$
for locally compact Hausdorff spaces~$X.$
Since we will eventually have to exclude $p = \I$ anyway,
we do not pursue this approach here.

The algebra $\LLp$ is an obvious example
of an $L^p$~operator algebra.
Here are some others.

\begin{exa}\label{E-Mpd}
Let $p \in [1, \I],$
let $d \in \N,$
set $Z = \{ 0, 1, \ldots, d - 1 \},$
and let $\ld$ be normalized counting measure on~$Z,$
that is,
$\ld (S) = d^{-1} \card (S)$ for all $S \S Z.$
Then $L (L^p ( Z, \ld))$ is an $L^p$~operator algebra,
algebraically isomorphic to the algebra $M_d$ of $d \times d$
complex matrices.
We call it $M^p_d.$

We write its standard matrix units as
$e_{j, k}$ for $j, k \in Z.$
Thus,
\[
e_{j, k} (\ch_{ \{ l \} } ) = \begin{cases}
   0 & l \neq k
       \\
   \ch_{ \{ j \} } & l = k.
\end{cases}
\]
\end{exa}

We use $\{ 0, 1, \ldots, d - 1 \}$
rather than $\{ 1, 2, \ldots, d \},$
and normalized counting measure,
for notational convenience in Section~\ref{Sec_OpdCP}.

\begin{rmk}\label{R-UnNormalize}
Let the notation be as in Example~\ref{E-Mpd},
and let $\nu = d \cdot \ld$ be ordinary counting measure on~$Z.$
Then there is an isometric isomorphism $\ph$ from
$M^p_d$ as defined there to $L (l^p (Z, \nu))$
which sends matrix units to matrix units.
Indeed, let $u \in L \big( L^p (Z, \mu), \, L^p (Z, \ld) \big)$
be the isometric isomorphism $u \xi = d^{1/p} \xi.$
Then set $\ph (a) = u a u^{-1}$ for $a \in M^p_d.$

More generally, replacing the measure $\mu$
by a strictly positive multiple of $\mu$
does not change $\LLp.$
(See Lemma~2.11 of~\cite{PhLp2b} for the formal statement.)
\end{rmk}

The algebra $M^p_d$ plays a key role in \cite{PhLp1} and~\cite{PhLp2a}.

\begin{exa}\label{L_3911_UTr}
Let the notation be as in Example~\ref{E-Mpd},
and set
\[
T = \spn
 \big( \big\{ e_{j, k} \colon 0 \leq j \leq k \leq d - 1 \big\} \big),
\]
the algebra of upper triangular matrices.
Then $T$ is an $L^p$~operator algebra.
\end{exa}

Unlike the other examples we present here,
for $p = 2$ we do not get a \ca.

For any Banach space~$E,$
we let $K (E)$ denote the algebra of compact operators on~$E.$

\begin{exa}\label{E-CptOps}
Let $p \in [1, \I],$
and let $\XBM$ be a \msp.
Then the algebra $K (L^p (X, \mu))$ of compact operators
on $L^p (X, \mu)$ is an $L^p$~operator algebra.
\end{exa}

\begin{exa}\label{E-MatpS}
Let $S$ be a set,
and let $p \in [1, \I].$
For a finite subset $F \S S,$
we define $M_F^p$ to be the set of all $a \in L (l^p (S))$
such that $a \xi = 0$ whenever $\xi |_F = 0$
and such that $a \xi \in l^p (F) \S l^p (S)$
for all $\xi \in l^p (S).$
We then define
\[
M_S^p = \bigcup_{ {\mbox{$F \S S$ finite}} } M_F^p,
\]
and we define ${\ov{M}}_S^p$
to be the closure of $M_S^p$ in the operator norm on $L (l^p (S)).$
It follows from \Lem{L-MpIsAlg} below that
${\ov{M}}_S^p$ is an $L^p$~operator algebra which is contained
in $K (l^p (S)).$
By Corollary~\ref{C-UMnIsK} below, ${\ov{M}}_S^p = K (l^p (S))$
for $p \in (1, \I),$
but Example~\ref{E-UMn1} shows that this is not true for $p = 1.$
When $S = \{ 0, 1, \ldots, d - 1 \}$
or $S = \{ 1, 2, \ldots, d \},$
we recover the algebra $M^p_d$ of Example~\ref{E-Mpd},
including its norm,
by Remark~\ref{R-UnNormalize}.

As in Example~\ref{E-Mpd},
for $j, k \in S$
we let $e_{j, k} \in M^p_S \S {\ov{M}}^p_S$
be the standard matrix unit, given by
\[
e_{j, k} (\ch_{ \{ l \} } ) = \begin{cases}
   0 & l \neq k
       \\
   \ch_{ \{ j \} } & l = k.
\end{cases}
\]
\end{exa}

\begin{lem}\label{L-MpIsAlg}
Let $S$ be a set,
and let $p \in [1, \I).$
Then $M_S^p$ is a subalgebra of $L (l^p (S))$
and ${\ov{M}}_S^p$ is a closed subalgebra of $L (l^p (S))$
which is contained in $K (l^p (S)).$
\end{lem}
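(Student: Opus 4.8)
We must show three things about $M_S^p$ and ${\ov{M}}_S^p$ for $p \in [1, \I)$: that $M_S^p$ is a subalgebra of $L(l^p(S))$, that ${\ov{M}}_S^p$ is a closed subalgebra, and that ${\ov{M}}_S^p \S K(l^p(S))$.

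**Plan.** The whole argument reduces to working one finite subset at a time and then passing to the closure. First I would verify that for a fixed finite $F \S S$, the set $M_F^p$ is a subalgebra of $L(l^p(S))$ isomorphic to $M_{\card(F)}$: the two defining conditions (that $a$ kills vectors vanishing on $F$ and that $a$ lands in $l^p(F)$) say precisely that $a$ factors through the canonical restriction/inclusion $l^p(S) \to l^p(F) \to l^p(S)$, so $M_F^p \cong L(l^p(F))$ as an algebra, and in particular it is closed under composition and is finite dimensional. Next, given two finite subsets $F_1, F_2 \S S$, I would observe that $M_{F_1}^p \cup M_{F_2}^p \S M_{F_1 \cup F_2}^p$, since enlarging $F$ only weakens both conditions; hence the union $M_S^p = \bigcup_F M_F^p$ is directed, and the product of $a \in M_{F_1}^p$ and $b \in M_{F_2}^p$ lies in $M_{F_1 \cup F_2}^p \S M_S^p$. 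This gives that $M_S^p$ is a subalgebra. Taking norm closures then shows ${\ov{M}}_S^p$ is a closed subalgebra, using only that multiplication in $L(l^p(S))$ is (jointly) continuous on bounded sets together with the standard fact that the closure of a subalgebra of a Banach algebra is a subalgebra.

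For the inclusion in the compacts, the point is that each $M_F^p$ consists of finite-rank operators: if $a \in M_F^p$ then $\ran(a) \S l^p(F)$, which is finite dimensional, so $a$ has finite rank and hence is compact. Therefore $M_S^p \S K(l^p(S))$, and since $K(l^p(S))$ is norm closed in $L(l^p(S))$, its closure ${\ov{M}}_S^p$ is also contained in $K(l^p(S))$.

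**Main obstacle.** There is essentially no deep obstacle here; the only thing requiring a moment's care is checking that the two conditions defining $M_F^p$ really do combine correctly under composition and under enlarging $F$ — i.e., that $M_{F_1}^p M_{F_2}^p \S M_{F_1 \cup F_2}^p$ rather than needing some larger set, and that a product $ab$ with $a \in M_{F_1}^p$, $b \in M_{F_2}^p$ indeed kills $\xi$ whenever $\xi|_{F_1 \cup F_2} = 0$ (it does: then $\xi|_{F_2} = 0$ so $b\xi = 0$) and has range in $l^p(F_1 \cup F_2)$ (it does: $\ran(ab) \S \ran(a) \S l^p(F_1) \S l^p(F_1 \cup F_2)$). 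Once this bookkeeping is in place, everything follows from elementary Banach-algebra facts, and one does not even need $p \in [1, \I)$ for most of it — that hypothesis is relevant only insofar as one wants $l^p(S)$ to be a genuine $L^p$ space and wants the later identification ${\ov{M}}_S^p = K(l^p(S))$ for $p \in (1, \I)$, which is not part of this lemma.
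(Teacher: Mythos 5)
Your argument is correct and is exactly the elaboration the paper has in mind: the paper's proof simply notes that $M_S^p$ is easily checked to be a subalgebra whose elements have finite rank, and the rest follows by passing to the closure. Your directed-union bookkeeping ($M_{F_1}^p M_{F_2}^p \S M_{F_1 \cup F_2}^p$, etc.) is precisely the "easy to check" part made explicit, so there is nothing to add.
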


\begin{proof}
It is easy to check that $M_S^p$ is a subalgebra of $L (l^p (S))$
and all its elements have finite rank.
The rest of the statement follows.
\end{proof}

The following proposition and its corollary
will not be formally used.
They are included to show how ${\ov{M}}_S^p$
is related to the more usual algebra $K (l^p (S)).$

\begin{prp}\label{P-UMnDense}
Let $p \in [1, \I).$
Let $S$ be a set, and let ${\mathcal{F}}$ be the collection of
finite subsets of~$S,$
ordered by inclusion.
For $T \in {\mathcal{F}}$
let $e_T \in L (l^p (S))$ be multiplication by~$\ch_T.$
Let $a \in K (l^p (S)).$
If $p \in (1, \I),$ then
$\lim_{T \in \mathcal{F}} \| e_T a e_T - a \| = 0,$
and if $p = 1$ then $\lim_{T \in \mathcal{F}} \| e_T a - a \| = 0.$
\end{prp}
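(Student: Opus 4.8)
The plan is to reduce to finite-rank operators and then estimate directly, treating the cases $p \in (1, \I)$ and $p = 1$ together as far as possible and separating only where necessary. First I would fix $a \in K(l^p(S))$ and $\ep > 0$, and choose a finite-rank operator $b$ with $\|a - b\| < \ep$. Since $b$ has finite rank, its range is contained in $l^p(F_0)$ for some finite $F_0 \S S$, and there is a finite $F_1 \S S$ such that $b$ factors through the restriction map to coordinates in $F_1$ (i.e. $b\xi$ depends only on $\xi|_{F_1}$); concretely $b = e_{F_0} b e_{F_1}$. Hence for any $T \in \mathcal{F}$ with $T \supseteq F_0 \cup F_1$ we have $e_T b e_T = b$ and $e_T b = b$. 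This handles the approximant exactly; the work is in controlling the error terms $e_T(a-b)e_T$ and $e_T(a-b)$.

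For this I need a uniform bound on $\|e_T\|$ for $T \in \mathcal{F}$. The operator $e_T$ is multiplication by the characteristic function $\ch_T$, which on $l^p(S)$ (for any $p \in [1,\I)$) is a norm-one idempotent: $\|e_T \xi\|_p^p = \sum_{s \in T} |\xi_s|^p \le \|\xi\|_p^p$. So $\|e_T\| \le 1$ for every $T$. Then for $p \in (1,\I)$ and $T \supseteq F_0 \cup F_1$,
\[
\| e_T a e_T - a \|
\le \| e_T (a - b) e_T \| + \| e_T b e_T - b \| + \| b - a \|
\le \|e_T\|^2 \,\ep + 0 + \ep \le 2\ep,
\]
and since $\mathcal{F}$ is directed, $\lim_{T \in \mathcal{F}} \|e_T a e_T - a\| = 0$. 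The case $p = 1$ is identical with $e_T a e_T$ replaced by $e_T a$: for $T \supseteq F_0 \cup F_1$,
\[
\| e_T a - a \|
\le \| e_T (a - b) \| + \| e_T b - b \| + \| b - a \|
\le \ep + 0 + \ep = 2\ep .
\]

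The only genuinely nontrivial point — and the reason the statement distinguishes $p = 1$ — is the factorization $b = e_{F_0} b e_{F_1}$ for a finite-rank $b$. The range side, $b = e_{F_0} b$, is immediate: a finite-rank operator has range inside some finite-dimensional, hence finitely-supported, subspace of $l^p(S)$. The source side, $b = b\, e_{F_1}$, requires that a finite-rank operator on $l^p(S)$ can be written as $\xi \mapsto \sum_{i=1}^n \ph_i(\xi)\, \eta_i$ with each $\ph_i$ in the dual $(l^p(S))^* = l^q(S)$ — and here one wants each $\ph_i$ to be finitely supported, which is false in general (an $l^q$ functional need not be finitely supported), so instead one approximates each $\ph_i$ in $l^q$-norm by a finitely supported functional, absorbing the resulting small error into the $\ep$ already in play. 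For $p = 1$ one cannot symmetrize (compress on both sides) because $e_T \to 1$ only in the strong operator topology on the source side and the adjoints live on $l^\infty$ rather than $c_0$, which is exactly why only the one-sided statement $\|e_T a - a\| \to 0$ survives. I expect this factorization/approximation of the functionals $\ph_i$ to be the main obstacle; everything else is the three-term triangle inequality above together with $\|e_T\| \le 1$.
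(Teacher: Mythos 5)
Your overall strategy---replace $a$ by a finite-rank operator $b$ that is exactly compressed by $e_T$ for large $T$, then run a three-term triangle inequality using $\| e_T \| \leq 1$---is a genuinely different route from the paper's, and it can be made to work, but as written it contains one false step and one hidden dependency.

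The false step is the claim that $b = e_{F_0} b$ is ``immediate'' because a finite-rank operator has finite-dimensional, ``hence finitely-supported,'' range. A finite-dimensional subspace of $l^p (S)$ need not be finitely supported: the rank-one operator $\xi \mapsto \xi_{s_0} \eta$ on $l^p (\N)$ with $\eta = (2^{-n})_{n}$ has one-dimensional range but satisfies $e_F b \neq b$ for every finite $F \subset \N.$ So the term $\| e_T b - b \| = 0$ in your displayed estimates is not justified, and this affects the $p = 1$ case just as much as $p \in (1, \infty).$ The repair is exactly the move you already make on the source side: writing $b \xi = \sum_{i = 1}^{n} \varphi_i (\xi) \eta_i,$ replace each $\eta_i$ by a finitely supported vector close to it in $l^p$-norm (possible for every $p \in [1, \infty)$ since finitely supported vectors are dense), in addition to replacing each $\varphi_i \in l^q (S)$ by a finitely supported functional when $p > 1.$ With that correction your argument goes through, including your correct explanation of why the source-side truncation is unavailable for $p = 1$ (the dual is $l^{\infty} (S),$ where finitely supported elements are not dense).

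The hidden dependency is the density of finite-rank operators in $K (l^p (S)),$ which you take as your starting point but which the paper explicitly records as a \emph{consequence} of this proposition. That density is true and standard, but its standard proof (via the approximation property or a Schauder basis) is precisely the compactness argument the paper uses directly: cover the compact closure of the image of the unit ball under $a$ by the open sets $\{ \eta \colon \| (1 - e_T) \eta \|_p < \varepsilon \}$ to conclude $\| e_{T_0} a - a \|$ is small, which already finishes $p = 1$; then, for $p > 1,$ represent the finite-rank operator $e_{T_0} a$ by functionals in $l^q (S)$ and truncate them---essentially your source-side step. So your proof is not circular, but it outsources the main analytic content to a black box whose proof is the first half of the paper's argument; the paper's version is self-contained and yields the density of finite-rank operators as a corollary rather than consuming it as an input.
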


\begin{proof}
We begin with the easily checked observation
that for all $T \in {\mathcal{F}},$
we have
\begin{equation}\label{Eq:NormeF}
\| e_T \| \leq 1
\andeqn
\| 1 - e_T \| \leq 1.
\end{equation}
(We have equality unless $T = \varnothing$ or $T = S.$)

Now let $a \in K (l^p (S))$ and let $\ep > 0.$
We will find $T_0 \in {\mathcal{F}}$ such that
for all $T \in {\mathcal{F}}$ containing~$T_0,$
we have $\| e_T a - a \| < \tfrac{1}{3} \ep.$
For $p \neq 1$ we will further find $T_1 \in {\mathcal{F}}$
(containing~$T_0$) such that
for all $T \in {\mathcal{F}}$ containing~$T_1,$
we have $\| e_T a e_T - a \| < \ep.$

Define
\[
B = {\ov{ \big\{ a \xi \colon
  {\mbox{$\xi \in l^p (S)$ and $\| \xi \|_p \leq 1$}} \big\}  }}
 \S l^p (S).
\]
Then $B$ is compact.
For $T \in {\mathcal{F}},$
define an open set $U_T \S l^p (S)$ by
\[
U_T = \big\{ \et \in l^p (S) \colon
     \| (1 - e_T) \et \|_p < \tfrac{1}{6} \ep \big\}.
\]
If $T_1, T_2 \in {\mathcal{F}}$ satisfy
$T_1 \S T_2,$
then $U_{T_1} \S U_{T_2},$
since, by~(\ref{Eq:NormeF}), for $\et \in l^p (S)$ we have
\[
\| (1 - e_{T_2}) \et \|_p
 = \| (1 - e_{T_2}) (1 - e_{T_1}) \et \|_p
 \leq \| (1 - e_{T_1}) \et \|_p.
\]
Also,
for all $\et \in l^p (S),$
we have $\lim_{T \in \mathcal{F}} \| (1 - e_T) \et \|_p = 0,$
so $\et \in \bigcup_{T \in {\mathcal{F}} } U_T.$
Since $B$ is compact,
there is $T_0 \in {\mathcal{F}}$ such that
$B \S U_{T_0}.$

We now claim that whenever $T \in {\mathcal{F}}$ satisfies $T_0 \S T,$
then $\| e_T a - a \| < \tfrac{1}{3} \ep.$
Indeed, if $\xi \in l^p (S)$ satisfies
$\| \xi \|_p \leq 1,$
then $a \xi \in B,$
so
\[
\| (e_T a - a) \xi \|_p = \| (e_T - 1) a \xi \|_p < \tfrac{1}{6} \ep.
\]
Taking the supremum over all such~$\xi$ gives
$\| e_T a - a \| \leq \tfrac{1}{6} \ep < \tfrac{1}{3} \ep.$
This proves the claim.

We now have the statement in the case $p = 1.$
So from now on assume $p \in (1, \I).$
For $j \in S$ let $\dt_j$ denote
the corresponding standard basis vector in $l^p (S).$
There are functionals $\om_j$ in the dual space $l^p (S)'$
for $j \in T_0$
such that for all $\xi \in l^p (S)$
we have
\begin{equation}\label{Eq_3912_St}
e_{T_0} a \xi = \sum_{j \in T_0} \om_j (\xi) \dt_j.
\end{equation}
Let $q \in (1, \I)$ be the conjugate exponent,
that is, $\frac{1}{p} + \frac{1}{q} = 1.$
There are $\et_j \in l^q (S)$ for $j \in T_0$
such that for all $\xi \in l^p (S)$ we have
\[
\om_j (\xi) = \sum_{l \in S} (\et_j)_l \xi_l.
\]

For $T \in {\mathcal{F}}$
let $f_T \in L (l^q (S))$ be multiplication by~$\ch_T.$
There is $T_1 \in {\mathcal{F}}$ such that $T_0 \S T_1$
and such that for $T \in {\mathcal{F}}$ with $T_1 \S T,$
and $j \in T_0,$ we have
\[
\| f_T \et_j - \et_j \|_q < \frac{\ep}{6 \cdot \card (T_0)}.
\]
Since $\xi \mapsto \om_j (e_T \xi)$
is the linear functional corresponding to $f_T \et_j \in l^q (S),$
we get,
for all $\xi \in l^p (S),$ all $j \in T_0,$
and all $T \in {\mathcal{F}}$ containing~$T_1,$
\[
| \om_j (\xi) - \om_j (e_T \xi) |
  \leq \| \xi \|_p \cdot \| \et_j - f_T \et_j \|_q
  \leq \frac{\ep \| \xi \|_p}{6 \cdot \card (T_0)}.
\]
Therefore, by~(\ref{Eq_3912_St}),
\[
\| e_{T_0} a e_T \xi -  e_{T_0} a \xi \|_p
  \leq \sum_{j \in T_0} | \om_j (\xi) - \om_j (e_T \xi) |
  \leq \frac{\ep \| \xi \|_p}{6}.
\]
Taking the supremum over all such~$\xi$ gives
$\| e_{T_0} a e_T - e_{T_0} a \|
   \leq \tfrac{1}{6} \ep < \tfrac{1}{3} \ep.$
Therefore
\begin{align*}
\| e_T a e_T - a \|
& \leq \| e_T ( a - e_{T_0} a) e_T \|
         + \| e_{T_0} a e_T - e_{T_0} a \|
         + \| e_{T_0} a - a \|
       \\
& < \| e_T \| \cdot \| a - e_{T_0} a \| \cdot \| e_T \|
         + \tfrac{1}{3} \ep
         + \tfrac{1}{3} \ep
  < \tfrac{1}{3} \ep + \tfrac{1}{3} \ep + \tfrac{1}{3} \ep
  = \ep.
\end{align*}
This completes the proof that
$\lim_{T \in \mathcal{F}} \| e_T a e_T - a \| = 0.$
\end{proof}

\begin{cor}\label{C-UMnIsK}
Let $p \in (1, \I),$
and let $S$ be a set.
Then ${\ov{M}}^p_S = K (l^p (S)).$
\end{cor}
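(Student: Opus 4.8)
The plan is to prove the two inclusions separately; almost all of the work is already contained in Proposition~\ref{P-UMnDense}. The inclusion $\ov{M}^p_S \S K (l^p (S))$ is immediate from \Lem{L-MpIsAlg} (and in fact holds for all $p \in [1, \I)$), so the real content is the reverse inclusion $K (l^p (S)) \S \ov{M}^p_S,$ and this is exactly where the hypothesis $p \in (1, \I)$ enters.

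For the reverse inclusion, I would fix $a \in K (l^p (S))$ and use the notation of Proposition~\ref{P-UMnDense}: for $T$ in the directed set ${\mathcal{F}}$ of finite subsets of~$S,$ let $e_T \in L (l^p (S))$ be multiplication by~$\ch_T.$ Since $p \neq 1,$ that proposition gives $\lim_{T \in {\mathcal{F}}} \| e_T a e_T - a \| = 0.$ Thus it suffices to check that $e_T a e_T \in M_S^p$ for every $T \in {\mathcal{F}},$ since then $a$ is a norm limit of elements of $M_S^p$ and hence lies in $\ov{M}^p_S.$ This check is routine from the definition of $M_T^p$ in Example~\ref{E-MatpS}: if $\xi \in l^p (S)$ satisfies $\xi |_T = 0,$ then $e_T \xi = 0,$ so $(e_T a e_T) \xi = 0;$ and for arbitrary $\xi \in l^p (S)$ we have $(e_T a e_T) \xi = e_T (a e_T \xi) \in l^p (T).$ Hence $e_T a e_T \in M_T^p \S M_S^p,$ which finishes the argument.

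There is no serious obstacle here; Proposition~\ref{P-UMnDense} does all the analytic work. The one place the restriction $p > 1$ is genuinely used is the two-sided cutdown $e_T a e_T$ in that proposition: for $p = 1$ one only gets the one-sided convergence $e_T a \to a,$ and $e_T a$ need not lie in $M_S^p$ (it generally fails to vanish on vectors supported off~$T$), which is consistent with the failure of the corollary for $p = 1$ recorded via Example~\ref{E-UMn1}.
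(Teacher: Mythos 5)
Your argument is correct and is exactly the intended one: the paper simply declares the corollary ``immediate from Proposition~\ref{P-UMnDense},'' and your write-up supplies the routine verification that $e_T a e_T \in M_T^p$ together with the containment from \Lem{L-MpIsAlg}. Nothing to add.
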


\begin{proof}
This is immediate from Proposition~\ref{P-UMnDense}.
\end{proof}

We also get the result that the finite rank operators
are dense in $K (l^p (S))$ whenever $p \in [1, \I).$
This is surely known.
The case in which $S$ is countable is known in much greater
generality,
namely for every Banach space with a Schauder basis.

It is not true that ${\ov{M}}^1_S = K (l^1 (S)),$
even when $S$ is countable.
In fact,
if $S$ is infinite, then ${\ov{M}}_S^1$ does not
even contain all rank one operators in $L (l^1 (S)).$

\begin{exa}\label{E-UMn1}
Let $S$ be an infinite set,
and fix $s_0 \in S.$
For $j \in S$ let $\dt_j$ denote
the corresponding standard basis vector in $l^1 (S).$
Define $a \colon l^1 (S) \to l^1 (S)$
by
\[
a \xi = \left( \ssum{j \in S} \xi_j \right) \dt_{s_0}.
\]
Then one easily checks that $a$ is a rank one operator
in $L (l^1 (S))$
and that $\| a \| = 1.$
We show that $a \not\in {\ov{M}}_S^1.$

Let $T \S S$ be any finite set,
and let $e_T$ be as in Proposition~\ref{P-UMnDense}.
We first observe that $\| a - a e_T \| \geq 1.$
Indeed, there is $j \in S$ such that $j \not\in T,$
and taking $\xi = \dt_j$ gives $\| \xi \| = 1,$
$\| a \xi \| = 1,$ and $a e_T \xi = 0.$

We now claim that $b \in {\ov{M}}_S^1$ implies
$\| a - b \| \geq 1.$
It suffices to prove this for $b \in M_S^1.$
Thus we may assume that there is a finite set $T \S S$
such that $e_T b e_T = b.$
Since $b (1 - e_T) = 0,$
we get
\[
\| a - b \|
  = \| a - b \| \cdot \| 1 - e_T \|
  \geq \| (a - b) (1 - e_T) \|
  = \| a (1 - e_T) \|
  \geq 1,
\]
as desired.
\end{exa}

\begin{exa}\label{E-pUHF}
Let $p \in [1, \I).$
Let $P$ be the set of prime numbers,
and let $N \colon P \to \Nz \cup \{ \I \}$
be a function such that $\sum_{t \in P} N (t) = \I.$
(Such a function is a {\emph{supernatural number}}.)
Then the spatial $L^p$~UHF algebra~$D$ of type~$N,$
as in Definition~3.9(2) of~\cite{PhLp2a}
(and whose uniqueness is proved in Theorem~3.10 of~\cite{PhLp2a})
is an $L^p$~operator algebra.
Recall from Definition~3.5 and Theorem~3.10 of~\cite{PhLp2a}
that $D$ is characterized as follows.
For any sequence $\big( r (0), \, r (1), \, r (2), \, \ldots \big)$
in~$\N$
with $r (0) = 1,$ $r (n) \mid r (n + 1)$ for $n \in \Nz,$
and such that for all $t \in P$ we have
\[
N (t) = \sup \big( \big\{ m \in \Nz \colon
 {\mbox{there is $n \in \Nz$ such that $t^m \mid r (n)$}} \big\} \big),
\]
there are subalgebras
$D_0 \S D_1 \S \cdots \S D$
such that ${\ov{\bigcup_{n = 0}^{\I} D_n}} = D$
and such that $D_n$ is isometrically isomorphic to $M^p_{r (n)}$
for all $n \in \Nz.$

As a special case, for $d \in \{ 2, 3, \ldots \},$
we get the spatial $L^p$~UHF algebra of type~$d^{\infty}$
by taking $N (t) = \I$ for those primes~$t$ which divide~$d$
and $N (t) = 0$ otherwise.
Then we can take $r (n) = d^n$ for $n \in \Nz.$
\end{exa}

Theorem 5.14 of~\cite{PhLp2b}
shows that for every $p \in [1, \I)$
and every supernatural number~$N,$
there are uncountably many
nonisomorphic nonspatial $L^p$~UHF algebras of type~$N.$

\begin{exa}\label{E-Odp}
Let $p \in [1, \I),$
and let $d \in \N.$
Then the algebra $\OP{d}{p}$
of Definition~8.8 of~\cite{PhLp1}
is an $L^p$~operator algebra.

We recall its definition.
For convenience in Section~\ref{Sec_OpdCP},
we use slightly different notation.
We let $L_d$ denote the Leavitt algebra over~$\C,$
as in Definition~1.1 of~\cite{PhLp1},
with standard generators
(note the change in labelling)
$s_0, s_1, \ldots, s_{d - 1}, t_0, t_1, \ldots, t_{d - 1}$
satisfying the relations
\begin{equation}\label{Eq:Leavitt1}
t_j s_j = 1
\,\,\,\,\,\,\,\,\,\,\,\,
{\mbox{for $j \in \{ 0, 1, \ldots, d - 1 \},$}}
\end{equation}
\begin{equation}\label{Eq:Leavitt2}
t_j s_k = 0
\,\,\,\,\,\,\,\,\,\,\,\,
{\mbox{for $j, k \in \{ 0, 1, \ldots, d - 1 \}$ with $j \neq k,$}}
\end{equation}
and
\begin{equation}\label{Eq:Leavitt3}
\sum_{j = 0}^{d - 1} s_j t_j = 1.
\end{equation}
Then $\OP{d}{p}$ is the completion of
$L_d$ in the norm coming from any spatial \rpn{}
(in the sense of Definition 7.4(2) of~\cite{PhLp1})
on a space $L^p (X, \mu)$ for a \sfm\  $\XBM.$
By Theorem~8.7 of~\cite{PhLp1},
all such \rpn{s} give the same norm on~$L_d,$
so $\OP{d}{p}$ is well defined.
Since injective spatial \rpn{s} of $L_d$ exist
(Lemma~7.5 of~\cite{PhLp1})
we may, and do,
regard $L_d$ as a subalgebra of~$\OP{d}{p}.$
\end{exa}

\begin{exa}\label{E-CX}
Let $p \in [1, \I],$
and let $X$ be a locally compact Hausdorff space.
Then $C_0 (X),$ with the usual supremum norm,
is an $L^p$~operator algebra.
To see this, let $\mu$ be counting measure on~$X.$
Then the map $f \mapsto m (f),$
sending $f \in C_0 (X)$ to the multiplication operator
$(m (f) \xi) (x) = f (x) \xi (x)$ for $\xi \in L^p (X, \mu)$
and $x \in X,$
is an isometric bijection from $C_0 (X)$
to a norm closed subalgebra of $\LLp.$

If $X$ is compact metrizable
(and in some other cases),
we can find a finite Borel measure $\nu$ on~$X$
such that $\nu (U) > 0$ for every nonempty open set $U \S X.$
Then $C (X)$ is isometrically isomorphic to the corresponding
algebra of multiplication operators on $L^p (X, \nu).$
If $X$ is compact metrizable and $p \neq \I,$
then $L^p (X, \nu)$ is separable.
\end{exa}

Our final example is the spatial $L^p$~operator tensor product
of $L^p$~operator algebras.
For this,
we need to recall briefly the tensor product of operators
on $L^p$~spaces.
What we need is summarized in Theorem~2.16 of~\cite{PhLp1},
but is taken from Chapter~7 of~\cite{DF}.
Those sources assume the measures are \sft,
but this hypothesis is not needed, by Section~1 of~\cite{FIP}.

When we need them,
we will use the symbol $\otimes_{\mathrm{alg}}$
for algebraic (that is, not completed)
tensor products of both Banach spaces and Banach algebras.

\begin{rmk}\label{R-LpTens}
For proofs of the following, see Theorem~2.16 of~\cite{PhLp1},
and remove the hypothesis of $\sm$-finiteness there
by using, in~\cite{FIP}, Theorem~1.1
and the discussion at the beginning of Section~1.

For $p \in [1, \I)$
and for \msp{s} $\XBM$ and $\YCN,$
there is an $L^p$~tensor product
$L^p (X, \mu) \otimes_p L^p (Y, \nu)$
which can be canonically identified
with $L^p (X \times Y, \, \mu \times \nu)$
via $(\xi \otimes \et) (x, y) = \xi (x) \et (y).$
Moreover, if
\[
(X_1, \cB_1, \mu_1), \,\,\,\,\,\,
(X_2, \cB_2, \mu_2), \,\,\,\,\,\,
(Y_1, \cC_1, \nu_1),
\andeqn
(Y_2, \cC_2, \nu_2)
\]
are \msp s,
and
\[
a \in L \big( L^p (X_1, \mu_1), \, L^p (X_2, \mu_2) \big)
\andeqn
b \in L \big( L^p (Y_1, \nu_1), \, L^p (Y_2, \nu_2) \big),
\]
then there is a corresponding tensor product operator
\[
a \otimes b
   \in L \big( L^p (X_1 \times Y_1, \, \mu_1 \times \nu_1),
          \, L^p (X_2 \times Y_2, \, \mu_2 \times \nu_2 \big),
\]
which has the expected properties:
bilinearity,
$(a_1 \otimes b_1) (a_2 \otimes b_2) = a_1 a_2 \otimes b_1 b_2,$
and $\| c \| = \| a \| \cdot \| b \|.$
\end{rmk}

We exclude $p = \I$
because usually $L^{\I} (X \times Y, \, \mu \times \nu)$
is much bigger than the closure of the image of
$L^{\I} (X, \mu) \otimes_p L^{\I} (Y, \nu).$

\begin{exa}\label{E-LpTP}
Let $p \in [1, \I),$
let $(X_1, \cB_1, \mu_1)$ and $(X_2, \cB_2, \mu_2)$
be \msp s,
and let $A_1 \S L ( L^p (X_1, \mu_1) )$
and $A_2 \S L ( L^p (X_2, \mu_2) )$
be norm closed subalgebras.
Define the algebra
\[
A_1 \otimes_p A_2
  \S L \big( L^p (X_1 \times X_2, \, \mu_1 \times \mu_2) \big)
\]
to be the closed linear span of all $a_1 \otimes a_2$
(in the sense of Remark~\ref{R-LpTens})
for $a_1 \in A_1$ and $a_2 \in A_2,$
as in Definition~1.9 of~\cite{PhLp2a}.
Then $A_1 \otimes_p A_2$ is an $L^p$~operator algebra.

Thus, for example,
if $A \S \LLp$ is an $L^p$~operator algebra,
we can form an $L^p$~matrix algebra:
let $d \in \N,$ let $Z$ and $\ld$ be as in Example~\ref{E-Mpd},
and get
\[
M^p_d \otimes_p A \S L \big( L^p (Z \times X, \, \ld\times \mu) \big).
\]
Remark~\ref{R-UnNormalize} is easily adapted to show that
we get the same result using counting measure instead of~$\ld.$
\end{exa}

If $A_1$ and $A_2$ are $L^p$~operator algebras,
we can choose
\msp s $(X_1, {\mathcal{B}}_1, \mu_1)$
and $(X_2, {\mathcal{B}}_2, \mu_2),$
and isometric \rpn{s}
(in the sense of \Def{D-LpRep} below)
\[
\pi_1 \colon A_1 \to L (L^p (X_1, \mu_1))
\andeqn
\pi_2 \colon A_2 \to L (L^p (X_2, \mu_2)).
\]
Following Example~\ref{E-LpTP},
we can then form the $L^p$~operator tensor product
$\pi_1 (A_1) \otimes_p \pi_2 (A_2).$
In general,
the resulting $L^p$~operator tensor product
can depend on the choice of $\pi_1$ and~$\pi_2,$
even when $p = 2.$
The following suggestion is due to Vern Paulsen.
Choose operator spaces $E_1, E_2, F_1, F_2$ on Hilbert spaces
such that $E_1$ is isometrically isomorphic to $F_1,$
such that $E_2$ is isometrically isomorphic to $F_2,$
but such that the spatial tensor products
$E_1 \otimes E_2$ and $F_1 \otimes F_2$
are distinct.
For example,
let $H$ be the two dimensional Hilbert space $l^2 ( \{ 1, 2 \} ).$
Take $E_2$ and $F_2$ to be
the column Hilbert space $H^{\text{c}}$
and the row Hilbert space $H^{\text{r}}$
associated to~$H$
(1.2.23 of~\cite{BM}).
Take $E_1 = F_1 = M_2 = L ( H ).$
Then $M_2 (E_2) \cong M_2 \otimes E_2$
and $M_2 (F_2) \cong M_2 \otimes F_2$
by 1.5.2 of~\cite{BM}.
There are $x, y \in M_2,$
such as $x = e_{1, 1}$ and $y = e_{1, 2},$
for which $\| x^* x + y^* y \| \neq \| x x^* + y y^* \|.$
Using 1.2.5 and 1.2.24 of~\cite{BM},
we see that the map $M_2 \otimes E_2 \to M_2 \otimes F_2$
is not isometric.
Define
\[
A_1 = \left\{ \left( \begin{matrix}
  \ld_1     &  a        \\
    0       &  \ld_2
\end{matrix} \right) \colon
   {\mbox{$a \in E_1$ and $\ld_1, \ld_2 \in \C$}} \right\}
 \S L (H \oplus H).
\]
Similarly define $A_2,$ $B_1,$ and $B_2$
using $E_2,$ $F_1,$ and $F_2$ in place of~$E_1.$
Then $A_1$ is isometrically isomorphic to~$B_1$
and $A_2$ is isometrically isomorphic to~$B_2,$
but the obvious map
$A_1 \otimes_{\mathrm{alg}} A_2 \to B_1 \otimes_{\mathrm{alg}} B_2$
does not extend to an isometric isomorphism of
these algebras on $(H \oplus H) \otimes (H \oplus H).$
It is an isomorphism by finite dimensionality,
but using $l^2$ in place of $H$ will give an example
for which the obvious map does not extend to an isomorphism of the closures.

This can't happen for \ca{s},
as pointed out by Narutaka Ozawa.
An isometric \hm{} from a \ca~$A$ to $L (H),$
for any Hilbert space~$H,$
must be a *-\hm,
by Proposition A.5.8 of~\cite{BM}.

Because of the rigidity
of contractive \rpn{s} of $\OP{d}{p}$
(see the remark after \Lem{L-OdIsoMdOd}),
it turns out that
we do not need a general theory
of $L^p$~operator tensor products.
Therefore we do not develop such a theory here.

\begin{lem}\label{L-MatpSMap}
Let $p \in [1, \I).$
Let $S$ and $T$ be sets,
and let $g \colon S \to T$ be an injective function.
Let $\XBM$ be a \msp,
and let $A \S \LLp$ be a closed subalgebra.
Then there is a unique isometric \hm{}
$\gm_{g, A} \colon
 {\ov{M}}^p_S \otimes_p A \to {\ov{M}}^p_T \otimes_p A$
such that
$\gm_{g, A} (e_{j, k} \otimes a) = e_{g (j), \, g (k) } \otimes a$
for all $j, k \in S$ and all $a \in A.$
Its range is ${\ov{M}}^p_{g (S)} \otimes_p A.$
The assignment $g \mapsto \gm_g$ is functorial.
\end{lem}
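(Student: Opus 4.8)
The plan is to build the map $\gm_{g,A}$ first at the algebraic level on $M^p_S \otimes_{\mathrm{alg}} A$, check that it is isometric for the norm inherited from $L\big(L^p(S\times X,\,\nu\times\mu)\big)$ (where $\nu$ is counting measure on $S$), and then extend by continuity to the closure ${\ov{M}}^p_S \otimes_p A$. Uniqueness is immediate once existence is known, since the elements $e_{j,k}\otimes a$ have dense linear span in ${\ov{M}}^p_S\otimes_p A$ by the definition in Example~\ref{E-LpTP} together with the fact that $M^p_S$ is spanned by the $e_{j,k}$.

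The key construction is spatial. Let $u\colon l^p(S)\to l^p(g(S))\S l^p(T)$ be the isometry determined by $u\dt_j = \dt_{g(j)}$ for $j\in S$; since $g$ is injective this is a well-defined linear isometry onto the closed subspace $l^p(g(S))$, and it has a contractive left inverse $u^{-1}$ defined on $l^p(g(S))$ (extended by the coordinate projection on $l^p(T)$). Tensoring with $\id_{L^p(X,\mu)}$ as in Remark~\ref{R-LpTens} gives an isometry $u\otimes\id$ from $L^p(S\times X,\,\nu\times\mu)$ onto $L^p\big(g(S)\times X,\,\nu\times\mu\big)\S L^p(T\times X,\,\nu\times\mu)$, with $\|(u\otimes\id)\,c\,(u\otimes\id)^{-1}\| = \|c\|$ for operators $c$ supported on the appropriate subspace. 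Define $\gm_{g,A}(c) = (u\otimes\id)\,c\,(u^{-1}\otimes\id)$ for $c\in M^p_S\otimes_{\mathrm{alg}}A$; one checks directly on the generators that $\gm_{g,A}(e_{j,k}\otimes a) = e_{g(j),g(k)}\otimes a$, that $\gm_{g,A}$ is multiplicative (conjugation by an invertible element between its image and domain is always a homomorphism), and that $\|\gm_{g,A}(c)\| = \|c\|$ because $c$ maps $L^p(S\times X)$ into itself and kills the complementary coordinates, so the conjugation really is a norm-preserving corner map and not merely contractive. Extending to the closure is then routine, and the range is exactly the closure of the span of $e_{g(j),g(k)}\otimes a$, i.e. ${\ov{M}}^p_{g(S)}\otimes_p A$.

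Functoriality $g\mapsto\gm_g$ follows by checking the defining formula on the generators $e_{j,k}\otimes a$: if $g\colon S\to T$ and $h\colon T\to U$ are injective, then $\gm_{h,A}\circ\gm_{g,A}$ sends $e_{j,k}\otimes a$ to $e_{h(g(j)),\,h(g(k))}\otimes a = \gm_{h\circ g,\,A}(e_{j,k}\otimes a)$, and since both composites are bounded (indeed isometric) and agree on a dense subspace they coincide; similarly $\gm_{\id_S,A}=\id$.

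The main obstacle I anticipate is purely bookkeeping rather than conceptual: making precise that the spatial operator $u\otimes\id$ on the non-$\sigma$-finite measure space $S\times X$ exists with the stated isometry property (this is covered by Remark~\ref{R-LpTens} and the cited removal of $\sigma$-finiteness from~\cite{FIP}), and verifying that conjugation by a partial isometry onto a coordinate subspace is genuinely isometric on operators that already respect that subspace decomposition — one must be slightly careful that $u^{-1}\otimes\id$ is used only on the range of $u\otimes\id$, or equivalently compose with the coordinate projection $e_{g(S)}\otimes\id$ first. Once that is set up cleanly, everything else is a direct computation on matrix units.
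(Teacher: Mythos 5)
Your proposal is correct and is essentially the paper's own argument: both construct $\gm_{g,A}$ by conjugating with the spatial isometry induced by the point map $g \times \id_X$ (your $u \otimes \id$) together with the corner embedding of $L\big(L^p(g(S)\times X)\big)$ into $L\big(L^p(T\times X)\big)$, and then extend by continuity from the dense algebraic tensor product. The only organizational difference is that the paper first reduces to finite subsets $F \S S$ precisely to sidestep the product-measure bookkeeping on $S \times X$ that you flag in your last paragraph, whereas you invoke Remark~\ref{R-LpTens} globally; both routes are fine.
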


We do not claim functoriality in~$A,$
because the norm on ${\ov{M}}^p_S \otimes_p A$
in general depends on how $A$ is represented.

The proof avoids possible issues with the product
of $\mu$ and counting measure on~$S$
by making serious use of it only on finite subsets of~$S.$

\begin{proof}[Proof of Lemma~\ref{L-MatpSMap}]
Regard $M^p_S \otimes_{\mathrm{alg}} A$
and $M^p_T \otimes_{\mathrm{alg}} A$
as subalgebras of
${\ov{M}}^p_S \otimes_p A$ and ${\ov{M}}^p_T \otimes_p A,$
with the restricted norms.
By definition,
they are dense.
There is an obvious algebra \hm{}
$\gm_{g, A}^{(0)} \colon
 M^p_S \otimes_{\mathrm{alg}} A \to M^p_T \otimes_{\mathrm{alg}} A$
such that
$\gm_{g, A}^{(0)} (e_{j, k} \otimes a) = e_{g (j), \, g (k)} \otimes a$
for all $j, k \in S$ and all $a \in A,$
which is is functorial in~$g.$
The closure of its range
is obviously ${\ov{M}}^p_{g (S)} \otimes_p A.$
The proof is therefore completed by showing that $\gm_{g, A}^{(0)}$
is isometric
and extending by continuity.

It suffices to show that, for any finite set $F \S S,$
the restriction
of $\gm_{g, A}^{(0)}$ to $M^p_F \otimes_{\mathrm{alg}} A$
is isometric.
Thus, we may as well assume that $S$ is finite.
Let $\nu_S$ and $\nu_T$ be counting measure on $S$ and~$T,$
with all subsets taken to be measurable,
and equip $S \times X$ and $T \times X$ with the product
$\sm$-algebras and measures.
Equip $g (S) \times X \S T \times X$
with the restricted $\sm$-algebra and measure.
We then have a bijection $h = g \times \id_X$
from $S \times X$ to $g (S) \times X$
which preserves measurable sets and the measure.
The map $h$ induces is an isometric bijection
\[
\ph \colon
  L \big( L^p (S \times X, \, \nu_S \times \mu) \big)
  \to L \big( L^p (g (S) \times X, \, \nu_T \times \mu) \big).
\]
Let $f \in L \big( L^p (T \times X, \, \nu_T \times \mu) \big)$
be multiplication by the characteristic function of $g (S) \times X.$
Then there is an obvious isometric identification
\[
L \big( L^p (g (S) \times X, \, \nu_T \times \mu) \big)
 = f L \big( L^p (T \times X, \, \nu_T \times \mu) \big) f,
\]
and thus an isometric inclusion
\[
\io \colon L \big( L^p (g (S) \times X, \, \nu_T \times \mu) \big)
  \to L \big( L^p (T \times X, \, \nu_T \times \mu) \big).
\]
The restriction of $\io \circ \ph$ to $M^p_S \otimes_{\mathrm{alg}} A$
agrees with $\gm_{g, A}^{(0)}.$
So $\gm_{g, A}^{(0)}$ is isometric,
as desired.
\end{proof}

There are many other examples
of $L^p$~operator algebras.
We will see a few later,
namely $L^p$~operator crossed products.

\begin{dfn}\label{D-LpRep}
Let $p \in [1, \I],$
and let $A$ be an $L^p$~operator algebra.
\begin{enumerate}
\item\label{D_3914_LpRep_Rep}
A {\emph{representation}} of~$A$
(on $L^p (X, \mu)$)
is a \ct\  \hm\  $\pi \colon A \to \LLp$
for some measure space $\XBM.$
\item\label{D_3914_LpRep_Contr}
The \rpn{} $\pi$ is said to be {\emph{contractive}}
if $\| \pi (a) \| \leq \| a \|$ for all $a \in A,$
and {\emph{isometric}}
if $\| \pi (a) \| = \| a \|$ for all $a \in A.$
\item\label{D_3914_LpRep_Spb}
If $p \neq \I,$
we say that the \rpn{} $\pi \colon A \to \LLp$
is {\emph{separable}} if $L^p (X, \mu)$ is separable,
and that $A$ is {\emph{separably representable}}
if it has a separable isometric representation.
\item\label{D_3914_LpRep_Sft}
We say that $\pi$ is {\emph{$\sigma$-finite}}
if $\mu$ is \sft,
and that $A$ is {\emph{$\sigma$-finitely representable}}
if it has a $\sigma$-finite isometric representation.
\item\label{D_3914_LpRep_Ndg}
We say that $\pi$ is {\emph{nondegenerate}}
if
\[
\pi (A) E = \spn \big( \big\{ \pi (a) \xi \colon
     {\mbox{$a \in A$ and $\xi \in E$}} \big\} \big)
\]
is dense in~$E.$
We say that $A$ is {\emph{nondegenerately (separably) representable}}
if it has a nondegenerate (separable) isometric representation,
and {\emph{nondegenerately $\sm$-finitely representable}}
if it has a nondegenerate $\sm$-finite isometric representation.
\end{enumerate}
\end{dfn}

We will only be interested in contractive \rpn{s},
but it seems potentially confusing to incorporate
contractivity into the definition.

\begin{rmk}\label{R_3916_SepSft}
Let $p \in [1, \I).$
The corollary to Theorem~3 in Section~15 of~\cite{Lc}
implies that any separable $L^p$~space
is isometrically isomorphic to a $\sigma$-finite $L^p$~space.
So separably (nondegenerately) representable
implies $\sigma$-finitely (nondegenerately) representable.
\end{rmk}

We do not require representations to be unital,
even when the algebra is unital.
But a nonunital representation of a unital algebra
is necessarily degenerate.

\begin{exa}\label{E_3419_Deg}
Use the notation of Example~\ref{E-Mpd}.
Take $S = \{ 1, 2 \},$
so that the algebra there is $\MP{2}{p}.$
Take $A = \C \cdot e_{1, 2}.$
Then $A$ is an $L^p$~operator algebra which has a separable isometric
representation.
We claim that $A$ has no nondegenerate representation
on any nonzero Banach space.

Let $E$ be a nonzero Banach space,
and let $\pi \colon A \to L (E)$
be a representation.
Then $\pi (e_{1, 2})^2 = 0,$
so $\pi (e_{1, 2}) E \S \ker ( \pi (e_{1, 2}) ).$
If $\pi (e_{1, 2}) \neq 0,$
then $\ker (\pi (e_{1, 2}))$ is a proper closed subspace
of $E$ which contains $\pi (A) E,$
while if $\pi (e_{1, 2}) = 0$ then $\pi (A) E = 0.$
\end{exa}

Lemma~\ref{L-LpSum} below is essentially the same as
Lemma~3.5 of~\cite{PhLp2b},
except that we do not assume that our direct sums are countable.
We first define a version of the disjoint union of measure spaces which
is suitable for our purposes.

\begin{dfn}\label{D_3914_DjUnion}
Let $S$ be a set,
and for $j \in S$ let $(X_j, \cB_j, \mu_j)$
be a \msp{} with $\mu_j (X_j) > 0.$
The
{\emph{disjoint union}}
$\coprod_{j \in S} (X_j, \cB_j, \mu_j)$
is the \msp{} $\XBM$ defined as follows.
Set $X = \coprod_{j \in S} X_j.$
Take $\cB$ to be the collection
of subsets $Y \S X$ such that $Y \cap X_j \in \cB_j$
for all $j \in S$
and such that $Y \cap X_j = \E$
for all but countably many $j \in S$
or $Y \cap X_j = X_j$
for all but countably many $j \in S.$
Define a measure $\mu$ on $X$
by $\mu (Y) = \sum_{j \in S} \mu_j (Y \cap X_j)$
for $Y \in \cB.$
\end{dfn}

\begin{lem}\label{L_3914_PrpDj}
Let the notation be as in Definition~\ref{D_3914_DjUnion}.
Then:
\begin{enumerate}
\item\label{D_3914_DjUnion_IsM}
$\XBM = \coprod_{j \in S} (X_j, \cB_j, \mu_j)$
is a \msp.
\item\label{D_3914_DjUnion_Supp}
If $Y \S X$ satisfies $Y \cap X_j \neq \E$
for uncountably many $j \in S,$
then $\mu (Y) = \infty.$
\item\label{D_3914_DjUnion_Sep}
If $S$ is countable and $L^p (X_j, \mu_j)$ is separable for all $j \in S,$
then $L^p (X, \mu)$ is separable.
\item\label{D_3914_DjUnion_Sft}
If $S$ is countable and $\mu_j$ is \sft{} for all $j \in S,$
then $\mu$ is \sft.
\end{enumerate}
\end{lem}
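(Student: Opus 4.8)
The proof is a sequence of essentially routine verifications, and the plan is to handle the four parts in order.

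For part~(\ref{D_3914_DjUnion_IsM}), I would first check that $\cB$ is a $\sigma$-algebra on $X$. It plainly contains $\E$ and $X$, and it is closed under complements because passing from $Y$ to $X \SM Y$ replaces $Y \cap X_j$ by $X_j \SM (Y \cap X_j)$ and interchanges the two clauses ``$Y \cap X_j = \E$ for all but countably many $j$'' and ``$Y \cap X_j = X_j$ for all but countably many $j$''. The only step needing attention is closure under countable unions: for $Y_1, Y_2, \ldots \in \cB$ and $Y = \bigcup_n Y_n$ one has $Y \cap X_j = \bigcup_n (Y_n \cap X_j) \in \cB_j$, and for the remaining clause there are two cases. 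If some $Y_{n_0}$ satisfies $Y_{n_0} \cap X_j = X_j$ for all but countably many $j$, then the same holds for the larger set $Y$; otherwise each $Y_n$ satisfies $Y_n \cap X_j = \E$ for $j$ outside some countable set $C_n$, and then $Y \cap X_j = \E$ for $j \notin \bigcup_n C_n$, a countable union of countable sets. Either way $Y \in \cB$. Next, $\mu$ is a well-defined function on $\cB$ with values in $[0, \I]$ (an unordered sum of nonnegative terms), $\mu(\E) = 0$ is immediate, and countable additivity follows from $\mu\big( \bigsqcup_n Y_n \big) = \sum_j \sum_n \mu_j (Y_n \cap X_j) = \sum_n \sum_j \mu_j (Y_n \cap X_j) = \sum_n \mu(Y_n)$, the interchange of the double sum being valid because all terms are nonnegative.

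For part~(\ref{D_3914_DjUnion_Supp}), suppose $Y \cap X_j \neq \E$ for uncountably many $j$; I may assume $Y \in \cB$, since otherwise $\mu(Y)$ is not even defined. Then the defining condition for $\cB$ cannot be the first clause, so $Y \cap X_j = X_j$, and hence $\mu_j(Y \cap X_j) = \mu_j(X_j) > 0$, for all but countably many $j$, in particular for uncountably many $j$. An unordered sum of strictly positive reals over an uncountable index set equals $+\I$: for each $k \in \N$ only finitely many terms can exceed $1/k$, so a finite value of the sum would force all but countably many terms to vanish. Thus $\mu(Y) = \I$. Part~(\ref{D_3914_DjUnion_Sft}) is then easy: if $S$ is countable and each $X_j = \bigcup_n X_{j,n}$ with $X_{j,n} \in \cB_j$ and $\mu_j(X_{j,n}) < \I$, then each $X_{j,n}$, regarded as a subset of $X$, lies in $\cB$ with $\mu(X_{j,n}) = \mu_j(X_{j,n}) < \I$, and $X = \bigcup_{j \in S,\, n} X_{j,n}$ is a countable union. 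For part~(\ref{D_3914_DjUnion_Sep}) (where $p < \I$, the only case in which separability is the relevant notion here), I would first note that an $f \in L^p(X,\mu)$ vanishes off a $\sigma$-finite set, which by part~(\ref{D_3914_DjUnion_Supp}) meets only countably many of the $X_j$; since the $X_j$ are disjoint, $\sum_j \| f|_{X_j} \|_p^p = \| f \|_p^p$, so $L^p(X, \mu)$ is the $\ell^p$-direct sum of the spaces $L^p(X_j, \mu_j)$. One then finishes with the standard fact that a countable $\ell^p$-direct sum of separable Banach spaces is separable: choosing a countable dense $D_j \S L^p(X_j, \mu_j)$ for each $j$, the countable set of finite sums $\sum_{j \in F} \xi_j$, with $F \S S$ finite and $\xi_j \in D_j$ each regarded as an element of $L^p(X, \mu)$ supported on $X_j$, is dense.

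The only point that amounts to more than bookkeeping is the countable/cocountable juggling in part~(\ref{D_3914_DjUnion_IsM}) — specifically, that the defining clause for $\cB$ survives countable unions and complements — together with the closely related fact, used in part~(\ref{D_3914_DjUnion_Sep}), that an $L^p$-function is concentrated on countably many of the $X_j$. I expect the written-out argument to be devoted mostly to stating and invoking the elementary facts that a countable union of countable sets is countable and that an uncountable sum of positive reals diverges; the remainder is unwinding the definitions.
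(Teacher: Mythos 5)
Your proof is correct and fills in exactly the routine verifications that the paper leaves out: the paper's own proof of this lemma is the single line ``All parts are immediate.'' The only points that genuinely require a moment's thought --- the countable/cocountable case split for closure of $\cB$ under countable unions, and the use of $\mu_j (X_j) > 0$ together with the divergence of an uncountable sum of positive terms in part~(\ref{D_3914_DjUnion_Supp}) --- are handled correctly.
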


\begin{proof}
All parts are immediate.
\end{proof}

\begin{lem}\label{L-LpSum}
Let $p \in [1, \I),$
and let $A$ be an $L^p$~operator algebra.
Let $S$ be a set.
For $j \in S$ let $(X_j, \cB_j, \mu_j)$
be a \msp{} with $\mu_j (X_j) > 0$ and
let $\pi_j \colon A \to L (L^p (X_j, \mu_j))$
be a contractive \rpn.
Equip the algebraic direct sum $E_0 = \bigoplus_{j \in S} L^p (X_j, \mu_j)$
with the norm
\[
\big\| (\xi_j )_{j \in S} \big\|
 = \left( \ssum{j \in S} \| \xi_j \|_p^p \right)^{1 / p},
\]
and let $E$ be the completion of $E_0$ in this norm.
Set $\XBM = \coprod_{j \in S} (X_j, \cB_j, \mu_j).$
Then $E \cong L^p (X, \mu),$
and there is a unique contractive \rpn{} $\pi \colon A \to L (E)$
such that
\[
\pi (a) \big( (\xi_j )_{j \in S} \big)
        = \big( (\pi_j (a) )_{j \in S} \big)
\]
for $a \in A$ and $\xi_j \in L^p (X_j, \mu_j)$
for $j \in S.$
\end{lem}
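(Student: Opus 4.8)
The plan is to establish the measure-theoretic identification first and then transport the representation structure along it. First I would show that $E \cong L^p (X, \mu)$ where $X = \coprod_{j \in S} X_j.$ The natural map sends a finitely supported family $(\xi_j)_{j \in S}$ with $\xi_j \in L^p (X_j, \mu_j)$ to the function on $X$ whose restriction to each $X_j$ is~$\xi_j$; by Definition~\ref{D_3914_DjUnion} such a function is measurable (its support meets only countably many~$X_j$), and its $L^p$~norm on $(X, \mu)$ is exactly $\big( \sum_{j} \| \xi_j \|_p^p \big)^{1/p},$ since $\mu$ is the sum of the~$\mu_j$ on disjoint pieces. This map is therefore isometric on the dense subspace $E_0 \subseteq E,$ so it extends to an isometric embedding $E \to L^p (X, \mu).$ For surjectivity, one checks that finitely supported families are dense in $L^p (X, \mu)$: given $\xi \in L^p (X, \mu)$ and $\ep > 0,$ the function $\xi |_{X_j}$ lies in $L^p (X_j, \mu_j)$ with $\sum_j \| \xi |_{X_j} \|_p^p = \| \xi \|_p^p < \I,$ so all but finitely many tails have small norm, and truncating to a finite set of indices gives the required approximation. (One must note that $\xi$ automatically vanishes off countably many~$X_j,$ using part~(\ref{D_3914_DjUnion_Supp}) of Lemma~\ref{L_3914_PrpDj}, and that each $\xi|_{X_j}$ is measurable on $X_j$ by definition of~$\cB.$)

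Next I would define~$\pi.$ For a finitely supported family $(\xi_j)_{j \in S} \in E_0$ and $a \in A,$ set $\pi (a) \big( (\xi_j)_{j \in S} \big) = \big( \pi_j (a) \xi_j \big)_{j \in S}.$ This is again finitely supported, so it lies in~$E_0,$ and since each $\pi_j$ is a \hm, the assignment $a \mapsto \pi (a)$ is multiplicative and linear on~$E_0.$ The key estimate is the contractivity bound: for each~$a,$
\[
\big\| \pi (a) \big( (\xi_j)_{j \in S} \big) \big\|^p
 = \ssum{j \in S} \| \pi_j (a) \xi_j \|_p^p
 \leq \ssum{j \in S} \| a \|^p \| \xi_j \|_p^p
 = \| a \|^p \big\| (\xi_j)_{j \in S} \big\|^p,
\]
using $\| \pi_j (a) \| \leq \| a \|$ for every~$j.$ Hence $\pi (a)$ is bounded with $\| \pi (a) \| \leq \| a \|$ on the dense subspace~$E_0,$ so it extends uniquely to a bounded operator on~$E,$ still of norm at most $\| a \|.$ Multiplicativity and linearity pass to the completion by continuity, so $\pi \colon A \to L (E) \cong L \big( L^p (X, \mu) \big)$ is a contractive representation. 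Uniqueness is forced because any representation with the stated action on~$E_0$ agrees with~$\pi$ there, and $E_0$ is dense.

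I do not expect a serious obstacle here; every part is routine, which is presumably why the paper states it as a lemma rather than belaboring it. The one point requiring a little care is the surjectivity of $E \to L^p (X, \mu),$ specifically making sure that an arbitrary element of $L^p (X, \mu)$ really is an $\ell^p$-summable family of pieces rather than something more exotic supported on uncountably many~$X_j$; this is handled by Lemma~\ref{L_3914_PrpDj}(\ref{D_3914_DjUnion_Supp}), which guarantees that a function with uncountable support has infinite measure support and hence cannot be in $L^p.$ A secondary bookkeeping point is that continuity of each $\pi_j$ (not just contractivity) is not separately needed once we have the uniform bound $\| \pi_j (a) \| \leq \| a \|,$ but it is already subsumed in the hypothesis that the $\pi_j$ are contractive representations. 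Everything else is a direct computation with $\ell^p$~sums, so the proof is short.
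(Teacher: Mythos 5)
Your proof is correct and fills in exactly the routine verifications that the paper dismisses with ``The proof is immediate'': the isometric identification of $E$ with $L^p (X, \mu)$ via Lemma~\ref{L_3914_PrpDj}, the uniform bound $\| \pi_j (a) \| \leq \| a \|$ giving contractivity of the direct sum, and uniqueness by density of~$E_0.$ There is no divergence from the paper's (implicit) argument.
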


\begin{proof}
The proof is immediate.
\end{proof}

We can of course replace contractivity
with
$\| \pi_j \| \leq M$ (with $M$ independent of~$j$) in the hypothesis
and $\| \pi \| \leq M$ in the conclusion.

\begin{dfn}\label{D-LpDirectSum}
The \rpn~$\pi$ of \Lem{L-LpSum}
is called the {\emph{$L^p$-direct sum}} of the \rpn{s}~$\pi_j,$
and written $\pi = \bigoplus_{j \in S} \pi_j.$
(We suppress~$p$ in the notation.)
\end{dfn}

\begin{lem}\label{L_3914_SmNdg}
Let $A$ be an $L^p$~operator algebra.
Then the $L^p$-direct sum of nondegenerate contractive \rpn{s}
is nondegenerate.
\end{lem}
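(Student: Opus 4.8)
The plan is to reduce the nondegeneracy of $\pi = \bigoplus_{j \in S} \pi_j$ to the nondegeneracy of the individual $\pi_j$, handled one index at a time, using that the \emph{algebraic} direct sum is already dense in $E$. Recall that, by definition (see \Def{D-LpRep}), $\pi(A) E$ denotes the linear span $\spn \big( \big\{ \pi (a) \xi \colon a \in A, \ \xi \in E \big\} \big)$, and that $\pi$ is nondegenerate precisely when $\pi(A) E$ is dense in $E$.

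First I would recall from \Lem{L-LpSum} that $E_0 = \bigoplus_{j \in S} L^p (X_j, \mu_j)$ (the algebraic direct sum) is dense in $E \cong L^p (X, \mu)$, and that every element of $E_0$ is a finite sum of tuples each supported at a single index $j \in S$. For $j \in S$ and $\xi \in L^p (X_j, \mu_j)$, write $\widehat{\xi} \in E_0 \S E$ for the tuple equal to $\xi$ in the $j$-th coordinate and $0$ elsewhere. Since $E_0$ is dense in $E$ and is spanned by such tuples, it suffices to show that $\widehat{\xi} \in \ov{\pi (A) E}$ for every $j$ and every $\xi \in L^p (X_j, \mu_j)$.

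Next, fix $j$ and $\xi$, and let $\ep > 0$. Because $\pi_j$ is nondegenerate, there are $a_1, \ldots, a_n \in A$ and $\et_1, \ldots, \et_n \in L^p (X_j, \mu_j)$ with $\big\| \xi - \ssum{k = 1}^{n} \pi_j (a_k) \et_k \big\|_p < \ep$. Using the coordinatewise formula for $\pi$ in \Lem{L-LpSum}, one has $\pi (a_k) \widehat{\et_k} = \widehat{\pi_j (a_k) \et_k}$ for each $k$, so $\ssum{k = 1}^{n} \pi (a_k) \widehat{\et_k} = \widehat{\ssum{k = 1}^{n} \pi_j (a_k) \et_k} \in \pi (A) E$, and its distance to $\widehat{\xi}$ in $E$ equals $\big\| \xi - \ssum{k = 1}^{n} \pi_j (a_k) \et_k \big\|_p < \ep$. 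Hence $\widehat{\xi} \in \ov{\pi (A) E}$.

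Combining the last two steps gives $E_0 \S \ov{\pi (A) E}$, and since $E_0$ is dense in $E$, we conclude that $\pi (A) E$ is dense in $E$, i.e. $\pi$ is nondegenerate. I expect no real obstacle here beyond bookkeeping: the only points to handle carefully are the identification of $\pi (a)$ acting on a single-coordinate tuple $\widehat{\et}$ with $\pi_j (a)$ acting on $\et$, and the observation that it is the \emph{algebraic} direct sum $E_0$ that is dense, so that no simultaneous approximation over infinitely many indices is required.
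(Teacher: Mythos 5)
Your proof is correct and follows the same route as the paper's: reduce to single-coordinate elements of the algebraic direct sum, apply nondegeneracy of the relevant $\pi_j$ there, and conclude by density of $E_0$ in $E$. You have simply written out the details that the paper leaves as "easily verified."
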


\begin{proof}
Let the notation be as in Lemma~\ref{L-LpSum}.
It is enough to show that if $j_0 \in S$
and $\et = (\et_j )_{j \in S}$ is an element of the
algebraic direct sum $E_0 = \bigoplus_{j \in S} L^p (X_j, \mu_j)$
such that $\et_j = 0$ for $j \neq j_0,$
then
$\xi \in {\overline{\pi (A) E_0}}.$
This is easily verified using elements
$\xi \in E_0$ such that $\xi_j = 0$ for $j \neq j_0.$
\end{proof}

We are interested in separably representable algebras
for technical reasons which will become apparent
in the example to which we want to apply the theory.
(See Section~\ref{Sec_OpdCP}.)

\begin{prp}\label{P-SepImpSepRep}
Let $p \in [1, \I),$
and let $A$ be a separable $L^p$~operator algebra.
Then $A$ is separably representable.
If $A$ is nondegenerately representable,
then $A$ is separably nondegenerately representable.
\end{prp}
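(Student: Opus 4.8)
The plan is to start from an isometric representation $\sigma \colon A \to L(L^p(X,\mu))$ guaranteed by Definition~\ref{D-LpOpAlg}, and to cut it down to a separable $L^p$~space without losing isometry, using separability of~$A$ to control how much of $L^p(X,\mu)$ is actually ``seen'' by the representation. First I would fix a countable dense subset $\{a_n : n \in \N\}$ of~$A$. Because $\|\sigma(a_n)\| = \|a_n\|$ for each~$n$, I can choose for each~$n$ a sequence of unit vectors $\xi_{n,k} \in L^p(X,\mu)$ with $\|\sigma(a_n)\xi_{n,k}\|_p \to \|a_n\|$ as $k \to \infty$. Each $\xi_{n,k}$, and each $\sigma(a_m)\xi_{n,k}$, is (up to a null set) supported on a $\sigma$-finite measurable subset of~$X$; taking the union over the countably many indices involved, together with the iterates under the countable set $\{\sigma(a_m)\}$, produces a single $\sigma$-finite measurable set $X_0 \subseteq X$ with the property that the closed linear span $E$ of $\{\sigma(a_m)\xi_{n,k}\}$ is contained in $L^p(X_0, \mu|_{X_0})$ and is separable. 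The subtlety to watch is that one must close up under the action of all the $\sigma(a_m)$ to get an honest invariant subspace; since the $a_m$ are dense and the $\sigma(a_m)$ are bounded, the closure $E$ is then invariant under all of $\sigma(A)$.

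Next I would define $\pi \colon A \to L(E)$ by $\pi(a) = \sigma(a)|_E$; this is a well-defined contractive homomorphism onto the separable space~$E$, and by construction $\|\pi(a_n)\| \geq \lim_k \|\sigma(a_n)\xi_{n,k}\|_p = \|a_n\| = \|\sigma(a_n)\| \geq \|\pi(a_n)\|$, so $\pi$ is isometric on the dense set $\{a_n\}$, hence isometric on all of~$A$. Finally I need $E$ to actually be a separable $L^p$~space, not merely a separable subspace of one: this is where I would invoke that a closed subspace of $L^p(X_0,\mu|_{X_0})$ need not be an $L^p$~space, so instead of taking the subspace~$E$ I should enlarge it slightly. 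The clean fix is to replace $E$ by $L^p(X_0, \mu|_{X_0})$ itself after arranging $X_0$ to be $\sigma$-finite \emph{and} such that $L^p(X_0,\mu|_{X_0})$ is separable; separability of an $L^p$~space over a $\sigma$-finite measure space follows because it has a countable measurable generating family (here generated by the countably many supports of the $\xi_{n,k}$ and their images). Then $\pi(A)$ leaves $L^p(X_0,\mu|_{X_0})$ invariant (any operator supported on $X_0$ maps $L^p(X_0)$ into itself), and we let $\pi$ be the compression of~$\sigma$ to this genuine separable $L^p$~space, which is still isometric by the estimate above. Alternatively, one can cite Remark~\ref{R_3916_SepSft} together with an isometric identification of the separable Banach subspace with an $L^p$~space; either route works, and I would use the ``$\sigma$-finite with countable generating family'' packaging since it is the most self-contained.

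For the nondegenerate statement, suppose $A$ is nondegenerately representable, so there is a nondegenerate isometric $\sigma \colon A \to L(L^p(X,\mu))$. I would run the same construction, but additionally arrange that $X_0$ is chosen so that $E = L^p(X_0, \mu|_{X_0})$ contains a countable dense subset of $\overline{\sigma(A)L^p(X_0,\mu|_{X_0})}$ — concretely, iterate: having chosen a countable approximating family, enlarge $X_0$ to also contain the supports of vectors $\eta_{n,k}$ witnessing that $\sigma(a_n)\xi_{n,k}$ lies in the closure of $\sigma(A)L^p(X,\mu)$, and repeat countably often, taking the union of the resulting increasing sequence of $\sigma$-finite sets. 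Since $\sigma$ is nondegenerate on all of $L^p(X,\mu)$, and the building blocks were chosen to be asymptotically in $\sigma(A)E$, the compressed representation $\pi$ on $E = L^p(X_0,\mu|_{X_0})$ is nondegenerate: every generator of a dense subset of~$E$ is approximated by elements of $\pi(A)E$. I expect the main obstacle to be precisely this bookkeeping — ensuring that the single $\sigma$-finite set $X_0$ one extracts is simultaneously large enough to carry an isometric \emph{and} nondegenerate representation while staying small enough ($\sigma$-finite, countably generated) to keep $L^p(X_0, \mu|_{X_0})$ separable; the resolution is the standard ``close up under countably many operations, countably many times, then take the union'' argument, which terminates because a countable union of $\sigma$-finite sets with countable generating families again has these properties.
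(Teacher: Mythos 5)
Your overall strategy --- cut an isometric representation $\sigma$ down to a separable piece using a countable dense subset of $A$ and countably many norm-witnessing vectors $\xi_{n,k}$ --- is the right starting point, and you correctly identify the central obstruction: a closed subspace of $L^p(X,\mu)$ need not be isometrically isomorphic to any $L^p$~space. But your proposed fix, replacing the invariant subspace $E$ by $L^p(X_0,\mu|_{X_0})$, does not close the gap, for two reasons. First, it destroys invariance: $\sigma(a)$ is an arbitrary bounded operator on all of $L^p(X,\mu)$, and there is no reason it should carry a function supported in $X_0$ to another function supported in $X_0$; your parenthetical ``any operator supported on $X_0$ maps $L^p(X_0)$ into itself'' begs the question, since $\sigma(a)$ is not supported on $X_0$. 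Only the smaller span $E$, which you explicitly closed up under the $\sigma(a_m)$, is invariant --- and that is exactly the subspace you cannot assume is an $L^p$~space. Second, even ignoring invariance, $L^p(X_0,\mu|_{X_0})$ need not be separable: $X_0$ carries the full trace $\sigma$-algebra $\{ Y\cap X_0 \colon Y\in\cB\}$, not the countably generated $\sigma$-algebra generated by the $\xi_{n,k}$. For example, if $X=\{0,1\}^{\R}$ with the product of coin-flip measures and $\xi$ is the constant function~$1$, the support of $\xi$ is all of $X$, a set of measure~$1$ (so certainly $\sigma$-finite), yet $L^p(X,\mu)$ is nonseparable because the uncountably many cylinder sets $\{x\colon x_t=0\}$ are pairwise at positive $L^p$-distance. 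So ``a $\sigma$-finite union of supports'' does not deliver separability.

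The missing idea is to close the countable set of witnesses not only under the operators $\sigma(a_m)$ but also under the \emph{lattice} operations of $L^p(X,\mu)$: real and imaginary parts, absolute values, finite suprema, and $\Q[i]$-linear combinations. Iterating these countably many operations countably often produces a separable closed $\sigma(A)$-invariant sublattice $F$ containing the witnesses. A closed sublattice of an $L^p$~space is an abstract $L^p$-space, and by the representation theorem (Theorem~3 in Section~15 of~\cite{Lc}) it is isometrically isomorphic to a concrete $L^p(Y,\nu)$; separability of $F$ then forces $L^p(Y,\nu)$ to be separable. This is how the paper resolves the difficulty, and with it the rest of your bookkeeping (isometry on a dense subset via the witnesses, and the iterated enlargement for nondegeneracy) goes through essentially as you describe, the pieces being assembled by an $L^p$-direct sum as in Definition~\ref{D-LpDirectSum}. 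A repair closer in spirit to your sketch would be to pass to the countably generated sub-$\sigma$-\emph{algebra} generated by the relevant functions (again closing up under the $\sigma(a_m)$ and iterating), since $L^p$ of such a sub-$\sigma$-algebra is a genuine separable $L^p$~space sitting isometrically inside $L^p(X,\mu)$; but as written your argument uses the sub\emph{set} $X_0$ with its full trace $\sigma$-algebra, and that is where it breaks.
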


The proof is harder than for $p = 2$
because a closed subspace of $L^p (X, \mu)$
need not be isomorphic to any $L^p (Y, \nu).$
Also, we don't have anything which plays the role of the adjoint,
so invariant subspaces need not be reducing.
This means that we can't use the most obvious form of
the method of decomposing a \rpn{} into
cyclic \rpn{s}.

Of course, there are nonseparable $L^p$~operator algebras
which are separably representable,
such as $L (l^p (\N)).$

\begin{proof}[Proof of Proposition~\ref{P-SepImpSepRep}]
By hypothesis,
there is a \msp\  $\XBM$ and an isometric
\rpn{} $\rh \colon A \to \LLp,$
nondegenerate if $A$ is nondegenerately representable.
Recall that $L^p (X, \mu)$ is a complex Banach lattice,
in the sense of Definition~3 in Section~1 of~\cite{Lc}
and Definition~1 in Section~3 of~\cite{Lc}.
The real part consists of the real valued functions
in $L^p (X, \mu),$
the order is the pointwise almost everywhere order,
and $\xi \vee \et$ is defined by
$(\xi \vee \et) (x) = \max (\xi (x), \et (x) )$ for $x \in X.$
We write ${\mathrm{Re}} (\xi)$ and ${\mathrm{Im}} (\xi)$
with the obvious meanings.

We claim that for every separable subset $Q \S L^p (X, \mu),$
there is a separable closed sublattice $F \S L^p (X, \mu)$
which contains $Q$ and which is invariant in the sense
that $\rh (a) F \S F$ for all $a \in A.$

We prove the claim.
Choose a countable dense subset $S \S A.$

For any set $T \S L^p (X, \mu),$
we define $G (T) \S L^p (X, \mu)$ as follows.
First,
let $G_0 (T)$ consist of all functions
${\mathrm{Re}} (\xi),$ ${\mathrm{Im}} (\xi),$
and $| \xi |,$
for $\xi \in T$ or $\xi = \rh (b) \et$ with $\et \in T$ and $b \in S.$
Then take
\[
G_1 (T) = \big\{ \xi_1 \vee \xi_2 \vee \cdots \vee \xi_n \colon
     {\mbox{$n \in \N$ and $\xi_1, \xi_2, \ldots, \xi_n \in G_0 (T)$}}
            \big\}.
\]
Finally, take $G (T)$ to be the
$\Q [i]$-linear span of $G_1 (T).$
Note that $T \S G (T),$
that $\rh (b) \xi \in G (T)$ for $\xi \in T$ and $b \in S,$
and that if $T$ is countable then so is $G (T).$

Now let $E_0$ be a countable dense subset of~$Q$
and, for $n \in \Nz,$
set $E_{n + 1} = G (E_n).$
Set $E = \bigcup_{n = 0}^{\I} E_n$
and $F = {\ov{E}}.$
It is clear that $F$ is a closed subspace of $L^p (X, \mu)$
and that (by density of $S$ in~$A$)
we have $\rh (A) F \S F.$
By continuity of the operations, if $\xi \in F$
then ${\mathrm{Re}} (\xi), \, {\mathrm{Im}} (\xi), \, | \xi | \in F,$
and if $\xi, \et \in F$ are real then $\xi \vee \et \in F.$
Therefore $F$ is a $\rh$-invariant closed sublattice of $L^p (X, \mu)$
which contains~$Q.$
(It is clearly the smallest such sublattice.)
It is separable because $E$ is countable.
This proves the claim.

We next claim that if $\rh$ is nondegenerate,
then the separable invariant closed sublattice $F \S L^p (X, \mu)$
in the claim above can be chosen to also be nondegenerate.
To prove this,
let $F_0$ be a sublattice as in the previous claim.
We construct by induction
separable invariant closed sublattices $F_m \S L^p (X, \mu)$
such that $F_m \S F_{m + 1}$
and $F_m \S {\ov{\spn}} (\rh (A) F_{m + 1})$ for all $m \in \Nz.$
Suppose we have $F_m.$
Let $P_m$ be a countable dense subset of $F_m$
and for every $\xi \in P_m$ and $n \in \N,$
use nondegeneracy of~$\rh$
to find $l_{\xi, n} \in \N,$
and $\et_{\xi, n, j} \in L^p (X, \mu)$
and $a_{\xi, n, j} \in A$ for $j = 1, 2, \ldots, l_{\xi, n},$
such that
\[
\left\| \xi
      -  \sssum{j = 1}{l_{\xi, n}} \pi ( a_{\xi, n, j} ) \et_{\xi, n, j}
    \right\|_p
 < \frac{1}{n}.
\]
Set
\[
Q_m = \big\{ \et_{\xi, n, j} \colon
    {\mbox{$n \in \N,$ $\xi \in P_m,$
       and $j = 1, 2, \ldots, l_{\xi, n}$}} \big\},
\]
which is a countable subset of $L^p (X, \mu)$
such that $F_m \S {\ov{\spn}} (\rh (A) Q_{m}).$
The previous claim provides a
separable invariant closed sublattice $F_{m + 1} \S L^p (X, \mu)$
which contains $F_m \cup Q_m.$
Clearly $F_m \S {\ov{\spn}} (\rh (A) F_{m + 1}).$
This completes the induction.

Set $F = {\ov{\bigcup_{m = 0}^{\I} F_m}}.$
Then $F$ is a
separable invariant closed sublattice of $L^p (X, \mu)$
which contains $Q$
and such that ${\ov{\spn}} (\rh (A) F) = F.$
This proves the claim.

We now claim that for every $a \in A$ and every $\ep > 0,$
there is a \msp\  $\YCN$ such that $L^p (Y, \nu)$ is separable,
and a contractive \rpn{} $\pi \colon A \to L ( L^p (Y, \nu) )$
such that $\| \pi (a) \| > \| a \| - \ep.$
Moreover, if $A$ is nondegenerately representable,
then $\pi$ may be chosen to be nondegenerate.
To prove the claim,
choose $\xi \in L^p (X, \mu)$ such that
$\| \xi \|_p = 1$ and $\| \rh (a) \xi \|_p > \| a \| - \ep.$
The claims above provide a
separable invariant closed sublattice $F \S L^p (X, \mu)$
which contains~$\xi,$
and which can be taken to satisfy
${\ov{\spn}} (\rh (A) F) = F$ if $\rh$ is nondegenerate.
Being a sublattice of $L^p (X, \mu),$
the space $F$ is clearly an abstract $L^p$-space
in the sense of Definition~1 in Section~15 of~\cite{Lc}.
According to Theorem~3 in Section~15 of~\cite{Lc},
it follows that there is a \msp\  $\YCN$
such that $F$ is isometrically isomorphic to $L^p (Y, \nu).$
Thus, $\pi (a) = \rh (a) |_{F}$ defines a contractive \rpn{} %
on a separable $L^p$~space,
which is nondegenerate if $\rh$ is nondegenerate.
Since $\xi \in F,$
we have $\| \pi (a) \| > \| a \| - \ep.$
This proves the claim.

Now we prove the proposition.
Let $S \S A$ be a countable dense subset.
For every $b \in S$ and $n \in \N,$
choose a contractive \rpn{}
$\pi_{b, n} \colon A \to L \big( L^p (Y_{b, n}, \nu_{b, n}) \big)$
as in the previous claim with $a = b$ and $\ep = \frac{1}{n}.$
Then let $\pi$ be the $L^p$~direct sum
(\Def{D-LpDirectSum}) of the \rpn{s} $\pi_{b, n},$
which is a contractive \rpn{} of $A$ on
a separable $L^p$~space.
The \rpn{} $\pi$ satisfies $\| \pi (b) \| = \| b \|$ for all $b \in S,$
and hence for all $b \in A.$
If $A$ is nondegenerately representable,
then $\pi$ is nondegenerate by Lemma~\ref{L_3914_SmNdg}.
\end{proof}

\section{Group actions and covariant representations}\label{Sec_GAlg}

\indent
We take group actions to be as in Section~2 of~\cite{PhLp2a}.

\begin{dfn}[Definition~2.3 of~\cite{PhLp2a}]\label{D:Aut}
Let $B$ be a Banach algebra.
An {\emph{automorphism}} of~$B$
is a \ct\  linear bijection $\ph \colon B \to B$
such that $\ph (a b) = \ph (a) \ph (b)$ for all $a, b \in B.$
(Continuity of $\ph^{-1}$ is automatic,
by the Open Mapping Theorem.)
We call $\ph$ an {\emph{isometric automorphism}}
if in addition $\| \ph (a) \| = \| a \|$ for all $a \in B.$
We let $\Aut (B)$ denote the set of automorphisms of~$B.$
\end{dfn}

\begin{dfn}\label{D:Action}
Let $A$ be a Banach algebra,
and let $G$ be a topological group.
An {\emph{action of $G$ on~$A$}}
is a \hm\  $g \mapsto \af_g$ from $G$ to $\Aut (A)$
such that for every $a \in A,$
the map $g \mapsto \af_g (a)$ is \ct\  from $G$ to~$A.$
We say that the action is {\emph{isometric}} if each $\af_g$ is.
If $p \in [1, \I]$ and $A$ is an $L^p$~operator algebra,
we call $(G, A, \af)$ a {\emph{$G$-$L^p$~operator algebra}},
and we call it an {\emph{isometric $G$-$L^p$~operator algebra}}
if in addition $\af$~is isometric.
We say $(G, A, \af)$ is {\emph{separable}},
{\emph{nondegenerately representable}},
{\emph{$\sm$-finitely representable}}, etc.,
if $A$ is,
in the sense of Definition~\ref{D-LpRep}.
\end{dfn}

We will only construct crossed products by
isometric actions of locally compact groups.
To avoid measure theoretic technicalities
(primarily related to product measures and Fubini's Theorem),
we mostly restrict to second countable groups
and to algebras on \sft{} $L^p$~spaces.
Since integrable functions have \sft{} supports,
we expect that, with care, these restrictions can be avoided.
However, we do not need the extra generality
and therefore give the simpler proofs which work without it.

\begin{rmk}\label{R-DynSys}
If $\af \colon G \to \Aut (A)$ is any action as in \Def{D:Action},
then $(G, A, \af)$ is a Banach algebra dynamical system
in the sense of Definition~2.10 of~\cite{DDW}.
\end{rmk}
The action needed for the computation of
$K_* \big( \OP{d}{p} \big)$
will be described in Notation~\ref{N-DICrPrd} below.
Dual actions on $L^p$~crossed products by abelian groups
will be constructed in Theorem~\ref{T-DualpFull}
and Theorem~\ref{T-DualpRed} below.
Meanwhile,
we give several other examples.
Except for the first,
they will not be used in the rest of this paper,
and the first one will not be used in the computation of
the K-theory of $\OP{d}{p}.$

\begin{exa}\label{E-GpOnSp}
Let $X$ be a locally compact Hausdorff space,
and let $G$ be a locally compact group
which acts continuously on~$X.$
Then $C_0 (X)$ is an $L^p$~operator algebra by Example~\ref{E-CX},
and the usual formula $\af_g (f) (x) = f (g^{-1} x)$
makes $(G, \, C_0 (X), \, \af)$ an isometric $G$-$L^p$ operator algebra.

As an important special case,
a \hme\  $h \colon X \to X$ gives an isometric action of~$\Z$
on $C_0 (X).$
\end{exa}

\begin{exa}\label{E-MdIso}
Let $p \in [1, \I) \SM \{ 2 \},$
let $d \in \N,$
and let $G$ be the group of isometries in the algebra $M^p_d$
of Example~\ref{E-Mpd}.
It follows from Theorem~6.9 and Lemma~6.15 of~\cite{PhLp1}
that $G$ consists of what we call complex permutation matrices,
that is, all $u \in M_d$ such that each row and each column of~$u$
contains exactly one nonzero entry,
and such that all nonzero entries have absolute value~$1.$
(These are the products of permutation matrices
and diagonal matrices whose diagonal entries have absolute value~$1.$)

The group $G$ is compact,
and its action on $M^p_d$ by conjugation
makes $M^p_d$ an isometric $G$-$L^p$ operator algebra.
\end{exa}

\begin{exa}\label{E-QF}
Let $p \in [1, \I) \SM \{ 2 \},$
let $d \in \N,$
and let $G$ be as in Example~\ref{E-MdIso}.
Let $\OP{d}{p}$ be as in Example~\ref{E-Odp}.
Then there is an isometric action
$\af \colon G \to \Aut \big( \OP{d}{p} \big)$
given as follows.
Write an element $g \in G$
as $g = \sum_{k = 0}^{d - 1} \ld_k e_{\sm (k), \, k}$
for a permutation $\sm$ of $\{ 0, 1, \ldots, d - 1 \}$
and complex numbers $\ld_0, \ld_1, \ldots, \ld_{d - 1}$
of absolute value~$1.$
Then $\af_g$ is determined by
\[
\af_g (s_j) = \ld_j s_{\sm (j)}
\andeqn
\af_g (t_j) = {\ov{\ld_j}} t_{\sm (j)}.
\]
One easily checks that these formulas define an action of~$G$
on~$L_d.$
It is isometric for the norm on $\OP{d}{p},$
because
if $\pi \colon \OP{d}{p} \to \LLp$
is a unital \rpn{} such that $\pi |_{L_d}$ is spatial
in the sense of Definition 7.4(2) of~\cite{PhLp1},
then $\pi \circ \af_g |_{L_d}$ is easily seen to be spatial.
Continuity of the action follows from continuity on the generators
via a standard $\frac{\ep}{3}$~argument.

Of course, individual automorphisms $\af_g$ generate
isometric actions of~$\Z.$
\end{exa}

Our definitions of crossed products
follow the general framework of~\cite{DDW}.
The following definition is the analog
in our situation (restricting to \rpn{s} on $L^p$~spaces)
of Definition 2.12 of~\cite{DDW}.

\begin{dfn}\label{D-CovRep}
Let $p \in [1, \I],$
let $G$ be a topological group,
and let $(G, A, \af)$ be a $G$-$L^p$~operator algebra.
Let $\XBM$ be a \msp.
Then a {\emph{covariant representation}} of
$(G, A, \af)$ on $L^p (X, \mu)$
is a pair $(v, \pi)$
consisting of a \rpn{} $g \mapsto v_g$
from $G$ to the invertible operators on $L^p (X, \mu)$
such that $g \mapsto v_g \xi$ is \ct\  for all $\xi \in L^p (X, \mu),$
and a \rpn{} $\pi \colon A \to \LLp,$
such that, in addition,
the following covariance condition is satisfied:
$\pi (\af_g (a)) = v_g \pi (a) v_g^{-1}$
for all $g \in G$ and $a \in A.$

A covariant representation $(v, \pi)$ of $(G, A, \af)$
is {\emph{contractive}} if $\| v_g \| \leq 1$ for all $g \in G$
and also $\pi$ is contractive.
It is {\emph{isometric}} if in addition $\pi$ is isometric.
It is {\emph{separable}}, {\emph{$\sigma$-finite}}, or
{\emph{nondegenerate}} if $\pi$ has the corresponding property.
\end{dfn}

If $(v, \pi)$ is contractive,
then necessarily $v_g$ is an isometric bijection for all $g \in G.$

\begin{rmk}\label{R-UnifBdd}
Let $p \in [1, \I],$
let $G$ be a locally compact group,
and let $(G, A, \af)$ be an isometric $G$-$L^p$~operator algebra.
Then the class of all contractive covariant representations
is uniformly bounded in the sense of Definition~3.1 of~\cite{DDW}.
\end{rmk}

For isometric actions of locally compact groups,
we show that covariant \rpn{s} exist
by constructing regular covariant \rpn{s}.
We need some preparation.

\begin{rmk}\label{R-CcGLp}
Let $p \in [1, \I),$
let $G$ be a locally compact group with left Haar measure~$\nu,$
and let $\XBM$ be a \msp.
Following Remark~\ref{R-LpTens},
we may identify the space
$C_{\mathrm{c}} \big( G, \, L^p (X, \mu) \big)$
of compactly supported \cfn s
from $G$ to $L^p (X, \mu)$
with a dense subspace of $L^p (G \times X, \, \nu \times \mu).$
Indeed, using Remark~\ref{R-LpTens},
this follows because the subspace contains all $\et \otimes \xi$
with $\et \in C_{\mathrm{c}} (G)$ and $\xi \in L^p (X, \mu),$
and $C_{\mathrm{c}} (G)$ is dense in $L^p (G, \nu).$
\end{rmk}

\begin{rmk}\label{R-lpG}
Let $p \in [1, \I),$
and let $G$ be a countable discrete group with counting measure~$\nu.$
For any Banach space~$E,$
let $l^p (G, E)$ be the Banach space of all functions
$\xi \colon G \to E$ such that the norm
\[
\| \xi \|_p = \left( \ssum{g \in G} \| \xi (g) \|^p \right)^{ 1 / p}
\]
is finite.
If now $\XBM$ is a \msp\  and $E = L^p (X, \mu),$
then for $\xi \in l^p (G, E)$ we get a function
$T (\xi)$ on $G \times X,$
given by $T (\xi) (g, x) = \xi (g) (x).$
One easily checks that $T$ is an isometric isomorphism
from $l^p \big( G, \, L^p (X, \mu) \big)$ to
$L^p (G \times X, \, \nu \times \mu).$
We therefore identify these spaces.
\end{rmk}

\begin{lem}\label{L-ExistRegRep}
Let $p \in [1, \I),$
let $G$ be a locally compact group with left Haar measure~$\nu,$
and let $(G, A, \af)$ be an isometric $G$-$L^p$~operator algebra.
Let $\XBM$ be a \msp,
and let $\pi_0 \colon A \to \LLp$
be a contractive \rpn.
Then:
\begin{enumerate}
\item\label{ExistRegRep-Exv}
There exists a unique \rpn~$v$ of $G$ on
$L^p (G \times X, \, \nu \times \mu)$ such that
$v_g (\xi) (h, x) = \xi (g^{-1} h, \, x)$
for $g, h \in G,$ $\xi \in L^p (G \times X, \, \nu \times \mu),$
and $x \in X.$
\item\label{ExistRegRep-VIsom}
The \rpn{} $v$ is isometric.
\item\label{ExistRegRep-Expi}
There is a unique \rpn~$\pi$
of $A$ on
$L^p (G \times X, \, \nu \times \mu)$ such that
for
\[
a \in A,
\,\,\,\,\,\,
h \in G,
\andeqn
\xi \in C_{\mathrm{c}} \big( G, \, L^p (X, \mu) \big)
    \S L^p (G \times X, \, \nu \times \mu)
\]
(see Remark~\ref{R-CcGLp}),
we have
\begin{equation}\label{Eq:RegRepFormula}
\big( \pi (a) \xi \big) (h)
 = \pi_0 ( \af_{h}^{-1} (a)) \big( \xi (h) \big).
\end{equation}
\item\label{ExistRegRep-PiContr}
The \rpn{} $\pi$ is contractive.
\item\label{ExistRegRep-GDiscr}
Suppose $G$ is discrete and countable.
For any
$\xi \in L^p (G \times X, \, \nu \times \mu),$
represented, following Remark~\ref{R-lpG}, as an element of
$l^p (G, \, L^p (X, \mu)),$
the formula~(\ref{Eq:RegRepFormula}) holds.
\item\label{ExistRegRep-Cov}
The pair $(v, \pi)$ is covariant.
\item\label{ExistRegRep-Ndg}
If $\pi_0$ is nondegenerate then $\pi$ is nondegenerate.
\item\label{ExistRegRep-Sft}
If $G$ is second countable
and $\mu$ is \sft,
then $\nu \times \mu$ is \sft.
\item\label{ExistRegRep-Sep}
If $G$ is second countable
and $L^p (X, \mu)$ is separable,
then $L^p (G \times X, \, \nu \times \mu)$ is separable.
\end{enumerate}
\end{lem}

\begin{proof}
Uniqueness of~$v$ is clear.
Now let $g \mapsto u_g$
be the usual left regular \rpn{} of $G$ on $L^p (G).$
Existence and contractivity of~$v$ follow from
the tensor product decomposition
\[
L^p (G \times X, \, \nu \times \mu)
  = L^p (G, \nu) \otimes_p L^p (X, \mu)
\]
as in Remark~\ref{R-LpTens},
which gives
$v_g = u_g \otimes 1$ for $g \in G.$

We now consider~$\pi.$
It is clear that for $a \in A,$
the formula~(\ref{Eq:RegRepFormula})
gives a well defined map
\[
T (a) \colon C_{\mathrm{c}} \big( G, \, L^p (X, \mu) \big)
    \to  C_{\mathrm{c}} \big( G, \, L^p (X, \mu) \big).
\]
Since $\pi_0$ and $\af_{h}^{-1}$ are contractive,
one readily checks that
$\| T (a) \xi \|_p \leq \| a \| \cdot \| \xi \|_p$
for $a \in A$ and
$\xi \in C_{\mathrm{c}} \big( G, \, L^p (X, \mu) \big).$
Moreover, $C_{\mathrm{c}} \big( G, \, L^p (X, \mu) \big)$
is dense in $L^p (G \times X, \, \nu \times \mu)$
by Remark~\ref{R-CcGLp}.
Therefore $T (a)$ has a unique extension to a linear operator
$\pi (a)$ on $L^p (G \times X, \, \nu \times \mu)$
with $\| \pi (a) \| = \| T (a) \| \leq \| a \|.$
One checks that $\pi$ is a \rpn{} by considering
its action on $C_{\mathrm{c}} \big( G, \, L^p (X, \mu) \big).$
This proves existence, uniqueness, and contractivity of~$\pi.$

For part~(\ref{ExistRegRep-GDiscr}),
one checks that when $G$ is discrete,
the formula~(\ref{Eq:RegRepFormula})
gives a well defined bounded operator $S (a)$ on
$L^p (G \times X, \, \nu \times \mu)$
which agrees with $T (a)$
on $C_{\mathrm{c}} \big( G, \, L^p (X, \mu) \big).$
Therefore $S (a) = \pi (a).$

Covariance can be checked by comparing values on
elements of $C_{\mathrm{c}} \big( G, \, L^p (X, \mu) \big).$

We prove part~(\ref{ExistRegRep-Ndg}).
It suffices to show that the linear span of the ranges
of the maps $T (a),$
for $a \in A,$
is dense in $C_{\mathrm{c}} \big( G, \, L^p (X, \mu) \big)$
in the norm on $L^p (G \times X, \, \nu \times \mu).$
So let $\xi \in C_{\mathrm{c}} \big( G, \, L^p (X, \mu) \big)$
and let $\ep > 0.$
Choose a compact set $K \S G$
such that $\supp (\xi) \S \sint (K).$
Set
\[
\ep_0 = \frac{\ep}{3 \nu (K)^{1/p} + 1}.
\]
Since $\pi_0$ is nondegenerate,
for $g \in G$ there are
\[
l (g) \in \N,
\,\,\,\,\,\,
\et_{g, 1}, \, \et_{g, 2}, \ldots, \et_{g, l (g)} \in L^p (X, \mu),
\andeqn
a_{g, 1}, \, a_{g, 2}, \ldots, a_{g, l (g)} \in A
\]
such that
\[
\left\| \xi (g)
 - \sssum{j = 1}{l (g)} \pi_0 (a_{g, j}) \xi_{g, j} \right\|_p
< \ep_0.
\]
Choose an open set $U (g) \S G$
containing~$g$
such that for $h \in U (G)$ we have
$\| \xi (h) - \xi (g) \| < \ep_0$
and
\[
\big\| (\pi_0 \circ \af_{h^{-1} g} ) (a_{g, j}) \et_{g, j}
        - \pi_0 (a_{g, j}) \xi_{g, j} \big\|_p
    < \frac{\ep_0}{l (g)}
\]
for $j = 1, 2, \ldots, l (g).$
For any $h \in U (g),$
we then get
\[
\left\| \xi (h)
 - \sssum{j = 1}{l (g)} \pi_0
              \big( \af_h^{-1} (\af_g (a_{g, j})) \big) \et_{g, j}
            \right\|_p
< 3 \ep_0.
\]

Choose elements $g_1, g_2, \ldots, g_n \in G$
and \cfn{s} $f_1, f_2, \ldots, f_n \colon G \to [0, 1]$
with compact supports $\supp (f_k) \S U (g_k) \cap \sint (K)$
such that $\sum_{k = 1}^n f_k (g) = 1$ for all $g \in \supp (\xi)$
and such that $\sum_{k = 1}^n f_k (g) \in [0, 1]$ for all $g \in G.$
For $k = 1, 2, \ldots, n$ and $j = 1, 2, \ldots, l (g_k),$
define
$\ld_{j, k} \in C_{\mathrm{c}} \big( G, \, L^p (X, \mu) \big)$
by $\ld_{j, k} (g) = f_k (g) \et_{g_k, j}$
and define $\xi_k \in C_{\mathrm{c}} \big( G, \, L^p (X, \mu) \big)$
by $\xi_k (g) = f_k (g) \xi (g).$
Then $\xi = \sum_{k = 1}^n \xi_k.$
Therefore
\begin{align*}
& \left\| \xi - \sssum{k = 1}{n} \sssum{j = 1}{l (g)}
  \pi (\af_{g_k} (a_{g_k, j})) \ld_{j, k} \right\|_p^p
\\
& \hspace*{3em} {\mbox{}}
 = \int_G \left\| \sssum{k = 1}{n} f_k (h)
      \left[ \xi (h)
 - \sssum{j = 1}{l (g)}
       \pi_0 \big( \af_h^{-1} (\af_{g_k} (a_{g_k, j})) \big)
                 \et_{g_k, j}
           \right] \right\|^p \, d \nu (h)
\\
& \hspace*{3em} {\mbox{}}
 \leq \int_G \left[ \sssum{k = 1}{n} f_k (h)
      \left\| \xi (h)
 - \sssum{j = 1}{l (g)}
       \pi_0 \big( \af_h^{-1} (\af_{g_k} (a_{g_k, j})) \big)
                \et_{g_k, j}
           \right\| \right]^p \, d \nu (h)
\\
& \hspace*{3em} {\mbox{}}
 \leq \int_G \left( \sssum{k = 1}{n} f_k (h) \cdot 3 \ep_0 \right)^p
        \, d \nu (h)
  \leq 3^p \ep_0^p \nu (K).
\end{align*}
So
\[
\left\| \xi - \sssum{k = 1}{n} \sssum{j = 1}{l (g)}
    \pi (\af_{g_k} (a_{g_k, j})) \ld_{j, k} \right\|_p
 \leq 3 \ep_0 \nu (K)^{1/p}
 < \ep.
\]
This completes the proof that $\pi$ is nondegenerate.

Part~(\ref{ExistRegRep-Sft})
and part~(\ref{ExistRegRep-Sep})
are well known.
\end{proof}

In \Lem{L-ExistRegRep},
we at least need $\sup_{g \in G} \| \af_g \|$ to be finite.
Otherwise,
the densely defined operators $T (a)$ in the proof might
not be bounded.

\begin{dfn}\label{D-RegRep}
Let the notation be as in \Lem{L-ExistRegRep}.
The covariant \rpn{} $(v, \pi)$
is called the {\emph{regular covariant representation}}
of $(G, A, \af)$ associated to~$\pi_0.$
We refer to any \rpn{} obtained this way as
a {\emph{regular contractive covariant representation}}.
We call it {\emph{separable}}, {\emph{$\sm$-finite}},
or {\emph{nondegenerate}}
if the original \rpn~$\pi_0$ has the corresponding property.
\end{dfn}

\section{Crossed products of $L^p$~operator algebras}\label{Sec_CP}

\indent
We present a rudimentary theory of crossed products
of algebras of bounded operators on $L^p$~spaces
by isometric actions of countable discrete groups.
Our intended purpose is the computation of the
K-theory of Cuntz algebras on $L^p$~spaces,
by realizing them as crossed products by actions of~$\Z.$
For the initial part of the theory,
however,
there is no extra work in allowing an arbitrary
second countable locally compact group,
and we therefore work in that generality.

As mentioned in the introduction to Section~\ref{Sec_LpOpAlg},
the completely isometric version of the theory may well be
at least as important.
One can copy everything done here in that context.
For example, in Definition~\ref{D-CrPrd},
one would use completely contractive \rpn{s}
instead of contractive \rpn{s}.
In fact, though,
the resulting theory is essentially a special case of the theory
we develop here and in the next section.
Let ${\ov{M}}^p_{\N}$ be as in Example~\ref{E-MatpS}.
Let $\et$ be counting measure on~$\N.$
Let the spatial $L^p$~operator tensor product be
as in Example~\ref{E-LpTP}.
Let $A \S \LLp$ be a norm closed subalgebra.
Then a \rpn{} $\pi \colon A \to L ( L^p (Y, \nu))$
is completely contractive \ifo\  the map
\[
\id_{{\ov{M}}^p_{\N}} \otimes \pi \colon
    {\ov{M}}^p_{\N} \otimes_{\mathrm{alg}} A
     \to L \big( L^p (\N \times Y, \, \et \times \nu) \big)
\]
extends to a contractive \hm\  defined on
${\ov{M}}^p_{\N} \otimes_{p} A,$
an action of $G$ on~$A$ is completely isometric \ifo\  it
induces an isometric action on ${\ov{M}}^p_{\N} \otimes_{p} A,$
etc.
So one can simply form the full or reduced crossed product,
as constructed here,
by the action on ${\ov{M}}^p_{\N} \otimes_{p} A,$
and cut down by $e_{1, 1} \otimes 1.$

\begin{ntn}\label{N-CcGA}
Let $A$ be a Banach algebra,
let $G$ be a locally compact group with left Haar measure~$\nu,$
and let $\af \colon G \to \Aut (A)$ be an action of $G$ on~$A.$
We let $C_{\mathrm{c}} (G, A, \af)$ be the vector space
of all compactly supported continuous functions from $G$ to~$A,$
made into an algebra over~$\C$ with the convolution product
\[
(a b) (g) = \int_G a (h) \af_h (b ( h^{-1} g)) \, d \nu (h)
\]
for $a, b \in C_{\mathrm{c}} (G, A, \af)$ and $g \in G,$
as at the beginning of Section~3 of~\cite{DDW},
or as at the beginning of Section~2.3 of~\cite{Wl}.

If $(v, \pi)$ is a covariant \rpn{} of $(G, A, \af)$
on a Banach space~$E$
in the general sense of Definition~2.12 of~\cite{DDW},
then we denote by
$v \ltimes \pi$ the \rpn{}  of $C_{\mathrm{c}} (G, A, \af)$
defined in equation~3.2 of \cite{DDW}
(called $\pi \rtimes v$ there), given by
\[
(v \ltimes \pi) (a) \xi = \int_G \pi (a (g)) v_g \xi \, d \nu (g)
\]
for $a \in C_{\mathrm{c}} (G, A, \af)$ and $\xi \in E.$
The integral is defined by duality,
as in Theorem~2.17 and Proposition~2.19 of~\cite{DDW}:
for all $\om$ in the dual space~$E'$ of~$E,$
we require that
\begin{equation}\label{Eq:DefOfInt}
\om \big( (v \ltimes \pi) (a) \xi \big)
  = \int_G \om \big( \pi (a (g)) v_g \xi \big) \, d \nu (g).
\end{equation}
In particular, we use the notation $v \ltimes \pi$
when $(v, \pi)$ is a covariant \rpn{} in the sense
of \Def{D-CovRep}.
\end{ntn}

\begin{rmk}\label{R-RegRepCP}
Let $p \in [1, \I),$
and let the notation be as in \Lem{L-ExistRegRep}.
Assume that $\mu$ is \sft.
Then a computation shows that for
\[
\xi \in C_{\mathrm{c}} \big( G, \, L^p (X, \mu) \big)
    \subset L^p (G \times X, \, \nu \times \mu)
\]
(following Remark~\ref{R-CcGLp})
and for $a \in C_{\mathrm{c}} (G, A, \af),$
we have
\[
\big( (v \ltimes \pi) (a) \xi \big) (g)
  = \int_G \pi_0 \big( \af_{g}^{-1} (a (h) ) \big)
      \big( \xi (h^{- 1} g) \big) \, d \nu (h).
\]
If $p \in (1, \I),$
we claim that
this integral is uniquely determined by the following requirement.
Let $\om \colon G \to L^p (X, \mu)'$ be an arbitrary
\cfn\  with compact support.
Then $\om$ determines a \ct\  linear functional on
$L^p (G \times X, \, \nu \times \mu),$ whose value
on an element $\et \in C_{\mathrm{c}} (G, A, \af)$
is given by $\int_G \om (g) ( \et (g)) \, d \nu (g).$
The requirement is then that the value of this functional
on $(v \ltimes \pi) (a) \xi$ be
\begin{align}\label{Eq:WeakIntRegRep}
& \int_G \om (g) \big( \big[ (v \ltimes \pi) (a) \xi \big] (g) \big)
    \, d \nu (g)
  \\
& \hspace*{3em} {\mbox{}}
  = \int_{G \times G}
    \om (g) \big( \pi_0 \big( \af_{g}^{-1} (a (h)) \big)
             \big( \xi (h^{- 1} g) \big) \big)
       \, d \nu (h) d \nu (g). \notag
\end{align}
(To see that these values determine $(v \ltimes \pi) (a) \xi,$
let $q \in (1, \I)$ be the conjugate exponent,
that is, $\frac{1}{p} + \frac{1}{q} = 1.$
Apply Remark~\ref{R-CcGLp} to
$L^q (G \times X, \, \nu \times \mu).$
Conclude that the set of such linear functionals is dense
in $L^p (G \times X, \, \nu \times \mu)'.$)
\end{rmk}

At this point, we must make a choice:
do we require covariant \rpn{s} to be nondegenerate?
For C*~crossed products,
both versions are used.
For example, nondegeneracy is required in 7.6.5 of~\cite{Pd1},
but not in Lemma~2.27 of~\cite{Wl}.
Lemma~2.31 of~\cite{Wl} shows that one gets the same
crossed product either way.
In our context,
with no analog of selfadjointness,
$L^p$~operator algebras
need not even be nondegenerately representable.
(See Example~\ref{E_3419_Deg}.)
Even if the algebra has an approximate identity,
we have not determined whether one gets the same crossed product
with the two choices.
We require nondegeneracy
because we want representations of unital algebras to be unital,
an outcome which is forced by nondegeneracy.

\begin{dfn}\label{D-CrPrd}
Let $p \in [1, \I),$
let $G$ be a second countable locally compact group,
and let $(G, A, \af)$ be an isometric $G$-$L^p$~operator algebra
which is nondegenerately $\sm$-finitely representable
(Definition \ref{D-LpRep}(\ref{D_3914_LpRep_Ndg})).
We define two crossed products following
Definition~3.2 of~\cite{DDW}
(justified in Lemma~\ref{L_3916_Exist} below),
but with different choices of the family~${\mathcal{R}}$
of covariant \rpn{s} which appears there:
\begin{enumerate}
\item\label{D-CrPrd-f}
We let $F^p (G, A, \af)$ be the crossed product obtained
by taking ${\mathcal{R}}$ to be the family ${\mathcal{R}}^p (G, A, \af)$
of all nondegenerate $\sm$-finite contractive covariant \rpn{s}.
We call it the {\emph{full $L^p$~operator crossed product}}
of $(G, A, \af).$
\item\label{D-CrPrd-r}
We let $F^p_{\mathrm{r}} (G, A, \af)$ be the crossed product obtained
by taking ${\mathcal{R}}$ to be the family
${\mathcal{R}}^p_{\mathrm{r}} (G, A, \af)$
of all regular covariant \rpn{s}
coming from nondegenerate $\sm$-finite contractive \rpn{s} of~$A.$
We call it the {\emph{reduced $L^p$~operator crossed product}}
of $(G, A, \af).$
\end{enumerate}
\end{dfn}

\begin{lem}\label{L_3916_Exist}
Let $p \in [1, \I),$
let $G$ be a second countable locally compact group,
and let $(G, A, \af)$ be a nondegenerately $\sm$-finitely representable
isometric $G$-$L^p$~operator algebra.
(See Definition~\ref{D:Action} and Definition~\ref{D-LpRep}.)
Then the crossed products $F^p (G, A, \af)$
and $F^p_{\mathrm{r}} (G, A, \af)$ of Definition~\ref{D-CrPrd} exist
as in Definition~3.2 of~\cite{DDW}.
\end{lem}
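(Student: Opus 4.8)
The plan is to verify the hypotheses under which Definition~3.2 of~\cite{DDW} produces a crossed product from a prescribed family ${\mathcal{R}}$ of covariant \rpn{s}: that ${\mathcal{R}}$ is uniformly bounded in the sense of Definition~3.1 of~\cite{DDW}, and (so that the resulting algebra is not the zero algebra) that ${\mathcal{R}}$ is nonempty. I would first note the translation of terminology: a covariant \rpn{} in the sense of \Def{D-CovRep} is in particular a covariant \rpn{} of the Banach algebra dynamical system $(G, A, \af)$ in the sense of Definition~2.12 of~\cite{DDW} (an $L^p$~space is a Banach space, and $(G, A, \af)$ is a Banach algebra dynamical system by Remark~\ref{R-DynSys}), and the associated integrated \rpn{} $v \ltimes \pi$ of $C_{\mathrm{c}} (G, A, \af)$ is the one of Notation~\ref{N-CcGA}. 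Because all measure spaces occurring here are \sft, the duality integrals defining $v \ltimes \pi$, and hence the seminorm of Definition~3.2 of~\cite{DDW}, are well defined.

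Uniform boundedness is then essentially immediate. First, ${\mathcal{R}}^p_{\mathrm{r}} (G, A, \af) \S {\mathcal{R}}^p (G, A, \af)$: if $(v, \pi)$ is the regular covariant \rpn{} coming from a nondegenerate \sft{} contractive \rpn{} $\pi_0 \colon A \to \LLp$, then $(v, \pi)$ is covariant by \Lem{L-ExistRegRep}(\ref{ExistRegRep-Cov}), contractive by parts~(\ref{ExistRegRep-VIsom}) and~(\ref{ExistRegRep-PiContr}), nondegenerate by part~(\ref{ExistRegRep-Ndg}), and acts on $L^p (G \times X, \, \nu \times \mu)$ with $\nu \times \mu$ \sft{} by part~(\ref{ExistRegRep-Sft}) (using second countability of~$G$); so $(v, \pi)$ is one of the covariant \rpn{s} defining ${\mathcal{R}}^p (G, A, \af)$. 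Thus both ${\mathcal{R}}^p (G, A, \af)$ and ${\mathcal{R}}^p_{\mathrm{r}} (G, A, \af)$ sit inside the class of all contractive covariant \rpn{s}, which is uniformly bounded by Remark~\ref{R-UnifBdd}. Hence Definition~3.2 of~\cite{DDW} applies to each of them, producing $F^p (G, A, \af)$ and $F^p_{\mathrm{r}} (G, A, \af)$.

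For nonemptiness, I would invoke the standing hypothesis that $(G, A, \af)$ is nondegenerately $\sm$-finitely representable to choose a nondegenerate \sft{} isometric \rpn{} $\pi_0 \colon A \to \LLp$, and form its associated regular covariant \rpn{} $(v, \pi)$ via \Lem{L-ExistRegRep}; by the parts cited above, $(v, \pi) \in {\mathcal{R}}^p_{\mathrm{r}} (G, A, \af) \S {\mathcal{R}}^p (G, A, \af)$, so both families are nonempty. I expect no serious obstacle here: the real content, the supply of regular covariant \rpn{s} of the required kind, is exactly what \Lem{L-ExistRegRep} provides; the only care needed is in matching our $L^p$~setting to the Banach space framework of~\cite{DDW} and in checking that the $\sm$-finiteness hypothesis on~$A$ propagates, through \Lem{L-ExistRegRep}(\ref{ExistRegRep-Sft}), to the $L^p$~spaces carrying the \rpn{s} in ${\mathcal{R}}^p_{\mathrm{r}} (G, A, \af)$, so that the integrated \rpn{s} and seminorm of Definition~3.2 of~\cite{DDW} make sense.
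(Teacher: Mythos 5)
Your proposal is correct and follows essentially the same route as the paper's proof: verify uniform boundedness (immediate from contractivity, with $C = 1$ and $\nu (g) = 1$ in the notation of~\cite{DDW}) and nonemptiness via \Lem{L-ExistRegRep} together with the containment ${\mathcal{R}}^p_{\mathrm{r}} (G, A, \af) \S {\mathcal{R}}^p (G, A, \af).$ You simply spell out in more detail which parts of \Lem{L-ExistRegRep} justify each required property of the regular covariant representations.
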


\begin{proof}
We must show that ${\mathcal{R}}^p (G, A, \af)$
and ${\mathcal{R}}^p_{\mathrm{r}} (G, A, \af)$
are uniformly bounded in the sense of Definition~3.1 of~\cite{DDW},
and are nonempty.
Uniform boundedness is obvious.
(In the notation of~\cite{DDW},
take $C = 1$ and $\nu (g) = 1$ for all $g \in G.$)
Lemma~\ref{L-ExistRegRep} shows that
${\mathcal{R}}^p_{\mathrm{r}} (G, A, \af) \neq \E.$
Then ${\mathcal{R}}^p (G, A, \af) \neq \E$
because
${\mathcal{R}}^p_{\mathrm{r}} (G, A, \af)
 \subset {\mathcal{R}}^p (G, A, \af).$
\end{proof}

Here is what happens for \ca{s}.
We are grateful to David Blecher
for pointing out the reference~\cite{BM}.

\begin{prp}\label{P-CompCStar}
Let $G$ be a locally compact group,
let $A$ be a \ca,
and let $\af \colon G \to \Aut (A)$ be an isometric action.
Then $A$  is nondegenerately $\sm$-finitely representable
and $\af_g$ is a *-automorphism for all $g \in G.$
Moreover, if $G$ is second countable,
then $F^2 (G, A, \af) = C^* (G, A, \af)$
and $F^2_{\mathrm{r}} (G, A, \af) = C^*_{\mathrm{r}} (G, A, \af).$
\end{prp}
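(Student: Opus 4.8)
The plan is to treat the three assertions in turn, relying on the facts that a \ca{} has a faithful nondegenerate $*$-representation on a Hilbert space and that every Hilbert space is the $L^2$ space of a $\sm$-finite measure space. First, to see that each $\af_g$ is a $*$-\am: fix a faithful nondegenerate $*$-representation $\rho\colon A\to L(H)$ on a Hilbert space $H$ (for instance the universal representation); it is isometric. Then $\rho\circ\af_g$ is an isometric \hm{} from $A$ into $L(H)$, hence a $*$-\hm{} by Proposition~A.5.8 of~\cite{BM}, so for $a\in A$ we get $\rho(\af_g(a^*))=\rho(\af_g(a))^*=\rho(\af_g(a)^*)$, using first that $\rho\circ\af_g$ is $*$-preserving and then that $\rho$ is. Since $\rho$ is injective, $\af_g(a^*)=\af_g(a)^*$, and as $\af_g$ is by hypothesis an isometric algebra \am{} it is a $*$-\am. (Equivalently: $\af_g$ carries each unitary of $A$, or of its unitization, to an invertible element of norm $\le1$ whose inverse also has norm $\le1$, hence to a unitary, and a \uca{} is spanned by its unitaries.)

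Next, $A$ is nondegenerately $\sm$-finitely representable --- which is needed even to make sense of $F^2(G,A,\af)$ via \Def{D-CrPrd} --- because the same $\rho$ works once one observes that every Hilbert space $H$ is isometrically isomorphic to $L^2(X,\mu)$ for a $\sm$-finite, indeed probability, measure space: if $\dim H$ is finite use a finite set with counting measure, and if $\dim H$ is infinite use $\{0,1\}^S$ with $|S|=\dim H$ and the product of the uniform measures, whose Walsh functions form an orthonormal basis indexed by the finite subsets of $S$, so that this $L^2$ space has dimension $\dim H$. Transported to such a space, $\rho$ is a nondegenerate $\sm$-finite isometric \rpn{} of $A$.

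Now assume $G$ is second countable; I would prove $F^2(G,A,\af)=C^*(G,A,\af)$ and $F^2_{\mathrm r}(G,A,\af)=C^*_{\mathrm r}(G,A,\af)$ by showing that the norms the two constructions put on $C_{\mathrm c}(G,A,\af)$ coincide. One inequality is formal: a nondegenerate covariant \rpn{} $(u,\pi)$ in the $C^*$-dynamical sense (so $u$ a unitary \rpn, hence $\|u_g\|\le1$, and $\pi$ a nondegenerate $*$-\rpn, hence contractive) is, after realizing its underlying Hilbert space as the $L^2$ of a $\sm$-finite measure space as above, a member of $\mathcal R^2(G,A,\af)$, and the operator $u\ltimes\pi$ of Notation~\ref{N-CcGA} is the usual integrated form of $(u,\pi)$; hence $\|\cdot\|_{C^*}\le\|\cdot\|_{F^2}$, and likewise $\|\cdot\|_{C^*_{\mathrm r}}\le\|\cdot\|_{F^2_{\mathrm r}}$ since the regular covariant \rpn{s} induced from $*$-\rpn{s} of $A$ are among those used in \Def{D-CrPrd}(\ref{D-CrPrd-r}). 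For the reverse direction, let $(v,\pi)$ be a nondegenerate $\sm$-finite contractive covariant \rpn{} on $L^2(X,\mu)$. Since $v$ is a group \rpn{} by contractions of the Hilbert space $L^2(X,\mu)$, each $v_g$ is an isometric bijection, hence unitary, so $v$ is a unitary \rpn; and $\pi$ is a nondegenerate contractive \hm{} of the \ca{} $A$ into $L(L^2(X,\mu))$, hence a nondegenerate $*$-\rpn. Therefore $(v,\pi)$ is a covariant \rpn{} in the $C^*$-dynamical sense, so $\|(v\ltimes\pi)(f)\|\le\|f\|_{C^*}$; taking the supremum over $\mathcal R^2(G,A,\af)$ gives $\|\cdot\|_{F^2}\le\|\cdot\|_{C^*}$. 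The reduced case is identical once one notes that the regular covariant \rpn{} attached by \Lem{L-ExistRegRep} to a \rpn{} $\pi_0$ of $A$ is, under the identification $L^2(G\times X,\nu\times\mu)=L^2(G,L^2(X,\mu))$ (with $\nu$ $\sm$-finite because $G$ is second countable), exactly the standard regular \rpn{} induced from $\pi_0$, and that the \rpn{s} $\pi_0$ used in \Def{D-CrPrd}(\ref{D-CrPrd-r}), the nondegenerate $\sm$-finite contractive \rpn{s} of $A$, are precisely the nondegenerate $*$-\rpn{s} of $A$ on Hilbert spaces.

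The step I expect to carry the real content is the one identifying the two classes of covariant \rpn{s}, and within it the assertion that a nondegenerate contractive \hm{} $\pi$ of a \ca{} into $L(H)$ is automatically a $*$-\hm. For unital $A$ this is elementary: $\pi(1)$ is a norm-one idempotent, hence an orthogonal projection, onto which one may compress, and then $\pi$ carries unitaries to unitaries (an invertible operator of norm $\le1$ with inverse of norm $\le1$ is unitary) while a \uca{} is spanned by its unitaries. The non-unital case is the delicate point; it can be reduced to the unital one by adjoining a unit to $A$ (an isometric action extends to an isometric action on the unitization) together with the compatibility of the full and reduced crossed products with unitization, or by localizing to the C*-subalgebra generated by a single self-adjoint element. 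The remaining items --- that the $L^p$-theoretic $v\ltimes\pi$ and the regular covariant \rpn{} of \Lem{L-ExistRegRep} literally reduce, for $p=2$, to the integrated form and the regular \rpn{} of the C*~theory --- are routine bookkeeping with the definitions in Notation~\ref{N-CcGA} and \Lem{L-ExistRegRep}.
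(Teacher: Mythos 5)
Your proposal is correct and follows essentially the same route as the paper: cite Proposition~A.5.8 of~\cite{BM} to see that contractive Hilbert-space representations of a \ca{} are automatically $*$-\hm{s} (which gives both that each $\af_g$ is a $*$-\am{} and that the contractive covariant \rpn{s} of Definition~\ref{D-CrPrd} coincide with the $C^*$-dynamical ones, using also that an isometric bijection of a Hilbert space is unitary), and realize arbitrary Hilbert spaces as $L^2$ of $\sm$-finite measure spaces via an infinite product measure. The one point you flag as delicate --- the non-unital case of ``contractive implies $*$-preserving'' --- needs no separate argument, since the cited Proposition~A.5.8 of~\cite{BM} already covers non-unital \ca{s}.
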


\begin{proof}
To prove that $A$ is nondegenerately $\sm$-finitely representable,
we need to know that there are arbitrarily
large Hilbert spaces of the form $L^2 (X, \mu)$
for $\sm$-finite measures~$\mu.$
For a sufficiently large set~$S,$
take $X = [0, 1]^S$ and take $\mu$
to be the product of copies of Lebesgue measure.
By Proposition A.5.8 of~\cite{BM},
every contractive representation of $A$ on a Hilbert space
is necessarily a *-\hm.
It follows immediately that
$\af_g$ is a *-automorphism for all $g \in G,$
and, if $G$ is second countable, that
$F^p_{\mathrm{r}} (G, A, \af) = C^*_{\mathrm{r}} (G, A, \af).$
To get $F^p (G, A, \af) = C^* (G, A, \af),$
we also need the well known fact that
every isometric bijection on a Hilbert space is unitary.
\end{proof}

The following two theorems state parts of the universal
properties of our two crossed products.

\begin{thm}\label{T-UPropFull}
Let $p \in [1, \I),$
let $G$ be a second countable locally compact group,
and let $(G, A, \af)$ be a nondegenerately $\sm$-finitely representable
isometric $G$-$L^p$~operator algebra.
(See Definition~\ref{D:Action} and Definition~\ref{D-LpRep}.)
\begin{enumerate}
\item\label{T-UPropFull-1}
There is a \hm{}
\[
\io \colon C_{\mathrm{c}} (G, A, \af) \to F^p (G, A, \af)
\]
such that whenever $\XBM$ is a $\sm$-finite \msp{} and
$(v, \pi)$ is a nondegenerate contractive covariant \rpn{}
of $(G, A, \af)$ on $L^p (X, \mu),$
there is a unique contractive
\rpn{} $\rh_{v, \pi} \colon F^p (G, A, \af) \to \LLp$
such that $\rh_{v, \pi} \circ \io = v \ltimes \pi.$
\item\label{T-UPropFull-2}
For every $a \in F^p (G, A, \af),$
we have
\begin{align*}
\| a \|
& = \sup \big( \big\{ \| \rh_{v, \pi} (a) \| \colon
    {\mbox{$(v, \pi)$ is a nondegenerate \sft}}
      \\
& \hspace*{5em}
     {\mbox{contractive covariant \rpn{} of $(G, A, \af)$}}
            \big\} \big).
\end{align*}
\item\label{T-UPropFull-3}
The \hm\  $\io$ of~(\ref{T-UPropFull-1}) has dense range.
\end{enumerate}
\end{thm}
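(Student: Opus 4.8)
The plan is to extract all three assertions from the general construction of crossed products of Banach algebras in Section~3 of~\cite{DDW}, as specialized in Definition~\ref{D-CrPrd}. Recall that for a uniformly bounded nonempty family $\mathcal{R}$ of covariant representations of $(G, A, \af),$ Definition~3.2 of~\cite{DDW} equips $C_{\mathrm{c}}(G, A, \af)$ with the seminorm $N(b) = \sup_{(v, \pi) \in \mathcal{R}} \| (v \ltimes \pi)(b) \|;$ this is finite because $\mathcal{R}$ is uniformly bounded, and submultiplicative because each $v \ltimes \pi$ is a homomorphism (Notation~\ref{N-CcGA}), so the completion of $C_{\mathrm{c}}(G, A, \af)$ after dividing out $\{ b \colon N(b) = 0 \}$ is a Banach algebra, and $\iota$ is the composite of the quotient map with the inclusion of that quotient into its completion. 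Since Lemma~\ref{L_3916_Exist} has already verified that $\mathcal{R}^p(G, A, \af)$ is uniformly bounded and nonempty, this applies with $\mathcal{R} = \mathcal{R}^p(G, A, \af)$ and produces $F^p(G, A, \af)$ and~$\iota.$

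Part~(\ref{T-UPropFull-3}) then costs nothing: $\iota\big(C_{\mathrm{c}}(G, A, \af)\big)$ is by construction dense in the completion $F^p(G, A, \af).$ For part~(\ref{T-UPropFull-1}), I would note that if $\XBM$ is a \sft{} \msp{} and $(v, \pi)$ is a nondegenerate contractive covariant representation of $(G, A, \af)$ on $L^p(X, \mu),$ then $(v, \pi)$ lies in $\mathcal{R}^p(G, A, \af)$ by the very definition of that family, so $\| (v \ltimes \pi)(b) \| \leq N(b)$ for all $b \in C_{\mathrm{c}}(G, A, \af);$ hence $v \ltimes \pi$ annihilates $\{ b \colon N(b) = 0 \}$ and extends by continuity to a contractive homomorphism $\rh_{v, \pi} \colon F^p(G, A, \af) \to \LLp$ with $\rh_{v, \pi} \circ \iota = v \ltimes \pi.$ Uniqueness of $\rh_{v, \pi}$ follows from part~(\ref{T-UPropFull-3}), since two continuous maps agreeing with $v \ltimes \pi$ on the dense range of $\iota$ must coincide.

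For part~(\ref{T-UPropFull-2}), put $\bt(a) = \sup\{ \| \rh_{v, \pi}(a) \| \colon (v, \pi) \in \mathcal{R}^p(G, A, \af) \}.$ Contractivity of each $\rh_{v, \pi}$ gives $\bt(a) \leq \| a \|$ for all $a \in F^p(G, A, \af).$ For the reverse inequality I would approximate: given $\ep > 0,$ choose $b \in C_{\mathrm{c}}(G, A, \af)$ with $\| \iota(b) - a \| < \ep,$ so that $\| a \| < N(b) + \ep$ (using $\| \iota(b) \| = N(b)$), and then choose $(v_0, \pi_0) \in \mathcal{R}^p(G, A, \af)$ with $\| (v_0 \ltimes \pi_0)(b) \| > N(b) - \ep,$ whence
\[
\bt(a) \geq \| \rh_{v_0, \pi_0}(a) \|
  \geq \| \rh_{v_0, \pi_0}(\iota(b)) \| - \| \iota(b) - a \|
  > N(b) - 2\ep
  \geq \| a \| - 3\ep .
\]
Letting $\ep \to 0$ gives $\bt(a) = \| a \|,$ which is the claimed formula.

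Apart from part~(\ref{T-UPropFull-2}), everything is a matter of unwinding the definitions in~\cite{DDW}. The one step that is not pure bookkeeping is the interchange in part~(\ref{T-UPropFull-2}) between the supremum over $\mathcal{R}^p(G, A, \af)$ and the limit implicit in passing to the completion; this is handled by the $\ep/3$-type estimate displayed above, and I expect it to be the main — and only quite mild — obstacle.
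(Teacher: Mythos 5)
Your proposal is correct and follows essentially the same route as the paper: the paper's proof simply cites Definition~3.2, the discussion after Remark~3.3, and Proposition~3.4 of~\cite{DDW} for exactly the construction and estimates you write out explicitly (the seminorm $N(b)=\sup_{(v,\pi)\in\mathcal{R}}\|(v\ltimes\pi)(b)\|,$ the Hausdorff completion, extension by continuity for members of the family, and the $\ep$-argument showing the completed norm is still the supremum of the $\|\rh_{v,\pi}(a)\|$). Your version is just a self-contained unwinding of those citations, and the $3\ep$ estimate in part~(\ref{T-UPropFull-2}) is the content of Proposition~3.4 of~\cite{DDW}.
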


\begin{proof}
Let ${\mathcal{R}}^p (G, A, \af)$ be as in
Definition \ref{D-CrPrd}(\ref{D-CrPrd-f}),
and abbreviate it to~${\mathcal{R}}.$
The map we call~$\io$
comes from Definition~3.2 of~\cite{DDW},
in which the crossed product is defined to be a certain
Hausdorff completion
of $C_{\mathrm{c}} (G, A, \af),$
and Lemma~\ref{L_3916_Exist}.
(This map is called $q^{\mathcal{R}}$
in the discussion after Remark~3.3 of~\cite{DDW}.)
By construction,
it has dense range.
Existence of $\rh_{v, \pi}$
follows from the discussion after Remark~3.3 of~\cite{DDW}.
Uniqueness is clear from the fact that $\io$ has dense range.
Part~(\ref{T-UPropFull-2}) follows from
Proposition~3.4 of~\cite{DDW}.
\end{proof}

\begin{thm}\label{T-UPropRed}
Let $p \in [1, \I),$
let $G$ be a second countable locally compact group,
and let $(G, A, \af)$ be an isometric $G$-$L^p$~operator algebra
which is nondegenerately $\sm$-finitely representable
(Definition~\ref{D:Action}
and Definition \ref{D-LpRep}).
\begin{enumerate}
\item\label{T-UPropRed-1}
There is a \hm{}
\[
\io_{\mathrm{r}} \colon
   C_{\mathrm{c}} (G, A, \af) \to F^p_{\mathrm{r}} (G, A, \af)
\]
such that whenever $\XBM$ is a $\sm$-finite \msp{} and
$(v, \pi)$ is a contractive regular covariant \rpn{} of $(G, A, \af)$
on $L^p (X, \mu),$
there is a unique contractive
\rpn{} $\rh_{v, \pi} \colon F^p_{\mathrm{r}} (G, A, \af) \to \LLp$
such that $\rh_{v, \pi} \circ \io_{\mathrm{r}} = v \ltimes \pi.$
\item\label{T-UPropRed-2}
For every $a \in F^p_{\mathrm{r}} (G, A, \af),$
we have
\begin{align*}
\| a \|
& = \sup \big( \big\{ \| \rh_{v, \pi} (a) \| \colon
    {\mbox{$(v, \pi)$ is a nondegenerate \sft}}
      \\
& \hspace*{4em}
     {\mbox{contractive regular covariant \rpn{} of $(G, A, \af)$}}
            \big\} \big).
\end{align*}
\item\label{T-UPropRed-3}
The \hm\  $\io_{\mathrm{r}}$ of~(\ref{T-UPropFull-1}) has dense range.
\end{enumerate}
\end{thm}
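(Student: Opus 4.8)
The plan is to prove Theorem~\ref{T-UPropRed} in exactly the same way as Theorem~\ref{T-UPropFull}, but using the family ${\mathcal{R}}^p_{\mathrm{r}}(G, A, \af)$ of Definition~\ref{D-CrPrd}(\ref{D-CrPrd-r}) in place of ${\mathcal{R}}^p(G, A, \af)$. By Lemma~\ref{L_3916_Exist}, $F^p_{\mathrm{r}}(G, A, \af)$ is the crossed product of Definition~3.2 of~\cite{DDW} for this (uniformly bounded, nonempty) family, and as such it is a certain Hausdorff completion of $C_{\mathrm{c}}(G, A, \af)$; I take $\io_{\mathrm{r}}$ to be the canonical map of $C_{\mathrm{c}}(G, A, \af)$ into this completion, called $q^{{\mathcal{R}}^p_{\mathrm{r}}}$ in the discussion after Remark~3.3 of~\cite{DDW}. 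Part~(\ref{T-UPropRed-3}), dense range of $\io_{\mathrm{r}}$, is then immediate from the construction, and part~(\ref{T-UPropRed-2}), the norm formula, is Proposition~3.4 of~\cite{DDW} specialized to ${\mathcal{R}}^p_{\mathrm{r}}$: the crossed product norm is by construction the supremum of the seminorms $a \mapsto \| (v \ltimes \pi)(a) \|$ over $(v, \pi) \in {\mathcal{R}}^p_{\mathrm{r}}$, which is precisely the claimed supremum over nondegenerate $\sm$-finite contractive regular covariant representations.

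For part~(\ref{T-UPropRed-1}), given $(v, \pi) \in {\mathcal{R}}^p_{\mathrm{r}}(G, A, \af)$, the ${\mathcal{R}}^p_{\mathrm{r}}$-seminorm on $C_{\mathrm{c}}(G, A, \af)$ dominates $a \mapsto \| (v \ltimes \pi)(a) \|$ by the very definition of that seminorm, so $v \ltimes \pi$ descends along $\io_{\mathrm{r}}$ to a contractive homomorphism $\rh_{v, \pi}$ on $F^p_{\mathrm{r}}(G, A, \af)$; this is again the content of the discussion after Remark~3.3 of~\cite{DDW}. Uniqueness of $\rh_{v, \pi}$ is clear because $\io_{\mathrm{r}}$ has dense range. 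Thus everything in the statement follows from the DDW machinery exactly as in the proof of Theorem~\ref{T-UPropFull}, provided the covariant representation appearing in~(\ref{T-UPropRed-1}) actually belongs to ${\mathcal{R}}^p_{\mathrm{r}}(G, A, \af)$.

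This last proviso is the one point where some care is needed, and it is the step I expect to be the main obstacle: part~(\ref{T-UPropRed-1}) is phrased for an arbitrary \emph{contractive} regular covariant representation on a $\sm$-finite $L^p$~space, whereas ${\mathcal{R}}^p_{\mathrm{r}}(G, A, \af)$ by definition consists only of those regular covariant representations arising (as in Definition~\ref{D-RegRep}) from a \emph{nondegenerate} $\sm$-finite contractive representation $\pi_0$ of~$A$. So it remains to check that for an arbitrary contractive $\pi_0 \colon A \to L(L^p(X_0, \mu_0))$ whose associated regular representation lives on a $\sm$-finite space, one still has $\| (v \ltimes \pi)(a) \| \leq \| \io_{\mathrm{r}}(a) \|$ for all $a \in C_{\mathrm{c}}(G, A, \af)$; equivalently, that the seminorm coming from a possibly degenerate $\pi_0$ is dominated by those coming from nondegenerate ones. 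I would handle this by a reduction to the nondegenerate case, passing to the essential subspace of $\pi_0$ and invoking the invariant-sublattice and separability arguments from the proof of Proposition~\ref{P-SepImpSepRep}, together with Remark~\ref{R_3916_SepSft}, to realize the relevant part of $\pi_0$ on a separable (hence $\sm$-finite) $L^p$~space carrying a nondegenerate contractive representation that dominates it on $C_{\mathrm{c}}(G, A, \af)$. If, on the other hand, the hypothesis of~(\ref{T-UPropRed-1}) is read as referring only to members of ${\mathcal{R}}^p_{\mathrm{r}}(G, A, \af)$, then the proof is word for word that of Theorem~\ref{T-UPropFull}.
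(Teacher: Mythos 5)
Your proposal is correct and is essentially the paper's own proof, which consists of the single remark that the argument is the same as for Theorem~\ref{T-UPropFull} with ${\mathcal{R}}^p_{\mathrm{r}} (G, A, \af)$ in place of ${\mathcal{R}}^p (G, A, \af).$ The proviso you raise at the end is resolved by the reading you yourself suggest: the covariant representations in part~(\ref{T-UPropRed-1}) are intended to be the members of ${\mathcal{R}}^p_{\mathrm{r}} (G, A, \af),$ i.e.\ regular representations arising from nondegenerate $\sm$-finite contractive representations of~$A$ as in Definition~\ref{D-RegRep}, so no additional domination argument is needed.
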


\begin{proof}
The proof is essentially the same as that of Theorem~\ref{T-UPropFull}.
\end{proof}

\begin{dfn}\label{D-IndRpn}
We refer to the \rpn{s} $\rh_{v, \pi}$
in Theorem~\ref{T-UPropFull} and Theorem~\ref{T-UPropRed}
as the {\emph{representations induced by the covariant
representation $(v, \pi).$}}
When no confusion can arise,
we also denote them by $v \ltimes \pi.$
If $(v, \pi)$ is a regular covariant representation,
we refer to $v \ltimes \pi,$
as a \rpn{} of any of
$C_{\mathrm{c}} (G, A, \af),$
$F^p (G, A, \af),$
and $F^p_{\mathrm{r}} (G, A, \af),$
as a {\emph{regular representation}}
of the appropriate algebra.
A {\emph{contractive regular representation}}
is a regular representation for which the original \rpn{} of~$A$
is contractive.
\end{dfn}

It is clear that separability and $\sm$-finiteness
of $v \ltimes \pi$ are the same as the corresponding
properties of~$\pi.$
If $(v, \pi)$ is a regular covariant representation,
derived from a \rpn{} $\pi_0$ of~$A,$
then, by Lemma~\ref{L-ExistRegRep},
they are the same as the corresponding properties for $\pi_0.$

\begin{lem}\label{L_3916_IntIsNdg}
Let $p \in [1, \I),$
let $G$ be a second countable locally compact group,
and let $(G, A, \af)$ be a nondegenerately $\sm$-finitely representable
isometric $G$-$L^p$~operator algebra.
(See Definition~\ref{D:Action} and Definition~\ref{D-LpRep}.)
\begin{enumerate}
\item\label{L_3916_IntIsNdg_Crr}
Let $(v, \pi)$ be a covariant representation of $(G, A, \af)$
in which $\pi$ is nondegenerate.
Then $v \ltimes \pi$ is nondegenerate.
\item\label{L_3916_IntIsNdg_Reg}
Let $\pi_0$ be a nondegenerate \rpn{} of~$A,$
and let $(v, \pi)$ be
the associated regular covariant representation of $(G, A, \af).$
If $\pi_0$ is nondegenerate,
then $v \ltimes \pi$ is nondegenerate.
\end{enumerate}
\end{lem}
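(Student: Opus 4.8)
The plan is to establish part~(\ref{L_3916_IntIsNdg_Crr}) directly and then read off part~(\ref{L_3916_IntIsNdg_Reg}) as an immediate consequence: if $\pi_0$ is nondegenerate, then the \rpn\ $\pi$ in the associated regular covariant \rpn\ $(v,\pi)$ is nondegenerate by \Lem{L-ExistRegRep}(\ref{ExistRegRep-Ndg}), so part~(\ref{L_3916_IntIsNdg_Crr}) applies to $(v,\pi)$ and gives the conclusion.

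For part~(\ref{L_3916_IntIsNdg_Crr}), write $E = L^p (X, \mu)$ for the space on which $(v, \pi)$ acts, and let $\nu$ be left Haar measure on~$G.$ Let $V \S E$ be the closed linear span of the vectors $(v \ltimes \pi)(a) \xi$ for $a \in C_{\mathrm{c}}(G, A, \af)$ and $\xi \in E.$ The key step is to show that $V$ contains every vector of the form $\pi (b) \xi$ with $b \in A$ and $\xi \in E$; since $\pi$ is nondegenerate, such vectors have dense linear span in~$E,$ and $V$ is closed, so this forces $V = E,$ i.e.\ $v \ltimes \pi$ is nondegenerate. To prove the claim I would run the usual approximate-identity argument: given $b \in A,$ $\xi \in E,$ and $\ep > 0,$ use continuity of $g \mapsto v_g \xi$ at $e \in G$ together with $v_e = \id$ to choose a \nbhd\ $U$ of~$e$ with $\| v_g \xi - \xi \| < \ep$ for all $g \in U$; then, since $G$ is locally compact Hausdorff, choose $f \in C_{\mathrm{c}}(G)$ with $f \geq 0,$ $\supp (f) \S U,$ and $\int_G f \, d\nu = 1,$ and set $a (g) = f (g) b,$ which lies in $C_{\mathrm{c}}(G, A, \af).$ Using linearity of $\pi$ and $\int_G f \, d\nu = 1,$
\[
(v \ltimes \pi)(a) \xi - \pi (b) \xi
  = \int_G f (g) \, \pi (b) \bigl( v_g \xi - \xi \bigr) \, d\nu (g),
\]
so $\| (v \ltimes \pi)(a) \xi - \pi (b) \xi \| \leq \| \pi (b) \| \int_G f (g) \| v_g \xi - \xi \| \, d\nu (g) \leq \| \pi (b) \| \, \ep.$ Since $\ep > 0$ was arbitrary, $\pi (b) \xi \in V,$ which completes the argument.

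I do not expect a genuine obstacle here. The only points that need to be kept straight are (i) that the vector-valued integral defining $(v \ltimes \pi)(a)$ behaves as usual — it is set up in Notation~\ref{N-CcGA} via duality, and since $g \mapsto \pi (a (g)) v_g \xi$ is \ct\ with compact support, the estimate $\| \int \cdots \, d\nu \| \leq \int \| \cdots \| \, d\nu$ used above is valid; and (ii) the (implicitly used) fact that, because $C_{\mathrm{c}}(G, A, \af)$ has dense image in both $F^p (G, A, \af)$ and $F^p_{\mathrm{r}}(G, A, \af),$ nondegeneracy of $v \ltimes \pi$ as a \rpn\ of $C_{\mathrm{c}}(G, A, \af)$ is the same as nondegeneracy of the induced \rpn\ of either crossed product.
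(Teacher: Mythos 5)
Your proof is correct, and part~(2) is handled exactly as in the paper (via Lemma~\ref{L-ExistRegRep}(\ref{ExistRegRep-Ndg}) and part~(1)). For part~(1) the paper simply cites Proposition~5.5(i) of~\cite{DDW} rather than giving an argument; the standard approximate-identity computation you supply, with $a(g) = f(g)b$ for a bump function $f$ of integral~$1$ supported near the identity and the norm estimate passed through the duality-defined integral, is a valid self-contained substitute for that citation, and your closing remarks (i) and (ii) correctly identify the only points that need checking.
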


\begin{proof}
Part~(\ref{L_3916_IntIsNdg_Crr})
follows from Proposition 5.5(i) of~\cite{DDW}.
Part~(\ref{L_3916_IntIsNdg_Reg})
follows from part~(\ref{L_3916_IntIsNdg_Crr})
and Lemma \ref{L-ExistRegRep}(\ref{ExistRegRep-Ndg}).
\end{proof}

Under additional hypotheses,
there is a converse
to Lemma \ref{L_3916_IntIsNdg}(\ref{L_3916_IntIsNdg_Crr});
see Proposition 5.5(i) of~\cite{DDW}.
There is also,
again under additional hypotheses,
an analog of the statement for \ca{s}
that every nondegenerate \rpn{} of $F^p (G, A, \af)$
comes from a nondegenerate covariant \rpn{} of $(G, A, \af),$
and a related statement for $F^p_{\mathrm{r}} (G, A, \af).$
See Theorem~8.1 and Theorem~9.1 of~\cite{DDW}.
We do not reformulate these results for our situation here.

We also find it convenient to use the $L^1$ crossed product.
(As far as we know,
in general this crossed product need not be the same
as either of the $L^1$~operator crossed products.
However, for the special case $A = \C,$
it agrees with both the $L^1$~operator crossed products.
See Proposition~\ref{P_3217_L1OpGpAlg} below.)

\begin{ntn}\label{N-L1CrPrd}
Let $A$ be a Banach algebra,
let $G$ be a locally compact group with left Haar measure~$\nu,$
and let $\af \colon G \to \Aut (A)$
be an isometric action of $G$ on~$A.$
Then $L^1 (G, A, \af)$ denotes the convolution algebra
consisting of all $L^1$~functions from $G$ to~$A.$
\end{ntn}

The space $L^1 (G, A, \af)$
is as defined in the discussion on page~630 of~\cite{Sw2},
taking $\sm (g) = 1$ for all $g \in G.$
The proof that it is a Banach algebra under convolution
follows from the estimates in the proof of Theorem 2.2.6 of~\cite{Sw2}.
Alternatively, see Appendix~B of~\cite{Wl},
and note that everything there not involving the *~operation
goes through for an isometric action of $G$ on a Banach algebra.
The important point is that
$L^1 (G, A, \af)$ is the completion of $C_{\mathrm{c}} (G, A, \af)$
in the norm $\| a \|_1 = \int_G \| a (g) \| \, d \nu (g).$
A third source,
with a detailed proof, is Proposition IV.1.5 of~\cite{Jn2}.
(See Lemma IV.1.1 and Definition IV.1.4 of~\cite{Jn2}
for the notation.
Actions are assumed isometric;
see Definition I.2.2 of~\cite{Jn1}.)

The following proposition
and its corollary
will not be used.
They are included to show
one way in which $L^p$~operator crossed products
are well behaved.

\begin{prp}\label{P-RegCcInj}
Let $p \in [1, \I),$
let $G$ be a locally compact group with left Haar measure~$\nu,$
and let $(G, A, \af)$ be an isometric $G$-$L^p$~operator algebra.
Let $\XBM$ be a \msp, let $\pi_0 \colon A \to \LLp$
be an injective \rpn,
and let $(v, \pi)$ be the associated regular \rpn,
as in \Def{D-RegRep}.
Then the map
$v \ltimes \pi \colon
   C_{\mathrm{c}} (G, A, \af) \to
   L \big( L^p ( G \times X, \, \nu \times \mu) \big)$
of Notation~\ref{N-CcGA}
is injective.
\end{prp}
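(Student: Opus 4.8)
The plan is to combine the explicit formula for a regular representation on $C_{\mathrm{c}}(G, A, \af)$ with an approximate-identity argument, using crucially that the action $\af$ is isometric. First I would record, as in Remark~\ref{R-RegRepCP} (the computation there does not really use $\sm$-finiteness of~$\mu$, and in any case it can be checked directly from Notation~\ref{N-CcGA} since it only concerns values on $C_{\mathrm{c}}$), that for $a \in C_{\mathrm{c}}(G, A, \af)$ and $\xi \in C_{\mathrm{c}}\big(G, \, L^p(X, \mu)\big) \S L^p(G \times X, \, \nu \times \mu)$ one has
\[
\big( (v \ltimes \pi)(a) \xi \big)(g)
  = \int_G \pi_0\big( \af_g^{-1}(a(h)) \big)\big( \xi(h^{-1} g) \big) \, d\nu(h).
\]

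Next I would fix a normalized approximate identity $(\et_i)_{i \in I}$ for convolution on~$G$: a net of nonnegative functions $\et_i \in C_{\mathrm{c}}(G)$ with $\int_G \et_i \, d\nu = 1$ whose supports are eventually contained in any prescribed neighborhood of $e \in G$ and are all contained in a single fixed compact neighborhood~$V$ of~$e$. Given $\zeta \in L^p(X, \mu)$, I would apply $(v \ltimes \pi)(a)$ to the vectors $\xi_i = \et_i \otimes \zeta$ (that is, $\xi_i(g) = \et_i(g)\zeta$), and I would introduce $w(g) = \pi_0\big(\af_g^{-1}(a(g))\big)\zeta$; this $w$ lies in $C_{\mathrm{c}}\big(G, \, L^p(X, \mu)\big)$, continuity holding because $\af$ is isometric and strongly continuous and $\pi_0$ is bounded. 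Both $w$ and all the $\xi_i$ are supported, in the $g$-variable, in the fixed compact set $K' = \supp(a) \cdot V$. The key claim is that $(v \ltimes \pi)(a) \xi_i \to w$ in $L^p(G \times X, \, \nu \times \mu)$.

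To prove the claim I would put $c_i = \int_G \et_i(h^{-1}g) \, d\nu(h)$, note that (changing variable $k = h^{-1}g$) this is independent of~$g$ and equals $\int_G \et_i(k) \Delta(k)^{-1} \, d\nu(k)$ for the modular function $\Delta$ of~$G$, so that $c_i \to 1$; then the displayed formula together with $\int_G \et_i \, d\nu = 1$ gives
\[
\big( (v \ltimes \pi)(a) \xi_i \big)(g) - c_i \, w(g)
  = \int_G \et_i(h^{-1}g) \, \pi_0\big( \af_g^{-1}(a(h) - a(g)) \big)\zeta \, d\nu(h).
\]
Since $\af_g^{-1}$ is isometric, the integrand has $L^p$-norm at most $\et_i(h^{-1}g) \, \|\pi_0\| \, \|\zeta\|_p \, \|a(h) - a(g)\|$, and on its support $h$ has the form $g k^{-1}$ with $k \in \supp(\et_i)$; integrating therefore yields
\[
\big\| \big( (v \ltimes \pi)(a) \xi_i \big)(g) - c_i \, w(g) \big\|_p
  \leq \|\pi_0\| \, \|\zeta\|_p \, c_i \sup_{k \in \supp(\et_i)} \| a(g k^{-1}) - a(g) \| .
\]
Since $a$ is continuous with compact support it is uniformly continuous, so this upper bound tends to $0$ uniformly in~$g$; as everything in sight is supported in~$K'$ with $\nu(K') < \infty$ and $c_i \to 1$, the claim follows.

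Finally, if $(v \ltimes \pi)(a) = 0$ then every $(v \ltimes \pi)(a) \xi_i$ is $0$, so the claim forces $w = 0$; since $w$ is continuous this means $\pi_0\big(\af_g^{-1}(a(g))\big)\zeta = 0$ for every $g \in G$, and letting $\zeta$ run over $L^p(X, \mu)$ gives $\pi_0\big(\af_g^{-1}(a(g))\big) = 0$ for all~$g$. Injectivity of $\pi_0$ then yields $\af_g^{-1}(a(g)) = 0$, hence $a(g) = 0$, for all $g \in G$, i.e.\ $a = 0$; thus $v \ltimes \pi$ is injective. The step I expect to be the main obstacle is the core estimate above — in particular passing rigorously from the pointwise-in-$g$ bound to the $L^p(G \times X)$-norm bound, which is where the uniform support control in the $g$-variable and the finiteness of $\nu$ on compact sets are needed, together with the bookkeeping around the modular function and the (routine) check that the formula of Remark~\ref{R-RegRepCP} does not require $\sm$-finiteness of~$\mu$. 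The one genuinely convenient point is that the isometry of the action lets one replace $\af_g^{-1}(a(h) - a(g))$ by $a(h) - a(g)$ inside the norm, reducing the whole continuity estimate to ordinary uniform continuity of the $A$-valued function~$a$.
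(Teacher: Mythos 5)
Your argument is correct, but it takes a genuinely different route from the paper. The paper follows Lemma~2.26 of~\cite{Wl}: it fixes a single point $g_0$ with $a(g_0) \neq 0,$ chooses a vector $\xi_0$ and a functional $\om_0$ detecting $\pi_0 \big( \af_{g_0}^{-1} (a (g_0)) \big),$ and then builds a bump function $f$ supported in a small neighborhood $W$ of the identity with $W^2 \S U^{-1} g_0,$ so that the double integral $\om \big( (v \ltimes \pi)(a) \xi \big)$ concentrates near $(g_0, g_0)$ and a $\tfrac{1}{2} | c (g_0, g_0) |$ perturbation estimate shows it is nonzero. You instead run an approximate-identity argument that recovers the entire ``diagonal'' function $g \mapsto \pi_0 \big( \af_g^{-1} (a (g)) \big) \zeta$ as the $L^p$-limit of $(v \ltimes \pi)(a) (\et_i \otimes \zeta),$ which then forces $a = 0$ pointwise. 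Your approach is more informative (it exhibits how $v \ltimes \pi$ determines $a,$ essentially reconstructing the conditional-expectation-type coefficients that Section~\ref{Sec_DiscCP} later develops in the discrete case), at the cost of the bookkeeping with the modular function, right uniform continuity of~$a,$ and the uniform compact support control needed to pass from the pointwise-in-$g$ bound to the $L^p (G \times X)$ bound; the paper's argument avoids all of that by only needing to detect nonvanishing at one point. Both proofs rest on the same underlying computation, namely the identification of the weakly defined integral of Notation~\ref{N-CcGA} with the explicit pointwise formula of Remark~\ref{R-RegRepCP} for $\xi \in C_{\mathrm{c}} \big( G, \, L^p (X, \mu) \big)$ (a Fubini-type statement for norm-convergent integrals of compactly supported continuous Banach-space-valued functions), and your observation that this identification does not actually need $\sm$-finiteness of~$\mu$ is correct and is implicitly relied on by the paper's own proof as well. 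Your estimates check out: the substitution giving $c_i = \int_G \et_i (k) \Dt (k)^{-1} \, d \nu (k) \to 1,$ the use of isometry of $\af_g^{-1}$ to reduce to $\| a (g k^{-1}) - a (g) \|,$ and the final integration over the fixed compact set $K' = \supp (a) \cdot V$ are all sound.
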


\begin{proof}
The proof is essentially the same as that of Lemma~2.26 of~\cite{Wl}.
(However, we avoid the initial translation simplification there,
since we don't need the properties of the translation map
for anything else.)

Thus, let $a \in C_{\mathrm{c}} (G, A, \af)$ be nonzero.
Choose $g_0 \in G$ such that $a (g_0) \neq 0.$
Then $\big( \pi_0 \circ \af_{g_0}^{-1} \big) (a (g_0) ) \neq 0.$
Choose $\xi_0 \in L^p (X, \mu)$ and $\om_0 \in L^p (X, \mu)'$
such that
\[
\om_0 \big( \big( \pi_0 \circ \af_{g_0}^{-1} \big) (a (g_0) ) \xi_0 \big)
 \neq 0.
\]
Define a \cfn{} $c \colon G \times G \to \C$ by
\[
c (g, h)
  = \om_0 \big( \big( \pi_0 \circ \af_{g}^{-1} \big) (a (h) ) \xi_0 \big)
\]
for $g, h \in G.$
Then $c (g_0, g_0) \neq 0.$
Therefore there is an open set $U \S G$
such that $g_0 \in U$ and
such that for all $g, h \in U,$
we have
$| c (g, h) - c (g_0, g_0) | < \tfrac{1}{2} | c (g_0, g_0) |.$
Choose an open set $W \S G$ such that $1 \in W$
and $W^2 \S U^{-1} g_0.$
Choose $f \in C_{\mathrm{c}} (G)$ such that $f \geq 0,$
$\supp (f) \S W,$
and $f (1) > 0.$
Let $\xi \in L^p (G \times X, \, \nu \times \mu)$
be given by the function $g \mapsto f (g) \xi_0$
from $G$ to $L^p (X, \mu).$
Let $\om \in C_{\mathrm{c}} (G, L^p (X, \mu)' )$
be the function $\om (g) = f (g^{-1} g_0) \om_0$
for $g \in G,$
and identify $\om$ with an element of
$L^p (G \times X, \, \nu \times \mu)'$ as in Remark~\ref{R-RegRepCP}.
Set
\[
\ld = \int_{G \times G}
   f (g^{-1} g_0) f (h^{-1} g) \, d \nu (h) d \nu (g).
\]
Two changes of variables,
first from $g$ to $g_0 g,$
then from $h$ to $g_0 h,$
show that
\[
\ld = \int_{G \times G}
   f (g^{-1}) f (h^{-1} g) \, d \nu (h) d \nu (g),
\]
which is clearly strictly positive.

Suppose $g, h \in G$
and $f (g^{-1} g_0) f (h^{-1} g) \neq 0.$
Then $g^{-1} g_0 \in W$ and $h^{-1} g \in W.$
So
\[
g^{-1} g_0 = g^{-1} g_0 \cdot 1 \in W^2 \S U^{-1} g_0
\andeqn
h^{-1} g_0 = h^{-1} g \cdot g^{-1} g_0 \in W^2 \S U^{-1} g_0.
\]
It follows that $g, h \in U,$
so
$| c (g, h) - c (g_0, g_0) | < \tfrac{1}{2} | c (g_0, g_0) |.$
Using this result at the second step,
and the formula~(\ref{Eq:WeakIntRegRep}) in Remark~\ref{R-RegRepCP}
at the first step,
we get
\begin{align*}
& \big| \om \big( (v \ltimes \pi) (a) \xi \big)
       - \ld c (g_0, g_0) \big|
\\
& \hspace*{3em} {\mbox{}}
 \leq \int_{G \times G}
   f (g^{-1} g_0) f (h^{-1} g) | c (g, h) - c (g_0, g_0) |
         \, d \nu (h) d \nu (g)
  \leq \frac{\ld | c (g_0, g_0) |}{2}.
\end{align*}
Since $\ld > 0$ and $c (g_0, g_0) \neq 0,$
we deduce that $(v \ltimes \pi) (a) \neq 0.$
\end{proof}

\begin{cor}\label{C-CcRedInj}
Let $p \in [1, \I),$
let $G$ be a second countable locally compact group,
and let $(G, A, \af)$ be a nondegenerately $\sm$-finitely representable
isometric $G$-$L^p$~operator algebra.
(See Definition~\ref{D:Action} and Definition~\ref{D-LpRep}.)
Then the map
$\io \colon
   C_{\mathrm{c}} (G, A, \af) \to F^p (G, A, \af)$
of Theorem~\ref{T-UPropFull}(\ref{T-UPropFull-1})
and the map
$\io_{\mathrm{r}} \colon
   C_{\mathrm{c}} (G, A, \af) \to F^p_{\mathrm{r}} (G, A, \af)$
of Theorem~\ref{T-UPropRed}(\ref{T-UPropRed-1})
are both injective.
\end{cor}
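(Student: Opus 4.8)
The plan is to produce a single covariant \rpn{} of $(G, A, \af)$ which is already injective on $C_{\mathrm{c}} (G, A, \af)$ and which factors through both $\io$ and $\io_{\mathrm{r}},$ and then to deduce injectivity of $\io$ and $\io_{\mathrm{r}}$ by an elementary factorization argument. (Note that a priori $\io$ and $\io_{\mathrm{r}}$ are only the canonical maps into certain Hausdorff completions of $C_{\mathrm{c}} (G, A, \af),$ so injectivity is a genuine assertion rather than something built into the construction.)

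First I would invoke the hypothesis that $(G, A, \af)$ is nondegenerately $\sm$-finitely representable to choose a \msp{} $\XBM$ with $\mu$ \sft{} together with a nondegenerate isometric \rpn{} $\pi_0 \colon A \to \LLp;$ in particular $\pi_0$ is injective. Let $(v, \pi)$ be the regular covariant \rpn{} of $(G, A, \af)$ on $L^p (G \times X, \, \nu \times \mu)$ associated to $\pi_0,$ as in \Def{D-RegRep}. By \Lem{L-ExistRegRep}, the \rpn{} $(v, \pi)$ is contractive (parts~(\ref{ExistRegRep-VIsom}) and~(\ref{ExistRegRep-PiContr})), nondegenerate (part~(\ref{ExistRegRep-Ndg})), and $\sm$-finite (part~(\ref{ExistRegRep-Sft}), using that $G$ is second countable and $\mu$ is \sft). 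Consequently $(v, \pi)$ lies both in the family ${\mathcal{R}}^p (G, A, \af)$ used to define $F^p (G, A, \af)$ and in the family ${\mathcal{R}}^p_{\mathrm{r}} (G, A, \af)$ used to define $F^p_{\mathrm{r}} (G, A, \af).$

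The key input is Proposition~\ref{P-RegCcInj}: because $\pi_0$ is injective, the map
\[
v \ltimes \pi \colon C_{\mathrm{c}} (G, A, \af)
   \to L \big( L^p (G \times X, \, \nu \times \mu) \big)
\]
of Notation~\ref{N-CcGA} is injective. Now Theorem~\ref{T-UPropFull}(\ref{T-UPropFull-1}) supplies a contractive \rpn{} $\rh_{v, \pi}$ of $F^p (G, A, \af)$ with $\rh_{v, \pi} \circ \io = v \ltimes \pi,$ and Theorem~\ref{T-UPropRed}(\ref{T-UPropRed-1}) supplies a contractive \rpn{} of $F^p_{\mathrm{r}} (G, A, \af)$ with $\rh_{v, \pi} \circ \io_{\mathrm{r}} = v \ltimes \pi.$ In each of these equations the composite on the right is injective, so the inner maps $\io$ and $\io_{\mathrm{r}}$ must themselves be injective, which is the assertion.

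I do not foresee any real obstacle: the argument is simply the composition of Proposition~\ref{P-RegCcInj} with the universal properties of the two crossed products. The only point requiring a moment's care is the verification that the regular covariant \rpn{} built from $\pi_0$ actually belongs to both defining families — in particular that it inherits $\sm$-finiteness — which is exactly what \Lem{L-ExistRegRep} guarantees under the standing assumptions that $G$ is second countable and the representing measure is \sft.
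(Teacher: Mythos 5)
Your proof is correct and follows essentially the same route as the paper: the paper also deduces the corollary directly from Proposition~\ref{P-RegCcInj} together with the universal properties of the two crossed products, merely citing the norm formulas in Theorem~\ref{T-UPropFull}(\ref{T-UPropFull-2}) and Theorem~\ref{T-UPropRed}(\ref{T-UPropRed-2}) where you use the factorizations from part~(\ref{T-UPropFull-1}); the two deductions are interchangeable. Your care in checking that the regular covariant representation lies in both defining families is exactly the right point to verify.
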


\begin{proof}
This is immediate from Proposition~\ref{P-RegCcInj}
and Theorem \ref{T-UPropFull}(\ref{T-UPropFull-2}) (for $\io$)
and Theorem \ref{T-UPropRed}(\ref{T-UPropRed-2})
(for $\io_{\mathrm{r}}$).
\end{proof}

\begin{lem}\label{L-CompOfCrPrd}
Let $p \in [1, \I),$
let $G$ be a second countable locally compact group,
and let $(G, A, \af)$ be a nondegenerately $\sm$-finitely representable
isometric $G$-$L^p$~operator algebra.
(See Definition~\ref{D:Action} and Definition~\ref{D-LpRep}.)
Let $\io \colon
   C_{\mathrm{c}} (G, A, \af) \to F^p (G, A, \af)$
be as in Theorem \ref{T-UPropFull}(\ref{T-UPropFull-1})
and let
$\io_{\mathrm{r}} \colon
   C_{\mathrm{c}} (G, A, \af) \to F^p_{\mathrm{r}} (G, A, \af)$
be as in
Theorem~\ref{T-UPropRed}(\ref{T-UPropRed-1}).
Then there are unique
\ct\  homomorphisms
\[
\kp \colon L^1 (G, A, \af) \to F^p (G, A, \af)
\andeqn
\kp_{\mathrm{r}} \colon F^p (G, A, \af) \to F^p_{\mathrm{r}} (G, A, \af)
\]
such that $\kp |_{C_{\mathrm{c}} (G, A, \af)} = \io$
and $\kp_{\mathrm{r}} \circ \io = \io_{\mathrm{r}}.$
Moreover,
$\kp$~and $\kp_{\mathrm{r}}$ are contractive and have dense ranges.
\end{lem}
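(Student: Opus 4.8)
The plan is to construct $\kp$ and $\kp_{\mathrm{r}}$ one at a time, in each case by producing the map first on a dense subalgebra, checking there that it is norm nonincreasing, and then extending by continuity; uniqueness and density of the ranges will then be immediate.

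For $\kp$, the point I would establish first is that $\io \colon C_{\mathrm{c}}(G, A, \af) \to F^p (G, A, \af)$ is contractive for the $L^1$~norm $\| a \|_1 = \int_G \| a (g) \| \, d\nu (g).$ By Theorem~\ref{T-UPropFull}(\ref{T-UPropFull-2}), for $a \in C_{\mathrm{c}}(G, A, \af)$ we have $\| \io (a) \| = \sup_{(v, \pi)} \| (v \ltimes \pi) (a) \|$, the supremum taken over all nondegenerate $\sm$-finite contractive covariant representations. For such a pair $(v, \pi)$ on $L^p (X, \mu),$ the representation $\pi$ is contractive and each $v_g$ is an isometric bijection, so the defining identity~(\ref{Eq:DefOfInt}) for the integral $v \ltimes \pi$ gives $| \om ( (v \ltimes \pi) (a) \xi ) | \leq \int_G \| \om \| \, \| \pi (a (g)) \| \, \| v_g \| \, \| \xi \| \, d\nu (g) \leq \| \om \| \, \| \xi \| \, \| a \|_1$ for all $\xi \in L^p (X, \mu)$ and all $\om \in L^p (X, \mu)',$ whence $\| (v \ltimes \pi) (a) \| \leq \| a \|_1$ and therefore $\| \io (a) \| \leq \| a \|_1.$ Since $L^1 (G, A, \af)$ is the completion of $C_{\mathrm{c}}(G, A, \af)$ in $\| \cdot \|_1$ (as recalled after Notation~\ref{N-L1CrPrd}) and multiplication is jointly continuous, $\io$ extends uniquely to a contractive homomorphism $\kp \colon L^1 (G, A, \af) \to F^p (G, A, \af)$ with $\kp|_{C_{\mathrm{c}}(G, A, \af)} = \io.$

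For $\kp_{\mathrm{r}}$, the point is that every nondegenerate $\sm$-finite contractive regular covariant representation, regarded simply as a covariant representation, belongs to the family used to define $F^p (G, A, \af)$; equivalently, ${\mathcal{R}}^p_{\mathrm{r}}(G, A, \af) \subset {\mathcal{R}}^p(G, A, \af),$ as already noted in the proof of \Lem{L_3916_Exist} (via \Lem{L-ExistRegRep}). Comparing Theorem~\ref{T-UPropRed}(\ref{T-UPropRed-2}) with Theorem~\ref{T-UPropFull}(\ref{T-UPropFull-2}), it follows that $\| \io_{\mathrm{r}} (a) \| \leq \| \io (a) \|$ for all $a \in C_{\mathrm{c}}(G, A, \af).$ Hence $\io (a) \mapsto \io_{\mathrm{r}} (a)$ is a well-defined norm nonincreasing homomorphism on the subalgebra $\io \big( C_{\mathrm{c}}(G, A, \af) \big) \subset F^p (G, A, \af),$ which is dense by Theorem~\ref{T-UPropFull}(\ref{T-UPropFull-3}), and it therefore extends uniquely to a contractive homomorphism $\kp_{\mathrm{r}} \colon F^p (G, A, \af) \to F^p_{\mathrm{r}}(G, A, \af)$ with $\kp_{\mathrm{r}} \circ \io = \io_{\mathrm{r}}.$

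Uniqueness of $\kp$ and of $\kp_{\mathrm{r}}$ then holds because two continuous homomorphisms agreeing on a dense subalgebra coincide. For the ranges, $\kp \big( L^1 (G, A, \af) \big) \supseteq \io \big( C_{\mathrm{c}}(G, A, \af) \big),$ dense in $F^p (G, A, \af)$ by Theorem~\ref{T-UPropFull}(\ref{T-UPropFull-3}), and $\kp_{\mathrm{r}} \big( F^p (G, A, \af) \big) \supseteq \io_{\mathrm{r}} \big( C_{\mathrm{c}}(G, A, \af) \big),$ dense in $F^p_{\mathrm{r}}(G, A, \af)$ by Theorem~\ref{T-UPropRed}(\ref{T-UPropRed-3}). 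The only step that calls for any care is the estimate $\| (v \ltimes \pi) (a) \| \leq \| a \|_1$: because $v \ltimes \pi$ is given by the weak (duality) integral of Notation~\ref{N-CcGA}, this bound should be read off from the general properties of such integrals in~\cite{DDW} (Theorem~2.17 and Proposition~2.19) rather than obtained by formal manipulation; everything else is routine bookkeeping with the universal properties already in hand.
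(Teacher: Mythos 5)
Your proposal is correct and follows essentially the same route as the paper: the key step in both is the estimate $\| (v \ltimes \pi) (a) \| \leq \| a \|_1$ obtained from the duality definition~(\ref{Eq:DefOfInt}) together with contractivity of $\pi$ and the $v_g,$ and the construction of $\kp_{\mathrm{r}}$ via the inclusion of the regular covariant representations among all contractive covariant representations, with uniqueness and dense ranges read off from parts~(\ref{T-UPropFull-3}) and~(\ref{T-UPropRed-3}) of the universal properties and density of $C_{\mathrm{c}} (G, A, \af)$ in $L^1 (G, A, \af).$
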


\begin{proof}
Once everything else is done,
the statements about uniqueness and dense ranges follow from
Theorem~\ref{T-UPropFull}(\ref{T-UPropFull-3}),
Theorem~\ref{T-UPropRed}(\ref{T-UPropRed-3}),
and density of $C_{\mathrm{c}} (G, A, \af)$ in $L^1 (G, A, \af).$
The remaining assertions about~$\kp_{\mathrm{r}}$
are immediate from Theorem~\ref{T-UPropFull}(\ref{T-UPropFull-2})
and Theorem~\ref{T-UPropRed}(\ref{T-UPropRed-2}),
because every contractive regular covariant \rpn{} %
is a contractive covariant \rpn.

It remains to prove that $\kp$ exists and is contractive.
By Theorem~\ref{T-UPropFull}(\ref{T-UPropFull-2}),
it suffices to show that if $\XBM$ is a \msp,
$(v, \pi)$ is a contractive covariant \rpn{} of $(G, A, \af)$
on $L^p (X, \mu),$
and $a \in C_{\mathrm{c}} (G, A, \af),$
then $\| (v \ltimes \pi) (a) \| \leq \| a \|_1.$
We let $\xi \in L^p (X, \mu)$ and $\om \in L^p (X, \mu)',$
and use~(\ref{Eq:DefOfInt}) in Notation~\ref{N-CcGA}.
Since $\pi$ and $v$ are contractive, we get
\[
\big| \om \big( (v \ltimes \pi) (a) \xi \big) \big|
 \leq
   \int_G \big| \om \big( \pi (a (g)) v_g \xi \big) \big| \, d \nu (g)
 \leq \| \om \| \cdot \| a \|_1 \cdot \| \xi \|.
\]
The required estimate follows.
\end{proof}

When $p = 2$ and $A$ is a \ca,
the map
$\kp_{\mathrm{r}} \colon
   C^* (G, A, \af) \to C^*_{\mathrm{r}} (G, A, \af)$
is always surjective
(this follows from Theorem 1.5.7 of~\cite{Pd1}),
and is an isometric isomorphism if $G$ is amenable
(Theorem 7.7.7 of~\cite{Pd1}).
Moreover, for $A = \C,$
if $\kp_{\mathrm{r}}$ is an isomorphism,
then $G$ is amenable.
(See Theorem 7.3.9 of~\cite{Pd1}.)
We have not investigated whether any of these facts
carries over to $L^p$~operator crossed products for $p \neq 2,$
except for some very special cases.
We will see in Remark~\ref{R-FGpCP} below that if $G$ is finite
then $\kp_{\mathrm{r}}$ is bijective.
Also, the case $p = 1$ is apparently special,
at least when considering the $L^p$~operator group algebras,
which we denote by
$F^p (G)$ and $F^p_{\mathrm{r}} (G).$

\begin{prp}\label{P_3217_L1OpGpAlg}
Let $G$ be any second countable locally compact group.
Then the maps
\[
\kp \colon L^1 (G) \to F^1 (G)
\andeqn
\kp_{\mathrm{r}} \colon F^1 (G) \to F^1_{\mathrm{r}} (G)
\]
of Lemma~\ref{L-CompOfCrPrd}
are isometric isomorphisms of Banach algebras.
\end{prp}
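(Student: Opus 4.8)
The plan is to produce one specific covariant representation whose associated operator norm on $C_{\mathrm{c}}(G)$ already equals the $L^1$-norm, and then to propagate that single equality to all three algebras by density and the universal properties. The representation to use is the left regular representation of $G$ on $L^1(G,\nu)$ itself. Concretely, I would take $(X,\mu)$ to be the one-point measure space of mass~$1$, so $L^1(X,\mu)=\C$ and $\pi_0=\id_\C$ is a nondegenerate $\sm$-finite contractive \rpn{} of $A=\C$; since $G$ is second countable it is $\sm$-compact, so $\nu$ is \sft. By \Def{D-RegRep}, the associated regular covariant \rpn{} $(v,\pi)$ of $(G,\C,\mathrm{triv})$ lives on $L^1(G\times X,\,\nu\times\mu)=L^1(G,\nu)$, with $v_g$ left translation and $\pi(\ld)=\ld\cdot\id$. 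By Remark~\ref{R-RegRepCP} (applicable since $\mu$ is \sft) and Remark~\ref{R-CcGLp}, for $a\in C_{\mathrm{c}}(G)=C_{\mathrm{c}}(G,\C,\mathrm{triv})$ the operator $(v\ltimes\pi)(a)$ on $L^1(G)$ is left convolution by~$a$, i.e. $\big((v\ltimes\pi)(a)\xi\big)(g)=\int_G a(h)\,\xi(h^{-1}g)\,d\nu(h)$ for $\xi\in C_{\mathrm{c}}(G)$, extended by continuity to $L^1(G)$. This $(v,\pi)$ belongs to the family of regular covariant representations defining $F^1_{\mathrm{r}}(G)$.

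The crucial input is the classical fact that left convolution is an \emph{isometry} of $L^1(G)$, i.e. $\|(v\ltimes\pi)(a)\|_{L(L^1(G))}=\|a\|_1$ for $a\in C_{\mathrm{c}}(G)$. The inequality $\le$ is the integral (Minkowski) inequality together with left invariance of~$\nu$. For $\ge$, I would let $\xi$ run through an approximate identity for $L^1(G)$: a net $(e_\ld)$ of nonnegative functions in $C_{\mathrm{c}}(G)$ with $\int_G e_\ld\,d\nu=1$ and supports shrinking to~$\{1\}$; then $a*e_\ld\to a$ in $L^1$ while $\|e_\ld\|_1=1$, using continuity of right translation on $L^1$ and continuity of the modular function. (This is the one place where $p=1$ is essential; for $p>1$ the regular representation on $L^p(G)$ does not recover $\|\cdot\|_1$.)

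For the propagation step, combine the preceding estimate with Lemma~\ref{L-CompOfCrPrd} (which gives $\|\io(a)\|_{F^1(G)}\le\|a\|_1$ and $\|\kp_{\mathrm{r}}\|\le1$ and $\kp_{\mathrm{r}}\circ\io=\io_{\mathrm{r}}$) and Theorem~\ref{T-UPropRed}(\ref{T-UPropRed-2}) (the reduced norm is a supremum over regular covariant \rpn{s}, which includes $(v,\pi)$) to get, for every $a\in C_{\mathrm{c}}(G)$,
\[
\|a\|_1=\|(v\ltimes\pi)(a)\|\le\|\io_{\mathrm{r}}(a)\|_{F^1_{\mathrm{r}}(G)}=\|\kp_{\mathrm{r}}(\io(a))\|_{F^1_{\mathrm{r}}(G)}\le\|\io(a)\|_{F^1(G)}\le\|a\|_1,
\]
so all four quantities coincide. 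Since $C_{\mathrm{c}}(G)$ is dense in $L^1(G)$, $\io(C_{\mathrm{c}}(G))$ is dense in $F^1(G)$ and $\io_{\mathrm{r}}(C_{\mathrm{c}}(G))$ is dense in $F^1_{\mathrm{r}}(G)$ (Theorems~\ref{T-UPropFull}(\ref{T-UPropFull-3}) and~\ref{T-UPropRed}(\ref{T-UPropRed-3})), the contractive homomorphism $\kp$ restricts to the isometry $a\mapsto\io(a)$ on the dense subspace $C_{\mathrm{c}}(G)\S L^1(G)$; hence $\kp$ is isometric, its range is closed and contains the dense set $\io(C_{\mathrm{c}}(G))$, so $\kp$ is an isometric isomorphism of Banach algebras. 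The identical argument applied to $\kp_{\mathrm{r}}$ on the dense subalgebra $\io(C_{\mathrm{c}}(G))\S F^1(G)$, using $\|\io(a)\|_{F^1(G)}=\|\io_{\mathrm{r}}(a)\|_{F^1_{\mathrm{r}}(G)}$, shows $\kp_{\mathrm{r}}$ is an isometric isomorphism as well.

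The hard part is the isometry statement for left convolution on $L^1(G)$ in the second paragraph; everything else is bookkeeping with the universal properties already established. The only delicate point there is handling non-unimodular $G$, where the modular function appears upon sliding the approximate identity past~$a$, but this is standard harmonic analysis.
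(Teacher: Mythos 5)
Your proof is correct and follows essentially the same route as the paper: reduce via Lemma~\ref{L-CompOfCrPrd} and Theorem~\ref{T-UPropRed}(\ref{T-UPropRed-2}) to a single regular representation coming from the one-point space, identify $v \ltimes \pi$ with left convolution on $L^1(G),$ and use a norm-one approximate identity to see that this operator has norm exactly $\| a \|_1.$ The paper's proof is just a terser version of the same argument.
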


\begin{proof}
By Lemma~\ref{L-CompOfCrPrd},
it suffices to prove that
$\| (\kp_{\mathrm{r}} \circ \kp ) (a) \| \geq \| a \|$
for all $a \in L^1 (G).$
For this,
by Theorem \ref{T-UPropRed}(\ref{T-UPropRed-2})
it suffices to find one \rpn{} $\pi$ of $\C$
on an $L^1$~space $L^1 (X, \mu)$
such that, if $(v, \pi)$ is the associated regular \rpn,
as in \Def{D-RegRep},
then the map
$v \ltimes \pi$
has the property that $\| (v \ltimes \pi) (a) \| \geq \| a \|.$
We take $X$ to be a one point space
and $\mu$ to be counting measure.
Then $v \ltimes \pi \colon L^1 (G) \to L ( L^1 (G))$
sends $a \in L^1 (G)$ to left convolution by~$a.$
This operator has norm $\| a \|$ because
$L^1 (G)$ has an approximate identity consisting
of elements of norm~$1.$
\end{proof}

In particular,
the map $\kp_{\mathrm{r}} \colon F^1 (G) \to F^1_{\mathrm{r}} (G)$
is an isomorphism regardless of whether $G$ is amenable.

In the rest of this section,
we define the dual action on an $L^p$~operator crossed product
(both full and reduced)
when the group is abelian.
We use the dual action for the case $G = \Z$
in Section~\ref{Sec_PV}
to obtain properties of a smooth subalgebra of the crossed product.

\begin{dfn}\label{D-DualCc}
Let $A$ be a Banach algebra,
let $G$ be a locally compact abelian group,
and let $\af \colon G \to \Aut (A)$ be an action of $G$ on~$A.$
For $\ta \in {\widehat{G}},$
we define
\[
{\widehat{\af}}_{\ta} \colon
   C_{\mathrm{c}} (G, A, \af) \to C_{\mathrm{c}} (G, A, \af)
\]
by
\[
{\widehat{\af}}_{\ta} (a) (g) = {\ov{\ta (g)}} a (g)
\]
for $a \in C_{\mathrm{c}} (G, A, \af)$ and $g \in G.$
The map $\ta \mapsto {\widehat{\af}}_{\ta}$ is called
the
{\emph{dual action of ${\widehat{G}}$ on $C_{\mathrm{c}} (G, A, \af)$}}.
\end{dfn}

The choice ${\ov{\ta (g)}}$ rather than $\ta (g)$
seems to be more common.
It agrees with the convention
in~\cite{Tki} (see the beginning of Section~3 there)
and~\cite{Wl} (see the beginning of Section~7.1 there),
but disagrees with the convention in~\cite{Pd1}
(see Proposition 7.8.3 there).

\begin{lem}\label{L-DualIsAct}
In the situation of \Def{D-DualCc},
the map $\ta \mapsto {\widehat{\af}}_{\ta}$
is a \hm\  from ${\widehat{G}}$
to the group of algebraic automorphisms of $C_{\mathrm{c}} (G, A, \af).$
\end{lem}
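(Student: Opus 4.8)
The plan is to verify directly, from the formula in Definition~\ref{D-DualCc}, the three things that constitute the claim: that each $\widehat{\af}_{\ta}$ is an algebra homomorphism of $C_{\mathrm{c}} (G, A, \af)$, that it is invertible, and that $\ta \mapsto \widehat{\af}_{\ta}$ respects the group law of $\widehat{G}$. Fix $\ta \in \widehat{G}$. Linearity of $\widehat{\af}_{\ta}$ is immediate, since multiplication by the scalar-valued function $g \mapsto \ov{\ta (g)}$ is linear, and it clearly sends compactly supported continuous functions to compactly supported continuous functions (the modulus of $\ov{\ta(g)}$ is $1$, so the support is unchanged and continuity is preserved). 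The multiplicativity is the one computation worth doing: for $a, b \in C_{\mathrm{c}} (G, A, \af)$ and $g \in G$,
\[
\widehat{\af}_{\ta} (a b) (g)
 = \ov{\ta (g)} \int_G a (h) \af_h \big( b (h^{-1} g) \big) \, d \nu (h),
\]
and using that $\ta$ is a character, $\ov{\ta (g)} = \ov{\ta (h)} \cdot \ov{\ta (h^{-1} g)}$, so the integrand becomes
\[
\big[ \ov{\ta (h)} a (h) \big] \, \af_h \big( \ov{\ta (h^{-1} g)} b (h^{-1} g) \big)
 = \big( \widehat{\af}_{\ta} (a) \big) (h) \, \af_h \big( ( \widehat{\af}_{\ta} (b) ) (h^{-1} g) \big),
\]
where I have pulled the scalar $\ov{\ta (h^{-1} g)}$ through $\af_h$ (which is linear). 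Integrating over $h$ gives $\widehat{\af}_{\ta} (a b) (g) = \big( \widehat{\af}_{\ta} (a) \widehat{\af}_{\ta} (b) \big) (g)$, so $\widehat{\af}_{\ta}$ is multiplicative.

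For the homomorphism property of $\ta \mapsto \widehat{\af}_{\ta}$, note that for $\ta_1, \ta_2 \in \widehat{G}$ and $a \in C_{\mathrm{c}} (G, A, \af)$,
\[
\widehat{\af}_{\ta_1} \big( \widehat{\af}_{\ta_2} (a) \big) (g)
 = \ov{\ta_1 (g)} \, \ov{\ta_2 (g)} \, a (g)
 = \ov{(\ta_1 \ta_2)(g)} \, a (g)
 = \widehat{\af}_{\ta_1 \ta_2} (a) (g),
\]
so $\widehat{\af}_{\ta_1} \circ \widehat{\af}_{\ta_2} = \widehat{\af}_{\ta_1 \ta_2}$; and $\widehat{\af}_1 = \id$ since the trivial character is identically $1$. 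In particular $\widehat{\af}_{\ta} \circ \widehat{\af}_{\ta^{-1}} = \widehat{\af}_{\ta^{-1}} \circ \widehat{\af}_{\ta} = \id$, so each $\widehat{\af}_{\ta}$ is an algebraic automorphism with inverse $\widehat{\af}_{\ta^{-1}}$, completing the proof.

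There is essentially no obstacle here: the only place requiring a moment's care is the manipulation inside the convolution integral, specifically the factorization $\ov{\ta(g)} = \ov{\ta(h)}\,\ov{\ta(h^{-1}g)}$ and pulling the resulting scalar through $\af_h$, which is legitimate since each $\af_h$ is $\C$-linear. Note the lemma as stated only asserts that $\ta \mapsto \widehat{\af}_{\ta}$ lands in the group of \emph{algebraic} automorphisms of $C_{\mathrm{c}}(G,A,\af)$ — no continuity or boundedness with respect to a crossed-product norm is claimed at this stage — so nothing beyond these formal identities is needed.
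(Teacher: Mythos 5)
Your proof is correct and is precisely the ``well known computation'' that the paper omits: the only substantive point is the multiplicativity check, which you carry out correctly using the character identity $\ov{\ta(g)} = \ov{\ta(h)}\,\ov{\ta(h^{-1}g)}$ and the $\C$-linearity of each $\af_h.$ Nothing further is needed.
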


\begin{proof}
This is a well known computation.
\end{proof}

\begin{thm}\label{T-DualL1}
Let $A$ be a Banach algebra,
let $G$ be a locally compact abelian group,
and let $\af \colon G \to \Aut (A)$ be a isometric action of $G$ on~$A.$
Then there exists a unique
continuous isometric action,
also called $\ta \mapsto {\widehat{\af}}_{\ta},$
of ${\widehat{G}}$ on $L^1 (G, A, \af)$
such that
the inclusion of $C_{\mathrm{c}} (G, A, \af)$ in $L^1 (G, A, \af)$
is equivariant.
\end{thm}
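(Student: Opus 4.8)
The plan is to extend each algebraic automorphism $\widehat{\af}_{\ta}$ of $C_{\mathrm{c}} (G, A, \af)$ to $L^1 (G, A, \af)$ by showing it is isometric for the $L^1$-norm, and then to verify joint continuity. First I would observe that for $a \in C_{\mathrm{c}} (G, A, \af)$ and $\ta \in \widehat{G}$, since $| \ov{\ta (g)} | = 1$ for all $g \in G$ (characters of a locally compact abelian group are unitary-valued), we have
\[
\| \widehat{\af}_{\ta} (a) \|_1
  = \int_G \| \ov{\ta (g)} a (g) \| \, d \nu (g)
  = \int_G \| a (g) \| \, d \nu (g)
  = \| a \|_1.
\]
Because $L^1 (G, A, \af)$ is by definition (see the discussion after Notation~\ref{N-L1CrPrd}) the completion of $C_{\mathrm{c}} (G, A, \af)$ in $\| \cdot \|_1$, each $\widehat{\af}_{\ta}$ extends uniquely to an isometric linear bijection of $L^1 (G, A, \af)$; its inverse is the extension of $\widehat{\af}_{\ta^{-1}} = \widehat{\af}_{\ov{\ta}}$. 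The homomorphism property $\widehat{\af}_{\ta_1 \ta_2} = \widehat{\af}_{\ta_1} \circ \widehat{\af}_{\ta_2}$ and the fact that each $\widehat{\af}_{\ta}$ is an algebra homomorphism pass from the dense subalgebra to the completion by continuity, using Lemma~\ref{L-DualIsAct}. Uniqueness of the extended action is immediate from density of $C_{\mathrm{c}} (G, A, \af)$, and equivariance of the inclusion is built into the construction.

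The remaining point, and the one requiring the most care, is continuity of the action: for each $a \in L^1 (G, A, \af)$, the map $\ta \mapsto \widehat{\af}_{\ta} (a)$ must be continuous from $\widehat{G}$ to $L^1 (G, A, \af)$. I would prove this first for $a \in C_{\mathrm{c}} (G, A, \af)$ and then use an $\tfrac{\ep}{3}$-argument. For $a \in C_{\mathrm{c}} (G, A, \af)$ with $K = \supp (a)$, and $\ta_0 \in \widehat{G}$, we have
\[
\| \widehat{\af}_{\ta} (a) - \widehat{\af}_{\ta_0} (a) \|_1
  = \int_K | \ta (g) - \ta_0 (g) | \cdot \| a (g) \| \, d \nu (g)
  \leq \nu (K) \cdot \| a \|_{\infty} \cdot \sup_{g \in K} | \ta (g) - \ta_0 (g) |,
\]
and the supremum tends to $0$ as $\ta \to \ta_0$ in $\widehat{G}$ because convergence in the (compact-open) topology on $\widehat{G}$ is uniform on compact subsets of~$G$. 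Hence $\ta \mapsto \widehat{\af}_{\ta} (a)$ is continuous on the dense subalgebra. For general $a \in L^1 (G, A, \af)$ and $\ep > 0$, choose $b \in C_{\mathrm{c}} (G, A, \af)$ with $\| a - b \|_1 < \tfrac{\ep}{3}$; then for $\ta$ near $\ta_0$,
\[
\| \widehat{\af}_{\ta} (a) - \widehat{\af}_{\ta_0} (a) \|_1
  \leq \| \widehat{\af}_{\ta} (a - b) \|_1 + \| \widehat{\af}_{\ta} (b) - \widehat{\af}_{\ta_0} (b) \|_1 + \| \widehat{\af}_{\ta_0} (b - a) \|_1
  < \tfrac{\ep}{3} + \tfrac{\ep}{3} + \tfrac{\ep}{3} = \ep,
\]
using that each $\widehat{\af}_{\ta}$ is isometric. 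This establishes continuity and completes the verification that $\ta \mapsto \widehat{\af}_{\ta}$ is a continuous isometric action of $\widehat{G}$ on $L^1 (G, A, \af)$, as required. The only genuinely delicate ingredient is the uniform-on-compacta convergence of characters, which is standard Pontryagin-duality material; everything else is routine extension-by-density.
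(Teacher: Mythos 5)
Your proof is correct and follows exactly the route of the paper: verify that $\widehat{\af}_{\ta}$ is isometric for $\| \cdot \|_1$ on $C_{\mathrm{c}} (G, A, \af)$, extend by density to the completion $L^1 (G, A, \af)$, check continuity of $\ta \mapsto \widehat{\af}_{\ta} (a)$ for compactly supported $a$ (your explicit use of uniform convergence of characters on compacta fills in the step the paper leaves as ``one checks''), and finish with the standard $\tfrac{\ep}{3}$ argument. No gaps.
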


\begin{proof}
When $\af$ is isometric,
it is easy to see that for all $a \in C_{\mathrm{c}} (G, A, \af)$
we have $\| {\widehat{\af}}_{\ta} (a) \|_1 = \| a \|_1.$
Therefore $\ta \mapsto {\widehat{\af}}_{\ta}$ extends uniquely
to a group \hm\  from ${\widehat{G}}$
to the isometric automorphisms of $L^1 (G, A, \af).$
One checks that if $a \in C_{\mathrm{c}} (G, A, \af)$
then $\ta \mapsto {\widehat{\af}}_{\ta} (a)$
is \ct\  in $\| \cdot \|_1.$
Since ${\widehat{\af}}_{\ta}$ is isometric
for all $\ta \in {\widehat{G}},$
a standard $\frac{\ep}{3}$~argument
shows that $\ta \mapsto {\widehat{\af}}_{\ta} (a)$
is \ct\  for all $a \in L^1 (G, A, \af).$
\end{proof}

\begin{thm}\label{T-DualpFull}
Let $p \in [1, \I),$
let $G$ be a second countable locally compact abelian group,
and let $(G, A, \af)$ be a nondegenerately $\sm$-finitely representable
isometric $G$-$L^p$~operator algebra.
(See Definition~\ref{D:Action} and Definition~\ref{D-LpRep}.)
Then there exists a unique
continuous isometric action,
also called $\ta \mapsto {\widehat{\af}}_{\ta},$
of ${\widehat{G}}$ on $F^p (G, A, \af)$
such that
the inclusion
$\io \colon C_{\mathrm{c}} (G, A, \af) \to F^p (G, A, \af)$
is equivariant.
\end{thm}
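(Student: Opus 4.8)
The plan is to construct $\widehat\af$ on $F^p (G, A, \af)$ from the formula in Definition~\ref{D-DualCc} by means of the universal property in Theorem~\ref{T-UPropFull}, together with the elementary observation that twisting a covariant representation by a character of~$G$ produces another covariant representation of the same kind.

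First I would fix $\ta \in \widehat{G}.$ Given a contractive covariant representation $(v, \pi)$ of $(G, A, \af)$ on some $L^p (X, \mu),$ set $v^\ta_g = \ov{\ta (g)} \, v_g$ for $g \in G.$ Since $|\ta (g)| = 1$ and $\ta$ is a continuous character of~$G,$ the family $v^\ta$ is again a representation of~$G$ by isometric bijections of $L^p (X, \mu)$ with $g \mapsto v^\ta_g \xi$ continuous, and the covariance identity $\pi (\af_g (a)) = v^\ta_g \pi (a) (v^\ta_g)^{-1}$ holds because the scalar $\ov{\ta (g)}$ cancels with $\ta (g).$ Thus $(v^\ta, \pi)$ is a contractive covariant representation, and it is nondegenerate, $\sm$-finite, or separable exactly when $(v, \pi)$ is, since by Definition~\ref{D-CovRep} these adjectives refer only to~$\pi.$ Comparing the integral formula of Notation~\ref{N-CcGA} for the two sides, and using Definition~\ref{D-DualCc}, one checks that $(v \ltimes \pi) \circ \widehat\af_\ta = v^\ta \ltimes \pi$ as maps on $C_{\mathrm{c}} (G, A, \af).$

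Now let $a \in C_{\mathrm{c}} (G, A, \af).$ By Theorem~\ref{T-UPropFull}(\ref{T-UPropFull-2}),
\[
\| \io (\widehat\af_\ta (a)) \|
  = \sup_{(v, \pi)} \| (v \ltimes \pi) (\widehat\af_\ta (a)) \|
  = \sup_{(v, \pi)} \| (v^\ta \ltimes \pi) (a) \|,
\]
the suprema taken over all nondegenerate $\sm$-finite contractive covariant representations $(v, \pi).$ The assignment $(v, \pi) \mapsto (v^\ta, \pi)$ is a bijection of this collection onto itself, its inverse being the twist by $\ov{\ta},$ so the last supremum equals $\sup_{(w, \pi)} \| (w \ltimes \pi) (a) \| = \| \io (a) \|.$ Consequently the map $\io (a) \mapsto \io (\widehat\af_\ta (a))$ is a well-defined isometric algebra homomorphism of the dense subalgebra $\io (C_{\mathrm{c}} (G, A, \af))$ of $F^p (G, A, \af)$ (density from Theorem~\ref{T-UPropFull}(\ref{T-UPropFull-3}), multiplicativity since $\io$ and the map of Definition~\ref{D-DualCc} are homomorphisms), and it therefore extends uniquely to an isometric algebra endomorphism of $F^p (G, A, \af),$ still denoted $\widehat\af_\ta.$ Composing this extension with the corresponding extension of $\widehat\af_{\ov{\ta}}$ yields a map agreeing with the identity on the dense set $\io (C_{\mathrm{c}} (G, A, \af)),$ so $\widehat\af_\ta \in \Aut (F^p (G, A, \af)).$ By construction $\io$ intertwines the dual action of Definition~\ref{D-DualCc} with $\widehat\af,$ and Lemma~\ref{L-DualIsAct}, together with density and uniqueness of extensions, shows that $\ta \mapsto \widehat\af_\ta$ is a homomorphism from $\widehat{G}$ into $\Aut (F^p (G, A, \af)).$

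Continuity of $\ta \mapsto \widehat\af_\ta (x)$ for all $x \in F^p (G, A, \af)$ then follows by a standard $\tfrac{\ep}{3}$~argument from the case $x = \io (a)$ with $a \in C_{\mathrm{c}} (G, A, \af)$: choosing a compact $K \S G$ with $\supp (a) \S K,$ convergence $\ta \to \ta_0$ in $\widehat{G}$ is uniform on~$K,$ so $\| \widehat\af_\ta (a) - \widehat\af_{\ta_0} (a) \|_1 = \int_K |\ta (g) - \ta_0 (g)| \, \| a (g) \| \, d \nu (g) \to 0,$ and applying the contractive map $\kp \colon L^1 (G, A, \af) \to F^p (G, A, \af)$ of Lemma~\ref{L-CompOfCrPrd} gives $\| \widehat\af_\ta (\io (a)) - \widehat\af_{\ta_0} (\io (a)) \| \to 0.$ Uniqueness of the action is immediate, since any continuous isometric action of $\widehat{G}$ on $F^p (G, A, \af)$ restricting to the dual action on $C_{\mathrm{c}} (G, A, \af)$ must agree with $\widehat\af$ on the dense subalgebra $\io (C_{\mathrm{c}} (G, A, \af)),$ hence everywhere. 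There is no genuine analytic obstacle here; the step requiring the most care is the bookkeeping that the twist $(v, \pi) \mapsto (v^\ta, \pi)$ preserves each adjective (nondegenerate, $\sm$-finite, contractive) built into the definition of $F^p (G, A, \af),$ which it does because they are all properties of~$\pi$ alone.
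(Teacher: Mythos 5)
Your proof is correct and follows essentially the same route as the paper: twist each nondegenerate $\sm$-finite contractive covariant representation by the character, observe that this is a bijection of the defining family so that Theorem~\ref{T-UPropFull}(\ref{T-UPropFull-2}) gives isometry of ${\widehat{\af}}_{\ta}$ on the dense image of $C_{\mathrm{c}} (G, A, \af),$ then extend and deduce continuity from the $L^1$ estimate together with contractivity of $\kp$ and a $\tfrac{\ep}{3}$ argument. No substantive differences from the paper's argument.
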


\begin{proof}
For a Banach space~$E,$
a \rpn{} $v$ of $G$ on~$E,$
and $\ta \in {\widehat{G}},$
we define a \rpn{} $\ta v \colon G \to L (E)$
by $(\ta v)_g = \ta (g) v_g$ for $g \in G.$
If $(v, \pi)$ is
a nondegenerate \sft{} contractive covariant \rpn{}
of $(G, A, \af),$
then one easily checks that $(\ta v, \, \pi)$ is too.
Since ${\ov{\ta}} ( \ta v) = v,$
the map $(v, \pi) \to ({\ov{\ta}} v, \, \pi)$ is a bijection
on the collection of
nondegenerate \sft{} contractive covariant \rpn{s}
of $(G, A, \af)$
on spaces of the form $L^p (X, \mu).$
One checks that
\[
({\ov{\ta}} v \ltimes \pi) (a)
  = (v \ltimes \pi) \big( {\widehat{\af}}_{\ta} (a) \big)
\]
for all $a \in C_{\mathrm{c}} (G, A, \af).$
It now follows from
Theorem~\ref{T-UPropFull}(\ref{T-UPropFull-2})
that $\| \io \big( {\widehat{\af}}_{\ta} (a) \big) \| = \| \io (a) \|$
for all $a \in C_{\mathrm{c}} (G, A, \af).$
By Theorem~\ref{T-UPropFull}(\ref{T-UPropFull-3}),
${\widehat{\af}}_{\ta}$
extends to an isometric endomorphism of $F^p (G, A, \af).$
These endomorphisms are easily seen to be an action of
${\widehat{G}}$ on $F^p (G, A, \af)$;
in particular,
they are automorphisms.
Since $\kp \colon L^1 (G, A, \af) \to F^p (G, A, \af)$
is contractive,
it follows from Theorem~\ref{T-DualL1}
that for $a \in L^1 (G, A, \af),$
the map
$\ta \mapsto {\widehat{\af}}_{\ta} ( \io (a)) \in F^p (G, A, \af)$
is \ct.
Since ${\widehat{\af}}_{\ta}$ is isometric
for all $\ta \in {\widehat{G}},$
continuity of $\ta \mapsto {\widehat{\af}}_{\ta} (b)$
for all $b \in F^p (G, A, \af)$
follows from density of the range of $\io$
by a standard $\frac{\ep}{3}$~argument.
\end{proof}

For the dual action on the reduced crossed product,
we need a lemma.

\begin{lem}\label{L-RedOneRep}
Let $p \in [1, \I),$
let $G$ be a second countable locally compact group,
and let $(G, A, \af)$ be a separable nondegenerately representable
isometric $G$-$L^p$~operator algebra.
(See Definition~\ref{D:Action} and Definition~\ref{D-LpRep}.)
Then there exists a \sfm{} $\XBM$
and a nondegenerate isometric \rpn{} $\pi_0 \colon A \to \LLp$
such that,
with $(v, \pi)$ being the associated regular covariant \rpn,
the \rpn{} $v \ltimes \pi$
is nondegenerate and isometric on $F^p_{\mathrm{r}} (G, A, \af).$
\end{lem}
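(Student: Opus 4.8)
The plan is to build $\pi_0$ as a countable $L^p$-direct sum (Definition~\ref{D-LpDirectSum}): one fixed isometric summand, which forces $\pi_0$ to be isometric, together with countably many contractive summands whose associated regular \rpn{s} jointly realize the reduced crossed product norm on a countable dense subset of $C_{\mathrm{c}}(G, A, \af).$

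First I would record a compatibility between $L^p$-direct sums and the construction of Lemma~\ref{L-ExistRegRep}. Suppose $\rh = \bigoplus_{i \in I} \rh_i$ is an $L^p$-direct sum of contractive \rpn{s} of $A$ with $\rh_i$ on $L^p (X_i, \mu_i),$ and let $(w, \te)$ and $(w_i, \te_i)$ be the regular covariant \rpn{s} associated to $\rh$ and to the $\rh_i$ respectively. Since $G$ is second countable, its left Haar measure is $\sm$-finite, and $G \times \coprod_{i \in I} X_i$ is canonically identified with the disjoint union $\coprod_{i \in I} (G \times X_i)$; under this identification both the translation \rpn{} $w$ of Lemma~\ref{L-ExistRegRep}(\ref{ExistRegRep-Exv}) and the operators $\te (a)$ given by~(\ref{Eq:RegRepFormula}) respect the direct sum decomposition, and passing to the integrated form (Notation~\ref{N-CcGA}) one gets $(w \ltimes \te)(a) = \bigoplus_{i \in I} (w_i \ltimes \te_i)(a)$ for all $a \in C_{\mathrm{c}}(G, A, \af).$ Since the norm of an $L^p$-direct sum of operators is the supremum of the norms of its summands (the remark following Lemma~\ref{L-LpSum}), this yields $\| (w \ltimes \te)(a) \| = \sup_{i \in I} \| (w_i \ltimes \te_i)(a) \|$ for all such $a.$

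Now I would choose the summands. By Remark~\ref{R_3916_SepSft} and Proposition~\ref{P-SepImpSepRep}, $A$ admits a nondegenerate $\sm$-finite isometric \rpn{} $\rh_{\ast}.$ Since $G$ is second countable and $A$ is separable, $L^1 (G, A, \af)$ is separable, so I would fix a countable $\| \cdot \|_1$-dense subset $S \S C_{\mathrm{c}}(G, A, \af);$ its image in $F^p_{\mathrm{r}}(G, A, \af)$ is dense because $\io_{\mathrm{r}}$ is $\| \cdot \|_1$-continuous (Lemma~\ref{L-CompOfCrPrd}) and has dense range (Theorem~\ref{T-UPropRed}(\ref{T-UPropRed-3})). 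For each $b \in S$ and each $n \in \N,$ Theorem~\ref{T-UPropRed}(\ref{T-UPropRed-2}) provides a nondegenerate $\sm$-finite contractive \rpn{} $\rh_{b, n}$ of $A$ whose associated regular covariant \rpn{} $(w_{b, n}, \te_{b, n})$ satisfies $\| (w_{b, n} \ltimes \te_{b, n})(b) \| > \| \io_{\mathrm{r}}(b) \| - \tfrac{1}{n}.$ Let $\pi_0$ be the $L^p$-direct sum of $\rh_{\ast}$ together with all the $\rh_{b, n},$ defined on the disjoint union $\XBM$ of the underlying \msp s. Then $\mu$ is $\sm$-finite by Lemma~\ref{L_3914_PrpDj}(\ref{D_3914_DjUnion_Sft}), being a countable disjoint union of $\sm$-finite spaces; $\pi_0$ is nondegenerate by Lemma~\ref{L_3914_SmNdg}; and $\pi_0$ is isometric because it is contractive and has the isometric summand $\rh_{\ast}.$

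Finally I would assemble the conclusion. Let $(v, \pi)$ be the regular covariant \rpn{} associated to $\pi_0.$ It is a nondegenerate $\sm$-finite contractive regular covariant \rpn, so by Theorem~\ref{T-UPropRed} it induces a contractive \rpn{} of $F^p_{\mathrm{r}}(G, A, \af),$ which is nondegenerate by Lemma~\ref{L_3916_IntIsNdg}(\ref{L_3916_IntIsNdg_Reg}). For $b \in S,$ the compatibility from the second paragraph together with the choice of the $\rh_{b, n}$ gives
\[
\| (v \ltimes \pi)(b) \|
  \geq \sup_{n \in \N} \| (w_{b, n} \ltimes \te_{b, n})(b) \|
  \geq \| \io_{\mathrm{r}}(b) \|,
\]
while $\| (v \ltimes \pi)(b) \| \leq \| \io_{\mathrm{r}}(b) \|$ holds automatically since $(v, \pi)$ is a contractive regular covariant \rpn{} (Theorem~\ref{T-UPropRed}(\ref{T-UPropRed-2})). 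Hence $v \ltimes \pi$ is isometric on $\io_{\mathrm{r}}(S),$ and, since that set is dense in $F^p_{\mathrm{r}}(G, A, \af)$ and $v \ltimes \pi$ is continuous there, it is isometric on all of $F^p_{\mathrm{r}}(G, A, \af).$ The step I expect to require the most care is the direct-sum compatibility in the second paragraph: the measure-theoretic identification of $G \times \coprod_{i} X_i$ with $\coprod_{i} (G \times X_i)$ (where $\sm$-finiteness of the Haar measure and of the $\mu_i$ enters) and the verification that the integrated form $w \ltimes \te$ decomposes accordingly. The separability input and the final density argument are then routine.
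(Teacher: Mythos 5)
Your proposal is correct and follows essentially the same route as the paper: an $L^p$-direct sum of one nondegenerate $\sigma$-finite isometric representation (forcing $\pi_0$ to be isometric) with countably many contractive representations, indexed by a countable dense set and $\ep = \tfrac{1}{n},$ whose regular representations nearly attain the reduced norm; nondegeneracy then comes from Lemma~\ref{L_3914_SmNdg} and Lemma~\ref{L_3916_IntIsNdg}. The only cosmetic differences are that you take the dense set inside $C_{\mathrm{c}}(G, A, \af)$ rather than in $F^p_{\mathrm{r}}(G, A, \af)$ itself, and that you spell out the compatibility of the regular-representation construction with $L^p$-direct sums, which the paper uses implicitly.
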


\begin{proof}
Since $G$ is second countable and $A$ is separable,
it is easily checked that $F^p_{\mathrm{r}} (G, A, \af)$
is separable.
Let $S \S F^p_{\mathrm{r}} (G, A, \af)$ be a countable dense subset.

For $\ep > 0$ and $b \in F^p_{\mathrm{r}} (G, A, \af),$
use Theorem~\ref{T-UPropRed}(\ref{T-UPropRed-2})
to choose a \sfm{}
$(X_{b, \ep}, \, \cB_{b, \ep}, \, \mu_{b, \ep})$
and a nondegenerate contractive \rpn{}
\[
\pi_0^{b, \ep} \colon
   A \to L \big( L^p (X_{b, \ep}, \, \mu_{b, \ep} ) \big)
\]
such that,
with $\big( v^{b, \ep}, \, \pi^{b, \ep} \big)$
being the associated regular covariant \rpn,
we have
\[
\big\| \big( v^{b, \ep} \ltimes \pi^{b, \ep} \big) (b) \big\|
  > \| b \| - \ep.
\]
Further choose a \sfm{} $\XBM$
and a nondegenerate isometric \rpn{} $\sm_0 \colon A \to \LLp,$
and let $(u, \sm)$ be the associated regular covariant \rpn.
Let $\pi_0$ be the $L^p$~direct sum
(Definition~\ref{D-LpDirectSum})
of $\sm_0$ and all $\pi_0^{b, \ep}$
for $\ep \in \left\{ 1, \frac{1}{2}, \frac{1}{3}, \ldots \right\}$
and $b \in S.$
Then $\pi_0$ is nondegenerate (by Lemma~\ref{L_3914_SmNdg}), isometric,
and \sft{}
(being a countable direct sum of \sft{} \rpn{s}).
Also, if $(v, \pi)$ is the associated regular covariant \rpn,
then $v \ltimes \pi$ is the $L^p$~direct sum
of $u \ltimes \sm$ and all $v^{b, \ep} \ltimes \pi^{b, \ep}$
for $\ep \in \left\{ 1, \frac{1}{2}, \frac{1}{3}, \ldots \right\}$
and $b \in S.$
These representations are all nondegenerate
by Lemma~\ref{L_3916_IntIsNdg}(\ref{L_3916_IntIsNdg_Reg}).
Therefore $v \ltimes \pi$ is nondegenerate (by Lemma~\ref{L_3914_SmNdg}),
and is clearly isometric.
\end{proof}

\begin{thm}\label{T-DualpRed}
Let $p \in [1, \I),$
let $G$ be a second countable locally compact abelian group,
and let $(G, A, \af)$ be a separable nondegenerately representable
isometric $G$-$L^p$~operator algebra.
(See Definition~\ref{D:Action} and Definition~\ref{D-LpRep}.)
Then there exists a unique
continuous isometric action,
also called $\ta \mapsto {\widehat{\af}}_{\ta},$
of ${\widehat{G}}$ on $F^p_{\mathrm{r}} (G, A, \af)$
such that
the inclusion
$\io_{\mathrm{r}} \colon
 C_{\mathrm{c}} (G, A, \af) \to F^p_{\mathrm{r}} (G, A, \af)$
is equivariant.
\end{thm}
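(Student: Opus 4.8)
The plan is to follow the proof of Theorem~\ref{T-DualpFull} as closely as possible. There the key point was that $(v,\pi)\mapsto(\ov{\ta}v,\pi)$, where $(\ov{\ta}v)_g=\ov{\ta(g)}\,v_g$, is a bijection of the class of nondegenerate \sft{} contractive covariant representations, combined with the identity $(\ov{\ta}v\ltimes\pi)(a)=(v\ltimes\pi)\big(\widehat{\af}_\ta(a)\big)$ for $a\in C_{\mathrm{c}}(G,A,\af)$. In the reduced setting this argument cannot be used directly, because $(\ov{\ta}v,\pi)$ is no longer visibly a \emph{regular} covariant representation, hence is not one of the representations defining the norm on $F^p_{\mathrm{r}}(G,A,\af)$. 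The remedy is to observe that $(\ov{\ta}v,\pi)$ is nevertheless spatially equivalent to $(v,\pi)$ via a concrete multiplication operator, and then to use a single faithful regular representation, supplied by Lemma~\ref{L-RedOneRep} (whose hypothesis matches exactly the one here).

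So first I would use Lemma~\ref{L-RedOneRep} to fix a \sfm{} $\XBM$ and a nondegenerate isometric \rpn{} $\pi_0\colon A\to\LLp$ whose associated regular covariant \rpn{} $(v,\pi)$ on $L^p (G \times X, \, \nu \times \mu)$ induces a nondegenerate isometric \rpn{} $v\ltimes\pi$ of $F^p_{\mathrm{r}}(G,A,\af)$. For $\ta\in\widehat{G}$ let $w_\ta$ be multiplication by the modulus-one function $g\mapsto\ov{\ta(g)}$ in the $G$ variable, that is, $(w_\ta\xi)(g,x)=\ov{\ta(g)}\,\xi(g,x)$ for $\xi\in L^p (G \times X, \, \nu \times \mu)$; this is a bounded bijective isometry with inverse $w_{\ov{\ta}}$. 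A direct computation with the integral formula of Remark~\ref{R-RegRepCP} (equivalently, the identities $w_\ta v_g w_\ta^{-1}=\ov{\ta(g)}\,v_g$ and $w_\ta\pi(a)w_\ta^{-1}=\pi(a)$, using that $\ta$ is a character of the abelian group~$G$) gives
\[
w_\ta\,(v\ltimes\pi)(a)\,w_\ta^{-1}=(\ov{\ta}v\ltimes\pi)(a)=(v\ltimes\pi)\big(\widehat{\af}_\ta(a)\big)
\]
for all $a\in C_{\mathrm{c}}(G,A,\af)$, the second equality being the same computation as in the proof of Theorem~\ref{T-DualpFull}. Since $\|w_\ta\|=\|w_\ta^{-1}\|=1$, conjugation by $w_\ta$ preserves operator norms, so $\|(v\ltimes\pi)(\widehat{\af}_\ta(a))\|=\|(v\ltimes\pi)(a)\|$; as $v\ltimes\pi$ is isometric on $F^p_{\mathrm{r}}(G,A,\af)$, this yields $\|\io_{\mathrm{r}}(\widehat{\af}_\ta(a))\|=\|\io_{\mathrm{r}}(a)\|$ for all $a\in C_{\mathrm{c}}(G,A,\af)$.

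From here everything parallels Theorem~\ref{T-DualpFull}. By Theorem~\ref{T-UPropRed}(\ref{T-UPropRed-3}) and the norm identity just proved, each $\widehat{\af}_\ta$, viewed on the dense subalgebra $\io_{\mathrm{r}}(C_{\mathrm{c}}(G,A,\af))$, extends uniquely to an isometric endomorphism of $F^p_{\mathrm{r}}(G,A,\af)$; by Lemma~\ref{L-DualIsAct} these compose as a group action of $\widehat{G}$, hence are automorphisms. For continuity I would use that $\kp_{\mathrm{r}}\circ\kp\colon L^1(G,A,\af)\to F^p_{\mathrm{r}}(G,A,\af)$ is contractive with dense range (Lemma~\ref{L-CompOfCrPrd}), together with the continuity of $\ta\mapsto\widehat{\af}_\ta(a)$ on $L^1(G,A,\af)$ from Theorem~\ref{T-DualL1}, and then pass to all of $F^p_{\mathrm{r}}(G,A,\af)$ by the standard $\frac{\ep}{3}$~argument, using that the $\widehat{\af}_\ta$ are isometric. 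Uniqueness follows because the equivariance requirement determines $\widehat{\af}_\ta$ on $\io_{\mathrm{r}}(C_{\mathrm{c}}(G,A,\af))$ and hence, by continuity, everywhere. The only genuinely new ingredient, and the only real obstacle, is the construction of the implementing isometry $w_\ta$ on the regular representation space; once one sees that $(\ov{\ta}v,\pi)$ is spatially equivalent to $(v,\pi)$, Lemma~\ref{L-RedOneRep} supplies everything else.
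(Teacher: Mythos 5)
Your proposal is correct and is essentially the paper's own proof: it uses Lemma~\ref{L-RedOneRep} to fix a single nondegenerate isometric regular representation, implements $\widehat{\af}_\ta$ spatially by the multiplication operator $w_\ta$, and handles continuity exactly as in Theorem~\ref{T-DualL1}. No substantive differences.
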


\begin{proof}
Let
\[
\pi_0 \colon A \to \LLp,
\,\,\,\,\,\,\,
\pi \colon A
   \to L \big( L^p (G \times X, \, \nu \times \mu) \big),
\]
and
\[
v \colon G
   \to L \big( L^p (G \times X, \, \nu \times \mu) \big)
\]
be as in \Lem{L-RedOneRep}.
For $\ta \in {\widehat{G}},$
define
\[
w_{\ta} \in L \big( L^p (G \times X, \, \nu \times \mu) \big)
\]
by
\[
(w_{\ta} \xi) (g, x)
  = {\ov{\ta (g) }} \xi (g, x)
\]
for $\xi \in L^p (G \times X, \, \nu \times \mu),$ $g \in G,$
and $x \in X.$
Then $w_{\ta}$ is an isometric bijection because it is
multiplication by a function with absolute value~$1.$
It is easy to check that $w_{\ta}$ commutes with $\pi (a)$
for all $\ta \in {\widehat{G}}$ and $a \in A,$
and that
$w_{\ta} v_g w_{\ta}^{-1} = {\ov{\ta (g) }} v_g$
for all $\ta \in {\widehat{G}}$ and $g \in G.$

It follows from the definition of $v \ltimes \pi$
that
\[
w_{\ta} ((v \ltimes \pi) \circ \io_{\mathrm{r}}) (b) w_{\ta}^{-1}
  = ((v \ltimes \pi) \circ \io_{\mathrm{r}})
          \big( {\widehat{\af}}_{\ta} (b) \big)
\]
for all $\ta \in {\widehat{G}}$ and $b \in C_{\mathrm{c}} (G, A, \af).$
Clearly $a \mapsto w_{\ta} a w_{\ta}^{-1}$
defines an isometric action of~${\widehat{G}}$
(with the discrete topology) on $F^p_{\mathrm{r}} (G, A, \af).$
The proof of continuity in the usual topology of~${\widehat{G}}$
is the same as in the proof of Theorem~\ref{T-DualL1}.
\end{proof}

\section{Reduced $L^p$~operator crossed products by countable discrete
   groups}\label{Sec_DiscCP}

\indent
In this section, we assume that the group~$G$ is discrete
and, starting with Lemma~\ref{L:FGpCP}, countable.
The Haar measure~$\nu$ on~$G$ will always be counting measure.
We will require that $A$ be separable,
since we use Lemma~\ref{L-RedOneRep}.
We define a Banach conditional expectation
from the reduced $L^p$~operator crossed product
$F^p_{\mathrm{r}} (G, A, \af)$ to~$A,$
and use it to define coordinates of elements
of $F^p_{\mathrm{r}} (G, A, \af),$
in a manner similar to what is done
for reduced crossed product \ca{s}.

We begin by writing the formula for a regular \rpn{} %
of $C_{\mathrm{c}} (G, A, \af)$ in a more convenient way.

\begin{ntn}\label{N-Sum}
Let $A$ be a Banach algebra,
let $G$ be a discrete group,
and let $\af \colon G \to \Aut (A)$ be an action of $G$ on~$A.$
Let ${\widetilde{A}}$ be the unitization of~$A$
(we do not add a new identity if $A$ is already unital),
and identify $C_{\mathrm{c}} (G)$ with a subspace
of $C_{\mathrm{c}} \big( G, {\widetilde{A}}, \af \big)$
in the obvious way.
We let $u_g$ be the characteristic function of~$\{ g \},$
regarded as an element of
$C_{\mathrm{c}} \big( G, {\widetilde{A}}, \af \big).$
We can then write an element $a \in C_{\mathrm{c}} (G, A, \af)$
as a finite sum
$a = \sum_{g \in G} a_g u_g$
(using $a_g$ rather than $a (g)$).
\end{ntn}

\begin{ntn}\label{N_3920_sgtg}
Let $G$ be a countable set with counting measure~$\nu,$
and let $\XBM$ be a measure space.
For $g \in G$ define
\[
s_g \colon L^p (X, \mu) \to L^p (G \times X, \, \nu \times \mu)
\andeqn
t_g \colon L^p (G \times X, \, \nu \times \mu) \to L^p (X, \mu)
\]
as follows.
Set
\[
s_g (\et) (h)
 = \begin{cases}
   \et & h = g
        \\
   0 & h \neq g
\end{cases}
\]
for $\et \in L^p (X, \mu)$ and $h \in G.$
For $\xi \in L^p (G \times X, \, \nu \times \mu),$
define $t_g (\xi)$ to be the composition of $\xi$
with the map $x \mapsto (g, x)$ from $X$ to $G \times X.$
\end{ntn}

\begin{lem}\label{L_3920_sgsp}
Let the notation be as in Notation~\ref{N_3920_sgtg}.
Then
$s_g$ is a spatial isometry
in the sense of Definition~6.4 of~\cite{PhLp1},
$t_g$ is its reverse in the sense of Definition~6.13 of~\cite{PhLp1},
$t_g s_g = 1,$
and $s_g t_g$ is multiplication by $\ch_{ \{ g \} \times X}.$
\end{lem}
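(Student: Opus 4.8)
The plan is to verify the four assertions directly from the formulas defining $s_g$ and $t_g$ in Notation~\ref{N_3920_sgtg}, using the identification $L^p(G \times X, \, \nu \times \mu) \cong l^p\big(G, \, L^p(X, \mu)\big)$ of Remark~\ref{R-lpG}. The key preliminary observation is that, since $\nu$ is counting measure and $\nu(\{g\}) = 1$, the map $(g, x) \mapsto x$ is a measure-preserving bijection from $\{g\} \times X$, equipped with the restriction of $\nu \times \mu$, onto $(X, \mu)$.

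To see that $s_g$ is a spatial isometry in the sense of Definition~6.4 of~\cite{PhLp1}, I would unwind that definition with the following data: the measurable subset $\{g\} \times X \S G \times X$, the measure-preserving bijection $(g,x) \mapsto x$ just described, and the unimodular function identically equal to~$1$. With these choices the general formula for a spatial isometry reduces exactly to $\eta \mapsto s_g(\eta)$; in particular one reads off $\| s_g(\eta) \|_p = \| \eta \|_p$. For the claim that $t_g$ is the reverse of $s_g$ in the sense of Definition~6.13 of~\cite{PhLp1}, I would similarly unwind that definition for the same data; because the unimodular function is the constant~$1$ and the underlying transformation is an honest measure-preserving bijection, the reverse is the operator $\xi \mapsto \big( x \mapsto \xi(g, x) \big)$, which is precisely $t_g$.

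The two algebraic identities are then immediate computations. For $\eta \in L^p(X, \mu)$ and $x \in X$ we have $(t_g s_g \eta)(x) = s_g(\eta)(g, x) = \eta(x)$, so $t_g s_g = 1$. For $\xi \in L^p(G \times X, \, \nu \times \mu)$, evaluating $s_g(t_g \xi)$ at $(h, x)$ gives $(t_g \xi)(x) = \xi(g, x)$ when $h = g$ and $0$ when $h \neq g$; hence $s_g t_g \xi = \ch_{\{g\} \times X} \cdot \xi$, i.e.\ $s_g t_g$ is multiplication by $\ch_{\{g\} \times X}$.

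The only genuine work — and it is the mildest of obstacles — is matching these concrete formulas against the general form of a spatial isometry and of its reverse in~\cite{PhLp1}, which are phrased using measurable set transformations and, in general, Radon--Nikodym factors. Here no such factors appear, since the relevant transformation is a genuine measure-preserving bijection and the accompanying function is the constant~$1$, so the translation is routine. If the definitions of~\cite{PhLp1} are stated only for $\sigma$-finite measure spaces, one first restricts attention to that case, which is all that is needed for the applications in this section.
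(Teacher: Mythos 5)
Your proposal is correct and matches the paper's approach; the paper's entire proof is ``Everything is immediate,'' and your write-up simply makes explicit the routine verifications (the measure-preserving identification of $\{g\}\times X$ with $X$, the constant unimodular function, and the two pointwise computations) that the author leaves to the reader.
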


\begin{proof}
Everything is immediate.
\end{proof}

\begin{lem}\label{L-StructRR}
Let $p \in [1, \I),$
let $G$ be a countable discrete group,
and let $(G, A, \af)$ be an isometric $G$-$L^p$~operator algebra.
Let $\XBM$ be a \msp,
and let $\pi_0 \colon A \to \LLp$
be a contractive \rpn.
Let $(v, \pi)$ be the associated regular covariant \rpn{} %
as in \Def{D-RegRep}.
For $g \in G,$
let $s_g$ and $t_g$ be as in Notation~\ref{N_3920_sgtg}.
Then:
\begin{enumerate}
\item\label{L-StructRR-1}
For $a \in C_{\mathrm{c}} (G, A, \af),$
$\xi \in C_{\mathrm{c}} \big( G, \, L^p (X, \mu) \big),$
and $h \in G,$ we have
\[
\big( (v \ltimes \pi) (a) \xi \big) (h)
 = \sum_{g \in G}
     \pi_0 \big( \af_h^{-1} (a_g) \big) \big( \xi (g^{-1} h ) \big).
\]
\item\label{L-StructRR-2b}
If $c \in L \big( L^p (G \times X, \, \nu \times \mu) \big)$
satisfies $t_h c s_k = 0$ for all $h, k \in G,$
then $c = 0.$
\item\label{L-StructRR-3}
For $a \in C_{\mathrm{c}} (G, A, \af)$
and $h, k \in G,$
we have
\[
t_h (v \ltimes \pi) (a) s_k
 = \pi_0 \big( \af_h^{-1} ( a_{h k^{-1} }) \big).
\]
\end{enumerate}
\end{lem}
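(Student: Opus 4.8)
The plan is to get part~(\ref{L-StructRR-1}) by directly unwinding the definition of $v\ltimes\pi$, then to read off part~(\ref{L-StructRR-3}) as the special case $\xi=s_k\et$, and to prove part~(\ref{L-StructRR-2b}) separately using the coordinate decomposition of $L^p(G\times X,\,\nu\times\mu)=l^p\big(G,\,L^p(X,\mu)\big)$ from Remark~\ref{R-lpG}. For part~(\ref{L-StructRR-1}): since $G$ is discrete, $\nu$ is counting measure, and $a\in C_{\mathrm{c}}(G,A,\af)$ has finite support, the integral defining $(v\ltimes\pi)(a)\xi$ in Notation~\ref{N-CcGA} is a genuine finite sum $\sum_{g\in G}\pi(a_g)v_g\xi$, so no delicacy about the weakly defined integral arises. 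Evaluating at $h\in G$: by Lemma~\ref{L-ExistRegRep}(\ref{ExistRegRep-Exv}) we have $(v_g\xi)(h)=\xi(g^{-1}h)$, and by Lemma~\ref{L-ExistRegRep}(\ref{ExistRegRep-GDiscr}), applied to each element of $l^p\big(G,\,L^p(X,\mu)\big)$, we have $(\pi(b)\zt)(h)=\pi_0\big(\af_h^{-1}(b)\big)\big(\zt(h)\big)$; combining these gives $(\pi(a_g)v_g\xi)(h)=\pi_0\big(\af_h^{-1}(a_g)\big)\big(\xi(g^{-1}h)\big)$, and summing over $g$ yields the asserted formula.

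For part~(\ref{L-StructRR-3}), fix $\et\in L^p(X,\mu)$. Under the identification of Remark~\ref{R-lpG}, the vector $s_k\et$ is the function on $G$ supported on $\{k\}$ with value~$\et$, hence lies in $C_{\mathrm{c}}\big(G,\,L^p(X,\mu)\big)$, so part~(\ref{L-StructRR-1}) applies with $\xi=s_k\et$. In the sum $\sum_g\pi_0\big(\af_h^{-1}(a_g)\big)\big((s_k\et)(g^{-1}h)\big)$, the factor $(s_k\et)(g^{-1}h)$ equals $\et$ precisely when $g^{-1}h=k$, i.e.\ $g=hk^{-1}$, and is $0$ otherwise, so only that one term survives. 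Since $t_h$ is evaluation of the $h$-th coordinate, this gives $t_h(v\ltimes\pi)(a)s_k\et=\pi_0\big(\af_h^{-1}(a_{hk^{-1}})\big)\et$, and as $\et$ was arbitrary this is exactly part~(\ref{L-StructRR-3}).

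For part~(\ref{L-StructRR-2b}): by Lemma~\ref{L_3920_sgsp} the operator $s_ht_h$ is multiplication by $\ch_{\{h\}\times X}$, so for any $\xi\in L^p(G\times X,\,\nu\times\mu)$ the partial sums $\sum_{h\in F}s_ht_h\xi$ (over finite $F\S G$) converge in norm to~$\xi$ --- this is where $p\in[1,\I)$ is used. By continuity of~$c$, $c\xi=\sum_h cs_ht_h\xi$, and the $k$-th coordinate of $cs_ht_h\xi$ is $t_kcs_h(t_h\xi)$, which vanishes by hypothesis; hence $cs_ht_h\xi=0$ for every $h$, so $c\xi=0$, and therefore $c=0$. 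None of the three parts presents a genuine obstacle; the only points that need a moment's care are that the defining integral collapses to a finite sum in the discrete case (so the pointwise manipulations in~(\ref{L-StructRR-1}) and~(\ref{L-StructRR-3}) are legitimate) and that the norm identity $\sum_h s_ht_h\xi=\xi$ in~(\ref{L-StructRR-2b}) genuinely requires $p<\I$.
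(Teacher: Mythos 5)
Your proof is correct, and it carries out exactly the calculations that the paper's proof (which simply says ``These statements are all calculations'') leaves to the reader: unwinding the defining integral as a finite sum for part~(1), specializing to $\xi = s_k\et$ for part~(3), and using the norm convergence of $\sum_{h} s_h t_h \xi$ to $\xi$ (valid since $p < \I$) for part~(2b). The only point worth emphasizing, which you do address, is that the duality-defined integral of Notation~\ref{N-CcGA} genuinely reduces to a finite operator sum when $G$ is discrete and $a$ has finite support, so the coordinatewise manipulations are legitimate.
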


\begin{proof}
These statements are all calculations.
\end{proof}

\begin{lem}\label{L:FGpCP}
Let $p \in [1, \I),$
let $G$ be a countable discrete group,
and let $(G, A, \af)$ be a separable nondegenerately representable
isometric $G$-$L^p$~operator algebra.
(See Definition~\ref{D:Action} and Definition~\ref{D-LpRep}.)
Let $\| \cdot \|$
be the norm on $F^p (G, A, \af),$
restricted to $C_{\mathrm{c}} (G, A, \af)$;
let $\| \cdot \|_{\mathrm{r}}$
be the norm on $F^p_{\mathrm{r}} (G, A, \af),$
restricted to $C_{\mathrm{c}} (G, A, \af)$;
and let $\| \cdot \|_{\I}$
be the supremum norm.
Then for every $a \in C_{\mathrm{c}} (G, A, \af),$
we have
$\| a \|_{\I} \leq \| a \|_{\mathrm{r}} \leq \| a \| \leq \| a \|_1.$
\end{lem}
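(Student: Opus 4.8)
The plan is to prove the four inequalities of the chain one at a time. The two rightmost ones, $\| a \|_{\mathrm{r}} \le \| a \|$ and $\| a \| \le \| a \|_1$, are immediate from Lemma~\ref{L-CompOfCrPrd}. The homomorphism $\kp \colon L^1 (G, A, \af) \to F^p (G, A, \af)$ is contractive and restricts to $\io$ on $C_{\mathrm{c}} (G, A, \af)$, so $\| a \| = \| \io (a) \| \le \| a \|_1$; and $\kp_{\mathrm{r}} \colon F^p (G, A, \af) \to F^p_{\mathrm{r}} (G, A, \af)$ is contractive with $\kp_{\mathrm{r}} \circ \io = \io_{\mathrm{r}}$, so $\| a \|_{\mathrm{r}} = \| \kp_{\mathrm{r}} (\io (a)) \| \le \| \io (a) \| = \| a \|$. (Here $\| a \|_1 = \sum_{g \in G} \| a_g \|$, since $\nu$ is counting measure.)

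The real content is the left inequality $\| a \|_{\I} \le \| a \|_{\mathrm{r}}$. First I would produce one well-chosen regular covariant representation. By the standing hypothesis, $(G, A, \af)$ is separable and nondegenerately representable, so Proposition~\ref{P-SepImpSepRep} yields a nondegenerate separable isometric representation of $A$; by Remark~\ref{R_3916_SepSft} the underlying $L^p$~space may be taken $\sm$-finite, giving a nondegenerate $\sm$-finite isometric representation $\pi_0 \colon A \to \LLp$. Let $(v, \pi)$ be the associated regular covariant representation of $(G, A, \af)$ on $L^p (G \times X, \, \nu \times \mu)$ (Definition~\ref{D-RegRep}); by Lemma~\ref{L-ExistRegRep} it is nondegenerate, $\sm$-finite, and contractive, hence lies in the family ${\mathcal{R}}^p_{\mathrm{r}} (G, A, \af)$ defining $F^p_{\mathrm{r}} (G, A, \af)$, and therefore $\| (v \ltimes \pi) (a) \| \le \| a \|_{\mathrm{r}}$ by Theorem~\ref{T-UPropRed}(\ref{T-UPropRed-2}).

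Next I would read off each coordinate of $a$. For $g \in G$, let $s_g, t_g$ be as in Notation~\ref{N_3920_sgtg}; from the explicit formulas there (or Lemma~\ref{L_3920_sgsp}) both have norm at most $1$. Lemma~\ref{L-StructRR}(\ref{L-StructRR-3}), applied with $h = g$ and $k = 1$, gives
\[
t_g \, (v \ltimes \pi) (a) \, s_1 \;=\; \pi_0 \big( \af_g^{-1} (a_g) \big) ,
\]
and hence, using contractivity of $t_g$ and $s_1$ and then that $\pi_0$ and $\af_g^{-1}$ are isometric,
\[
\| a_g \| \;=\; \big\| \pi_0 \big( \af_g^{-1} (a_g) \big) \big\|
 \;=\; \big\| t_g \, (v \ltimes \pi) (a) \, s_1 \big\|
 \;\le\; \| (v \ltimes \pi) (a) \| \;\le\; \| a \|_{\mathrm{r}} .
\]
Taking the supremum over $g \in G$ yields $\| a \|_{\I} \le \| a \|_{\mathrm{r}}$, which closes the chain.

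The only step needing care is the existence of the nondegenerate $\sm$-finite \emph{isometric} representation $\pi_0$: isometry of $\pi_0$ is what makes the coordinate computation identify $\| a_g \|$ exactly, while $\sm$-finiteness and contractivity are what place the resulting regular representation in the defining family for the reduced crossed product. Both are supplied by Proposition~\ref{P-SepImpSepRep} together with Remark~\ref{R_3916_SepSft} under the separability assumption in force throughout this section. Alternatively, Lemma~\ref{L-RedOneRep} produces a regular representation that is isometric on all of $F^p_{\mathrm{r}} (G, A, \af)$, and the same estimate with $s_g, t_g$ applies to it verbatim.
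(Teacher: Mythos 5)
Your proof is correct and follows essentially the same route as the paper: the two right-hand inequalities from Lemma~\ref{L-CompOfCrPrd}, and the inequality $\| a \|_{\I} \leq \| a \|_{\mathrm{r}}$ by extracting the coefficient $a_g$ via Lemma~\ref{L-StructRR}(\ref{L-StructRR-3}) applied to a regular representation built from a nondegenerate $\sm$-finite isometric $\pi_0$ (the paper takes $h = 1,$ $k = g^{-1}$ where you take $h = g,$ $k = 1,$ an immaterial difference). The only slip is cosmetic: the chain has three inequalities, not four.
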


\begin{proof}
The middle and last parts of this inequality
follow from \Lem{L-CompOfCrPrd}.

We prove the first inequality.
Let $a \in C_{\mathrm{c}} (G, A, \af).$
Write $a = \sum_{g \in G} a_g u_g,$
as in Notation~\ref{N-Sum}.
Let $g \in G.$
Choose an isometric \rpn{}  $\pi_0 \colon A \to \LLp$
as in \Lem{L-RedOneRep}.
Following Notation~\ref{N_3920_sgtg},
use \Lem{L-StructRR}(\ref{L-StructRR-3})
at the second step to get
\[
\| a_g \|
  = \| \pi_0 (a_g) \|
  = \| t_1 (v \ltimes \pi) (a) s_{g^{-1}} \|
  \leq \| (v \ltimes \pi) (a) \|
  \leq \| a \|_{\mathrm{r}}.
\]
This completes the proof.
\end{proof}

\begin{rmk}\label{R-Ident}
Lemma~\ref{L:FGpCP} implies that
the map $a \mapsto a u_1,$
from $A$ to $F^p_{\mathrm{r}} (G, A, \af),$
is isometric.
We routinely identify $A$
with its image in $F^p_{\mathrm{r}} (G, A, \af)$ under this map,
thus treating it as a subalgebra of $F^p_{\mathrm{r}} (G, A, \af).$

We do the same with the full \cp\  $F^p (G, A, \af).$
\end{rmk}

\begin{rmk}\label{R-FGpCP}
Adopt the notation of \Lem{L:FGpCP},
and assume that $G$ is finite.
Then $\| \cdot \|_1$ is equivalent to $\| \cdot \|_{\I},$
and $C_{\mathrm{c}} (G, A, \af)$ is complete in both these norms.
It follows that the map
$\kp_{\mathrm{r}} \colon
   F^p (G, A, \af) \to F^p_{\mathrm{r}} (G, A, \af)$
of \Lem{L-CompOfCrPrd} is bijective,
and also that the maps
\[
\io \colon
   C_{\mathrm{c}} (G, A, \af) \to F^p (G, A, \af)
\andeqn
\io_{\mathrm{r}} \colon
   C_{\mathrm{c}} (G, A, \af) \to F^p_{\mathrm{r}} (G, A, \af)
\]
are bijective.
Unlike in the C*~situation,
it does not immediately follow that $\kp_{\mathrm{r}}$
is isometric,
and we have not tried to determine whether it is.
\end{rmk}

\begin{prp}\label{P:CondExpt}
Let $p \in [1, \I),$
let $G$ be a countable discrete group,
and let $(G, A, \af)$ be a separable nondegenerately representable
isometric $G$-$L^p$~operator algebra.
(See Definition~\ref{D:Action} and Definition~\ref{D-LpRep}.)
Then for each $g \in G,$
there is a linear map $E_g \colon F^p_{\mathrm{r}} (G, A, \af) \to A$
with $\| E_g \| \leq 1$
such that if
\[
a = \sum_{g \in G} a_g u_g \in C_{\mathrm{c}} (G, A, \af),
\]
then $E_g (a) = a_g.$
Moreover,
for every \rpn{} $\pi_0$ of~$A$ as in \Lem{L-StructRR},
with $v$ and $\pi$ as in \Lem{L-StructRR}
and $s_g$ and $t_g$ as in Notation~\ref{N_3920_sgtg},
we have
\[
t_h (v \ltimes \pi) (a) s_k
 = \pi_0 \big( \af_h^{-1} (E_{h k^{-1}} (a)) \big)
\]
for all $h, k \in G.$
\end{prp}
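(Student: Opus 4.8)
The plan is to build the maps $E_g$ from a single faithful regular \rpn, using the operators $s_g,t_g$ to extract ``matrix coordinates,'' and then to read off the displayed formula for an arbitrary regular \rpn{} by a density argument.

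First I would use \Lem{L-RedOneRep} to fix a \sfm{} $\XBM$ and a nondegenerate isometric \rpn{} $\pi_0 \colon A \to \LLp$ whose associated regular covariant \rpn{} $(v, \pi)$ induces an \emph{isometric} \rpn{} $v \ltimes \pi$ of $F^p_{\mathrm{r}}(G, A, \af).$ Let $s_g$ and $t_g$ be as in Notation~\ref{N_3920_sgtg} for this measure space; by \Lem{L_3920_sgsp}, each $s_g$ is a spatial isometry and each $t_g$ is its reverse, so $\| s_g \| \le 1$ and $\| t_g \| \le 1.$ For $g \in G$ I would define a bounded linear map $b_g \colon F^p_{\mathrm{r}}(G, A, \af) \to \LLp$ by $b_g(a) = t_1 (v \ltimes \pi)(a) s_{g^{-1}};$ since $v \ltimes \pi$ is isometric, $\| b_g(a) \| \le \| t_1 \| \cdot \| a \| \cdot \| s_{g^{-1}} \| \le \| a \|.$

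Next, by \Lem{L-StructRR}(\ref{L-StructRR-3}) (with $h = 1$ and $k = g^{-1}$), for $a = \sum_{g \in G} a_g u_g \in C_{\mathrm{c}}(G, A, \af)$ one has $b_g(a) = \pi_0\big(\af_1^{-1}(a_g)\big) = \pi_0(a_g).$ Thus $b_g$ carries the subspace $C_{\mathrm{c}}(G, A, \af)$ into $\pi_0(A).$ Since $\pi_0$ is isometric, $\pi_0(A)$ is closed in $\LLp,$ and $C_{\mathrm{c}}(G, A, \af)$ is dense in $F^p_{\mathrm{r}}(G, A, \af)$ by Theorem~\ref{T-UPropRed}(\ref{T-UPropRed-3}); hence continuity of $b_g$ forces $b_g\big(F^p_{\mathrm{r}}(G, A, \af)\big) \S \pi_0(A).$ I would then set $E_g = \pi_0^{-1} \circ b_g \colon F^p_{\mathrm{r}}(G, A, \af) \to A.$ This is linear, satisfies $\| E_g(a) \| = \| b_g(a) \| \le \| a \|$ because $\pi_0$ is isometric, and restricts to $a \mapsto a_g$ on $C_{\mathrm{c}}(G, A, \af).$ I expect this step --- checking that $b_g(a)$ lands in $\pi_0(A)$ for \emph{every} $a,$ not merely for $a \in C_{\mathrm{c}}(G, A, \af)$ --- to be the only point requiring any care, and it is handled by the closedness of $\pi_0(A)$ together with density of $C_{\mathrm{c}}(G, A, \af).$

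Finally, for the ``moreover'' assertion, let $\pi_0'$ be any contractive \rpn{} of $A$ as in \Lem{L-StructRR}, with associated regular covariant \rpn{} $(v', \pi')$ and with $s_g', t_g'$ the corresponding operators of Notation~\ref{N_3920_sgtg}. For $a \in C_{\mathrm{c}}(G, A, \af)$ and $h, k \in G,$ \Lem{L-StructRR}(\ref{L-StructRR-3}) gives
\[
t_h' (v' \ltimes \pi')(a) s_k'
  = \pi_0'\big(\af_h^{-1}(a_{h k^{-1}})\big)
  = \pi_0'\big(\af_h^{-1}(E_{h k^{-1}}(a))\big).
\]
Both sides are continuous linear functions of $a$ on $F^p_{\mathrm{r}}(G, A, \af)$: the left-hand side because $v' \ltimes \pi'$ is contractive (Theorem~\ref{T-UPropRed}) and left and right multiplication by the bounded operators $t_h'$ and $s_k'$ are continuous, and the right-hand side because $E_{h k^{-1}},$ $\af_h^{-1},$ and $\pi_0'$ are all contractive. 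Since $C_{\mathrm{c}}(G, A, \af)$ is dense in $F^p_{\mathrm{r}}(G, A, \af),$ the identity extends to all $a \in F^p_{\mathrm{r}}(G, A, \af),$ which completes the plan.
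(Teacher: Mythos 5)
Your proposal is correct and follows essentially the same route as the paper: the key estimate $\|a_g\| = \|t_1 (v\ltimes\pi)(a) s_{g^{-1}}\| \le \|a\|_{\mathrm{r}}$ via an isometric regular representation from Lemma~\ref{L-RedOneRep} is exactly how the paper proves the first inequality of Lemma~\ref{L:FGpCP}, from which it then obtains $E_g$ by extending the coefficient map by continuity, and the ``moreover'' clause is likewise Lemma~\ref{L-StructRR}(\ref{L-StructRR-3}) plus density. Your only (harmless) variation is to package $E_g$ as the globally defined composite $\pi_0^{-1}\circ t_1 (v\ltimes\pi)(\cdot)\, s_{g^{-1}}$, checking its range lies in $\pi_0(A)$, rather than extending the densely defined map $a\mapsto a_g$.
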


\begin{proof}
The first part is immediate from the first inequality
in Lemma~\ref{L:FGpCP}.
The last statement follows from
\Lem{L-StructRR}(\ref{L-StructRR-3}) by continuity.
\end{proof}

It follows that coefficients~$a_g$ of
elements $a \in F^p_{\mathrm{r}} (G, A, \af)$ make sense.
One does not get convergence
of $\sum_{g \in G} a_g u_g,$
since this already fails for \ca{s} and $p = 2$ is allowed
in the proposition.
However,
we can prove that,
as in the C*~case,
$a$ is uniquely determined by the coefficients~$a_g.$

\begin{prp}\label{P:Faithful}
Let $p \in [1, \I),$
let $G$ be a countable discrete group,
and let $(G, A, \af)$ be a separable nondegenerately representable
isometric $G$-$L^p$~operator algebra.
(See Definition~\ref{D:Action} and Definition~\ref{D-LpRep}.)
Let the maps $E_g \colon F^p_{\mathrm{r}} (G, A, \af) \to A$
be as in Proposition~\ref{P:CondExpt}.
Then:
\begin{enumerate}
\item\label{P:Faithful:G}
If $a \in F^p_{\mathrm{r}} (G, A, \af)$
and $E_g (a) = 0$ for all $g \in G,$
then $a = 0.$
\item\label{P:Faithful:Inj}
If $\XBM$ is a \sfm{}
and $\pi_0 \colon A \to \LLp$
is a nondegenerate contractive representation
such that the $L^p$~direct sum (Definition~\ref{D-LpDirectSum})
$\bigoplus_{g \in G} \pi_0 \circ \af_g$ is injective,
then the regular representation~$\sm$ of $F^p_{\mathrm{r}} (G, A, \af)$
associated to $\pi_0$ is injective.
\end{enumerate}
\end{prp}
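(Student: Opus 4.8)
The plan is to read everything off the intertwining formula of Proposition~\ref{P:CondExpt}, the rigidity statement of Lemma~\ref{L-StructRR}(\ref{L-StructRR-2b}), and the existence of a faithful regular representation from Lemma~\ref{L-RedOneRep}. For part~(\ref{P:Faithful:G}), I would first use Lemma~\ref{L-RedOneRep} to fix a $\sm$-finite measure space $\XBM$ and a nondegenerate isometric contractive representation $\pi_0 \colon A \to \LLp$ whose associated regular covariant representation $(v,\pi)$ has the property that $\sm := v \ltimes \pi$ is isometric on $F^p_{\mathrm{r}}(G,A,\af)$. By Proposition~\ref{P:CondExpt} (whose displayed identity holds for all $a \in F^p_{\mathrm{r}}(G,A,\af)$, by continuity), with $s_g$ and $t_g$ as in Notation~\ref{N_3920_sgtg} we have $t_h \sm(a) s_k = \pi_0\big(\af_h^{-1}(E_{h k^{-1}}(a))\big)$ for all $h,k \in G$. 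If $E_g(a) = 0$ for every $g \in G$, the right-hand side vanishes for all $h,k$, so $t_h \sm(a) s_k = 0$ for all $h,k \in G$; by Lemma~\ref{L-StructRR}(\ref{L-StructRR-2b}) this forces $\sm(a) = 0$, and since $\sm$ is isometric, $a = 0$.

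For part~(\ref{P:Faithful:Inj}), let $(v,\pi)$ be the regular covariant representation of $(G,A,\af)$ associated to the given $\pi_0$, so that $\sm = v \ltimes \pi$ is the induced representation of $F^p_{\mathrm{r}}(G,A,\af)$ (via Theorem~\ref{T-UPropRed}(\ref{T-UPropRed-1})). Suppose $\sm(a) = 0$. Then $t_h \sm(a) s_k = 0$ for all $h,k \in G$, so Proposition~\ref{P:CondExpt} gives $\pi_0\big(\af_h^{-1}(E_{h k^{-1}}(a))\big) = 0$ for all $h,k \in G$. Fix $g \in G$; letting $h$ run over $G$ and taking $k = g^{-1} h$, we obtain $\pi_0\big(\af_h^{-1}(E_g(a))\big) = 0$ for all $h \in G$. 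Since $h \mapsto h^{-1}$ is a bijection of $G$, injectivity of $\bigoplus_{h \in G} \pi_0 \circ \af_h$ is the same as injectivity of $\bigoplus_{h \in G} \pi_0 \circ \af_h^{-1}$, and the hypothesis therefore yields $E_g(a) = 0$. As $g \in G$ was arbitrary, part~(\ref{P:Faithful:G}) gives $a = 0$, so $\sm$ is injective.

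There is no real obstacle here: both parts reduce to the formula of Proposition~\ref{P:CondExpt}, and the only points needing a moment's care are that this formula extends from $C_{\mathrm{c}}(G,A,\af)$ to all of $F^p_{\mathrm{r}}(G,A,\af)$ by continuity, and that $\bigoplus_h \pi_0 \circ \af_h^{-1}$ is injective exactly when $\bigoplus_h \pi_0 \circ \af_h$ is. The substantive input is Lemma~\ref{L-RedOneRep}, which supplies the single faithful regular representation needed for part~(\ref{P:Faithful:G}).
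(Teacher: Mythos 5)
Your proof is correct and follows essentially the same route as the paper's: both parts reduce to the intertwining formula of Proposition~\ref{P:CondExpt} together with Lemma~\ref{L-StructRR}(\ref{L-StructRR-2b}), and part~(\ref{P:Faithful:Inj}) is handled by the same substitution trick followed by an appeal to part~(\ref{P:Faithful:G}). The only cosmetic difference is in part~(\ref{P:Faithful:G}), where the paper lets $\pi_0$ range over all nondegenerate contractive representations and concludes $a = 0$ from the fact that the reduced norm is the supremum over regular representations, while you instead fix the single isometric regular representation supplied by Lemma~\ref{L-RedOneRep}; both are valid.
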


In \ca{s},
if $a \in C^*_{\mathrm{r}} (G, A, \af)$
and $E_1 (a^* a) = 0,$ then $a = 0.$
For $p \neq 2,$
we don't have adjoints,
and we do not know a sensible version of this
statement.

Also,
in part~(\ref{P:Faithful:Inj}),
we do not know whether
the regular representation of $F^p_{\mathrm{r}} (G, A, \af)$
is isometric,
not even if $\pi_0$ itself is assumed isometric.

\begin{proof}[Proof of Proposition~\ref{P:Faithful}]
We prove~(\ref{P:Faithful:G}).
Let $\XBM$ be a \sfm,
let $\pi_0 \colon A \to \LLp$
be a nondegenerate contractive representation,
and let $(v, \pi)$ be the associated regular covariant \rpn.
Let $s_g$ and $t_g$
be as in Notation~\ref{N_3920_sgtg}.
If $a \in F^p_{\mathrm{r}} (G, A, \af)$ satisfies $E_g (a) = 0$
for all $g \in G,$
then $t_h (v \ltimes \pi) (a) s_k = 0$ for all $h, k \in G,$
whence $(v \ltimes \pi) (a) = 0$
by \Lem{L-StructRR}(\ref{L-StructRR-2b}).
Since $\pi_0$ is arbitrary, it follows that $a = 0.$
This proves~(\ref{P:Faithful:G}).

For~(\ref{P:Faithful:Inj}),
suppose $a \in F^p_{\mathrm{r}} (G, A, \af)$ and $\sm (a) = 0.$
Fix $l \in G.$
Taking $h = g^{-1}$ and $k = l^{-1} g^{-1}$ in
Proposition~\ref{P:CondExpt},
we get $(\pi_0 \circ \af_g) (E_l (a)) = 0$
for all $g \in G.$
So $E_l (a) = 0.$
This is true for all $l \in G,$
so $a = 0.$
\end{proof}

\begin{cor}\label{C-L1Inj}
Let $p \in [1, \I),$
let $G$ be a countable discrete group,
and let $(G, A, \af)$ be a separable nondegenerately representable
isometric $G$-$L^p$~operator algebra.
(See Definition~\ref{D:Action} and Definition~\ref{D-LpRep}.)
With notation as in \Lem{L-CompOfCrPrd},
the map
$\kp_{\mathrm{r}} \circ \kp \colon
  L^1 (G, A, \af) \to F^p_{\mathrm{r}} (G, A, \af)$
is injective.
\end{cor}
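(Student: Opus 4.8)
The plan is to pin down, for each $g \in G,$ the composite $E_g \circ \kp_{\mathrm{r}} \circ \kp \colon L^1 (G, A, \af) \to A$ and to show that it equals the coordinate map $\ev_g \colon a \mapsto a (g);$ injectivity of $\kp_{\mathrm{r}} \circ \kp$ then falls out immediately. First I would record the elementary fact that, since the Haar measure on the discrete group~$G$ is counting measure, $L^1 (G, A, \af)$ is the space of functions $a \colon G \to A$ with $\| a \|_1 = \ssum{g \in G} \| a (g) \| < \I,$ so that $a = 0$ in $L^1 (G, A, \af)$ exactly when $a (g) = 0$ for every $g \in G,$ and, for each fixed $g \in G,$ the coordinate map $\ev_g \colon L^1 (G, A, \af) \to A,$ $\ev_g (a) = a (g),$ is linear with $\| \ev_g \| \leq 1.$

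Next I would compare $E_g \circ \kp_{\mathrm{r}} \circ \kp$ with $\ev_g.$ The composite $E_g \circ \kp_{\mathrm{r}} \circ \kp$ is contractive, since $\kp$ and $\kp_{\mathrm{r}}$ are contractive (Lemma~\ref{L-CompOfCrPrd}) and $\| E_g \| \leq 1$ (Proposition~\ref{P:CondExpt}). On the dense subspace $C_{\mathrm{c}} (G, A, \af) \S L^1 (G, A, \af)$ I would use the compatibility relations $\kp |_{C_{\mathrm{c}} (G, A, \af)} = \io$ and $\kp_{\mathrm{r}} \circ \io = \io_{\mathrm{r}}$ from Lemma~\ref{L-CompOfCrPrd} to get $(\kp_{\mathrm{r}} \circ \kp) (a) = \io_{\mathrm{r}} (a)$ for $a \in C_{\mathrm{c}} (G, A, \af),$ and then Proposition~\ref{P:CondExpt} gives $E_g \big( (\kp_{\mathrm{r}} \circ \kp) (a) \big) = a (g)$ for such $a.$ Thus $E_g \circ \kp_{\mathrm{r}} \circ \kp$ and $\ev_g$ are two contractive linear maps on $L^1 (G, A, \af)$ agreeing on a dense subspace, hence they coincide: $E_g \big( (\kp_{\mathrm{r}} \circ \kp) (a) \big) = a (g)$ for all $a \in L^1 (G, A, \af)$ and all $g \in G.$

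To conclude, if $a \in L^1 (G, A, \af)$ satisfies $(\kp_{\mathrm{r}} \circ \kp) (a) = 0,$ then $a (g) = E_g \big( (\kp_{\mathrm{r}} \circ \kp) (a) \big) = 0$ for all $g \in G,$ so $a = 0$ by the first step; alternatively one may feed $(\kp_{\mathrm{r}} \circ \kp) (a)$ directly to Proposition~\ref{P:Faithful}(\ref{P:Faithful:G}). I do not expect a genuine obstacle here: the only points needing attention are the bookkeeping identity $(\kp_{\mathrm{r}} \circ \kp) |_{C_{\mathrm{c}} (G, A, \af)} = \io_{\mathrm{r}}$ and the concrete description of $L^1 (G, A, \af)$ for counting measure, both of which are routine.
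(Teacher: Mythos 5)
Your proof is correct and follows essentially the same route as the paper, which simply declares the corollary immediate from Proposition~\ref{P:Faithful}(\ref{P:Faithful:G}); the identity $E_g \circ \kp_{\mathrm{r}} \circ \kp = \ev_g$ that you verify by contractivity and density is exactly the detail implicit in that one-line argument. Nothing is missing.
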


\begin{proof}
This is immediate from
Proposition~\ref{P:Faithful}(\ref{P:Faithful:G}).
\end{proof}

\begin{dfn}\label{D:StdCond}
Let $p \in [1, \I),$
let $G$ be a countable discrete group,
and let $(G, A, \af)$ be a separable nondegenerately representable
isometric $G$-$L^p$~operator algebra.
(See Definition~\ref{D:Action} and Definition~\ref{D-LpRep}.)
The map $E_1 \colon F^p_{\mathrm{r}} (G, A, \af) \to A$
of Proposition~\ref{P:CondExpt},
determined by
\[
E_1 \left( \ssum{g \in G} a_g u_g \right) = a_1
\]
when $\sum_{g \in G} a_g u_g \in C_{\mathrm{c}} (G, A, \af),$
is called the
{\emph{standard conditional expectation
from $F^p_{\mathrm{r}} (G, A, \af)$ to~$A$}},
and is denoted by~$E.$
\end{dfn}

\begin{prp}\label{P-EIsCondExpt}
Let $p \in [1, \I),$
let $G$ be a countable discrete group,
and let $(G, A, \af)$ be a separable nondegenerately representable
isometric $G$-$L^p$~operator algebra.
(See Definition~\ref{D:Action} and Definition~\ref{D-LpRep}.)
Identify $A$ with its image in $F^p_{\mathrm{r}} (G, A, \af),$
as in Remark~\ref{R-Ident}.
Then the map $E \colon F^p_{\mathrm{r}} (G, A, \af) \to A$
is a Banach conditional expectation
in the sense of Definition~2.1 of~\cite{PhLp2a}.
\end{prp}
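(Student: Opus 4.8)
The properties required by Definition~2.1 of~\cite{PhLp2a} are that $E$ be linear with $\| E \| \leq 1$, that $E$ restrict to the identity on the copy of $A$ sitting inside $F^p_{\mathrm{r}} (G, A, \af)$ (equivalently, composed with the inclusion, $E$ is idempotent), and that $E (b_1 c b_2) = b_1 E (c) b_2$ for all $b_1, b_2 \in A$ and all $c \in F^p_{\mathrm{r}} (G, A, \af).$ Linearity and $\| E \| \leq 1$ are part of Proposition~\ref{P:CondExpt} (recall $E = E_1$). For the identity-on-$A$ property, recall from Remark~\ref{R-Ident} that $a \in A$ is identified with $a u_1 \in C_{\mathrm{c}} (G, A, \af)$ (Notation~\ref{N-Sum}), whose only nonzero coefficient is $a$, at $g = 1$; so $E (a u_1) = a$ directly from the defining formula for $E_1$ in Proposition~\ref{P:CondExpt}. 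Since $E$ takes values in $A$, it is then idempotent, and also $\| E \| = 1$ whenever $A \neq \{ 0 \}.$

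The remaining point is the bimodule identity, which I would prove first on the dense subalgebra $C_{\mathrm{c}} (G, A, \af)$ (dense in $F^p_{\mathrm{r}} (G, A, \af)$ by Theorem~\ref{T-UPropRed}(\ref{T-UPropRed-3})) and then extend by continuity. One checks that $a \mapsto a u_1$ is an algebra homomorphism from $A$ into $C_{\mathrm{c}} (G, A, \af)$, so that for $c \in C_{\mathrm{c}} (G, A, \af)$ the product $b_1 c b_2$ taken in $F^p_{\mathrm{r}} (G, A, \af)$ coincides with the one taken in $C_{\mathrm{c}} (G, A, \af).$ Writing $c = \sum_{g \in G} c_g u_g$ and using the relations $(c_g u_g)(b_2 u_1) = c_g \af_g (b_2) u_g$ and $(b_1 u_1)(a u_g) = b_1 a u_g$ in $C_{\mathrm{c}} (G, A, \af)$, a short computation gives $b_1 c b_2 = \sum_{g \in G} b_1 c_g \af_g (b_2) u_g$, whose coefficient at $g = 1$ is $b_1 c_1 b_2 = b_1 E (c) b_2.$ Hence $E (b_1 c b_2) = b_1 E (c) b_2$ for $c \in C_{\mathrm{c}} (G, A, \af).$ Since multiplication in the Banach algebra $F^p_{\mathrm{r}} (G, A, \af)$ is jointly continuous and $E$ is contractive, the map $c \mapsto E (b_1 c b_2) - b_1 E (c) b_2$ is continuous and vanishes on a dense set, so it vanishes identically; this completes the verification.

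There is no serious obstacle in this argument: everything reduces to the convolution computation on $C_{\mathrm{c}} (G, A, \af)$ together with a density argument. The only points demanding attention are the bookkeeping with the covariance relation $u_g a = \af_g (a) u_g$ in that computation, and the (routine) observation that the isometric embedding $A \hookrightarrow F^p_{\mathrm{r}} (G, A, \af)$ of Remark~\ref{R-Ident} respects products, so that the products $b_1 c b_2$ make sense consistently whether $c$ is taken in $C_{\mathrm{c}} (G, A, \af)$ or in $F^p_{\mathrm{r}} (G, A, \af).$
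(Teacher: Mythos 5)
Your proposal is correct and follows essentially the same route as the paper: linearity and the norm bound come from Proposition~\ref{P:CondExpt}, $E(a)=a$ for $a\in A$ is immediate from the identification in Remark~\ref{R-Ident}, and the bimodule identity is verified on the dense subalgebra $C_{\mathrm{c}}(G,A,\af)$ and extended by continuity. The only difference is that you write out the convolution computation that the paper summarizes as ``a calculation shows,'' and your computation is correct.
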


\begin{proof}
We already know that $E$ is linear and $\| E \| = 1.$
It is immediate that $E (a) = a$ for $a \in A.$
It remains to verify $E (a b c) = a E (b) c$
for $a, c \in A$ and $b \in F^p_{\mathrm{r}} (G, A, \af).$
A calculation shows that this is true when
$b \in C_{\mathrm{c}} (G, A, \af),$
and the general case follows by continuity.
\end{proof}

We will also need the following technical lemma
concerning right and left multiplication by~$u_g.$
It is easy when $A$ is unital,
since then $u_g$ is in the crossed product and
$\| u_g \| = 1$ by \Lem{L:FGpCP}.
However, we need it in the nonunital case.
Essentially,
it says that $u_g$ defines isometric elements
of the multiplier algebras
of $F^p (G, A, \af)$ and $F^p_{\mathrm{r}} (G, A, \af).$
Since we only need these particular multipliers,
we do not discuss the general theory of multiplier algebras.

\begin{lem}\label{L-NormOfMultU}
Let $p \in [1, \I),$
let $G$ be a countable discrete group,
and let $(G, A, \af)$ be a separable nondegenerately representable
isometric $G$-$L^p$~operator algebra.
(See Definition~\ref{D:Action} and Definition~\ref{D-LpRep}.)
Then for every $g \in G$ there are unique isometric maps
\[
L_g, R_g \colon F^p (G, A, \af) \to F^p (G, A, \af)
\andeqn
L_g, R_g \colon
 F^p_{\mathrm{r}} (G, A, \af) \to F^p_{\mathrm{r}} (G, A, \af)
\]
given by $L_g (a) = u_g a$
and $R_g (a) = a u_g$ for $a \in F^p (G, A, \af)$
and $a \in F^p_{\mathrm{r}} (G, A, \af).$
That is, for
\[
a = \sum_{h \in G} a_h u_h \in C_{\mathrm{c}} (G, A, \af),
\]
we have
\[
L_g (\io (a))
 = \io \left( \ssum{h \in G} \af_g (a_h) u_{g h} \right)
\andeqn
R_g (\io (a))
 = \io \left( \ssum{h \in G} a_h u_{h g} \right),
\]
and similarly with $\io_{\mathrm{r}}$ in place of~$\io.$
Moreover, for $g \in G$ and $a, b \in F^p (G, A, \af)$
or $a, b \in F^p_{\mathrm{r}} (G, A, \af),$
we have
\[
L_g (a b) = L_g (a) b,
\,\,\,\,\,\,
R_g (a b) = a R_g (b),
\andeqn
a L_g (b) = R_g (a) b.
\]
\end{lem}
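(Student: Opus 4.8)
The plan is to reduce everything to a single well-chosen regular covariant representation on which $u_g$ is implemented by the spatial operator $v_g$, and then transport the resulting estimates back to the crossed product using the isometry statements from \Lem{L-RedOneRep} and \Lem{L:FGpCP}. First I would treat the reduced crossed product. Fix $g \in G$ and pick, via \Lem{L-RedOneRep}, a \sfm\ $\XBM$ and a nondegenerate isometric \rpn\ $\pi_0 \colon A \to \LLp$ whose associated regular \rpn\ $v \ltimes \pi$ is isometric (and nondegenerate) on $F^p_{\mathrm{r}} (G, A, \af)$. On $L^p (G \times X, \, \nu \times \mu)$ the operator $v_g$ is isometric and invertible (\Lem{L-ExistRegRep}(\ref{ExistRegRep-VIsom})), and a direct computation on $C_{\mathrm{c}} \big( G, \, L^p (X, \mu) \big)$, using the regular representation formula and covariance, shows that $v_g \, (v \ltimes \pi)(\io_{\mathrm{r}}(a))$ equals $(v \ltimes \pi)$ applied to $\io_{\mathrm{r}} \big( \sum_{h} \af_g(a_h) u_{gh} \big)$, and similarly $(v \ltimes \pi)(\io_{\mathrm{r}}(a)) \, v_g = (v \ltimes \pi)\big( \io_{\mathrm{r}}( \sum_h a_h u_{hg}) \big)$, for $a = \sum_h a_h u_h \in C_{\mathrm{c}} (G, A, \af)$. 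Since $\sum_h \af_g(a_h) u_{gh}$ and $\sum_h a_h u_{hg}$ lie in $C_{\mathrm{c}} (G, A, \af)$ and $v \ltimes \pi$ is isometric on the reduced crossed product, this identifies the norms $\big\| \io_{\mathrm{r}}\big(\sum_h \af_g(a_h) u_{gh}\big)\big\|_{\mathrm{r}} = \| v_g (v\ltimes\pi)(\io_{\mathrm{r}}(a)) \| = \| \io_{\mathrm{r}}(a) \|_{\mathrm{r}}$ on the dense subalgebra, so the assignment $a \mapsto \sum_h \af_g(a_h) u_{gh}$ extends to an isometric map $L_g$ on $F^p_{\mathrm{r}} (G, A, \af)$; likewise for $R_g$. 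Uniqueness is immediate from density of the range of $\io_{\mathrm{r}}$ (\Lem{L-CompOfCrPrd} or Theorem~\ref{T-UPropRed}(\ref{T-UPropRed-3})).

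For the full crossed product I would argue similarly but more uniformly: for \emph{every} nondegenerate \sft{} contractive covariant \rpn\ $(v, \pi)$ of $(G, A, \af)$, the operator $v_g$ is an isometric bijection, and the same computation on $C_{\mathrm{c}} (G, A, \af)$ gives $v_g (v\ltimes\pi)(a) = (v\ltimes\pi)\big(\sum_h \af_g(a_h) u_{gh}\big)$ and $(v\ltimes\pi)(a) v_g = (v\ltimes\pi)\big(\sum_h a_h u_{hg}\big)$. Taking the supremum of $\| \cdot \|$ over all such $(v,\pi)$ and invoking Theorem~\ref{T-UPropFull}(\ref{T-UPropFull-2}) shows $\big\| \io\big(\sum_h \af_g(a_h) u_{gh}\big)\big\| = \| \io(a) \|$ and $\big\| \io\big(\sum_h a_h u_{hg}\big)\big\| = \| \io(a) \|$, so $L_g$ and $R_g$ extend to isometries of $F^p (G, A, \af)$; uniqueness follows from Theorem~\ref{T-UPropFull}(\ref{T-UPropFull-3}). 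Finally, the multiplicativity-type identities $L_g(ab) = L_g(a) b$, $R_g(ab) = a R_g(b)$, and $a L_g(b) = R_g(a) b$ are verified first for $a, b \in C_{\mathrm{c}} (G, A, \af)$ by a direct convolution calculation (they amount to associativity of the convolution product together with the relation $u_g a_h u_h = \af_g(a_h) u_{gh}$ at the level of $C_{\mathrm{c}}$), and then extended to all of $F^p (G, A, \af)$ or $F^p_{\mathrm{r}} (G, A, \af)$ by continuity of multiplication, since $\io$ (respectively $\io_{\mathrm{r}}$) has dense range and $L_g, R_g$ are bounded.

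The only genuinely delicate point is the verification that $v_g$ intertwines $(v \ltimes \pi)(\io(a))$ with $(v \ltimes \pi)(\io(\sum_h \af_g(a_h) u_{gh}))$; everything else is bookkeeping. This needs to be checked carefully using the defining formula for $v \ltimes \pi$ via duality (equation~(\ref{Eq:DefOfInt})), since in the non-discrete generality the integrals are only weakly defined — though for the reduced side one may, as in the statement of \Lem{L-StructRR}, work directly with the explicit formula on $C_{\mathrm{c}} \big( G, \, L^p (X, \mu) \big)$, where $G$ is discrete, so there the computation is an elementary finite-sum manipulation using the change of variable $h \mapsto g h$ together with the covariance identity $\pi(\af_g(a)) = v_g \pi(a) v_g^{-1}$. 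I expect no real obstacle beyond keeping the index arithmetic straight, and the nonunital case is handled transparently because nowhere does the argument require $u_g$ itself to lie in the crossed product — only that $v_g$ be an isometric bijection on the representation space.
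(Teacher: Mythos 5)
Your proposal is correct and follows essentially the same route as the paper: the key step in both is the identity $v_g\,(v\ltimes\pi)(a) = (v\ltimes\pi)\bigl(\sum_{h} \af_g(a_h)u_{gh}\bigr)$ (and its right-handed analogue) for a contractive covariant representation, combined with the fact that $v_g$ is an isometric bijection, followed by extension from $C_{\mathrm{c}}(G,A,\af)$ by density. The only cosmetic difference is that on the reduced side the paper takes the supremum over all nondegenerate $\sm$-finite contractive regular covariant representations via Theorem~\ref{T-UPropRed}(\ref{T-UPropRed-2}), whereas you fix a single isometric one from Lemma~\ref{L-RedOneRep}; both work, yours merely leaning on the separability hypothesis where the paper's does not need to.
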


\begin{proof}
By Theorem~\ref{T-UPropFull}(\ref{T-UPropFull-3})
and Theorem~\ref{T-UPropRed}(\ref{T-UPropRed-3}),
it suffices to work with elements of $C_{\mathrm{c}} (G, A, \af).$
In particular, uniqueness is clear.

Let $(v, \pi)$ be any
nondegenerate $\sm$-finite contractive covariant \rpn{} of
$(G, A, \af),$
and let $a \in C_{\mathrm{c}} (G, A, \af).$
Then
we have
\[
( v \ltimes \pi) \left( \ssum{h \in G} \af_g (a_h) u_{g h} \right)
  = \sum_{h \in G} \pi ( \af_g (a_h) ) v_{g} v_{h}
  = v_{g} \sum_{h \in G} \pi ( a_h) v_{h}
  = v_g ( v \ltimes \pi) (a)
\]
and
\[
( v \ltimes \pi) \left( \ssum{h \in G} a_h u_{h g} \right)
  = \sum_{h \in G} \pi ( a_h ) v_{h} v_{g}
  = ( v \ltimes \pi) (a) v_g.
\]
Since $v_g$ is an isometry,
it follows that
\[
\left\|
 ( v \ltimes \pi) \left( \ssum{h \in G} \af_g (a_h) u_{g h} \right)
  \right\|
  = \| ( v \ltimes \pi) (a) \|
\]
and
\[
\left\|
   ( v \ltimes \pi) \left( \ssum{h \in G} a_h u_{h g} \right) \right\|
  = \| ( v \ltimes \pi) (a) \|.
\]
By Theorem~\ref{T-UPropFull}(\ref{T-UPropFull-2}),
taking the supremum over all
nondegenerate $\sm$-finite contractive covariant \rpn{s} of
$(G, A, \af)$
gives the result for the full crossed product.
By Theorem~\ref{T-UPropRed}(\ref{T-UPropRed-2}),
restricting to
nondegenerate $\sm$-finite contractive regular
covariant \rpn{s} gives the result for the reduced crossed product.
\end{proof}

\begin{ntn}\label{N-Multug}
Let the notation be as in \Lem{L-NormOfMultU}.
We write $u_g a$ for $L_g (a)$ and $a u_g$ for $R_g (a).$
This notation is consistent with Notation~\ref{N-Sum}.
\end{ntn}

\section{Structure theorems for reduced $L^p$~operator
  crossed products by discrete groups}\label{Sec_FpCX}

\indent
In this section,
we prove three results about the structure
of reduced $L^p$~operator crossed products by discrete groups.
None of them will be used in the computation of
the K-theory of $\OP{d}{p},$
but they are fairly easy to prove using known techniques,
and it therefore seems appropriate to include them.
The results are as follows.
First,
crossed products by isometric actions of countable discrete amenable groups
on unital nondegenerately $\sm$-finitely representable
$L^p$~operator algebras
preserve Banach algebra amenability
(in the sense of~\cite{Rnd}).
Second,
if a countable discrete group~$G$ acts freely and minimally
on a compact metrizable space~$X,$
then $F^p_{\mathrm{r}} (G, \, C (X))$ is simple.
Third,
if a countable discrete group~$G$ acts freely
on a compact metrizable space~$X,$
then the traces on $F^p_{\mathrm{r}} (G, \, C (X))$
all come from $G$-invariant measures on~$X.$
Besides these results,
it is proved in~\cite{PH}
that,
for $p \neq 1,$
the reduced $L^p$~operator crossed product
by a $G$-simple isometric action of a Powers group~$G$
is simple.

The definition of an amenable Banach algebra is given in
Definition 2.1.9 of~\cite{Rnd};
see Theorem 2.2.4 of~\cite{Rnd}
for two standard equivalent conditions.
The amenability result has essentially
the same proof as has been already
used for \ca{s}.
(See Theorem~1 of~\cite{Rs} or Proposition IV.4.3 of~\cite{Jn2}.)
We state the result on $L^1 (G, A, \af)$ from~\cite{Jn2}
for completeness.

\begin{prp}\label{P_3917_L1Amen}
Let $A$ be an amenable unital Banach algebra,
let $G$ be an amenable discrete group,
and let $\af \colon G \to \Aut (A)$
be an isometric action of $G$ on~$A.$
Then $L^1 (G, A, \af)$ is an amenable Banach algebra.
\end{prp}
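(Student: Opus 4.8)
The plan is to prove that $B := L^1(G, A, \af)$ (with the convolution product of Notation~\ref{N-L1CrPrd}) is amenable by producing a \emph{bounded approximate diagonal}: a bounded net $(m_\ld)$ in the projective tensor product $B \widehat{\otimes} B$ such that $b \cdot m_\ld - m_\ld \cdot b \to 0$ for all $b \in B$ and the net $\pi_B(m_\ld)$ is a bounded approximate identity, where $\pi_B \colon B \widehat{\otimes} B \to B$ is multiplication. This is one of the standard equivalent formulations of amenability (Theorem~2.2.4 of~\cite{Rnd}). Since $A$ is unital and $G$ is discrete, $B$ is unital with identity $u_e$, and $a \mapsto a u_e$ embeds $A$ isometrically into $B$ with $u_g a u_{g^{-1}} = \af_g(a)$ and $\| u_g \|_B = 1$. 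Because $A$ is amenable it has a bounded approximate diagonal $(d_i)$ in $A \widehat{\otimes} A$, say $\| d_i \| \le M$; since $A$ is unital, $\pi_A(d_i) \to 1_A$ in norm.

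The construction combines this with a F{\o}lner net for $G$. The index set consists of triples $\ld = (F, \ep, S)$ with $F \S G$ finite, $\ep > 0$, and $S \S A$ finite, ordered so that $F$ grows, $\ep$ shrinks, and $S$ grows (a directed set). Given $\ld$, first choose a finite $F' \S G$ with $F \S F'$ and $\card(hF' \,\triangle\, F') < \ep \, \card(F') / (M+1)$ for all $h \in F$; this exists by amenability of $G$. Then, using $(d_i)$, choose $d \in A \widehat{\otimes} A$ with $\| d \| \le M$, with $\| \pi_A(d) - 1_A \| < \ep$, and with $\| c \cdot d - d \cdot c \| < \ep$ for every $c$ in the finite set $\{ \af_{g^{-1}}(c_0) \colon g \in F', \ c_0 \in S \}$; write $d = \sum_{i=1}^N a_i \otimes b_i$ with $\sum_i \| a_i \| \, \| b_i \| \le M+1$. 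Set
\[
m_\ld = \frac{1}{\card(F')} \sum_{g \in F'} \sum_{i=1}^N \big( \af_g(a_i) u_g \big) \otimes \big( u_{g^{-1}} \af_g(b_i) \big) \in B \widehat{\otimes} B ,
\]
so that $\| m_\ld \| \le M+1$. It is convenient to write $\Phi_g \colon A \widehat{\otimes} A \to B \widehat{\otimes} B$ for the contraction $x \otimes y \mapsto (\af_g(x) u_g) \otimes (u_{g^{-1}} \af_g(y))$, so that $m_\ld = \card(F')^{-1} \sum_{g \in F'} \Phi_g(d)$.

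The verification then splits into three short computations, all exploiting $u_g \xi u_{g^{-1}} = \af_g(\xi)$ and the isometry of each $\af_g$ and each $u_g$. First, $\pi_B(\Phi_g(x \otimes y)) = \af_g(xy)$, hence $\pi_B(m_\ld) = \card(F')^{-1} \sum_{g \in F'} \af_g(\pi_A(d))$ and $\| \pi_B(m_\ld) - u_e \| \le \| \pi_A(d) - 1_A \| < \ep$, so $\pi_B(m_\ld) b \to b$. Second, for $h \in G$ the change of variable $g \mapsto hg$ shows that $u_h \cdot m_\ld - m_\ld \cdot u_h$ is $\card(F')^{-1}$ times a sum over $hF' \,\triangle\, F'$ of terms of norm $\le M+1$, hence has norm $< \ep$ once $h \in F$. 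Third, from $\Phi_g(c \cdot \omega) = \af_g(c) \cdot \Phi_g(\omega)$ and $\Phi_g(\omega \cdot c) = \Phi_g(\omega) \cdot \af_g(c)$ one gets $c \cdot m_\ld - m_\ld \cdot c = \card(F')^{-1} \sum_{g \in F'} \Phi_g\big( \af_{g^{-1}}(c) \cdot d - d \cdot \af_{g^{-1}}(c) \big)$, which has norm $< \ep$ once $c \in S$. Since every $b \in C_{\mathrm{c}}(G, A, \af)$ is a finite sum $\sum_h c_h u_h$, the second and third facts together with boundedness of $(m_\ld)$ and density of $C_{\mathrm{c}}(G, A, \af)$ in $B$ give $b \cdot m_\ld - m_\ld \cdot b \to 0$ for all $b \in B$ by a routine $\tfrac{\ep}{3}$ argument. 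Hence $(m_\ld)$ is a bounded approximate diagonal and $B$ is amenable.

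The main obstacle is the third computation, or rather the bookkeeping it forces: a bounded approximate diagonal of $A$ only provides $\| c \cdot d_i - d_i \cdot c \| \to 0$ for each \emph{fixed} $c$, with no uniformity in $c$, whereas $\Phi_g$ drags $c$ through all conjugates $\af_{g^{-1}}(c)$ with $g \in F'$. This is precisely why the net must be indexed by triples $(F, \ep, S)$ and why $F'$ — hence the finite set of conjugates needing control — is chosen \emph{before} $d$: at each index only finitely many elements of $A$ must be handled at once. Apart from this, the argument is the same bimodule manipulation used for $C^*$-crossed products (cf.\ Theorem~1 of~\cite{Rs} and Proposition~IV.4.3 of~\cite{Jn2}).
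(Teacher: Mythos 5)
Your proof is correct. Note that the paper does not actually prove this proposition: it simply cites Proposition~IV.4.2 of~\cite{Jn2}, so what you have written is a self-contained substitute for that citation, and it is essentially the standard argument underlying it (Johnson's approximate-diagonal proof of amenability of $L^1(G)$, merged with the bimodule manipulation of Theorem~1 of~\cite{Rs}). All the computations check: $\Phi_g$ is the projective tensor product of the two isometries $x \mapsto \af_g(x) u_g = u_g x$ and $y \mapsto u_{g^{-1}} \af_g(y) = y u_{g^{-1}},$ hence contractive on $A \widehat{\otimes} A$; the identities $\pi_B(\Phi_g(\omega)) = \af_g(\pi_A(\omega)) u_e,$ $c \cdot \Phi_g(\omega) = \Phi_g(\af_{g^{-1}}(c) \cdot \omega),$ and $\Phi_g(\omega) \cdot c = \Phi_g(\omega \cdot \af_{g^{-1}}(c))$ are exactly as you state; and your reindexing $g \mapsto h g$ correctly reduces the commutator with $u_h$ to a sum over $h F' \,\triangle\, F'.$ You have also correctly identified and handled the one place where care is genuinely required, namely that $F'$ must be fixed before $d$ so that only the finite set of conjugates $\af_{g^{-1}}(c_0),$ $g \in F',$ $c_0 \in S,$ needs to approximately commute with $d.$ Two very minor remarks: the requirement $F \S F'$ on the F{\o}lner set is never used (and, while achievable, would need a small extra argument), so it can simply be dropped; and since $\pi_B(m_\ld) \to u_e$ in norm, the approximate identity condition holds in the strong form, which is more than enough for Theorem~2.2.4 of~\cite{Rnd}.
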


\begin{proof}
This is Proposition IV.4.2 of~\cite{Jn2}.
See Lemma IV.1.1 and Definition IV.1.4 of~\cite{Jn2}
for the notation.
Actions in~\cite{Jn2} are assumed isometric;
see Definition I.2.2 of~\cite{Jn1}.
\end{proof}

\begin{thm}\label{T-CPAmen}
Let $p \in [1, \I),$
let $G$ be a countable discrete amenable group,
let $A$ be a unital $L^p$~operator algebra
which is nondegenerately $\sm$-finitely representable
(Definition \ref{D-LpRep}(\ref{D_3914_LpRep_Ndg})),
and let $\af \colon G \to \Aut (A)$
be an isometric action.
Suppose that $A$ is amenable as a Banach algebra.
Then $F^p (G, A, \af)$ and $F^p_{\mathrm{r}} (G, A, \af)$
are amenable as Banach algebras.
\end{thm}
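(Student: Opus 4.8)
The plan is to realize both \cp s as images, under continuous homomorphisms with dense range, of the Banach algebra $L^1 (G, A, \af)$, and then to invoke the permanence of Banach algebra amenability under such maps. The starting point is that $L^1 (G, A, \af)$ is amenable: this is exactly Proposition~\ref{P_3917_L1Amen}, which applies because $A$ is a unital amenable Banach algebra, $G$ is discrete and amenable, and $\af$ is isometric. This is the step carrying the real content; it is Johnson's argument (see~\cite{Jn2}), ultimately the same one used for \ca s in~\cite{Rs}.

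Next I would use Lemma~\ref{L-CompOfCrPrd}. Since $G$ is countable and discrete, hence second countable and locally compact, and $A$ is nondegenerately $\sm$-finitely representable, the hypotheses of that lemma are satisfied, so there are contractive homomorphisms
\[
\kp \colon L^1 (G, A, \af) \to F^p (G, A, \af)
\andeqn
\kp_{\mathrm{r}} \colon F^p (G, A, \af) \to F^p_{\mathrm{r}} (G, A, \af),
\]
both with dense range; consequently $\kp_{\mathrm{r}} \circ \kp \colon L^1 (G, A, \af) \to F^p_{\mathrm{r}} (G, A, \af)$ also has dense range.

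Finally I would apply the standard fact that if $\mathcal{A}$ is an amenable Banach algebra and $\ph \colon \mathcal{A} \to \mathcal{B}$ is a continuous homomorphism with dense range, then $\mathcal{B}$ is amenable (see~\cite{Rnd}; this is also essentially Proposition~IV.4.3 of~\cite{Jn2}). Applied to $\kp$, this yields amenability of $F^p (G, A, \af)$; applied to $\kp_{\mathrm{r}} \circ \kp$ (or to $\kp_{\mathrm{r}}$, once $F^p (G, A, \af)$ is known to be amenable), it yields amenability of $F^p_{\mathrm{r}} (G, A, \af)$. I do not expect a genuine obstacle here: all the substance is already packaged into Proposition~\ref{P_3917_L1Amen} and Lemma~\ref{L-CompOfCrPrd}, and nothing in the argument uses $p = 2$ or any Hilbert space structure, so the only care needed is checking the hypotheses of those two results and of the permanence property. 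One could alternatively bypass $L^1 (G, A, \af)$ and build an approximate diagonal for the \cp\ directly from one for $A$ together with a F{\o}lner-type averaging over $G$, in the spirit of~\cite{Rs}, but the route through $L^1 (G, A, \af)$ is shorter given what has already been proved.
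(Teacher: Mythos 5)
Your proposal is correct and is essentially identical to the paper's proof: both establish amenability of $L^1 (G, A, \af)$ via Proposition~\ref{P_3917_L1Amen}, then push it through the dense-range contractive homomorphisms $\kp$ and $\kp_{\mathrm{r}}$ of Lemma~\ref{L-CompOfCrPrd} using the permanence property (Proposition 2.3.1 of~\cite{Rnd}). No further comment is needed.
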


\begin{proof}
We follow the proof of Proposition IV.4.3 of~\cite{Jn2}.
(The paper~\cite{Rs}
does not have Proposition~\ref{P_3917_L1Amen}
as an intermediate step.)
\Lem{L-CompOfCrPrd}
provides contractive  homomorphisms
\[
\kp \colon L^1 (G, A, \af) \to F^p (G, A, \af)
\andeqn
\kp_{\mathrm{r}} \colon F^p (G, A, \af) \to F^p_{\mathrm{r}} (G, A, \af)
\]
which have dense ranges.
So amenability of $F^p (G, A, \af)$
follows from
Proposition 2.3.1 of~\cite{Rnd}
and
Proposition~\ref{P_3917_L1Amen}.
Amenability of $F^p_{\mathrm{r}} (G, A, \af)$
then follows from amenability of $F^p (G, A, \af)$
and Proposition 2.3.1 of~\cite{Rnd}.
\end{proof}

Now we consider the results on free actions.
We need a lemma,
which is based on the main part of
the proof of Lemma VIII.3.7 of~\cite{Dv}.
We simplify and generalize it in several ways.
In particular,
we do not need the Rokhlin Lemma,
and the proof works for free minimal actions
of arbitrary discrete groups.

\begin{lem}\label{L-RmvOneElt}
Let $G$ be a discrete group,
let $X$ be a free compact $G$-space,
and let $F \in G \setminus \{ 1 \}$ be finite.
Then there exist $n \in \N$ and $s_1, s_2, \ldots, s_n \in C (X)$
such that $| s_k (x) | = 1$ for $k = 1, 2, \ldots, n$ and all $x \in X,$ 
and such that for all $x \in U$ and $g \in F,$
we have
\[
\frac{1}{n} \sum_{k = 1}^n s_k (x) {\overline{s_k (g^{-1} x) }} = 0.
\]
\end{lem}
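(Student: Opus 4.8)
The plan is to reduce to the case that $F$ has a single element and then build the functions $s_k$ from a partition of unity, using freeness of the action to force the averaged sum to collapse to a sum of roots of unity.

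First I would treat the reduction. Assume the statement for one-element subsets of $G \setminus \{ 1 \}$ and write $F = \{ g_1, \ldots, g_r \}$. For each $i$ choose $m_i \in \N$ and $s^{(i)}_1, \ldots, s^{(i)}_{m_i} \in C (X)$ of modulus one with $\frac{1}{m_i} \sum_{k = 1}^{m_i} s^{(i)}_k (x) \overline{s^{(i)}_k (g_i^{-1} x)} = 0$ for all $x \in U$. Put $n = m_1 \cdots m_r$, and for a multi-index $\vec{k} = (k_1, \ldots, k_r)$ with $1 \leq k_i \leq m_i$ set $s_{\vec{k}} = \prod_{i = 1}^r s^{(i)}_{k_i}$, so $| s_{\vec{k}} (x) | = 1$. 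Since $s_{\vec{k}} (x) \overline{s_{\vec{k}} (g^{-1} x)} = \prod_{i = 1}^r s^{(i)}_{k_i} (x) \overline{s^{(i)}_{k_i} (g^{-1} x)}$, and a sum over a product index set of products factors as a product of sums,
\[
\frac{1}{n} \sum_{\vec{k}} s_{\vec{k}} (x) \overline{s_{\vec{k}} (g^{-1} x)}
  = \prod_{i = 1}^r \left( \frac{1}{m_i} \sum_{k_i = 1}^{m_i}
       s^{(i)}_{k_i} (x) \overline{s^{(i)}_{k_i} (g^{-1} x)} \right) .
\]
For $g = g_{i_0} \in F$ the factor with $i = i_0$ vanishes, hence so does the whole product. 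This reduces the problem to a single $g \in G \setminus \{ 1 \}$.

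For a single $g$: freeness of the action gives $g^{-1} x \neq x$ for every $x$, so by Hausdorffness each point has an open neighborhood $V$ with $g^{-1} V \cap V = \varnothing$; by compactness finitely many such sets $V_1, \ldots, V_N$ cover $\overline{U}$. Take a partition of unity $f_1, \ldots, f_N$ subordinate to $\{ V_j \}$, so $f_j (x) f_j (g^{-1} x) = 0$ for all $j$ and all $x$. The goal is a continuous $w \colon X \to \C$ with $| w (x) | = 1$ everywhere and with $w (x) \overline{w (g^{-1} x)}$ a \emph{nontrivial} $n$-th root of unity for every $x \in U$; then $s_k := w^k$ for $k = 1, \ldots, n$ has modulus one and
\[
\frac{1}{n} \sum_{k = 1}^n s_k (x) \overline{s_k (g^{-1} x)}
  = \frac{1}{n} \sum_{k = 1}^n \bigl( w (x) \overline{w (g^{-1} x)} \bigr)^k = 0 ,
\]
since $\sum_{k = 1}^n \zeta^k = 0$ for any nontrivial $n$-th root of unity $\zeta$. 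When $X$ is zero-dimensional this is immediate: take the $V_j$ to be a clopen partition, $f_j = \chi_{V_j}$, $n = N$, and $w = \sum_{j = 1}^N e^{2 \pi i j / N} \chi_{V_j}$; then $w (x) \overline{w (g^{-1} x)} = e^{2 \pi i (j (x) - j (g^{-1} x)) / N}$, a nontrivial $N$-th root of unity because $x$ and $g^{-1} x$ always lie in different pieces.

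The main obstacle is producing such a $w$ when $X$ is not zero-dimensional: there is no continuous integer-valued ``index'' function, and a winding function built naively from the $f_j$ gives $w (x) \overline{w (g^{-1} x)} = e^{2 \pi i \delta (x) / N}$ with $\delta$ real-valued rather than integer-valued, so the value need not be a root of unity at all. This is exactly where the open set $U$ enters: on $U$ one has room to adjust $w$ --- for instance to make it advance by a fixed amount under $g$, exploiting that $U$ does not exhaust the $g$-orbits --- which would be impossible if $U$ were all of $X$ and $g$ had infinite orbits. Carrying this out, following Davidson's Lemma~VIII.3.7, is the real content of the proof; the two reductions above are routine.
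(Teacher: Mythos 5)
Your reduction from a general finite $F$ to a single $g \in G \setminus \{1\}$ is correct, and the product identity you use for it is in fact the engine of the paper's entire proof. But the part you defer --- the single-$g$ case on a general compact space --- is precisely the content of the lemma, and the route you sketch for it does not work. You propose to find one continuous unimodular $w$ such that $w(x)\overline{w(g^{-1}x)}$ is a nontrivial $n$-th root of unity at every point. Already for the rotation of $S^1$ by an irrational angle $\theta$ (a free action of $\Z$) this is impossible: by connectedness the continuous root-of-unity-valued function $x \mapsto w(x)\overline{w(g^{-1}x)}$ would be a constant $\zeta$, and writing $w(x) = e^{2\pi i m x}e^{i h(x)}$ with $h$ continuous and real-valued forces $h(x) - h(x-\theta)$ to be constant, hence (by boundedness of $h$ and minimality of the rotation) zero, so $\zeta = e^{2\pi i m\theta},$ which is never a nontrivial root of unity. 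Also, the ``$U$'' in the statement is a typo for $X$ --- the lemma is applied in Proposition~\ref{P-RmvEst} with the identity needed at every point of $X$ --- so there is no proper open set on which you have ``room to adjust $w$.''

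The missing idea is that no single global $w$ is needed. The identity you already wrote down for the reduction over $F$, namely
\[
\frac{1}{m n}\sum_{j,k} (r_j s_k)(x)\,\overline{(r_j s_k)(g^{-1}x)}
 = \left(\frac{1}{m}\sum_{j} r_j(x)\overline{r_j(g^{-1}x)}\right)
   \left(\frac{1}{n}\sum_{k} s_k(x)\overline{s_k(g^{-1}x)}\right),
\]
also shows that if one family of unimodular functions makes the average vanish on an open set $U$ and another makes it vanish on an open set $V,$ then the family of all products makes it vanish on $U \cup V$ (at each point at least one factor is zero). Locally the problem is trivial: around any $x$ choose $U$ with $\overline{U} \cap g^{-1}\overline{U} = \varnothing,$ take $n = 2,$ $s_1 = 1,$ and $s_2 = e^{i r}$ with $r$ continuous, $r = 0$ on $\overline{U}$ and $r = \pi$ on $g^{-1}\overline{U}$; the average on $U$ is $\tfrac12(1 - 1) = 0.$ Compactness gives a finite cover by such $U$'s, and finitely many applications of the product identity patch these local solutions into one valid on all of $X.$ This is exactly how the paper argues, and it is why the proof needs no topological hypotheses on $X$ beyond compactness.
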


\begin{proof}
For use only within this proof,
we define a pair $(F, U),$
consisting of a finite subset $F \subset G \setminus \{ 1 \}$
and an open subset $U \subset X,$
to be inessential
if there exist $n \in \N$ and $s_1, s_2, \ldots, s_n \in C (X)$
such that $| s_k (x) | = 1$ for $k = 1, 2, \ldots, n$ and all $x \in X,$ 
and such that for all $x \in U$ and $g \in F,$
we have
\[
\frac{1}{n} \sum_{k = 1}^n s_k (x) {\overline{s_k (g^{-1} x) }} = 0.
\]
Thus, we have to prove that $(F, X)$ is inessential
for every finite subset $F \subset G \setminus \{ 1 \}.$

We claim the following:
\begin{enumerate}
\item\label{L-Iness-1}
For every $x \in X$
and every $g \in G \SM \{ 1 \},$
there exists an open set $U \subset X$ with $x \in U$
such that $( \{ g \}, U )$ is inessential.
\item\label{L-Iness-2}
If $F \subset G \setminus \{ 1 \}$ is finite
and $U, V \S X$ are open,
and if $(F, U)$ and $(F, V)$ are both inessential,
then so is $(F, \, U \cup V).$
\item\label{L-Iness-3}
If $E, F \subset G \setminus \{ 1 \}$ are finite
and $U \S X$ is open,
and if $(E, U)$ and $(F, U)$ are both inessential,
then so is $(E \cup F, \, U).$
\end{enumerate}

To prove~(\ref{L-Iness-1}),
choose an open set $U \subset X$ with $x \in U$
such that ${\overline{U}} \cap g^{-1} {\overline{U}} = \varnothing.$
Take $n = 2,$
and take $s_1$ to be the constant function~$1.$
Choose a \cfn\  $r \colon X \to \R$
such that $r (x) = 0$ for $x \in {\overline{U}}$
and $r (x) = \pi$ for $x \in g^{-1} {\overline{U}}.$
Set $s_2 (x) = \exp (i r (x))$ for $x \in X.$
For $x \in U,$
we have
\[
\frac{1}{n} \sum_{k = 1}^n s_k (x) {\overline{s_k (g^{-1} x) }}
  = \frac{1}{2} \big[ 1 \cdot 1 + 1 \cdot (-1) \big]
  = 0.
\]
Thus $( \{ g \}, U )$ is inessential.

The proofs of (\ref{L-Iness-2}) and~(\ref{L-Iness-3})
are both based on the following calculation.
Let
\[
m, n \in \N,
\,\,\,\,\,\,
r_1, r_2, \ldots, r_m, s_1, s_2, \ldots, s_n \in C (X),
\,\,\,\,\,\,
x \in X,
\andeqn
g \in G.
\]
Then
\begin{align}\label{Eq:RmvProd}
\lefteqn{
\frac{1}{m n} \sum_{j = 1}^m \sum_{k = 1}^n
     (r_j s_k) (x) {\overline{(r_j s_k) (g^{-1} x) }}  }
        \\
& \hspace*{3em}
  = \left(
       \frac{1}{m} \sum_{j = 1}^m r_j (x) {\overline{r_j (g^{-1} x) }}
          \right)
    \left(
       \frac{1}{n} \sum_{k = 1}^n s_k (x) {\overline{s_k (g^{-1} x) }}
          \right).
       \notag
\end{align}
For~(\ref{L-Iness-2}),
choose $m, n \in \N$
and \cfn s
\[
r_1, r_2, \ldots, r_m, s_1, s_2, \ldots, s_n \colon X \to S^1
\]
such that for every $g \in F,$
we have
\begin{equation}\label{Eq:L-RmvUnion-Eq}
{\mbox{${\displaystyle{
  \frac{1}{m} \sum_{j = 1}^m r_j (x) {\overline{r_j (g^{-1} x) }} = 0}}$
   for $x \in U$}}
\andeqn
{\mbox{${\displaystyle{
  \frac{1}{n} \sum_{k = 1}^n s_k (x) {\overline{s_k (g^{-1} x) }} = 0}}$
   for $x \in V$}}.
\end{equation}
The functions $r_j s_k$ are \cfn s from $X$ to~$S^1,$
and (\ref{Eq:RmvProd})~implies that
for every $g \in F$ and $x \in U \cup V,$
we have
\[
\frac{1}{m n} \sum_{j = 1}^m \sum_{k = 1}^n
     (r_j s_k) (x) {\overline{(r_j s_k) (g^{-1} x) }}
 = 0.
\]
Similarly,
for~(\ref{L-Iness-3})
choose $m, n \in \N$
and \cfn s
\[
r_1, r_2, \ldots, r_m, s_1, s_2, \ldots, s_n \colon X \to S^1
\]
such that for every $x \in U,$
we have
\[
{\mbox{${\displaystyle{
  \frac{1}{m} \sum_{j = 1}^m r_j (x) {\overline{r_j (g^{-1} x) }} = 0}}$
   for $g \in E$}}
\andeqn
{\mbox{${\displaystyle{
  \frac{1}{n} \sum_{k = 1}^n s_k (x) {\overline{s_k (g^{-1} x) }} = 0}}$
   for $g \in F$}}.
\]
The claim again follows from~(\ref{Eq:RmvProd}).

We now claim that $( \{ g \}, X )$
is inessential
for all $g \in G \SM \{ 1 \}.$
Use compactness of~$X$ and~(\ref{L-Iness-1})
to find $n$ and open sets $U_1, U_2, \ldots, U_n \subset X$
such that $( \{ g \}, U_k)$ is inessential
for $k = 1, 2, \ldots, n$
and such that $\bigcup_{k = 1}^n U_k = X.$
Then apply~(\ref{L-Iness-2}) a total of $n - 1$ times.
This proves the claim.

For an arbitrary finite subset $F \subset G \setminus \{ 1 \},$
we use this claim and apply~(\ref{L-Iness-3}) repeatedly
to see that $(F, X)$ is inessential, as desired.
\end{proof}

We use the following analog of conventional notation for
transformation group \ca{s}.

\begin{ntn}\label{N-TransfGpLpAlg}
Let $X$ be a locally compact metrizable space,
and let $G$ be a second countable locally compact group which acts on~$X.$
Let $\af \colon G \to \Aut (C (X))$
be the action of Example~\ref{E-GpOnSp}.
We abbreviate
$F^p (G, A, \af)$ to $F^p (G, X)$
and $F^p_{\mathrm{r}} (G, A, \af)$ to $F^p_{\mathrm{r}} (G, X).$
If the action of $G$ on $X$ is called~$h,$
we write $F^p (G, X, h)$ and $F^p_{\mathrm{r}} (G, X, h).$
\end{ntn}

\begin{prp}\label{P-RmvEst}
Let $G$ be a countable discrete group,
let $X$ be a free compact $G$-space,
and let $E \colon F^p_{\mathrm{r}} (G, X) \to C (X)$
be the standard conditional expectation (Definition~\ref{D:StdCond}),
viewed as a map $F^p_{\mathrm{r}} (G, X) \to F^p_{\mathrm{r}} (G, X)$
via Remark~\ref{R-Ident}.
Then for every $a \in  F^p_{\mathrm{r}} (G, X)$
and $\ep > 0,$
there exist $n \in \N$ and $s_1, s_2, \ldots, s_n \in C (X)$
such that $| s_k (x) | = 1$ for $k = 1, 2, \ldots, n$ and all $x \in X,$ 
and such that
\[
\left\| E (a)
    - \frac{1}{n} \sum_{k = 1}^n s_k a {\overline{s_k}} \right\|
 < \ep.
\]
\end{prp}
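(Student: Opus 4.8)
The plan is to establish the statement exactly (with zero error) for elements $b$ of $C_{\mathrm{c}} (G, C (X), \af),$ and then to pass to the general case by continuity. Fix $a \in F^p_{\mathrm{r}} (G, X)$ and $\ep > 0.$ Using density of the range of $\io_{\mathrm{r}}$ (Theorem~\ref{T-UPropRed}(\ref{T-UPropRed-3})), I would choose $b = \sum_{g \in F} b_g u_g$ in $C_{\mathrm{c}} (G, C (X), \af)$ with $\| a - b \| < \tfrac{1}{2} \ep,$ where $F \S G$ is finite; enlarging~$F,$ we may assume $1 \in F.$ For $n \in \N$ and $s_1, s_2, \ldots, s_n \in C (X)$ with $| s_k (x) | = 1$ for all $k$ and~$x,$ write $\Phi (c) = \tfrac{1}{n} \sum_{k = 1}^n s_k c {\overline{s_k}}$ for $c \in F^p_{\mathrm{r}} (G, X).$ Each of $s_k u_1$ and ${\overline{s_k}} u_1$ in $C_{\mathrm{c}} (G, C (X), \af)$ has supremum norm and $\| \cdot \|_1$ equal to~$1,$ so \Lem{L:FGpCP} gives $\| s_k \| = \| {\overline{s_k}} \| = 1$ in $F^p_{\mathrm{r}} (G, X)$; hence, by submultiplicativity, $\| \Phi (c) \| \leq \tfrac{1}{n} \sum_{k = 1}^n \| s_k \| \cdot \| c \| \cdot \| {\overline{s_k}} \| = \| c \|,$ that is, $\Phi$ is linear and contractive. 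So is $E,$ by Proposition~\ref{P:CondExpt}.

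The next step is to choose the functions~$s_k$ well. Apply \Lem{L-RmvOneElt} to the finite set $F \SM \{ 1 \} \S G \SM \{ 1 \}$ (taking $n = 1$ and $s_1 = 1$ if this set is empty) to obtain $n \in \N$ and $s_1, s_2, \ldots, s_n \in C (X)$ with $| s_k (x) | = 1$ for all $k$ and~$x$ and
\[
\frac{1}{n} \sum_{k = 1}^n s_k (x) {\overline{s_k (g^{-1} x) }} = 0
   \qquad {\mbox{for all $x \in X$ and all $g \in F \SM \{ 1 \}.$}}
\]
Working in $C_{\mathrm{c}} (G, C (X), \af),$ using the convolution product and the relation $u_g f = \af_g (f) u_g$ for $f \in C (X)$ (a special case of \Lem{L-NormOfMultU}) together with $\af_g (f) (x) = f (g^{-1} x)$ from Example~\ref{E-GpOnSp}, one computes that for each $g \in F$ the coefficient of $u_g$ in $\Phi (b)$ is the function
\[
x \longmapsto b_g (x) \cdot \frac{1}{n} \sum_{k = 1}^n
   s_k (x) {\overline{s_k (g^{-1} x)}} .
\]
For $g \in F \SM \{ 1 \}$ this is identically~$0$ by the choice of the~$s_k,$ while for $g = 1$ it equals~$b_1$ since $| s_k (x) |^2 = 1$ for all~$x.$ Hence $\Phi (b) = b_1 u_1 = E (b),$ using Remark~\ref{R-Ident} to regard $b_1 \in C (X)$ as $b_1 u_1 \in F^p_{\mathrm{r}} (G, X).$

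Finally, since $E$ and $\Phi$ are linear, contractive, and agree on~$b,$
\[
\left\| E (a) - \frac{1}{n} \sum_{k = 1}^n s_k a {\overline{s_k}} \right\|
 = \| E (a - b) - \Phi (a - b) \|
 \leq 2 \, \| a - b \|
 < \ep ,
\]
which is exactly the asserted estimate, for this~$n$ and these~$s_1, s_2, \ldots, s_n.$ I expect the only points needing attention to be the proof that $\Phi$ is contractive — reduced by \Lem{L:FGpCP} to the fact that $s_k$ and ${\overline{s_k}}$ have reduced crossed product norm~$1$ — and the routine bookkeeping with the convolution product and the relation $u_g f = \af_g (f) u_g.$ The actual content of the proposition, namely annihilating the off-diagonal coefficients of~$b$ by averaging a single finite family of unitaries from $C (X),$ is supplied in full by \Lem{L-RmvOneElt}, so there should be no serious obstacle.
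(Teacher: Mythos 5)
Your proposal is correct and follows essentially the same route as the paper: approximate $a$ by $b \in C_{\mathrm{c}} (G, C (X), \af)$ within $\tfrac{1}{2}\ep,$ invoke \Lem{L-RmvOneElt} for the finite support of~$b,$ observe that the averaging map and $E$ are both contractive and agree exactly on~$b,$ and conclude by the triangle inequality. The only cosmetic difference is that you package the final estimate as $\| (E - \Phi)(a - b) \| \leq 2 \| a - b \|$ rather than as a three-term triangle inequality, which is equivalent.
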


\begin{proof}
Let $\af \colon G \to \Aut (C (X))$
be the action of Example~\ref{E-GpOnSp}.
Also, for $g \in G$ let $u_g \in F^p_{\mathrm{r}} (G, X)$
be as in Notation~\ref{N-Sum}.

Choose a finite set $F \subset G$
and elements $b_g \in C (X)$ for $g \in G$
such that,
with $b = \sum_{g \in F} b_g u_g,$
we have
$\left\| a - b \right\| < \tfrac{1}{2} \ep.$
\Wolog\  $1 \in F.$
By Lemma~\ref{L-RmvOneElt},
there exist $n \in \N$ and $s_1, s_2, \ldots, s_n \in C (X)$
such that $| s_k (x) | = 1$ for $k = 1, 2, \ldots, n$ and all $x \in X,$ 
and such that for all $x \in U$ and $g \in F \setminus \{ 1 \},$
we have
\begin{equation}\label{Eq_3922_Star}
\frac{1}{n} \sum_{k = 1}^n s_k (x) {\overline{s_k (g^{-1} x) }} = 0.
\end{equation}

Define $P \colon F^p_{\mathrm{r}} (G, X) \to F^p_{\mathrm{r}} (G, X)$
by
\[
P (c) = \frac{1}{n} \sum_{k = 1}^n s_k c {\overline{s_k}}
\]
for $c \in F^p_{\mathrm{r}} (G, X).$
We have to show that $\| E (a) - P (a) \| < \ep.$
Since $\| s_k \| = \| {\overline{s_k}} \| = 1$ for all~$k,$
we have $\| P \| \leq 1.$
Therefore
\begin{align*}
\| E (a) - P (a) \|
& \leq \| E (a) - E (b) \| + \| E (b) - P (b) \| + \| P (b) - P (a) \|
     \\
& < \tfrac{1}{2} \ep + \| E (b) - P (b) \| + \tfrac{1}{2} \ep
  = \| E (b) - P (b) \| + \ep.
\end{align*}
So it suffices to prove that $P (b) = E (b).$

Let $g \in F \setminus \{ 1 \}.$
Then, using~(\ref{Eq_3922_Star}) and the definition of~$\af_g,$
\[
P (b_g u_g)
  = \frac{1}{n} \sum_{k = 1}^n s_k b_g u_g {\overline{s_k}}
  = b_g \left( \frac{1}{n}
      \sum_{k = 1}^n s_k \af_g ({\overline{s_k}}) \right) u_g
  = 0.
\]
Also,
\[
P (b_1 u_1)
  = b_1 \cdot \frac{1}{n} \sum_{k = 1}^n s_k {\overline{s_k}}
  = b_1
  = E (b).
\]
Thus, $P (b) = E (b),$
as desired.
\end{proof}

\begin{thm}\label{T-FreeMinSimp}
Let $G$ be a countable discrete group,
and let $X$ be a free minimal compact metrizable $G$-space.
Then $F^p_{\mathrm{r}} (G, X)$ is simple.
\end{thm}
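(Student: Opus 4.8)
The plan is to adapt the classical argument for simplicity of reduced crossed products attached to free minimal actions: use the standard conditional expectation $E$ together with Proposition~\ref{P-RmvEst} (which plays the role of averaging over unitaries) to push an arbitrary nonzero closed ideal down into the commutative subalgebra $C (X),$ where minimality finishes the job. Observe first that since $X$ is compact metrizable, $C (X)$ is separable and unital, so all the standing hypotheses of Section~\ref{Sec_DiscCP} are in force, and $F^p_{\mathrm{r}} (G, X)$ is unital with unit lying in $C (X)$ (Remark~\ref{R-Ident}).

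So let $I \S F^p_{\mathrm{r}} (G, X)$ be a nonzero closed two-sided ideal; I want to show $I = F^p_{\mathrm{r}} (G, X).$ Pick $a \in I$ with $a \neq 0.$ By Proposition~\ref{P:Faithful}(\ref{P:Faithful:G}) there is $g \in G$ with $E_g (a) \neq 0.$ Since $C (X)$ is unital, $u_{g^{-1}} \in F^p_{\mathrm{r}} (G, X)$ (Remark~\ref{R-Ident} and Lemma~\ref{L-NormOfMultU}), so $a u_{g^{-1}} \in I,$ and Lemma~\ref{L-NormOfMultU} together with Proposition~\ref{P:CondExpt} gives $E (a u_{g^{-1}}) = E_g (a) \neq 0.$ Replacing $a$ by $a u_{g^{-1}},$ I may assume $f := E (a)$ is a nonzero element of $C (X).$

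The next step is to apply Proposition~\ref{P-RmvEst}: for every $\ep > 0$ there are $n \in \N$ and $s_1, \ldots, s_n \in C (X)$ with $|s_k (x)| = 1$ for all $x$ such that
\[
\left\| f - \frac{1}{n} \sum_{k = 1}^{n} s_k \, a \, {\overline{s_k}} \right\| < \ep .
\]
Because $I$ is a two-sided ideal and each $s_k, {\overline{s_k}}$ lies in $C (X) \S F^p_{\mathrm{r}} (G, X),$ the averaged element $\tfrac{1}{n} \sum_{k} s_k a {\overline{s_k}}$ belongs to $I,$ so $\dist (f, I) < \ep.$ As $\ep$ was arbitrary and $I$ is closed, $f \in I.$ Thus $J := I \cap C (X)$ is a nonzero closed ideal of the commutative Banach algebra $C (X)$ (with the supremum norm, by Example~\ref{E-CX}), hence of the form $J = \{ h \in C (X) \colon h |_Y = 0 \}$ for a proper closed subset $Y \subsetneq X.$

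Finally, $J$ is $G$-invariant: for $h \in J$ and $g \in G,$ Lemma~\ref{L-NormOfMultU} gives $\af_g (h) = u_g h u_{g^{-1}} \in I,$ and clearly $\af_g (h) \in C (X),$ so $\af_g (h) \in J.$ Hence $Y$ is a proper closed $G$-invariant subset of $X,$ and minimality forces $Y = \E,$ that is, $J = C (X).$ In particular the unit of $F^p_{\mathrm{r}} (G, X)$ lies in $I,$ so $I = F^p_{\mathrm{r}} (G, X),$ which proves simplicity. The step most likely to cause trouble is the reduction from $a \neq 0$ to a translate with nonzero conditional expectation---in the $C^*$-case one would pass to $a^* a$---but here it is harmless, because we only need $E (a)$ to be \emph{nonzero} in $C (X),$ not positive; after that the commutative ideal structure and minimality do everything, so the genuine content is already in Lemma~\ref{L-RmvOneElt} and Proposition~\ref{P-RmvEst}.
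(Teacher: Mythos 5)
Your proof is correct and rests on the same two pillars as the paper's: Proposition~\ref{P-RmvEst} to show that the conditional expectation carries a closed ideal $I$ into $I \cap C (X),$ and minimality to dispose of $I \cap C (X).$ The only difference is cosmetic --- the paper shows a \emph{proper} ideal meets $C (X)$ trivially (by summing translates of $f {\overline{f}}$ into an invertible element) and is therefore annihilated by every $E_g,$ whereas you show a \emph{nonzero} ideal meets $C (X)$ nontrivially and then use the classification of closed $G$-invariant ideals of $C (X)$ to capture the unit --- but the content is identical.
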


\begin{proof}
Let $I \subset F^p_{\mathrm{r}} (G, X)$ be a proper closed ideal.

We first claim that $I \cap C (X) = \{ 0 \}.$
If not,
let $f \in I \cap C (X)$ be nonzero.
Choose a nonempty open set $U \subset X$ on which $f$ does not vanish.
By minimality,
we have $\bigcup_{g \in G} g U = X.$
Since $X$ is compact, there is a finite set $S \subset G$
such that $\bigcup_{g \in S} g U = X.$
Define $b \in C (X)$
by
\[
b (x) = \sum_{g \in S} f (g^{-1} x) {\overline{ f (g^{-1} x) }}
\]
for $x \in X.$
Then $b (x) > 0$ for all $x \in X,$ so $b$ is invertible.
For $g \in G$ let $u_g \in F^p_{\mathrm{r}} (G, X)$
be as in Notation~\ref{N-Sum}.
Then $b = \sum_{g \in S} u_g f {\overline{f}} u_g^{-1} \in I.$
So $I$ contains an invertible element,
contradicting the assumption that $I$ is proper.
This proves the claim.

Let $E \colon F^p_{\mathrm{r}} (G, X) \to C (X)$
be the standard conditional expectation (Definition~\ref{D:StdCond}),
viewed as a map $F^p_{\mathrm{r}} (G, X) \to F^p_{\mathrm{r}} (G, X)$
(following Remark~\ref{R-Ident}).
We claim that $E (a) = 0$ for all $a \in I.$
It suffices to show that $E (a) \in I.$
To prove this,
let $\ep > 0.$
Use Proposition~\ref{P-RmvEst} to choose
$n \in \N$ and $s_1, s_2, \ldots, s_n \in C (X)$
such that the element
$b = \frac{1}{n} \sum_{k = 1}^n s_k a {\overline{s_k}}$
satisfies $\| E (a) - b \| < \ep.$
Clearly $b \in I.$
Since $\ep > 0$ is arbitrary,
this implies that $E (a) \in {\overline{I}} = I.$
The claim is proved.

Now let $a \in I.$
For all $g \in G,$
we have $a u_g^{-1} \in I,$ so $E (a u_g^{-1}) = 0.$
In the notation of Proposition~\ref{P:CondExpt},
this means that $E_g (a) = 0$ for all $g \in G.$
Proposition~\ref{P:Faithful}(\ref{P:Faithful:G})
now implies that $a = 0.$
\end{proof}

We can use the same methods to identify all the normalized traces
on $F^p_{\mathrm{r}} (G, X).$

\begin{dfn}\label{D-NTrace}
Let $A$ be a unital Banach algebra.
Then a
{\emph{normalized trace}}
on~$A$ is a linear functional satisfying the following three conditions:
\begin{enumerate}
\item\label{D-NTrace-1}
$\ta (1) = 1.$
\item\label{D-NTrace-2}
$\| \ta \| = 1.$
\item\label{D-NTrace-3}
$\ta (b a) = \ta (a b)$ for all $a, b \in A.$
\end{enumerate}
\end{dfn}

When $A$ is a unital \ca,
the normalized traces are exactly the tracial states.

Our result requires that the action be free,
but not necessarily minimal.
The main point is contained in the following proposition.
The proof follows the proof of Corollary VIII.3.8 of~\cite{Dv}.

\begin{prp}\label{P-TracesOnSubalg}
Let $G$ be a countable discrete group,
let $X$ be a free compact metrizable $G$-space,
and let $A \subset F^p_{\mathrm{r}} (G, X)$
be a subalgebra such that $C (X) \subset A.$
Let $E \colon F^p_{\mathrm{r}} (G, X) \to C (X)$
be the standard conditional expectation (Definition~\ref{D:StdCond}).
Let $\ta \colon A \to \C$ be a normalized trace
(Definition~\ref{D-NTrace}).
Then there exists a unique Borel probability measure $\mu$ on~$X$
such that for all $a \in A$ we have
\[
\ta (a) = \int_X E (a) \, d \mu.
\]
\end{prp}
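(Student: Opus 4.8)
The plan is to show that $\ta|_{C(X)}$ is a positive normalized functional on $C(X)$, hence given by integration against a Borel probability measure $\mu$ by the Riesz representation theorem, and then to prove that the formula $\ta(a) = \int_X E(a)\,d\mu$ holds for all $a \in A$ by reducing, via density and continuity of both sides, to elements of the form $f u_g$ with $f \in C(X)$ and $g \in G$. For such elements one shows $\ta(f u_g) = 0$ when $g \neq 1$ (so that both sides vanish, since $E(f u_g) = 0$), and $\ta(f u_1) = \int_X f\,d\mu$ when $g = 1$ (which is just the definition of $\mu$).

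First I would establish that $\ta$ restricted to $C(X)$ is a positive linear functional of norm~$1$ with $\ta(1) = 1$. Positivity is the point requiring care: given $f \in C(X)$ with $f \geq 0$, one writes $f = \overline{g} g$ where $g = f^{1/2} \in C(X)$, and then, using the trace property and the fact that for any $s \in C(X)$ with $|s(x)| = 1$ everywhere we have $s \overline{s} = 1$ and $\|s\| = \|\overline{s}\| = 1$, one argues that $\ta(f)$ is real and then that it is nonnegative. Concretely, since $\ta$ has norm~$1$ and $\ta(1) = 1$, for any self-adjoint-in-spirit element (real-valued $f \in C(X)$) the standard argument bounding $|\ta(f + it)|$ for real $t$ forces $\ta(f) \in \R$; then for $0 \leq f \leq 1$ one has $\|1 - f\| \leq 1$, so $|\ta(1 - f)| \leq 1$, giving $\ta(f) \geq 0$. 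Thus $\ta|_{C(X)}$ is a state on $C(X)$, and the Riesz representation theorem provides a unique Borel probability measure $\mu$ on $X$ with $\ta(f) = \int_X f\,d\mu$ for all $f \in C(X)$. Uniqueness of $\mu$ in the statement follows from the fact that $E(f) = f$ for $f \in C(X)$, so $\mu$ is already determined by $\ta|_{C(X)}$.

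Next I would handle the off-diagonal terms. Fix $g \in G \setminus \{1\}$ and $f \in C(X)$; the goal is $\ta(f u_g) = 0$. Here I would use the freeness of the action together with Lemma~\ref{L-RmvOneElt} applied to $F = \{g\}$: there exist $n \in \N$ and $s_1, \ldots, s_n \in C(X)$ with $|s_k(x)| = 1$ everywhere and $\frac{1}{n}\sum_{k=1}^n s_k(x)\overline{s_k(g^{-1}x)} = 0$ for all $x \in X$. Using Notation~\ref{N-Multug} and Lemma~\ref{L-NormOfMultU}, each $s_k$ is an isometric multiplier, and $u_g \overline{s_k} = \af_g(\overline{s_k}) u_g$; combining with the trace property $\ta(s_k c \overline{s_k}) = \ta(\overline{s_k} s_k c) = \ta(c)$, one gets
\[
\ta(f u_g)
  = \frac{1}{n} \sum_{k=1}^n \ta\big(s_k f u_g \overline{s_k}\big)
  = \ta\left( f \cdot \frac{1}{n} \sum_{k=1}^n s_k \af_g(\overline{s_k}) \cdot u_g \right)
  = \ta(0) = 0,
\]
since $\frac{1}{n}\sum_{k=1}^n s_k(x)\af_g(\overline{s_k})(x) = \frac{1}{n}\sum_{k=1}^n s_k(x)\overline{s_k(g^{-1}x)} = 0$. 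Therefore $\ta(f u_g) = 0 = \int_X E(f u_g)\,d\mu$. For $g = 1$ we have $\ta(f u_1) = \ta(f) = \int_X f\,d\mu = \int_X E(f u_1)\,d\mu$ by construction of $\mu$. By linearity, $\ta(a) = \int_X E(a)\,d\mu$ for every $a$ in the dense subalgebra spanned by such elements; since $\ta$ and $a \mapsto \int_X E(a)\,d\mu$ are both continuous (using $\|E\| = 1$ and $\|\ta\| = 1$), the identity extends to all of $A$.

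The main obstacle I anticipate is the positivity step: in the $L^p$ setting for $p \neq 2$ there is no adjoint operation, so one cannot simply invoke C*-algebra facts to see that a norm-one unital trace on $C(X)$ is a state. I would deal with this purely within $C(X)$ as sketched above, using only that $C(X)$ is a commutative unital $L^p$ operator algebra with its supremum norm (Example~\ref{E-CX}) so that for real-valued $f$ with $0 \le f \le 1$ one has $\|1-f\|_\infty \le 1$, together with the $|\ta(f + it\cdot 1)| \le \|f+it\|_\infty$ argument to pin $\ta(f)$ to $\R$. A secondary point to be careful about is justifying $u_g \overline{s_k} = \af_g(\overline{s_k}) u_g$ and the norm-one multiplier estimates for $s_k$ in the possibly nonunital algebra $F^p_{\mathrm{r}}(G,X)$; but $C(X)$ here is unital, so $u_g \in F^p_{\mathrm{r}}(G,X)$ and these are covered directly by Lemma~\ref{L:FGpCP} and Lemma~\ref{L-NormOfMultU}. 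Everything else is a routine density-and-continuity argument.
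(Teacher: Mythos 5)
Your computation of $\ta (f u_g) = 0$ for $g \neq 1$ and your positivity argument for $\ta |_{C (X)}$ are both correct (the latter is a detail the paper leaves implicit when it invokes the Riesz Representation Theorem to get a \emph{probability} measure), but the final density step fails for the statement as given. The proposition concerns an \emph{arbitrary} subalgebra $A \subset F^p_{\mathrm{r}} (G, X)$ containing $C (X)$; such an $A$ need not contain any element of the form $f u_g$ with $g \neq 1$, and the linear span of $\{ f u_g \colon f \in C (X), \ g \in G \}$ need not be dense in $A$. It is dense in $F^p_{\mathrm{r}} (G, X)$, but $\ta$ is only defined on $A$, so you cannot approximate a general $a \in A$ by elements on which you have evaluated $\ta$, and there is no reason the trace extends to the ambient algebra. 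Your argument therefore proves the result only when $C_{\mathrm{c}} (G, C (X), \af) \subset A$ with dense span --- for instance when $A = F^p_{\mathrm{r}} (G, X)$, which is the case used in Theorem~\ref{T-11202Traces} --- but not in the stated generality.

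The repair is to move the averaging from the generators to the element $a$ itself, which is what the paper does. Proposition~\ref{P-RmvEst} (whose proof is essentially your computation with Lemma~\ref{L-RmvOneElt}, applied to a finite-sum approximation of $a$ \emph{inside the algebra}, where no trace is involved) produces, for any $a \in F^p_{\mathrm{r}} (G, X)$ and $\ep > 0$, unimodular $s_1, s_2, \ldots, s_n \in C (X)$ with $\bigl\| E (a) - \tfrac{1}{n} \sum_{k = 1}^n s_k a {\overline{s_k}} \bigr\| < \ep.$ Since $s_k, {\overline{s_k}} \in C (X) \subset A$, the elements $s_k a {\overline{s_k}}$ and $E (a)$ all lie in $A$; the trace property gives $\ta ( s_k a {\overline{s_k}} ) = \ta (a)$, and $\| \ta \| = 1$ then yields $| \ta (a) - \ta (E (a)) | < \ep.$ Letting $\ep \to 0$ gives $\ta = ( \ta |_{C (X)} ) \circ E$ on all of $A$, after which Riesz applied to $\ta |_{C (X)}$ (using your positivity observation) finishes the proof.
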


\begin{proof}
We prove that $\ta = (\ta |_{C (X)} ) \circ E.$
The statement then follows by applying the Riesz Representation Theorem
to $\ta |_{C (X)}.$

Let $a \in A$ and let $\ep > 0.$
We prove that
$| \ta (a) - \ta (E (a)) | < \ep.$
Use Proposition~\ref{P-RmvEst} to choose
$n \in \N$ and $s_1, s_2, \ldots, s_n \in C (X)$
such that $| s_k (x) | = 1$ for $k = 1, 2, \ldots, n$ and all $x \in X,$ 
and such that
$\left\| E (a) - \frac{1}{n} \sum_{k = 1}^n s_k a {\overline{s_k}} \right\|
  < \ep.$
Since
$s_1, s_2, \ldots, s_n, {\overline{s_1}}, {\overline{s_2}}, \ldots,
 {\overline{s_n}} \in A,$
we have $\ta (s_k a {\overline{s_k}} ) = \ta (a)$ for $k = 1, 2, \ldots, n.$
Therefore
\[
\big| \ta (a) - \ta (E (a)) \big|
  = \left| \ta \left( \frac{1}{n} \sum_{k = 1}^n
                   s_k a {\overline{s_k}} \right)
           - \ta (E (a)) \right|
  \leq \left\| E (a) - \frac{1}{n} \sum_{k = 1}^n
                   s_k a {\overline{s_k}} \right\|
  < \ep.
\]
This completes the proof.
\end{proof}

\begin{thm}\label{T-11202Traces}
Let $G$ be a countable discrete group,
and let $X$ be a free compact metrizable $G$-space.
Let $E \colon F^p_{\mathrm{r}} (G, X) \to C (X)$
be the standard conditional expectation (Definition~\ref{D:StdCond}).
For a $G$-invariant Borel probability measure $\mu$ on~$X,$
define a linear functional $\ta_{\mu}$ on $F^p_{\mathrm{r}} (G, X)$
by
\[
\ta_{\mu} (a) = \int_X E (a) \, d \mu
\]
for all $a \in F^p_{\mathrm{r}} (G, X).$
Then $\mu \mapsto \ta_{\mu}$ is an affine bijection
from the $G$-invariant Borel probability measures on~$X$
to the normalized traces on $F^p_{\mathrm{r}} (G, X)$
(Definition~\ref{D-NTrace}).
Its inverse sends $\ta$ to the measure obtained from the
functional $\ta |_{C (X)}$ via the Riesz Representation Theorem.
\end{thm}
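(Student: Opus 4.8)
The plan is to use Proposition~\ref{P-TracesOnSubalg}, applied with the subalgebra $A$ equal to all of $F^p_{\mathrm{r}} (G, X)$, as the engine of the argument: it immediately says that every normalized trace $\ta$ on $F^p_{\mathrm{r}} (G, X)$ has the form $\ta (a) = \int_X E (a) \, d\mu$, where $\mu$ is the Borel probability measure attached to $\ta|_{C (X)}$ by the Riesz Representation Theorem. The remaining work is then (i) to check that $\ta_{\mu}$ really is a normalized trace when $\mu$ is $G$-invariant, (ii) that distinct $\mu$ give distinct $\ta_{\mu}$, and (iii) that the measure produced by Proposition~\ref{P-TracesOnSubalg} from an arbitrary normalized trace is automatically $G$-invariant.

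First I would verify (i). Conditions~(1) and~(2) of Definition~\ref{D-NTrace} are immediate: $E (1) = 1$ gives $\ta_{\mu} (1) = \mu (X) = 1$, and since $\| E \| \leq 1$ and integration against a probability measure has norm $1$, we get $| \ta_{\mu} (a) | \leq \| E (a) \|_{\I} \leq \| a \|$, hence $\| \ta_{\mu} \| = 1$. For the trace identity, by continuity of $E$ and density of $C_{\mathrm{c}} (G, C (X), \af)$ in $F^p_{\mathrm{r}} (G, X)$ (Theorem~\ref{T-UPropRed}(\ref{T-UPropRed-3})) it is enough to treat $a = \sum_{g \in G} a_g u_g$ and $b = \sum_{h \in G} b_h u_h$ with finitely many nonzero terms. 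A direct computation with the convolution product gives $E (a b) = \sum_{g \in G} a_g \af_g (b_{g^{-1}})$ and $E (b a) = \sum_{g \in G} b_g \af_g (a_{g^{-1}})$. Reindexing the second sum by $g \mapsto g^{-1}$, then using that $\af_g$ is an algebra automorphism of $C (X)$ with $\int_X \af_g (f) \, d\mu = \int_X f \, d\mu$ (the $G$-invariance of $\mu$), together with commutativity of $C (X)$, one gets
\[
\int_X b_{g^{-1}} \af_{g^{-1}} (a_g) \, d\mu
  = \int_X \af_g \big( b_{g^{-1}} \af_{g^{-1}} (a_g) \big) \, d\mu
  = \int_X a_g \af_g (b_{g^{-1}}) \, d\mu ,
\]
so $\ta_{\mu} (b a) = \ta_{\mu} (a b)$. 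Affineness of $\mu \mapsto \ta_{\mu}$ is clear from linearity of $E$ and of the integral.

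For (ii), if $\ta_{\mu} = \ta_{\mu'}$, then restricting to $C (X)$, on which $E$ is the identity, gives $\int_X f \, d\mu = \int_X f \, d\mu'$ for all $f \in C (X)$, so $\mu = \mu'$ by uniqueness in the Riesz Representation Theorem. For (iii), let $\ta$ be a normalized trace on $F^p_{\mathrm{r}} (G, X)$ and let $\mu$ be the measure produced by Proposition~\ref{P-TracesOnSubalg} applied with $A = F^p_{\mathrm{r}} (G, X)$, so that $\ta (a) = \int_X E (a) \, d\mu$ and $\mu$ is obtained from $\ta|_{C (X)}$ via Riesz. Since $C (X)$ is unital, $u_g \in F^p_{\mathrm{r}} (G, X)$, and the covariance relation gives $u_g f u_g^{-1} = \af_g (f)$ for $f \in C (X)$; hence, using $E|_{C (X)} = \id$ and the trace property,
\[
\int_X \af_g (f) \, d\mu = \ta \big( u_g f u_g^{-1} \big)
  = \ta \big( u_g^{-1} u_g f \big) = \ta (f) = \int_X f \, d\mu .
\]
So $\mu$ is $G$-invariant and $\ta = \ta_{\mu}$, which also identifies the inverse map as claimed. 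The main obstacle is the bookkeeping in the trace identity in (i) — correctly evaluating $E$ on a convolution product and then transporting the automorphism $\af_g$ across the integral using $G$-invariance — but this is entirely routine once set up; everything else follows directly from Proposition~\ref{P-TracesOnSubalg} and the Riesz Representation Theorem.
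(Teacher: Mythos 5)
Your proposal is correct and follows essentially the same route as the paper: Proposition~\ref{P-TracesOnSubalg} applied with $A = F^p_{\mathrm{r}} (G, X)$ for surjectivity, the Riesz Representation Theorem for injectivity and the description of the inverse, and the identity $\int_X \af_g (f) \, d\mu = \ta ( u_g f u_g^{-1} ) = \ta (f)$ for $G$-invariance. The only difference is that you spell out the verification that $\ta_{\mu}$ satisfies the trace identity (which the paper dismisses as "easy to check"), and your computation of $E (a b)$ on $C_{\mathrm{c}} (G, C (X), \af)$ together with the reindexing and invariance step is correct.
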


\begin{proof}
It is easy to check that
if $\mu$ is a $G$-invariant Borel probability measure on~$X,$
then $\ta_{\mu}$ is a normalized trace on $F^p_{\mathrm{r}} (G, X).$
Clearly $\ta_{\mu} (f) = \int_X f \, d \mu$ for $f \in C (X).$
This implies that $\mu \mapsto \ta_{\mu}$ is injective
and that the description of its inverse is correct on the range
of this map.

It remains only to prove that $\mu \mapsto \ta_{\mu}$ is surjective.
Let $\ta$ be a normalized trace on $F^p_{\mathrm{r}} (G, X).$
Proposition~\ref{P-TracesOnSubalg} provides
a Borel probability measure $\mu$ on~$X$
such that $\ta (a) = \int_X E (a) \, d \mu$
for all $a \in F^p_{\mathrm{r}} (G, X).$
For $g \in G$ and $f \in C (X),$
using the fact that $\ta$ is a trace at the second step,
we have
\[
\int_X f (g^{-1} x)  \, d \mu (x)
  = \ta \big( u_g f u_g^{-1} \big)
  = \ta (f)
  = \int_X f \, d \mu.
\]
Uniqueness in the Riesz Representation Theorem
now implies that $\mu$ is $G$-invariant.
This completes the proof.
\end{proof}

\section{The K-theory of direct limits and crossed
  products by~$\Z$}\label{Sec_PV}

\indent
We give two general results on K-theory that are needed
for the computation of $K_* \big( \OP{d}{p} \big).$
The first is the K-theory of direct limits of Banach algebras
with contractive homomorphisms,
and the second is the K-theory of
a reduced $L^p$~operator crossed product by~$\Z.$
For the main definitions and theorems related to
the K-theory of Banach algebras
(in fact, ``local Banach algebras''),
we refer to Sections 5, 8, and~9 of~\cite{Bl3}.

We start with direct limits.
Without some condition on norms,
we do not expect direct limits to exist in general.
For example,
the limit which describes $\| \ph_i (a) \|$
in Proposition~\ref{P-DLim} might be infinite,
or fail to exist because the net has more than one limit point.

\begin{prp}\label{P-DLim}
Let $I$ be a directed set,
and let $\big( (A_i)_{i \in I}, \, (\ph_{j, i})_{i \leq j} \big)$
be a direct system of Banach algebras with contractive homomorphisms.
Then the direct limit $A = \Dirlim A_i$
exists in the category of Banach algebras and contractive homomorphisms.
If the maps to the direct limit are called $\ph_i \colon A_i \to A,$
then
$\bigcup_{i \in I} \ph_i (A_i)$ is a dense subalgebra of~$A$
and
for all $i \in I$ and all $a \in A_i,$
we have $\| \ph_i (a) \| = \lim_{j \geq i} \| \ph_{j, i} (a) \|.$
\end{prp}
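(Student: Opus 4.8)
The plan is to carry out the standard ``algebraic direct limit, then complete'' construction; the only genuinely new ingredient is that contractivity of the connecting maps is exactly what makes the defining limit exist. First I would form the algebraic direct limit $A_{\mathrm{alg}} = \Dirlim A_i$ in the category of complex algebras, with canonical maps $\ps_i \colon A_i \to A_{\mathrm{alg}}$ satisfying $\ps_j \circ \ph_{j,i} = \ps_i$ for $i \leq j$. For $a \in A_i$ I would observe that the net $\big( \| \ph_{j,i}(a) \| \big)_{j \geq i}$ is nonincreasing: if $i \leq j \leq k$ then $\| \ph_{k,i}(a) \| = \| \ph_{k,j}(\ph_{j,i}(a)) \| \leq \| \ph_{j,i}(a) \|$ since $\ph_{k,j}$ is contractive. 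A nonincreasing net bounded below by $0$ converges to its infimum, so $\| a \|_{\I} := \lim_{j \geq i} \| \ph_{j,i}(a) \| = \inf_{j \geq i} \| \ph_{j,i}(a) \|$ is well defined and is $\leq \| a \|$. I would then check that $\| \cdot \|_{\I}$ descends to a well-defined submultiplicative seminorm on $A_{\mathrm{alg}}$: two representatives of the same element of $A_{\mathrm{alg}}$ become equal after applying some $\ph_{k, \cdot}$, so the two defining nets share a tail; and the seminorm and submultiplicativity inequalities follow by moving both elements into a common $A_j$ and pushing the corresponding inequalities in $A_j$ through $\lim_j$.

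Next I would set $N = \{ x \in A_{\mathrm{alg}} : \| x \|_{\I} = 0 \}$, which is a two-sided ideal by submultiplicativity, form the normed algebra $A_{\mathrm{alg}} / N$ with the induced norm $\| \cdot \|_{\I}$, and let $A$ be its completion; then $A$ is a Banach algebra, the multiplication extending by continuity. Define $\ph_i \colon A_i \to A$ to be the composite of $\ps_i$ with the quotient map and the inclusion of $A_{\mathrm{alg}}/N$ into its completion. Each $\ph_i$ is contractive, $\ph_j \circ \ph_{j,i} = \ph_i$, and by construction $\| \ph_i(a) \| = \| \ps_i(a) \|_{\I} = \lim_{j \geq i} \| \ph_{j,i}(a) \|$, which is the asserted norm formula. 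The image of $A_{\mathrm{alg}}/N$ is dense in $A$, and that image equals $\bigcup_{i \in I} \ph_i(A_i)$, which is moreover a subalgebra since the $\ph_i(A_i)$ are directed (indeed $\ph_i(A_i) \S \ph_j(A_j)$ for $i \leq j$); so $\bigcup_{i \in I} \ph_i(A_i)$ is a dense subalgebra of $A$.

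Finally I would verify the universal property, which is what exhibits $A$ as \emph{the} direct limit in the stated category. Given a Banach algebra $B$ and contractive homomorphisms $\te_i \colon A_i \to B$ with $\te_j \circ \ph_{j,i} = \te_i$ for $i \leq j$, the universal property of $A_{\mathrm{alg}}$ gives a unique algebra homomorphism $\te_{\mathrm{alg}} \colon A_{\mathrm{alg}} \to B$ with $\te_{\mathrm{alg}} \circ \ps_i = \te_i$. For $a \in A_i$ and any $j \geq i$ we have $\| \te_i(a) \|_B = \| \te_j(\ph_{j,i}(a)) \|_B \leq \| \ph_{j,i}(a) \|$; taking the infimum over $j \geq i$ gives $\| \te_{\mathrm{alg}}(x) \|_B \leq \| x \|_{\I}$ for all $x \in A_{\mathrm{alg}}$. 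Hence $\te_{\mathrm{alg}}$ kills $N$, factors through a contractive homomorphism on $A_{\mathrm{alg}}/N$, and extends uniquely by continuity to a contractive homomorphism $\te \colon A \to B$ with $\te \circ \ph_i = \te_i$; uniqueness of $\te$ follows from density of $\bigcup_i \ph_i(A_i)$.

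I do not expect a real obstacle: once contractivity is used to make the net $\big( \| \ph_{j,i}(a) \| \big)_{j \geq i}$ monotone, existence of the limit is immediate, and the rest is the classical construction, carried out with nets rather than sequences because $I$ need not be countable — but monotonicity makes the net-theoretic bookkeeping trivial. The one point to state with care is the well-definedness of $\| \cdot \|_{\I}$ on $A_{\mathrm{alg}}$ together with the fact that passing to the quotient by $N$ and completing leaves the norm on the dense subalgebra unchanged, so that the equality $\| \ph_i(a) \| = \lim_{j \geq i} \| \ph_{j,i}(a) \|$ holds exactly.
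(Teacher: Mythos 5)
Your proof is correct and follows exactly the standard construction (algebraic direct limit, the monotone-net seminorm $\lim_{j \geq i} \| \ph_{j,i}(a) \|$, quotient by the null ideal, completion, and verification of the universal property) that the paper invokes by citation rather than writing out. The only point the paper's references emphasize that you also handle correctly is that contractivity makes the defining net nonincreasing, so the limit exists without any countability assumption on $I$.
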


\begin{proof}
The proof is essentially the same as the proof of
Proposition 2.5.1 of~\cite{Ph0},
where the statement is proved for the category of \ca{s}
equipped with actions of a fixed group~$G.$
Also see Section 3.3 of~\cite{Bl3},
where a weaker boundedness condition on the maps is used,
but where the universal property of the direct limit
is not addressed.
\end{proof}

If the $A_i$ in Proposition~\ref{P-DLim}
are $L^p$~operator algebras,
we do not know whether it follows that
$\Dirlim A_i$ is an $L^p$~operator algebra.

\begin{cor}\label{C-ClUIsDLim}
Let $A$ be a Banach algebra, let $I$ be a directed set,
and let $(A_i)_{i \in I}$
be a family of closed subalgebras of~$A$ such that
$A_i \S A_j$ for $i \leq j,$
and such that ${\ov{\bigcup_{i \in I} A_i}} = A.$
For $i \leq j,$ let $\ph_{j, i} \colon A_i \to A_j$
be the inclusion.
Then the canonical map from $\Dirlim A_i$ to~$A$
is an isometric isomorphism.
\end{cor}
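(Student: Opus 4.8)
The plan is to apply Proposition~\ref{P-DLim} essentially verbatim. Write $B = \Dirlim A_i$ for the direct limit furnished there, with structure maps $\ph_i \colon A_i \to B.$ Since for $i \leq j$ the map $\ph_{j, i}$ is by hypothesis the inclusion $A_i \S A_j,$ it is isometric; hence the last assertion of Proposition~\ref{P-DLim} gives, for every $i \in I$ and every $a \in A_i,$
\[
\| \ph_i (a) \| = \lim_{j \geq i} \| \ph_{j, i} (a) \| = \| a \|,
\]
so each $\ph_i$ is isometric. The inclusions $\io_i \colon A_i \S A$ form a cocone over the direct system, because they are contractive homomorphisms and $\io_j \circ \ph_{j, i} = \io_i$ for $i \leq j$ (all maps in sight being inclusions). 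The universal property of $B$ in the category of Banach algebras with contractive homomorphisms therefore yields a unique contractive homomorphism $\ps \colon B \to A$ with $\ps \circ \ph_i = \io_i$ for all $i \in I;$ this is the canonical map of the statement.

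Next I would check that $\ps$ is isometric. For $b = \ph_i (a)$ with $a \in A_i$ one has $\| \ps (b) \| = \| \io_i (a) \| = \| a \| = \| \ph_i (a) \| = \| b \|,$ using that both $\io_i$ and $\ph_i$ are isometric. Thus $\ps$ is isometric on $\bigcup_{i \in I} \ph_i (A_i),$ which is dense in $B$ by Proposition~\ref{P-DLim}; by continuity of $\ps$ and of the norm it is then isometric on all of~$B.$ Finally, the range of $\ps$ contains $\bigcup_{i \in I} \io_i (A_i) = \bigcup_{i \in I} A_i,$ which is dense in $A$ by hypothesis; since $\ps$ is isometric, $\ps (B)$ is complete, hence closed, so $\ps (B) = A.$ Therefore $\ps$ is an isometric isomorphism.

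I do not expect any real obstacle here: the entire content is extracted from Proposition~\ref{P-DLim}, and the only points requiring (trivial) verification are that the family of inclusions $\io_i$ satisfies the compatibility needed to invoke the universal property and that an isometric homomorphism which is dense-ranged is surjective. Both are immediate, the second because an isometric image of a Banach space is closed.
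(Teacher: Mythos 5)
Your proof is correct and follows essentially the same route as the paper's: the norm formula in Proposition~\ref{P-DLim} shows the canonical map is isometric (since the connecting maps are inclusions, hence isometric), and then density of $\bigcup_{i \in I} A_i$ in $A$ together with closedness of the isometric image gives surjectivity. You have merely spelled out the details (the cocone compatibility and the universal property) that the paper leaves implicit.
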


\begin{proof}
The formula for $\| \ph_i (a) \|$ in Proposition~\ref{P-DLim}
implies that the map $\Dirlim A_i \to A$ is isometric.
Therefore density of $\bigcup_{i \in I} A_i$ in $A$
implies surjectivity.
\end{proof}

For a Banach algebra $A$ and a locally compact Hausdorff space~$X,$
we let $C_0 (X, A)$
be the Banach algebra of all \cfn{s} $b \colon X \to A$
such that $x \mapsto \| b (x) \|$
vanishes at infinity on~$X,$
with pointwise multiplication and the supremum norm.
If $\ph \colon A \to B$
is a \ct{} \hm{} of Banach algebras,
let $C_0 (X, \ph) \colon C_0 (X, A) \to C_0 (X, B)$
be the \ct{} \hm{} determined by
$C_0 (X, \ph) (b) (x) = \ph (b (x))$
for $b \in C_0 (X, A)$ and $x \in X.$
The following lemma is well known,
but we do not know a reference.
The case we care about is $X = \R,$
but the proof in this case is no simpler.

\begin{lem}\label{L_3918_LimC0}
Let $X$ be a locally compact Hausdorff space.
Then $A \mapsto C_0 (X, A)$
is a functor
from the category of Banach algebras with contractive homomorphisms
to itself
which preserves direct limits.
\end{lem}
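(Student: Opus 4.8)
The plan is to verify the two functoriality requirements (that $A \mapsto C_0(X,A)$ is a functor and that it preserves direct limits) more or less directly, with the only real content being the second claim, and there using Proposition~\ref{P-DLim} together with Corollary~\ref{C-ClUIsDLim}. First I would dispose of the functoriality bookkeeping: if $\ph \colon A \to B$ is contractive then $C_0(X,\ph)$ is well defined (continuity of $x \mapsto \ph(b(x))$ is clear, and vanishing at infinity follows from $\| \ph(b(x))\| \le \|b(x)\|$), it is multiplicative since multiplication in $C_0(X,A)$ is pointwise, and $\|C_0(X,\ph)(b)\|_\infty = \sup_x \|\ph(b(x))\| \le \|b\|_\infty$, so $C_0(X,\ph)$ is again contractive. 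Compatibility with composition and identities is immediate from the pointwise formula. So $C_0(X,-)$ is an endofunctor of the category of Banach algebras with contractive homomorphisms.

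For the preservation of direct limits, let $\big((A_i)_{i\in I}, (\ph_{j,i})_{i\le j}\big)$ be a direct system of Banach algebras with contractive homomorphisms, let $A = \Dirlim A_i$ with canonical maps $\ph_i \colon A_i \to A$ (all contractive, by Proposition~\ref{P-DLim}), and form the direct system $\big((C_0(X,A_i))_{i\in I}, (C_0(X,\ph_{j,i}))_{i\le j}\big)$, whose direct limit $B = \Dirlim C_0(X,A_i)$ exists by Proposition~\ref{P-DLim} applied again. The maps $C_0(X,\ph_i) \colon C_0(X,A_i) \to C_0(X,A)$ are contractive and compatible with the connecting maps, so they induce a canonical contractive homomorphism $\Psi \colon B \to C_0(X,A)$. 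I would then show $\Psi$ is an isometric isomorphism. For surjectivity with dense range: $\bigcup_i C_0(X,\ph_i)(C_0(X,A_i))$ is dense in $C_0(X,A)$, because a function $b \in C_0(X,A)$ can be uniformly approximated by one with values in $\bigcup_i \ph_i(A_i)$ — here one uses compactness of the support up to $\ep$ (a function in $C_0(X,A)$ is, within $\ep$, supported on a compact set $K$), partitions of unity on $K$ to reduce to finitely many values, density of $\bigcup_i \ph_i(A_i)$ in $A$, and the fact that $I$ is directed to land all finitely many chosen elements in a single $\ph_{i_0}(A_{i_0})$. So $\Psi$ has dense range. For isometry, I would compute, for $b \in C_0(X,A_i)$,
\[
\|\Psi(C_0(X,\ph_i)(b))\|_\infty = \sup_{x\in X} \|\ph_i(b(x))\|_A = \sup_{x\in X} \lim_{j\ge i} \|\ph_{j,i}(b(x))\|_{A_j},
\]
using the norm formula from Proposition~\ref{P-DLim} in $A$, and compare this with $\|C_0(X,\ph_i)(b)\|_{B} = \lim_{j\ge i} \|C_0(X,\ph_{j,i})(b)\|_\infty = \lim_{j\ge i} \sup_{x\in X}\|\ph_{j,i}(b(x))\|_{A_j}$, again by Proposition~\ref{P-DLim} in $B$. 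Since the net $j \mapsto \|\ph_{j,i}(a)\|$ is nonincreasing for each fixed $a$ (the connecting maps are contractive), one can interchange $\sup_x$ and $\lim_j$: $\sup_x \lim_j = \inf_j \sup_x = \lim_j \sup_x$. Hence the two quantities agree, $\Psi$ is isometric on a dense subalgebra, and therefore $\Psi$ is an isometric isomorphism.

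The step I expect to be the main obstacle is the interchange of $\sup_{x\in X}$ and $\lim_{j\ge i}$ in the norm computation; this is where one genuinely uses that the connecting homomorphisms are contractive (so the net is monotone nonincreasing in $j$ pointwise), which forces $\lim_j \sup_x \|\ph_{j,i}(b(x))\| = \inf_j \sup_x \|\ph_{j,i}(b(x))\| = \sup_x \inf_j \|\ph_{j,i}(b(x))\| = \sup_x \lim_j \|\ph_{j,i}(b(x))\|$. Everything else — the functor axioms, the density argument via compact support and partitions of unity, and the induced map from the universal property — is routine, so I would state those briefly and concentrate the write-up on the monotone interchange. Alternatively, one can bypass the explicit norm formula by regarding each $A_i$, via Corollary~\ref{C-ClUIsDLim}-type reasoning, as identified with the closure of $\ph_i(A_i)$ inside $A$ (after replacing the system by its canonical image, which changes nothing since the $\ph_{j,i}$ need not be injective only affects passage to a quotient that $C_0(X,-)$ respects), and then the claim reduces to: if $(A_i)$ is an increasing family of closed subalgebras of $A$ with dense union, then $(C_0(X,A_i))$ is an increasing family of closed subalgebras of $C_0(X,A)$ with dense union, which is exactly the density argument above combined with Corollary~\ref{C-ClUIsDLim}.
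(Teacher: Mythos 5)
Your overall architecture is sound and essentially parallels the paper's: the paper also reduces everything to two claims, (i) density of $\bigcup_i C_0(X,\ph_i)(C_0(X,A_i))$ in $C_0(X,A)$, proved exactly by your compact-support/partition-of-unity argument, and (ii) the norm identity $\lim_{j \ge i}\|C_0(X,\ph_{j,i})(b)\| = \|C_0(X,\ph_i)(b)\|$, after which it verifies the universal property directly rather than mapping out of an abstractly formed $\Dirlim C_0(X,A_i)$ as you do. The difference in packaging is immaterial. However, your justification of (ii) has a genuine gap. You assert that because each net $j \mapsto \|\ph_{j,i}(a)\|$ is nonincreasing, one may interchange $\sup_{x}$ and $\lim_j$, i.e.\ that $\inf_j \sup_x \|\ph_{j,i}(b(x))\| = \sup_x \inf_j \|\ph_{j,i}(b(x))\|$. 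Monotonicity alone does not give this: take $X = {\mathbb{Z}}_{> 0}$ discrete and $f_j = \ch_{\{x \colon x \geq j\}}$; this is a decreasing sequence of continuous functions with $\sup_x \inf_j f_j(x) = 0$ but $\inf_j \sup_x f_j(x) = 1.$ So the inequality $\inf_j \sup_x \leq \sup_x \inf_j$ is precisely the nontrivial content of the lemma and cannot be dispatched by pointing at monotonicity.

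The interchange is nevertheless true in your situation, but its proof must use that $b \in C_0(X, A_i)$: the functions $f_j (x) = \|\ph_{j,i}(b(x))\|$ are dominated by $x \mapsto \|b(x)\|,$ hence uniformly $< \ep$ off a compact set $K_\ep,$ and on $K_\ep$ one runs a compactness argument (either the finite intersection property applied to the decreasing closed sets $\{x \in K_\ep \colon f_j(x) \geq c\},$ or, as in the paper, a finite cover of $K_\ep$ by sets on which $b$ oscillates by less than $\tfrac{\ep}{3}$ together with the equicontinuity estimate $|f_j(x) - f_j(y)| \leq \|b(x) - b(y)\|$). Note that in my counterexample above the domination by a $C_0$ function fails, which is exactly why the interchange fails there. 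Your closing "alternative" via Corollary~\ref{C-ClUIsDLim} does not evade this point either: identifying $\Dirlim C_0(X, A_i)$ with $\Dirlim C_0\big(X, {\ov{\ph_i(A_i)}}\big)$ already requires knowing that the direct-limit norm of $b$ equals $\sup_x \|\ph_i(b(x))\|,$ which is the same identity (ii). Everything else in your write-up is correct; you need only replace the one-line interchange by the compactness argument.
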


\begin{proof}
The only nonobvious part is that $C_0 (X, {-})$
preserves direct limits.
So let $I$ be a directed set,
and let $\big( (A_i)_{i \in I}, \, (\ph_{j, i})_{i \leq j} \big)$
be a direct system of Banach algebras with contractive homomorphisms.
Set $A = \Dirlim A_i,$
and for $i \in I$ let $\ph_i \colon A_i \to A$
be the map to the direct limit.
For $i \leq j,$ set
$\ps_{j, i} = C_0 (X, \ph_{j, i}) \colon C_0 (X, A_i) \to C_0 (X, A_j),$
and for $i \in I$ set
$\ps_i = C_0 (X, \ph_i) \colon C_0 (X, A_i) \to C_0 (X, A).$
We must show that the algebra $C_0 (X, A)$
and the maps $\ph_i$
satisfy the universal property of the direct limit.

We first claim that
for every $b \in C_0 (X, A)$
and every $\ep > 0$
there are $i \in I$ and $c \in C_0 (X, A_i)$
such that $\| \ps_i (c) - b \| < \ep.$
Choose a compact subset $K \S X$
such that $\| b (x) \| < \frac{\ep}{5}$ for all $x \in X \SM K.$
Choose a finite cover
$(U_1, U_2, \ldots, U_n)$ of $K$
consisting of nonempty open sets $U_m \S X$
such that $\| b (x) - b (y) \| < \frac{\ep}{5}$
for $m = 1, 2, \ldots, n$
and $x, y \in U_m,$
and choose \cfn{s} $f_1, f_2, \ldots, f_n \colon X \to [0, 1]$
with compact supports $\supp (f_m) \S U_m$
such that $\sum_{m = 1}^n f_m (g) = 1$ for all $x \in K$
and such that $\sum_{m = 1}^n f_m (g) \in [0, 1]$ for all $x \in X.$
For $m = 1, 2, \ldots, n,$
choose any $x_m \in U_m,$
and then choose $i (m) \in I$ and $a_m \in A_{i (m)}$
such that $\| \ph_{i (m)} (a_m) - b (x_m) \| < \frac{\ep}{5}.$
Define $c_m \in C_0 (X, A_{i (m)})$
by $c_m (x) = f_m (x) a_m$
for $x \in X.$
Choose $i \in I$ such that $i \geq i_m$ for $m = 1, 2, \ldots, n.$
Set $c = \sum_{m = 1}^{n} \ps_{i, i (m)} (c_m).$

We estimate $\| \ps_i (c) - b \|.$
If $x \not\in \bigcup_{m = 1}^n \supp (f_m),$
then $\| b (x) \| < \frac{\ep}{5}$ and
$c (x) = 0,$
so $\| \ph_i (c (x)) - b (x) \| < \frac{\ep}{5}.$
Otherwise,
let $J$
be the set of $m \in \{ 1, 2, \ldots, n \}$
such that $x \in U_m.$
If $x \not\in K,$
then for all $m \in J$ we have
\[
\| b (x_m) \|
 \leq \| b (x) \| + \| b (x) - b (x_m) \|
 < \frac{\ep}{5} + \frac{\ep}{5}
 = \frac{2 \ep}{5}.
\]
So
\begin{align*}
\| \ph_i (c (x)) \|
& \leq \| c (x) \|
  \leq \sum_{m \in J} f_m (x) \| \ph_{i (m)} (a_m) \|
  \\
& \leq \sum_{m \in J} f_m (x)
    \left(\| b (x_m) \| + \frac{\ep}{5} \right)
  \leq \sum_{m \in J} f_m (x)
    \left( \frac{2 \ep}{5} + \frac{\ep}{5} \right)
  \leq \frac{3 \ep}{5}.
\end{align*}
Since $\| b (x) \| < \frac{\ep}{5},$
we get $\| c (x) - b (x) \| < \frac{4 \ep}{5}.$
Finally,
for $x \in K,$
we have
\[
\| \ph_i (c (x)) - b (x) \|
  \leq \sum_{m \in J} f_m (x) \| \ph_{i (m)} (a_m) - b (x) \|
  \leq \sum_{m \in J} f_m (x) \cdot \frac{\ep}{5}
  = \frac{\ep}{5}.
\]
Therefore $\| \ps_i (c) - b \| \leq \frac{4 \ep}{5} < \ep.$
The claim is proved.

Next,
we claim that if $i \in I$
and $b \in C_0 (X, A_i),$
then $\lim_{j \geq i} \| \ps_{j, i} (b) \| = \| \ps_i (b) \|.$
To see this, let $\ep > 0.$
Choose a finite cover
$(U_1, U_2, \ldots, U_n)$ of $K$
consisting of nonempty open sets $U_m \S X$
such that $\| b (x) - b (y) \| < \frac{\ep}{3}$
for $m = 1, 2, \ldots, n$
and $x, y \in U_m.$
For $m = 1, 2, \ldots, n,$
choose $x_m \in U_m$
and choose $i (m) \in I$
such that
$\| \ph_{i (m), \, i} (b (x_m)) \|
 < \| \ph_i (b (x_m)) \| + \frac{\ep}{3}.$
Choose $j \in I$ such that $j \geq i (m)$ for $m = 1, 2, \ldots, n.$
Now let $x \in X.$
If there is $m$ such that $x \in U_m,$
then
\begin{align*}
\| \ph_{j, i} (b (x)) \|
& \leq \| \ph_{j, i} ( b (x) - b (x_m) ) \|
           + \big\| \ph_{j, i (m)}
                 \big( \ph_{i (m), \, i} (b (x_m)) \big) \big\|
             \\
& 
  < \frac{\ep}{3} + \| \ph_i (b (x_m)) \| + \frac{\ep}{3}
  \leq \| \ps_i (b) \| + \frac{2 \ep}{3}.
\end{align*}
Otherwise, $x \not\in K,$
so $\| b (x) \| < \frac{\ep}{3},$
whence $\| \ph_{j, i} (b (x)) \| < \frac{\ep}{3}.$
It follows that
$\| \ps_{j, i} (b) \|
  \leq \| \ps_i (b) \| + \frac{2 \ep}{3}
  < \| \ps_i (b) \| + \ep.$
The claim is proved.

We now prove the universal property.
Let $D$ be a Banach algebra,
and let $(\gm_i)_{i \in I}$
be a family of contractive homomorphisms
$\gm_i \colon C_0 (X, A_i) \to D$
such that $\gm_j \circ \ps_{j, i} = \gm_i$
whenever $i, j \in I$
satisfy $i \leq j.$
We need a contractive \hm{}
$\gm \colon C_0 (X, A) \to D$
such that $\gm \circ \ps_i = \gm_i$ for all $i \in I.$
The first claim implies that
$\bigcup_{i \in I} \ps_i ( C_0 (X, A_i) )$
is dense in $C_0 (X, A),$
so $\gm$ is unique if it exists.

Let $b \in \bigcup_{i \in I} \ps_i ( C_0 (X, A_i) ).$
We claim that whenever $i_1, i_2 \in I,$
and $b_1 \in C_0 (X, A_{i_1})$ and $b_2 \in C_0 (X, A_{i_2})$
satisfy $\ps_{i_1} (b_1) = b$ and $\ps_{i_2} (b_2) = b,$
then $\gm_{i_1} (b_1) = \gm_{i_2} (b_2).$
To prove this,
let $\ep > 0.$
Choose $j \in I$ such that $j \geq i_1$ and $j \geq i_2.$
Then $\ps_j ( \ps_{j, i_1} (b_1) - \ps_{j, i_2} (b_2) ) = 0.$
Use the second claim to choose $k \geq j$
such that
$\big\| \ps_{k, j} ( \ps_{j, i_1} (b_1) - \ps_{j, i_2} (b_2) ) \big\|
     < \ep.$
Then
\[
\| \gm_{i_1} (b_1) - \gm_{i_2} (b_2) \|
 = \big\| \gm_{k} \big( \ps_{k, i_1} (b_1) - \ps_{k, i_2} (b_2) \big) \big\|
 \leq \big\| \ps_{k, j} \big( \ps_{j, i_1} (b_1)
                 - \ps_{j, i_2} (b_2) \big) \big\|
 < \ep.
\]
Since $\ep > 0$ is arbitrary,
the claim follows.

There is therefore a well defined map
$\bt \colon \bigcup_{i \in I} \ps_i ( C_0 (X, A_i) ) \to D$
such that $\bt \circ \ps_i = \gm_i$ for all $i \in I.$
It is easy to check that $\bt$ is an algebra \hm.

We next claim that
for all $b \in \bigcup_{i \in I} \ps_i ( C_0 (X, A_i) ),$
we have $\| \bt (b) \| \leq \| b \|.$
This will complete the proof,
since we can take $\gm$ to be the extension of $\bt$
to $C_0 (X, A)$ by continuity.

Let $\ep > 0.$
Choose $i \in I$ and $c \in C_0 (X, A_i)$
such that $\ps_i (c) = b.$
Use the second claim to choose $j \in I$ such that $j \geq i$
and $\| \ps_{j, i} (c) \| < \| b \| + \ep.$
Then
\[
\| \gm (b) \|
 = \| \gm_j (\ps_{j, i} (c)) \|
 \leq \| \ps_{j, i} (c) \|
 < \| b \| + \ep.
\]
This completes the proof.
\end{proof}

\begin{thm}\label{P-KThDLim}
Let $I$ be a directed set,
and let $\big( (A_i)_{i \in I}, \, (\ph_{j, i})_{i \leq j} \big)$
be a direct system of Banach algebras with contractive homomorphisms.
Then the induced map
$\Dirlim K_* (A_i) \to K_* \big( \Dirlim A_i \big)$
is an isomorphism.
\end{thm}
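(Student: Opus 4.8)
The plan is to reduce the whole statement to continuity of $K_0$ and then to prove that case using density together with standard Banach-algebra perturbation arguments. For the reduction I would use the natural suspension isomorphism $K_1(B) \cong K_0\big( C_0(\R, B) \big)$, valid for all Banach algebras, together with Lemma~\ref{L_3918_LimC0}, which gives $C_0\big( \R, \, \Dirlim A_i \big) = \Dirlim C_0(\R, A_i)$; hence it suffices to show that $\Dirlim K_0(A_i) \to K_0\big( \Dirlim A_i \big)$ is an isomorphism. Write $A = \Dirlim A_i$ and let $\ph_i \colon A_i \to A$ be the canonical maps. I would then record the routine facts that, in the category of Banach algebras with contractive homomorphisms, unitization and passage to $M_n$ commute with direct limits, so $\widetilde{A} = \Dirlim \widetilde{A_i}$ and $M_n(\widetilde{A}) = \Dirlim M_n(\widetilde{A_i})$ with contractive connecting maps, and that $\bigcup_{i \in I} \ph_i(A_i)$ is dense in $A$ by Proposition~\ref{P-DLim}; consequently the images of the $M_n(\widetilde{A_i})$ are dense in $M_n(\widetilde{A})$ for every $n.$

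For surjectivity, let a class in $K_0(A)$ be represented by a formal difference $[e]_0 - [f]_0$ of idempotents over $\widetilde{A}.$ Given such an idempotent $e \in M_n(\widetilde{A}),$ I would use density to choose an index $i$ and an element $x \in M_n(\widetilde{A_i})$ whose image is so close to $e$ that $\| x^2 - x \| < \tfrac{1}{4}$; then $\tfrac{1}{2}$ lies outside the spectrum of $x$ in the unital Banach algebra $M_n(\widetilde{A_i}),$ and holomorphic functional calculus applied to $x$ inside that algebra produces a genuine idempotent $e' \in M_n(\widetilde{A_i})$ whose image in $M_n(\widetilde{A})$ is close to $e.$ A standard estimate then exhibits an invertible element of $M_n(\widetilde{A}),$ given by a norm-convergent power series in $e$ and the image of $e',$ conjugating one to the other, so that $[e]_0$ equals the image of $[e']_0.$ Subtracting the evident scalar parts to stay inside $K_0,$ this shows the original class lies in the image of $K_0(A_i) \to K_0(A).$

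For injectivity, suppose $\zeta \in K_0(A_i)$ maps to $0$ in $K_0(A).$ Representing $\zeta$ by a difference of idempotents over $M_N(\widetilde{A_i}),$ the vanishing of its image means that after stabilizing by a common idempotent the two corresponding idempotents over $M_N(\widetilde{A})$ are conjugate by some invertible $v \in M_N(\widetilde{A}).$ I would approximate both $v$ and $v^{-1}$ by elements of $M_N(\widetilde{A_j})$ for some $j \geq i$; for $j$ large enough the approximant of $v$ is itself invertible in $M_N(\widetilde{A_j}),$ since any element within distance $\| v^{-1} \|^{-1}$ of $v$ is invertible, and a further small perturbation as in the surjectivity argument makes the two idempotents already conjugate in $M_N(\widetilde{A_j}).$ Hence $\zeta$ maps to $0$ under $K_0(A_i) \to K_0(A_j),$ so its class in $\Dirlim K_0(A_i)$ is $0.$

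The main obstacle is carrying out the perturbation steps with only Banach-algebra norm estimates: without self-adjointness, positivity, or polar decomposition one must manufacture genuine idempotents by holomorphic functional calculus performed inside $M_n(\widetilde{A_i}),$ implement conjugacies by explicit norm-convergent series, and keep the relevant constants uniform enough to survive the passage to the limit. All of these estimates, however, are part of the standard development of K-theory for (local) Banach algebras; for them we refer to Sections 5, 8, and~9 of~\cite{Bl3}.
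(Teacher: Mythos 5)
Your overall strategy coincides with the paper's: the $K_1$ case is reduced to $K_0$ via the suspension isomorphism $K_1 (B) \cong K_0 (C_0 (\R, B))$ together with Lemma~\ref{L_3918_LimC0}, and the $K_0$ case rests on the standard perturbation theory of idempotents over Banach algebras (the paper simply cites 5.2.4 of~\cite{Bl3} for the Murray--von Neumann semigroup version, passes to Grothendieck groups, and unitizes; you are essentially unpacking that citation).

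There is, however, one recurring inference in your $K_0$ argument that fails as written, in both the surjectivity and the injectivity steps. Controlling the image of an element in $M_n \big( \widetilde{A} \big)$ does not control the element itself in $M_n \big( \widetilde{A_i} \big)$: the connecting maps are only contractive, so $\| \ph_i (a) \|$ can be strictly smaller than $\| a \|,$ and $\ph_i (a)$ can be invertible without $a$ being invertible. Concretely: (i) choosing $x \in M_n \big( \widetilde{A_i} \big)$ with image close to $e$ gives $\| \ph_i (x)^2 - \ph_i (x) \| < \tfrac{1}{4},$ not $\| x^2 - x \| < \tfrac{1}{4},$ and it is the latter that you need in order to exclude $\tfrac{1}{2}$ from the spectrum of $x$ in $M_n \big( \widetilde{A_i} \big)$ before applying holomorphic functional calculus there; (ii) in the injectivity step, the criterion ``any element within distance $\| v^{-1} \|^{-1}$ of $v$ is invertible'' yields invertibility of the \emph{image} of your approximant in $M_N \big( \widetilde{A} \big),$ not invertibility of the approximant in $M_N \big( \widetilde{A_j} \big),$ and without the latter you cannot conjugate the two idempotents inside $M_N \big( \widetilde{A_j} \big).$ Both points are repaired the same way: by Proposition~\ref{P-DLim} one has $\| \ph_i (a) \| = \lim_{j \geq i} \| \ph_{j, i} (a) \|$ (and the analogous formula over matrices of the unitizations), so after replacing $i$ by a sufficiently large $j \geq i$ and $x$ by $\ph_{j, i} (x)$ (respectively, pushing the approximants of $v$ and $v^{-1}$ forward to a large enough finite stage) the required norm estimates hold in the finite stage, and the rest of your argument goes through. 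This ``push to a larger index'' step is exactly the content that 5.2.4 of~\cite{Bl3} supplies and that your sketch omits.
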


\begin{proof}
It is shown in 5.2.4 of~\cite{Bl3}
that the result holds for the Murray-von Neumann semigroups
of idempotents over the algebra
in place of $K_0.$
It is easy to see that the result then also holds
for the Grothendieck groups of these semigroups.
The result for $K_0$ follows from
that for the Murray-von Neumann semigroups
by unitizing.
(See the proof of
Proposition 2.5.4 of~\cite{Ph0}.
One can also copy the proof of this proposition,
substituting Lemma 2.5.8 of~\cite{Ph0}
for the use of continuous functional calculus.)

To prove the result for $K_1,$
use the result for~$K_0,$
the natural isomorphism $K_1 (B) \cong K_0 (C_0 (\R, B))$
for any Banach algebra~$B$
(Theorem 8.2.2 of~\cite{Bl3}),
and Lemma~\ref{L_3918_LimC0}.
\end{proof}

\begin{thm}\label{P-KStdUHF}
Let $P$ be the set of prime numbers,
enumerated as $\{ p_1, p_2, \ldots \}.$
Let $N \colon P \to \Nz \cup \{ \I \}$ be any function
such that $\sum_{t \in P} N (t) = \I,$
and for $n \in \Nz$ define
\[
r (n) = p_1^{\min (N (1), n)} p_2^{\min (N (2), n)}
          \cdots p_n^{\min (N (n), n)}.
\]
Let $p \in [1, \I].$
Let $D$ be a spatial $L^p$~UHF algebra of type~$N,$
as in Example~\ref{E-pUHF}.
Then $K_1 (D) = 0$ and
$K_0 (D)
 \cong \bigcup_{n = 1}^{\I}
     r (n)^{-1} \Z
 \S \Q,$
via an isomorphism which sends the class $[1_D]$
of the idempotent $1_D$ to $1 \in \Q.$
\end{thm}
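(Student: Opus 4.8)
The plan is to realize $D$ as a direct limit, over $n$, of the matrix algebras $M^p_{r(n)}$ with unital connecting maps, to record the (elementary) $K$-theory of the building blocks $M^p_d$, and then to invoke Theorem~\ref{P-KThDLim}.

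First I would compute $K_*\big(M^p_d\big)$ in the Banach-algebraic sense of~\cite{Bl3}. By Example~\ref{E-Mpd}, $M^p_d$ is finite-dimensional and algebraically isomorphic to $M_d$, so its norm is equivalent to the usual operator norm on $M_d(\C)$, and for every $n$ the group $GL_n\big(M^p_d\big) \cong GL_{nd}(\C)$ is path connected. Hence, just as for $M_d(\C)$, one has $K_1\big(M^p_d\big) = 0$ and $K_0\big(M^p_d\big) \cong \Z$, generated by the class $[e_{0,0}]$ of a rank-one idempotent, with $[1_{M^p_d}] = d[e_{0,0}]$. For the direct-limit bookkeeping it is convenient, when $d = r$, to identify $K_0\big(M^p_r\big)$ with the subgroup $r^{-1}\Z \S \Q$ via $[e_{0,0}] \mapsto r^{-1}$, so that the class of the identity goes to $1$.

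Next I would set up the direct system. The sequence $(r(n))_{n \geq 0}$ in the statement satisfies $r(0) = 1$, $r(n) \mid r(n+1)$, and, for each prime $t = p_k$, $\sup\{m \colon t^m \mid r(n) \text{ for some } n\} = \sup_{n \geq k}\min(N(k), n) = N(k)$; these are exactly the conditions appearing in Example~\ref{E-pUHF}. That example then provides subalgebras $D_0 \S D_1 \S \cdots \S D$ with $\ov{\bigcup_n D_n} = D$ and isometric isomorphisms $D_n \cong M^p_{r(n)}$, all sharing the unit $1_D$, so that the inclusions $\ph_{n+1,n}\colon D_n \hookrightarrow D_{n+1}$ are unital, hence contractive; and by Corollary~\ref{C-ClUIsDLim} the canonical map $\Dirlim D_n \to D$ is an isometric isomorphism. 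Since $\ph_{n+1,n}$ is unital it carries the class of $1_{D_n}$ to the class of $1_{D_{n+1}}$ on $K_0$, which, under the normalization of the previous paragraph, forces $(\ph_{n+1,n})_*$ to be the inclusion $r(n)^{-1}\Z \hookrightarrow r(n+1)^{-1}\Z$. (For $p = \I$ one uses the analogous direct-limit description of the spatial $L^{\I}$~UHF algebra; nothing else changes.)

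Finally I would apply Theorem~\ref{P-KThDLim} to conclude
\[
K_0(D) \cong \Dirlim\big( r(n)^{-1}\Z \hookrightarrow r(n+1)^{-1}\Z \big) = \bigcup_{n=1}^{\I} r(n)^{-1}\Z \S \Q,
\]
via an isomorphism sending $[1_D]$ to $1$, and $K_1(D) \cong \Dirlim K_1\big(M^p_{r(n)}\big) = \Dirlim 0 = 0$. The only step carrying any content is the first one --- identifying $K_*\big(M^p_d\big)$ in the Banach-algebraic setting and checking that a unital connecting map induces the inclusion $r(n)^{-1}\Z \hookrightarrow r(n+1)^{-1}\Z$ on $K_0$; once that is in hand, everything follows formally from Example~\ref{E-pUHF}, Corollary~\ref{C-ClUIsDLim}, and Theorem~\ref{P-KThDLim}.
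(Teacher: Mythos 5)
Your proposal is correct and follows essentially the same route as the paper: realize $D$ as the closure of an increasing union of subalgebras isometrically isomorphic to $M^p_{r(n)}$ (via the characterization recalled in Example~\ref{E-pUHF}), note that $M^p_{r(n)}$ carries an equivalent norm to $M_{r(n)}(\C)$ and hence has the same K-theory, and then combine Corollary~\ref{C-ClUIsDLim} with the continuity of K-theory from Theorem~\ref{P-KThDLim}. The extra bookkeeping you supply (normalizing $K_0\big(M^p_r\big) \cong r^{-1}\Z$ so that unital inclusions become the inclusions $r(n)^{-1}\Z \hookrightarrow r(n+1)^{-1}\Z$) is exactly what the paper leaves implicit.
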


\begin{proof}
Theorem~3.10 and Definition~3.5 of~\cite{PhLp2a}
imply that there are
subalgebras
$D_0 \S D_1 \S \cdots \S D$
such that ${\ov{\bigcup_{n = 0}^{\I} D_n}} = D$
and such that
$D_n$ is isometrically isomorphic to $M^p_{r (n)}$
for all $n \in \Nz.$
Since $M^p_{r (n)}$ is $M_{r (n)}$ with an equivalent norm,
its K-theory is the same as for $M_{r (n)}.$
Thus, we have isomorphisms $\et_n \colon K_0 (D_n) \to r (n)^{-1} \Z,$
with $\et_n ([1]) = 1,$
and we have $K_1 (D_n) = 0,$
just as in the \ca{} case.
Apply Corollary~\ref{C-ClUIsDLim}
and Theorem~\ref{P-KThDLim}
to get the result.
\end{proof}

\begin{lem}\label{L-KthyTPM}
Let $p \in [1, \I),$
let $\XBM$ be a \msp,
and let $A \S \LLp$ be a norm closed subalgebra.
Let $S$ be a set,
let ${\ov{M}}^p_S$ be as in Example~\ref{E-MatpS},
and let $s_0 \in S.$
Let ${\ov{M}}^p_S \otimes_p A$
be as in Example~\ref{E-LpTP}.
Let $\ep \colon A \to {\ov{M}}^p_S \otimes_p A$
be the \hm{}
defined by $\ep (a) = e_{s_0, s_0} \otimes a$ for $a \in A.$
Then $\ep$ is an isomorphism on K-theory.
\end{lem}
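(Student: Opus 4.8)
The plan is to reduce to the case where $S$ is finite by a direct limit argument, and then to apply the matrix stability of Banach algebra K-theory.

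For the reduction, let $\mathcal{F}$ be the set of finite subsets $F \S S$ with $s_0 \in F,$ directed by inclusion. For $F \S F'$ in $\mathcal{F}$ the inclusion $F \hookrightarrow F'$ gives, via Lemma~\ref{L-MatpSMap} (and $\ov{M}^p_F = M^p_F$ since $F$ is finite), an isometric homomorphism $\gm_{F', F} \colon M^p_F \otimes_p A \to M^p_{F'} \otimes_p A,$ and the inclusion $F \hookrightarrow S$ gives an isometric homomorphism $\io_F \colon M^p_F \otimes_p A \to \ov{M}^p_S \otimes_p A$ with range $\ov{M}^p_F \otimes_p A = M^p_F \otimes_p A.$ Functoriality in the inclusion maps (Lemma~\ref{L-MatpSMap}) makes these into a direct system, and since $\bigcup_{F \in \mathcal{F}} \io_F \big( M^p_F \otimes_p A \big)$ contains $M^p_S \otimes_{\mathrm{alg}} A$ it is dense in $\ov{M}^p_S \otimes_p A,$ so Corollary~\ref{C-ClUIsDLim} identifies $\ov{M}^p_S \otimes_p A$ with $\Dirlim_{F \in \mathcal{F}} M^p_F \otimes_p A.$ By Theorem~\ref{P-KThDLim} this gives $K_* \big( \ov{M}^p_S \otimes_p A \big) = \Dirlim_{F \in \mathcal{F}} K_* \big( M^p_F \otimes_p A \big),$ with structure maps induced by the~$\io_F.$ Since $M^p_{\{ s_0 \}} = \C,$ the term indexed by $\{ s_0 \}$ is $M^p_{\{ s_0 \}} \otimes_p A \cong A,$ and under this identification $\io_{\{ s_0 \}}$ is exactly~$\ep.$ Because a directed colimit of abelian groups in which every transition map is an isomorphism has all structure maps isomorphisms, it suffices to show that each $\gm_{F', F}$ induces an isomorphism on K-theory; this forces $K_* (\ep) = K_* \big( \io_{\{ s_0 \}} \big)$ to be an isomorphism.

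This leaves the finite case, which I regard as the only real content. Fix $F \S F'$ in $\mathcal{F}$ and put $n = \card (F) \leq m = \card (F').$ Since $M^p_F$ and $M^p_{F'}$ are finite dimensional, $M^p_F \otimes_p A = M^p_F \otimes_{\mathrm{alg}} A$ is, as an algebra, the matrix algebra $M_n (A),$ and $\gm_{F', F}$ is then the inclusion of $M_n (A)$ into $M_m (A)$ as the upper left corner (zeros elsewhere). I would check that the $L^p$~operator norm on $M^p_F \otimes_p A$ is equivalent to a representation-independent matrix norm: fixing an isometric representation $A \S \LLp,$ Remark~\ref{R-LpTens} realizes $M^p_F \otimes_p A$ as operators on the $\ell^p$~direct sum of copies of $L^p (X, \mu)$ indexed by $F,$ so an element with matrix entries $(a_{j, k})_{j, k \in F}$ has norm at most $\sum_{j, k} \| a_{j, k} \|,$ while compressing by the coordinate inclusions and projections --- spatial isometries and their reverses, cf.\ Notation~\ref{N_3920_sgtg} and Lemma~\ref{L_3920_sgsp} --- gives $\| a_{j, k} \| \leq \| (a_{j, k}) \|$ for each $j, k.$ Hence that norm is equivalent to $(a_{j, k}) \mapsto \sum_{j, k} \| a_{j, k} \|,$ which does not depend on the chosen representation of~$A.$ Banach algebra K-theory (Sections~5, 8, and~9 of~\cite{Bl3}) is invariant under isomorphisms of topological algebras, and the upper left corner inclusion $M_n (A) \to M_m (A)$ over a Banach algebra induces an isomorphism on K-theory --- essentially immediate from the description of K-theory via idempotents and invertibles over $M_{\infty},$ compare 5.2.4 of~\cite{Bl3} and the proof of Theorem~\ref{P-KThDLim}. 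So $\gm_{F', F}$ is an isomorphism on K-theory, completing the argument. The main obstacle, such as it is, is precisely this reconciliation of the $L^p$~operator norm on $M^p_F \otimes_p A$ with an ordinary matrix norm; everything else is formal, once one notes --- as Lemma~\ref{L-MatpSMap} guarantees --- that the transition maps are genuinely corner inclusions.
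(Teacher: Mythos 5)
Your proposal is correct and follows essentially the same route as the paper: write $\ov{M}^p_S \otimes_p A$ as the direct limit over finite subsets containing $s_0$ via Lemma~\ref{L-MatpSMap} and Corollary~\ref{C-ClUIsDLim}, apply Theorem~\ref{P-KThDLim}, and reduce to matrix stability for the finite transition maps. The only difference is that you spell out the norm-equivalence argument that the paper dismisses with ``clearly,'' which is a worthwhile addition but not a different proof.
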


\begin{proof}
Let ${\mathcal{F}}$ be the collection of
finite subsets of~$S$ which contain~$s_0,$
ordered by inclusion.
For $E \in {\mathcal{F}}$
let $M_E^p \S {\ov{M}}_S^p$
be as in Example~\ref{E-MatpS}.
If $F \in {\mathcal{F}}$ contains~$E,$
then
Lemma~\ref{L-MatpSMap} gives
isometric maps
$\ep_{F, E} \colon M_E^p \otimes_p A \to M^p_F \otimes_p A$
and
$\ep_E \colon M_E^p \otimes_p A \to {\ov{M}}^p_S \otimes_p A,$
coming from the inclusions.
Clearly $\ep_F \circ \ep_{F, E} = \ep_E.$
Also,
by the construction of ${\ov{M}}^p_S$
and density of elementary tensors in ${\ov{M}}^p_S \otimes_p A,$
the union of the ranges of the maps $\ep_F$ is dense.
Therefore we can make the identification
${\ov{M}}^p_S \otimes_p A
 = \Dirlim_{F \in {\mathcal{F}} } M^p_F \otimes_p A,$
using the maps $\ep_F$ for $F \in {\mathcal{F}}.$

For $E, F \in {\mathcal{F}}$ with $E \S F,$
we have $M^p_E \otimes_p A = M^p_E \otimes_{\mathrm{alg}} A$
and $M^p_F \otimes_p A = M^p_F \otimes_{\mathrm{alg}} A.$
The map $\ep_{F, E}$ is therefore clearly an isomorphism on K-theory.
It now follows from Theorem~\ref{P-KThDLim}
that $(\ep_F)_*$ is an isomorphism
for all $F \in {\mathcal{F}}.$
Take $F = \{ s_0 \}$ to get the result.
\end{proof}

We now give an analog of the
Pimsner-Voiculescu exact sequence~\cite{PV}
for reduced $L^p$~operator crossed products by~$\Z.$
Rather than repeating the proof of~\cite{PV}
(or any of a number of later proofs),
we show that the smooth crossed product,
for which the result is already known~\cite{PS},
has the same K-theory as the reduced $L^p$~operator crossed product.
Therefore we start with smooth crossed products.

The following definition is a special case of a much more
general definition.
(See Definition 2.1.0 of~\cite{Sw2}.)

\begin{dfn}\label{D-SmoothCrPrd}
Let $A$ be a Banach algebra,
and let $\af$ be an isometric automorphism of~$A.$
We use the same notation for the obvious isometric action
$\af \colon \Z \to \Aut (A).$
We define the {\emph{smooth crossed product}}
${\mathcal{S}} (\Z, A, \af)$
to be the vector space of all functions
$a \in L^1 (\Z, A, \af)$
such that for every $r \in \Nz$ the number
\[
\| a \|_{r, 1} = \sum_{n \in \N} (1 + | n | )^r \| a_n \|
\]
is finite.
We equip it with the topology given by the seminorms
$\| \cdot \|_{r, 1},$
and with the (convolution) multiplication
inherited from $L^1 (\Z, A, \af).$
Let
$\kp_{\I} \colon {\mathcal{S}} (\Z, A, \af) \to L^1 (\Z, A, \af)$
be the inclusion.
\end{dfn}

\begin{prp}\label{P-SIsFrechet}
The space ${\mathcal{S}} (\Z, A, \af)$
of \Def{D-SmoothCrPrd}
is a locally $m$-convex Fr\'{e}chet algebra,
and $\kp_{\I}$ is \ct.
\end{prp}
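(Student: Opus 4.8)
The plan is to verify, in order, that $\mathcal{S}(\Z, A, \af)$ is a subalgebra of $L^1(\Z, A, \af)$ on which every defining seminorm $\|\cdot\|_{r,1}$ is submultiplicative, that the induced topology is complete and metrizable, and that $\kp_{\I}$ is continuous; local $m$-convexity is then precisely the submultiplicativity statement. First I would record the elementary inequality $(1+|n|)^r \le (1+|m|)^r (1+|n-m|)^r$ for all $m, n \in \Z$ and $r \in \Nz$, which follows from $|n| \le |m| + |n-m|$ together with $(1+s)(1+t) = 1 + s + t + st \ge 1 + s + t$ for $s, t \ge 0.$ Since the action $\af$ is isometric, the convolution formula of Notation~\ref{N-CcGA} specialized to $G = \Z$ gives $(ab)_n = \sum_{m \in \Z} a_m \af_m(b_{n-m})$ and hence $\|(ab)_n\| \le \sum_{m \in \Z} \|a_m\| \, \|b_{n-m}\|$ for $a, b \in L^1(\Z, A, \af).$ Multiplying by $(1+|n|)^r$, invoking the inequality above, and interchanging the resulting nonnegative double series by Tonelli's theorem gives
\[
\|ab\|_{r,1}
  \le \sum_{n \in \Z} \sum_{m \in \Z}
       (1+|m|)^r \|a_m\| \cdot (1+|n-m|)^r \|b_{n-m}\|
  = \|a\|_{r,1} \, \|b\|_{r,1} .
\]
In particular $ab \in \mathcal{S}(\Z, A, \af)$ whenever $a, b \in \mathcal{S}(\Z, A, \af)$, so $\mathcal{S}(\Z, A, \af)$ is a subalgebra of $L^1(\Z, A, \af)$ and each $\|\cdot\|_{r,1}$ is submultiplicative.

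Next I would treat the Fr\'echet property. The family $(\|\cdot\|_{r,1})_{r \in \Nz}$ is countable and increasing in $r$ (because $1+|n| \ge 1$), so the topology is metrizable, and it is Hausdorff since $\|\cdot\|_{0,1} = \|\cdot\|_1$ is already a norm. For completeness, let $(a^{(j)})_{j}$ be a Cauchy sequence in $\mathcal{S}(\Z, A, \af).$ From $\|a^{(j)}_n - a^{(l)}_n\| \le \|a^{(j)} - a^{(l)}\|_{0,1}$ each coordinate sequence $(a^{(j)}_n)_j$ is Cauchy in the Banach algebra $A$, hence converges to some $b_n \in A$; set $b = (b_n)_{n \in \Z}.$ Fixing $r \in \Nz$ and $\ep > 0$, choose $j_0$ with $\|a^{(j)} - a^{(l)}\|_{r,1} < \ep$ for $j, l \ge j_0$; then for every finite $N$,
\[
\sum_{|n| \le N} (1+|n|)^r \|b_n - a^{(j)}_n\|
  = \lim_{l \to \I} \sum_{|n| \le N} (1+|n|)^r \|a^{(l)}_n - a^{(j)}_n\|
  \le \ep
\]
for all $j \ge j_0$, and letting $N \to \I$ yields $\|b - a^{(j)}\|_{r,1} \le \ep.$ This shows simultaneously that $b - a^{(j_0)} \in \mathcal{S}(\Z, A, \af)$, hence $b \in \mathcal{S}(\Z, A, \af)$, and that $a^{(j)} \to b$ in every seminorm; so $\mathcal{S}(\Z, A, \af)$ is a locally $m$-convex Fr\'echet algebra.

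Finally, $\kp_{\I}$ is continuous because $\|\kp_{\I}(a)\|_1 = \sum_{n \in \Z} \|a_n\| = \|a\|_{0,1}$, so the inclusion is contractive for one of the defining seminorms of $\mathcal{S}(\Z, A, \af).$ I expect the main obstacle to be the completeness argument, and within it the verification that the candidate limit $b$ again lies in $\mathcal{S}(\Z, A, \af)$; everything else reduces to the single submultiplicativity inequality above and routine rearrangement of absolutely convergent series.
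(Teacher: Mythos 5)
Your proof is correct, but it takes a different route from the paper: the paper gives no direct argument at all, instead observing that for the discrete group $\Z$ the algebra ${\mathcal{S}} (\Z, A, \af)$ coincides with Schweitzer's weighted algebra $L_1^{\sm} (\Z, A)$ for the weight $\sm (n) = 1 + | n |,$ and then invoking Theorem 3.1.7 of~\cite{Sw2} (noting that the auxiliary condition there, that $\sm_{-}$ bound $\Ad,$ is trivial because the action is isometric). Your computation is exactly the content hiding behind that citation: the inequality $(1 + | n | )^r \leq (1 + | m | )^r (1 + | n - m | )^r$ is the submultiplicativity of the weight that drives Schweitzer's theorem, and your use of $\| \af_m (b_{n - m}) \| = \| b_{n - m} \|$ is where the isometric hypothesis enters, corresponding to the trivially satisfied condition on $\Ad.$ The trade-off is the usual one: your argument is elementary and self-contained (and correctly handles the one delicate point, namely that the Cauchy limit $b$ again lies in ${\mathcal{S}} (\Z, A, \af),$ via uniform control of the truncated sums), whereas the citation buys brevity and situates the statement in a framework that covers crossed products by general Lie groups and more general scales. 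One cosmetic remark: the displayed definition of $\| \cdot \|_{r, 1}$ in Definition~\ref{D-SmoothCrPrd} sums over $n \in \N$ where $n \in \Z$ is clearly intended; you have read it the intended way.
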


\begin{proof}
Since $\Z$ is a discrete group,
this algebra is the same as the one called $L_1^{\sm} (\Z, A)$
in Equation (2.1.3) in~\cite{Sw2},
with $\sm (n) = 1 + | n |$ for $n \in \Z.$
The conclusion now follows from
the second part of Theorem 3.1.7 of~\cite{Sw2}.
(The condition there, that $\sm_{-}$ bound~$\Ad,$
is trivial.
See the discussion after Definition 1.3.9 of~\cite{Sw2}.)
\end{proof}

\begin{lem}\label{L-SchwInj}
Let $p \in [1, \I),$
let $A$ be an $L^p$~operator algebra,
and let $\af \in \Aut (A)$ be isometric.
With notation as in \Lem{L-CompOfCrPrd}
and \Def{D-SmoothCrPrd},
the map
\[
\kp_{\mathrm{r}} \circ \kp \circ \kp_{\I} \colon
  {\mathcal{S}} (\Z, A, \af) \to F^p_{\mathrm{r}} (\Z, A, \af)
\]
is injective
and has dense range.
\end{lem}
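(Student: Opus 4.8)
The plan is to deduce both assertions from results already in hand, with essentially no new work. Write $\Phi = \kp_{\mathrm{r}} \circ \kp \circ \kp_{\I} \colon \mathcal{S}(\Z, A, \af) \to F^p_{\mathrm{r}}(\Z, A, \af)$ for the map in question. For the dense range claim, the key observation is that the finitely supported functions lie in $\mathcal{S}(\Z, A, \af)$, that is, $C_{\mathrm{c}}(\Z, A, \af) \subseteq \mathcal{S}(\Z, A, \af)$, and on this subspace $\kp_{\I}$ is the identity inclusion into $L^1(\Z, A, \af)$. Using the compatibility relations $\kp|_{C_{\mathrm{c}}(\Z, A, \af)} = \io$ and $\kp_{\mathrm{r}} \circ \io = \io_{\mathrm{r}}$ from Lemma~\ref{L-CompOfCrPrd}, one gets $\Phi|_{C_{\mathrm{c}}(\Z, A, \af)} = \io_{\mathrm{r}}$, whose range is dense in $F^p_{\mathrm{r}}(\Z, A, \af)$ by Theorem~\ref{T-UPropRed}(\ref{T-UPropRed-3}); hence $\Phi$ has dense range. (Equivalently, $\kp_{\I}$ has dense range because $C_{\mathrm{c}}(\Z, A, \af)$ is dense in $L^1(\Z, A, \af)$, and $\kp$ and $\kp_{\mathrm{r}}$ have dense ranges by Lemma~\ref{L-CompOfCrPrd}, so the composite does.)

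For injectivity I would argue as follows. The map $\kp_{\I}$ is by its very definition the inclusion of the linear subspace $\mathcal{S}(\Z, A, \af)$ of $L^1(\Z, A, \af)$, hence injective, and by Corollary~\ref{C-L1Inj} the map $\kp_{\mathrm{r}} \circ \kp \colon L^1(\Z, A, \af) \to F^p_{\mathrm{r}}(\Z, A, \af)$ is injective. Therefore $\Phi = (\kp_{\mathrm{r}} \circ \kp) \circ \kp_{\I}$ is injective, and the lemma is proved.

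It is worth spelling out the mechanism behind the second step, since that is where the genuine content lies. If $a = (a_n)_{n \in \Z} \in \mathcal{S}(\Z, A, \af) \subseteq L^1(\Z, A, \af)$ satisfies $\Phi(a) = 0$, choose finitely supported $c^{(k)} \in C_{\mathrm{c}}(\Z, A, \af)$ with $\|c^{(k)} - a\|_1 \to 0$; since $\kp_{\mathrm{r}} \circ \kp$ is contractive for $\|\cdot\|_1$ (Lemma~\ref{L-CompOfCrPrd}) and the coordinate maps $E_n \colon F^p_{\mathrm{r}}(\Z, A, \af) \to A$ of Proposition~\ref{P:CondExpt} are contractive, continuity gives $E_n(\Phi(a)) = \lim_k E_n(\Phi(c^{(k)})) = \lim_k c^{(k)}_n = a_n$ for each $n$, so $\Phi(a) = 0$ forces $a = 0$; this is precisely Corollary~\ref{C-L1Inj}, whose proof in turn rests on the faithfulness of the family of regular representations, Proposition~\ref{P:Faithful}(\ref{P:Faithful:G}). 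I do not expect any real obstacle here: the lemma is a straightforward assembly of the structure theory of Section~\ref{Sec_DiscCP}, and the only point needing a moment's care is that $\kp_{\I}$ is a set-theoretic inclusion, so injectivity requires no density argument at all.
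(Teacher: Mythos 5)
Your proof is correct and follows essentially the same route as the paper: injectivity is deduced from $\kp_{\I}$ being a set-theoretic inclusion together with Corollary~\ref{C-L1Inj}, and dense range from Theorem~\ref{T-UPropRed}(\ref{T-UPropRed-3}). The extra paragraph unpacking the mechanism behind Corollary~\ref{C-L1Inj} is accurate but not needed.
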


\begin{proof}
Injectivity is immediate from
injectivity of $\kp_{\I}$ and Corollary~\ref{C-L1Inj}.
Density of the range
follows from Theorem~\ref{T-UPropRed}(\ref{T-UPropRed-3}).
\end{proof}

\begin{ntn}\label{N-ScwhSubalg}
Let $p \in [1, \I),$
let $A$ be an $L^p$~operator algebra,
and let $\af \in \Aut (A)$ be isometric.
Using \Lem{L-SchwInj},
from now on we identify ${\mathcal{S}} (\Z, A, \af)$
with the corresponding subalgebra of $F^p_{\mathrm{r}} (\Z, A, \af).$
\end{ntn}
The following definition is again a very special case of something
much more general.
See the Appendix in~\cite{Sw2},
especially Theorem~A.2.

\begin{dfn}\label{D-SmoothElts}
Let $B$ be a Banach algebra,
and let $\bt \colon S^1 \to \Aut (B)$
be an action of the circle $S^1$ on~$B.$
We say that $b \in B$ is {\emph{smooth}}
(or $\bt$-smooth if $\bt$ must be specified)
if the function $\ld \mapsto \bt_{\exp (i \ld)} (b)$
is a $C^{\infty}$~function from $\R$ to~$B.$
\end{dfn}

\begin{thm}\label{T-SmoothIffSchw}
Let $p \in [1, \I),$
let $A$ be a separable $L^p$~operator algebra,
and let $\af \in \Aut (A)$ be isometric.
Let
${\widehat{\af}} \colon
    S^1 \to \Aut \big( F^p_{\mathrm{r}} (\Z, A, \af) \big)$
be the dual action of Theorem~\ref{T-DualpRed}.
Then $b \in F^p_{\mathrm{r}} (\Z, A, \af)$
is ${\widehat{\af}}$-smooth
\ifo\  $b \in {\mathcal{S}} (\Z, A, \af).$
\end{thm}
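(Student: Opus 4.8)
The plan is to translate both conditions into statements about the decay of the coefficients $E_n (b) \in A$ of Proposition~\ref{P:CondExpt}, and then invoke the classical correspondence between smoothness of a periodic Banach-space-valued function and rapid decay of its Fourier coefficients. Fix $b \in F^p_{\mathrm{r}} (\Z, A, \af)$ and set $f (\ld) = {\widehat{\af}}_{\exp(i \ld)} (b),$ a continuous $2 \pi$-periodic function from $\R$ to $F^p_{\mathrm{r}} (\Z, A, \af).$ The starting point is that for $\ta \in {\widehat{\Z}} = S^1$ and $n \in \Z$ we have $E_n \big( {\widehat{\af}}_{\ta} (b) \big) = {\ov{\ta}}^n E_n (b);$ this holds for $b \in C_{\mathrm{c}} (\Z, A, \af)$ directly from Definition~\ref{D-DualCc}, hence for all $b$ by continuity of $E_n$ and of ${\widehat{\af}}_{\ta}.$ In particular $E_n (f (\ld)) = e^{- i n \ld} E_n (b).$ I will also use, without further comment, that $a \mapsto a u_n$ is isometric from $A$ into $F^p_{\mathrm{r}} (\Z, A, \af)$ (Lemma~\ref{L-NormOfMultU} and Remark~\ref{R-Ident}), that an element of $F^p_{\mathrm{r}} (\Z, A, \af)$ is determined by the values $E_n (b)$ for $n \in \Z$ (Proposition~\ref{P:Faithful}(\ref{P:Faithful:G})), and that, under the identification of Notation~\ref{N-ScwhSubalg} (which relies on Corollary~\ref{C-L1Inj}), an element of ${\mathcal{S}} (\Z, A, \af)$ is precisely an element $b$ with $\sum_{n \in \Z} (1 + |n|)^r \| E_n (b) \| < \I$ for every $r \in \Nz,$ the corresponding $L^1$-function being $n \mapsto E_n (b).$

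Suppose first that $b \in {\mathcal{S}} (\Z, A, \af),$ and write $b_n = E_n (b).$ Since $\| b_n u_n \| = \| b_n \|$ and $\sum_n \| b_n \| = \| b \|_{0, 1} < \I,$ the series $\sum_n b_n u_n$ converges absolutely in $F^p_{\mathrm{r}} (\Z, A, \af);$ by continuity of the $E_m$ its sum has the same coefficients as $b,$ so by faithfulness $b = \sum_n b_n u_n.$ Applying the isometric automorphism ${\widehat{\af}}_{\exp(i \ld)}$ term by term gives $f (\ld) = \sum_n e^{- i n \ld} b_n u_n.$ For each $k \in \Nz$ the formally differentiated series $\sum_n (- i n)^k e^{- i n \ld} b_n u_n$ converges uniformly in $\ld,$ since $\sum_n |n|^k \| b_n u_n \| = \sum_n |n|^k \| b_n \| \leq \| b \|_{k, 1} < \I.$ The standard theorem on term-by-term differentiation of uniformly convergent series of Banach-space-valued $C^{\I}$-functions, applied inductively in~$k,$ shows $f \in C^{\I},$ that is, $b$ is ${\widehat{\af}}$-smooth.

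Conversely, suppose $f \in C^{\I},$ and let ${\widehat{f}} (n) = \frac{1}{2 \pi} \int_0^{2 \pi} e^{- i n \ld} f (\ld) \, d \ld \in F^p_{\mathrm{r}} (\Z, A, \af)$ be its Fourier coefficients. Since $E_m$ is bounded linear it commutes with this integral, and $\frac{1}{2 \pi} \int_0^{2 \pi} e^{- i (n + m) \ld} \, d \ld = \dt_{n + m, 0}$ gives $E_m ( {\widehat{f}} (n) ) = E_{-n} (b)$ for $m = - n$ and $0$ otherwise. These are also the coefficients of $E_{-n} (b) u_{-n} \in C_{\mathrm{c}} (\Z, A, \af),$ so by faithfulness ${\widehat{f}} (n) = E_{-n} (b) u_{-n},$ and hence $\| {\widehat{f}} (n) \| = \| E_{-n} (b) \|.$ Integrating by parts $k$ times, the boundary terms vanishing by periodicity, yields ${\widehat{f}} (n) = (i n)^{-k} \, {\widehat{f^{(k)}}} (n)$ for $n \neq 0,$ so
\[
\| E_{-n} (b) \| = \| {\widehat{f}} (n) \|
  \leq |n|^{-k} \sup_{\ld \in \R} \big\| f^{(k)} (\ld) \big\|
\]
for every $k \in \Nz.$ Therefore $\| E_m (b) \| = O \big( (1 + |m|)^{-k} \big)$ for every $k,$ whence $\sum_{m \in \Z} (1 + |m|)^r \| E_m (b) \| < \I$ for every $r \in \Nz$ (take $k \geq r + 2$). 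By the characterization of ${\mathcal{S}} (\Z, A, \af)$ recorded above, this says exactly that $b \in {\mathcal{S}} (\Z, A, \af).$

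I do not expect a genuine obstacle: the content is the classical dictionary between $C^{\I}$ periodic functions and rapidly decreasing Fourier coefficients, and all that needs care is the bookkeeping with the coefficient maps~$E_n$ (that they intertwine $f$ with multiplication by $e^{- i n \ld},$ commute with vector-valued integration, and are jointly faithful by Proposition~\ref{P:Faithful}) together with the isometry statement of Lemma~\ref{L-NormOfMultU}.
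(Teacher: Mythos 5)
Your proof is correct. The direction $b \in {\mathcal{S}} (\Z, A, \af) \Rightarrow b$ smooth is essentially identical to the paper's: write $b = \sum_{n} b_n u_n$ with absolute convergence and differentiate the series $\sum_n e^{-i n \ld} b_n u_n$ term by term, using $\| b_n u_n \| = \| b_n \|.$ For the converse the paper takes a genuinely different route. It never integrates: it applies the bounded map $E_n$ to the difference quotient defining the derivative, uses $E_n \big( {\widehat{\af}}_{\exp (i \ld)} (b) \big) = e^{- i n \ld} E_n (b)$ to conclude that the derivative at~$0$ is an element $c \in F^p_{\mathrm{r}} (\Z, A, \af)$ with $E_n (c) = - i n E_n (b),$ iterates to produce $b_r$ with $E_n (b_r) = (- i n)^r E_n (b),$ and then reads off $\sup_n | n |^r \| E_n (b) \| \leq \| b_r \| < \I.$ You instead form the vector-valued Fourier coefficients ${\widehat{f}} (n),$ identify them as $E_{-n} (b) u_{-n}$ via the coefficient maps and Proposition~\ref{P:Faithful}(\ref{P:Faithful:G}), and obtain the same decay estimate by integrating by parts $k$ times. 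Both mechanisms produce $\| E_n (b) \| = O \big( | n |^{-k} \big)$ for all~$k$; yours needs the (routine) Riemann integral of a continuous Banach-space-valued function on a compact interval together with the fact that bounded linear maps and integration commute, while the paper's needs only boundedness of the $E_n$ and their joint faithfulness. Your preliminary reductions --- the formula for $E_n \circ {\widehat{\af}}_{\ta},$ the isometry $\| a u_n \| = \| a \|,$ and the identification of ${\mathcal{S}} (\Z, A, \af)$ inside $F^p_{\mathrm{r}} (\Z, A, \af)$ with the set of elements whose coefficients decay faster than any polynomial --- are all justified correctly, so there is no gap.
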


\begin{proof}
We first prove that smooth elements are in ${\mathcal{S}} (\Z, A, \af).$

For $n \in \Z,$
let $E_n \colon F^p_{\mathrm{r}} (\Z, A, \af) \to A$
be as in Proposition~\ref{P:CondExpt}.
It follows from continuity of $E_n$ and the definition
of ${\widehat{\af}}$
(see Definition~\ref{D-DualCc} for the formula)
that
\begin{equation}\label{Eq:EnOnDual}
E_n \big( {\widehat{\af}}_{\exp (i \ld)} (b) \big)
  = \exp (- i n \ld) E_n (b)
\end{equation}
for all $b \in F^p_{\mathrm{r}} (\Z, A, \af)$ and $n \in \Z.$

We claim that if $b \in F^p_{\mathrm{r}} (\Z, A, \af)$
and the function $\ld \mapsto {\widehat{\af}}_{\exp (i \ld)} (b)$
is differentiable,
with derivative $f \colon \R \to F^p_{\mathrm{r}} (\Z, A, \af),$
then
\[
E_n (f (\ld)) = - i n \exp (- i n \ld) E_n (b)
\]
for all $n \in \Z.$

To prove the claim,
we first observe that $f$ is determined by the
condition
\[
\lim_{h \to 0} \frac{\big\| {\widehat{\af}}_{\exp (i (\ld + h))} (b)
                       - {\widehat{\af}}_{\exp (i \ld)} (b)
                       - h f (\ld) \big\|}{| h |}
         = 0.
\]
Apply~$E_n,$
and use (\ref{Eq:EnOnDual}) and boundedness of $E_n$ to get
\[
\lim_{h \to 0} \frac{\big\| \big[ \exp (- i n (\ld + h))
                       - \exp (- i n \ld) \big]  E_n (b)
                       - h E_n (f (\ld)) \big\|}{| h |}
         = 0.
\]
The claim follows.

The claim implies that
if the function $\ld \mapsto {\widehat{\af}}_{\exp (i \ld)} (b)$
is differentiable,
with derivative $f \colon \R \to F^p_{\mathrm{r}} (\Z, A, \af),$
then there is $c \in F^p_{\mathrm{r}} (\Z, A, \af)$
(namely $f (0)$)
such that $E_n (c) = - i n E_n (b)$
for all $n \in \Z.$
Moreover, from~(\ref{Eq:EnOnDual})
and Proposition~\ref{P:Faithful}(\ref{P:Faithful:G})
we get $f (\ld) = {\widehat{\af}}_{\exp (i \ld)} (c)$
for all $\ld \in \R.$

Now suppose $b \in F^p_{\mathrm{r}} (\Z, A, \af)$ is smooth.
Then, by induction, for every $r \in \Nz,$
there is $b_r \in F^p_{\mathrm{r}} (\Z, A, \af)$
such that $E_n (b_r) = (- i n)^r E_n (b)$
for all $n \in \N.$
Since $\sup_{n \in \Z} \| E_n (b_r) \| \leq \| b_r \|,$
it follows that for every $r \in \Nz$ we have
\[
\sup_{n \in \Z} | n |^r \| E_n (b) \| < \I.
\]
So for every $r \in \Nz$ we have,
using
$(1 + | n | )^{r + 2} \leq 2^{r + 2} \big( 1 + | n |^{r + 2} \big),$
\begin{align*}
\sum_{n \in \Z} (1 + | n | )^r \| E_n (b) \|
 & \leq \sum_{n \in \Z}
      \frac{2^{r + 2} \big( 1 + | n |^{r + 2} \big) \| E_n (b) \|}{
           (1 + | n | )^2}
 \\
 & \leq 2^{r + 2} \left( \sup_{n \in \Z} \| E_n (b) \|
          + \sup_{n \in \Z} | n |^{r + 2} \| E_n (b) \| \right)
            \sum_{n \in \Z} \frac{1}{(1 + | n | )^2}
 \\
 & < \I.
\end{align*}
It follows that $b \in {\mathcal{S}} (\Z, A, \af).$

Now let $b \in {\mathcal{S}} (\Z, A, \af).$
Since $\sum_{n \in \Z} \| b_n \| < \I,$
we can write $b = \sum_{n \in \Z} b_n u_n,$
with absolute convergence in $F^p_{\mathrm{r}} (\Z, A, \af).$
For $\ld \in \R,$ set $g (\ld) = {\widehat{\af}}_{\exp (i \ld)} (b).$
Then
\[
g (\ld) = \sum_{n \in \Z} \exp (- i n \ld) b_n u_n.
\]
It is easy to prove by induction on~$r$ that the function
\[
\ld \mapsto \sum_{n \in \Z} (- i n)^r \exp (- i n \ld) b_n u_n
\]
is well defined
(the series converges absolutely in $F^p_{\mathrm{r}} (\Z, A, \af)$),
takes values in ${\mathcal{S}} (\Z, A, \af),$
and is equal to $g^{(r)} (\ld).$
Thus $b$ is smooth.
\end{proof}

Recall
(see the second paragraph of Definition~1.1 of~\cite{Sw0})
that if $B$ is a unital Banach algebra and $C$ is a dense subalgebra,
then $C$ is spectrally invariant in~$B$
if whenever $c \in C$ has an inverse $b \in B,$
then $b \in C.$
In the nonunital case,
we ask that the unitization of~$C$
be spectrally invariant in the unitization of~$B.$
(It is shown in Lemma~1.2 of~\cite{Sw0}
that, under good conditions, $C$ is spectrally invariant in~$B$
\ifo\  $C$ is closed under holomorphic functional calculus in~$B.$)

\begin{cor}\label{C-SpecInv}
Let $p \in [1, \I),$
let $A$ be a $L^p$~operator algebra,
and let $\af \in \Aut (A)$ be isometric.
Then ${\mathcal{S}} (\Z, A, \af)$ is
spectrally invariant in $F^p_{\mathrm{r}} (\Z, A, \af).$
\end{cor}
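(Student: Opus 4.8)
The plan is to combine Theorem~\ref{T-SmoothIffSchw}, which identifies ${\mathcal{S}} (\Z, A, \af)$ with the set of ${\widehat{\af}}$-smooth elements of $F^p_{\mathrm{r}} (\Z, A, \af)$ for the dual action ${\widehat{\af}}$ of Theorem~\ref{T-DualpRed}, with the general principle (compare the Appendix of~\cite{Sw2}) that the algebra of smooth vectors for a Lie group action on a Banach algebra is spectrally invariant. Since ${\mathcal{S}} (\Z, A, \af)$ has dense range in $F^p_{\mathrm{r}} (\Z, A, \af)$ by \Lem{L-SchwInj}, and since spectral invariance in the nonunital case is defined via the unitizations, it suffices to work in the unitization ${\widetilde{B}}$ of $B = F^p_{\mathrm{r}} (\Z, A, \af)$.

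First I would extend ${\widehat{\af}}$ to an action on ${\widetilde{B}}$ by fixing the adjoined identity, so that each ${\widehat{\af}}_{\ta}$ becomes a \emph{unital} automorphism of ${\widetilde{B}}$. An element $b + \zt 1 \in {\widetilde{B}}$ with $b \in B$ and $\zt \in \C$ is ${\widehat{\af}}$-smooth \ifo\ $b$ is; hence, by Theorem~\ref{T-SmoothIffSchw}, the ${\widehat{\af}}$-smooth elements of ${\widetilde{B}}$ are exactly the elements of the unitization ${\widetilde{\mathcal{S}}}$ of ${\mathcal{S}} (\Z, A, \af)$ inside ${\widetilde{B}}$. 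It therefore remains to show: if $b \in {\widetilde{B}}$ is ${\widehat{\af}}$-smooth and invertible in ${\widetilde{B}}$, then $b^{-1}$ is ${\widehat{\af}}$-smooth.

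For this, consider the map $f \colon \R \to {\widetilde{B}}$, $f (\ld) = {\widehat{\af}}_{\exp (i \ld)} (b)$, which is $C^{\infty}$ by hypothesis. Because each ${\widehat{\af}}_{\exp (i \ld)}$ is a unital algebra automorphism, $f (\ld)$ is invertible with $f (\ld)^{-1} = {\widehat{\af}}_{\exp (i \ld)} (b^{-1})$ for every $\ld$. The group of invertibles of ${\widetilde{B}}$ is open, and inversion $x \mapsto x^{-1}$ is real-analytic there (with derivative $h \mapsto - x^{-1} h x^{-1}$ at~$x$), so $\ld \mapsto {\widehat{\af}}_{\exp (i \ld)} (b^{-1})$ is $C^{\infty}$; that is, $b^{-1}$ is ${\widehat{\af}}$-smooth, hence $b^{-1} \in {\widetilde{\mathcal{S}}}$, as required. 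The argument is essentially formal, so I expect no real obstacle; the only points needing care are the routine check that the extended dual action on ${\widetilde{B}}$ has the stated smooth elements, and --- if one wishes to stay strictly within the separability hypothesis used in Theorem~\ref{T-SmoothIffSchw} and Theorem~\ref{T-DualpRed} --- a harmless preliminary reduction to the case in which $A$ is separable.
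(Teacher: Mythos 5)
Your proposal is correct and follows essentially the same route as the paper: identify ${\mathcal{S}} (\Z, A, \af)$ with the ${\widehat{\af}}$-smooth elements via Theorem~\ref{T-SmoothIffSchw}, and then observe that the inverse of an invertible smooth element is smooth because ${\widehat{\af}}_{\exp (i \ld)} (b)^{-1} = {\widehat{\af}}_{\exp (i \ld)} (b^{-1})$ and inversion is smooth on the open group of invertibles. Your added care about the unitization (and the separability hypothesis needed to invoke Theorem~\ref{T-SmoothIffSchw}) is a reasonable tightening of details the paper leaves implicit, not a different argument.
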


\begin{proof}
It is easy to check that if $b \in F^p_{\mathrm{r}} (\Z, A, \af)$
is invertible and smooth,
then $b^{-1}$ is smooth.
Apply Theorem~\ref{T-SmoothIffSchw}.
\end{proof}

In fact, Theorem~2.2 of~\cite{Sw1}
shows that ${\mathcal{S}} (\Z, A, \af)$ is
strongly spectrally invariant in $F^p_{\mathrm{r}} (\Z, A, \af)$
in the sense of Definition~1.2 of~\cite{Sw1}.

For a locally $m$-convex Fr\'{e}chet algebra~$A,$
we let $RK_* (A)$ denote the representable K-theory of~$A,$
as defined in~\cite{Ph1}.

\begin{cor}\label{C-KThyIso}
Let $p \in [1, \I),$
let $A$ be a $L^p$~operator algebra,
and let $\af \in \Aut (A)$ be isometric.
Let $\kp_{\I}$ be as in Definition~\ref{D-SmoothCrPrd}.
Then
\[
(\kp_{\I})_* \colon RK_* ( {\mathcal{S}} (\Z, A, \af) )
     \to RK_* ( F^p_{\mathrm{r}} (\Z, A, \af) )
\]
is an isomorphism.
\end{cor}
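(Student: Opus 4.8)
The plan is to deduce this from the general principle that a continuous injective homomorphism with dense range from a locally $m$-convex Fr\'echet algebra onto a strongly spectrally invariant subalgebra of a Banach algebra induces an isomorphism on representable K-theory. All the needed structural facts have already been assembled: Proposition~\ref{P-SIsFrechet} gives that $\mathcal{S} (\Z, A, \af)$ is a locally $m$-convex Fr\'echet algebra and that $\kp_{\I}$ is continuous; Lemma~\ref{L-SchwInj}, together with the identification of Notation~\ref{N-ScwhSubalg}, gives that $\kp_{\I}$ is injective with dense range; and Corollary~\ref{C-SpecInv}, strengthened by Theorem~2.2 of~\cite{Sw1} as noted immediately after it, gives that $\mathcal{S} (\Z, A, \af)$ is \emph{strongly} spectrally invariant in $F^p_{\mathrm{r}} (\Z, A, \af).$ Strong spectral invariance is exactly the hypothesis needed so that, for every $n,$ the algebra $M_n \big( \mathcal{S} (\Z, A, \af) \big)$ is again spectrally invariant in $M_n \big( F^p_{\mathrm{r}} (\Z, A, \af) \big),$ equivalently closed under holomorphic functional calculus there (cf.\ Lemma~1.2 of~\cite{Sw0}); mere spectral invariance would not suffice for the matrix amplifications.

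Granting that, the argument for $RK_0$ is the standard idempotent-lifting one. For surjectivity, an idempotent over $F^p_{\mathrm{r}} (\Z, A, \af)$ is approximated by an element $c$ over $\mathcal{S} (\Z, A, \af)$; for a close enough approximation the spectrum of $c$ misses the line $\{ \mathrm{Re} (z) = \tfrac12 \},$ so applying to $c$ the holomorphic function equal to $0$ to the left and $1$ to the right of this line produces an idempotent over $\mathcal{S} (\Z, A, \af)$ which, being close to the original, is similar to it over $F^p_{\mathrm{r}} (\Z, A, \af).$ Injectivity follows by running the same approximation on the similarities witnessing that two idempotents over $\mathcal{S} (\Z, A, \af)$ become equivalent over $F^p_{\mathrm{r}} (\Z, A, \af).$ For $RK_1$ one repeats this with invertibles in the unitizations in place of idempotents, again using that inverses and holomorphic functional calculus stay inside the strongly spectrally invariant subalgebra; alternatively, apply the $RK_0$ case to $C_0 (\R, {-})$ and combine the suspension isomorphism $RK_1 (B) \cong RK_0 \big( C_0 (\R, B) \big)$ (see~\cite{Ph1}) with the observation that $C_0 \big( \R, \, \mathcal{S} (\Z, A, \af) \big)$ is a strongly spectrally invariant dense Fr\'echet subalgebra of $C_0 \big( \R, \, F^p_{\mathrm{r}} (\Z, A, \af) \big).$ Since $F^p_{\mathrm{r}} (\Z, A, \af)$ is a Banach algebra, its representable K-theory is its ordinary K-theory, so the conclusion is as stated.

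In practice I would cite the packaged form of this principle from~\cite{Sw1} (and~\cite{Ph1} for the $RK$ formalism) rather than reproduce the lifting arguments. The one point genuinely requiring care is the distinction between spectral invariance and strong spectral invariance: it is the latter, furnished by Theorem~2.2 of~\cite{Sw1}, that controls all the matrix algebras $M_n$ simultaneously and hence makes the K-theory comparison go through. Verifying that the remaining hypotheses of the general theorem are literally met --- Fr\'echet versus Banach, local $m$-convexity, density, and continuity of the inclusion --- is then routine from the results recalled above.
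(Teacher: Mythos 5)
Your proof is correct and follows essentially the same route as the paper, which simply cites Lemma~1.1.9(1) of~\cite{PS}: a continuous inclusion of a spectrally invariant locally $m$-convex Fr\'echet subalgebra induces an isomorphism on representable K-theory. One small caveat: your assertion that ``mere spectral invariance would not suffice for the matrix amplifications'' overstates the case in this setting --- for a dense Fr\'echet subalgebra of a Banach algebra, spectral invariance automatically passes to $M_n$ (this is precisely the content of~\cite{Sw0}), which is why the paper's citation needs only spectral invariance (Corollary~\ref{C-SpecInv}) and not the strong form from Theorem~2.2 of~\cite{Sw1}; your use of the stronger hypothesis is harmless but not necessary.
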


\begin{proof}
This is true for any \ct\  inclusion of a spectrally invariant
locally $m$-convex Fr\'{e}chet algebra,
by Lemma 1.1.9(1) of~\cite{PS}.
\end{proof}

\begin{thm}\label{T-PV}
Let $p \in [1, \I),$
let $A$ be a $L^p$~operator algebra,
and let $\af \in \Aut (A)$ be isometric.
Then we have the following natural six term exact sequence in
$K$-theory,
in which $\ep \colon A \to F^p_{\mathrm{r}} (\Z, A, \alpha)$ is the
inclusion of Remark~\ref{R-Ident}:
\begin{equation}\label{Eq:PVLp}
\begin{CD}
K_{0} (A) @>{{\operatorname{id}} - (\alpha^{-1})_{*}}>> K_{0} (A)
      @>{\ep_{*}}>>
      K_{0} \big( F^p_{\mathrm{r}} (\Z, A, \alpha) \big) \\
@A{\partial}AA @. @VV{\partial}V \\
K_{1} \big( F^p_{\mathrm{r}} (\Z, A, \alpha) \big) @<{\ep_{*}}<<
K_{1} (A) @<{{\operatorname{id}} - (\alpha^{-1})_{*}}<< K_{1}(A). \\
\end{CD}
\end{equation}
\end{thm}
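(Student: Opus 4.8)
The plan is to deduce the sequence from the Pimsner--Voiculescu sequence for the smooth crossed product ${\mathcal{S}} (\Z, A, \af)$, which is already known, by transporting it along the K-theory isomorphism of Corollary~\ref{C-KThyIso}.

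First I would invoke \cite{PS}: since ${\mathcal{S}} (\Z, A, \af)$ is a locally $m$-convex Fr\'{e}chet algebra (Proposition~\ref{P-SIsFrechet}), realized as the smooth crossed product of $A$ by the isometric action of~$\Z$ generated by~$\af$, there is a natural six term exact sequence in representable K-theory relating $RK_* (A)$ and $RK_* \big( {\mathcal{S}} (\Z, A, \af) \big)$, in which the two maps $RK_j (A) \to RK_j (A)$ are $\id - (\af^{-1})_*$ and the two maps $RK_j (A) \to RK_j \big( {\mathcal{S}} (\Z, A, \af) \big)$ are induced by the canonical inclusion $j \colon A \to {\mathcal{S}} (\Z, A, \af).$ One should check that the hypotheses of the relevant theorem of \cite{PS} are met; this should cause no trouble, since $\af$ is isometric and ${\mathcal{S}} (\Z, A, \af)$ is exactly the algebra to which that result applies.

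Second, for any Banach algebra~$B$ one has $RK_* (B) = K_* (B)$, so in particular $RK_* (A) = K_* (A)$ and $RK_* \big( F^p_{\mathrm{r}} (\Z, A, \af) \big) = K_* \big( F^p_{\mathrm{r}} (\Z, A, \af) \big).$ By Corollary~\ref{C-KThyIso}, the map $(\kp_{\I})_*$ is an isomorphism from $RK_* \big( {\mathcal{S}} (\Z, A, \af) \big)$ onto $K_* \big( F^p_{\mathrm{r}} (\Z, A, \af) \big).$ Applying this isomorphism to the middle term of the sequence from the first step produces a six term exact sequence relating $K_* (A)$, $K_* (A)$, and $K_* \big( F^p_{\mathrm{r}} (\Z, A, \af) \big)$: the maps $\id - (\af^{-1})_*$ are unchanged, the boundary maps transport accordingly via $(\kp_{\I})_*$, and the maps $K_j (A) \to K_j \big( F^p_{\mathrm{r}} (\Z, A, \af) \big)$ become $(\kp_{\I} \circ j)_*.$ Under the identification of ${\mathcal{S}} (\Z, A, \af)$ with a subalgebra of $F^p_{\mathrm{r}} (\Z, A, \af)$ in Notation~\ref{N-ScwhSubalg}, namely via $\kp_{\mathrm{r}} \circ \kp \circ \kp_{\I}$, the composite $\kp_{\I} \circ j$ is just the canonical identity-fibre inclusion of~$A$, that is, the map $\ep$ of Remark~\ref{R-Ident}. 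Hence $(\kp_{\I} \circ j)_* = \ep_*$ and the resulting sequence is~(\ref{Eq:PVLp}). Finally, naturality of~(\ref{Eq:PVLp}) follows from naturality of the sequence in \cite{PS}, together with the obvious naturality of the assignments ${\mathcal{S}} (\Z, {-}, {-})$, $F^p_{\mathrm{r}} (\Z, {-}, {-})$, $\kp_{\I}$, and $\ep$ with respect to equivariant contractive homomorphisms.

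The step I expect to be the main obstacle is the first one: matching conventions so that \cite{PS} really yields the map $\id - (\af^{-1})_*$ with the variance appearing in~(\ref{Eq:PVLp}) --- several standard treatments instead produce $\id - \af_*$, the difference depending on which generator of~$\Z$ is used in the underlying Toeplitz extension and on the direction in which one conjugates by the generating unitary --- and so that the inclusion of~$A$ appearing there agrees, after the identification of Notation~\ref{N-ScwhSubalg}, with~$\ep.$ A secondary concern is confirming that the theorem of \cite{PS} applies to ${\mathcal{S}} (\Z, A, \af)$ for an arbitrary $L^p$~operator algebra~$A$ equipped with an isometric automorphism, with no further hypothesis on~$A.$
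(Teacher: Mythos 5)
Your proposal is correct and follows essentially the same route as the paper: apply the Pimsner--Voiculescu sequence of Phillips--Schweitzer (Theorem~2.6 of~\cite{PS}) to the smooth crossed product ${\mathcal{S}} (\Z, A, \af),$ transport it along the isomorphism of Corollary~\ref{C-KThyIso}, identify the resulting inclusion with $\ep = \kp_{\I} \circ \ep_{\I},$ and pass from $RK_*$ to $K_*$ for Banach algebras via Corollary~7.8 of~\cite{Ph1}. The concerns you flag (variance conventions, applicability of~\cite{PS}) are the right ones to check, and the paper resolves them exactly as you anticipate.
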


\begin{proof}
The action satisfies the hypotheses of Theorem~2.6 of~\cite{PS}.
Therefore, with $\ep_{\I} \colon A \to {\mathcal{S}} (\Z, A, \alpha)$ being the inclusion analogous to that of Remark~\ref{R-Ident},
we get the following natural six term exact sequence:
\[
\begin{CD}
RK_{0} (A) @>{{\operatorname{id}} - (\alpha^{-1})_{*}}>> RK_{0} (A)
      @>{(\ep_{\I})_{*}}>>
      RK_{0} \big( {\mathcal{S}} (\Z, A, \alpha) \big) \\
@A{\partial}AA @. @VV{\partial}V \\
RK_{1} \big( {\mathcal{S}} (\Z, A, \alpha) \big)
 @<{(\ep_{\I})_{*}}<<
RK_{1} (A) @<{{\operatorname{id}} - (\alpha^{-1})_{*}}<< RK_{1}(A). \\
\end{CD}
\]
Now apply Corollary~\ref{C-KThyIso}
and use $\ep = \kp_{\I} \circ \ep_{\I},$
getting~(\ref{Eq:PVLp}) except with $RK_*$ in place of~$K_*.$
Since the algebras in~(\ref{Eq:PVLp})
are all Banach algebras,
Corollary 7.8 of~\cite{Ph1} allows us to replace $RK_*$ with~$K_*.$
\end{proof}

We suppose that there is also an analog of the Connes isomorphism
theorem
for reduced $L^p$~operator crossed products by isometric
actions of~$\R,$
but we have not investigated this question.
The analog for smooth crossed products
is Theorem 1.2.7 of~\cite{PS}.

\section{Realizing $\OP{d}{p}$ as a crossed
   product}\label{Sec_OpdCP}

\indent
In this section,
we show that $\OP{d}{p}$ is stably
isomorphic (in a suitable sense) to
a reduced $L^p$~operator crossed product by an isometric
action of~$\Z,$
in a manner analogous to the C*~case
(Section~2.1 of~\cite{Cu1}).
We use this isomorphism
to compute $K_* \big( \OP{d}{p} \big).$

The methods of the original computation,
in~\cite{Cu2},
do not seem to work.
We have only partial information about to what extent the description
of the K-theory of purely infinite simple \ca{s},
as in Section~1 of~\cite{Cu2},
carries over to purely infinite simple Banach algebras,
as in Definition~5.1 of~\cite{PhLp2a}.
See Corollary~5.15 of~\cite{PhLp2a} and Question~\ref{Q_3216_K1Inv0}.
This, however, is not the main difficulty,
since this description seems not to be essential
for the rest of the argument of~\cite{Cu2}.
Rather, the problem occurs
in the proof of Proposition~2.2 of~\cite{Cu2}.
The map $u \mapsto \ld_u$ in Proposition~2.1 of~\cite{Cu2},
from unitaries in ${\mathcal{O}}_d$
to endomorphisms of~${\mathcal{O}}_d,$
seems to have an analog in the $L^p$~situation
only when $u$ is an isometric bijection
($\| u \| \leq 1$ and $\| u^{-1} \| \leq 1$).
If $\| u \| > 1,$
we have potential trouble with the norm of $\ld_u (t_j^n)$
as $n \to \I,$
and if $\| u^{-1} \| > 1,$
we have potential trouble with the norm of $\ld_u (s_j^n)$
as $n \to \I.$
However, for $p \neq 2$ the isometries in $M_r (\C)$
are necessarily spatial.
(This is essentially Lamperti's Theorem~\cite{Lp};
see Theorem~6.9 and Lemma~6.15 of~\cite{PhLp1}.)
The isometry group is therefore not connected.
So we do not get the homotopy required in the
proof of Proposition~2.2 of~\cite{Cu2}.

We give careful details in this section
because it is often necessary to prove explicitly
that homomorphisms are isometric.
In particular,
we want it to be clear that we have provided such a proof
everywhere that one is needed.

The proof of stable isomorphism involves a number of objects,
including an algebra with notation for some of its elements,
an action of $\Z$ on this algebra,
and a number of homomorphisms.
We introduce notation for these as we proceed though
the construction,
stating the properties as lemmas.
Once established, notation implicitly stays in effect
for the rest of this section.

We represent the spatial $L^p$~UHF algebra of type $d^{\infty}$
(Example~\ref{E-pUHF}) following Example~3.8 of~\cite{PhLp2a},
taking $X_n$ there to be a $d$~point space
with normalized counting measure.
It is convenient here to use somewhat different notation.

\begin{ntn}\label{N-dinfty}
Fix $d \in \{ 2, 3, \ldots \}$
and $p \in [1, \I) \SM \{ 2 \}.$
(Here, and in the rest of the notation for this section,
we mostly suppress the dependence on both $d$ and~$p.$)
We describe a representation of
the spatial $L^p$~UHF algebra of
type $d^{\infty}$
(Example~\ref{E-pUHF}).
Define
\[
Z = \{ 0, 1, \ldots, d - 1 \}
\andeqn
X_0 = Z^{\N}
    = \prod_{n = 1}^{\I} Z.
\]
Equip $Z$ with the discrete topology
and normalized counting measure~$\ld,$
as in Example~\ref{E-Mpd}.
Write $\ld^n$ for the product of $n$ copies of $\ld$ on~$Z^n.$
Equip $X_0$ with the product topology
and the infinite product measure,
which we call~$\mu_0.$
For $r \in \N,$
we further let
$X_r = \prod_{n = r + 1}^{\I} Z,$
with the product topology and infinite product measure,
now called~$\mu_r,$
giving
$X_0 = Z^r \times X_r.$
Following Remark~\ref{R-LpTens},
we can then identify
\begin{equation}\label{Eq:LprTLpXr}
L^p (X_0, \mu_0) = L^p ( Z^r, \ld^r) \otimes_p L^p (X_r, \mu_r),
\end{equation}
and we can further decompose the first factor in various ways,
for example as
\[
L^p ( Z^r, \ld^r)
= L^p ( Z^{r - 1}, \, \ld^{r - 1}) \otimes_p L^p ( Z, \ld)
= L^p ( Z, \ld)^{\otimes r}.
\]
For $r \in \Nz,$
define
\[
1_{> r} = 1_{L^p (X_r, \mu_r)}
\andeqn
1_{\leq r} = 1_{L^p ( Z^r, \ld^r)},
\]
so that $1_{> 0} = 1_{\leq r} \otimes 1_{> r}.$
(These were called
$1_{ {\mathbb{N}}_{> r}}$ and $1_{ {\mathbb{N}}_{\leq r}}$
in Example~3.8 of~\cite{PhLp2a}.)
Take $M^p_d = L (L^p ( Z, \ld))$ as in Example~\ref{E-Mpd}.
For $r \in \N$ let $D_r^{(0)}$
be the set of all operators of the form
\begin{equation}\label{Eq:1TaT1}
1_{\leq r - 1} \otimes a \otimes 1_{> r}
 \in L \big( L^p ( Z^{r - 1}, \, \ld^{r - 1})
  \otimes_p L^p (Z, \ld) \otimes_p L^p (X_r, \mu_r) \big)
 = L \big( L^p ( X_0, \mu_0) \big)
\end{equation}
with $a \in M^p_d,$
and define
$\ph_r^{(0)} \colon
 M^p_d \to L \big( L^p ( X_0, \mu_0) \big)$
by $\ph_r ^{(0)}(a) = 1_{\leq r - 1} \otimes a \otimes 1_{> r}$
as in~(\ref{Eq:1TaT1}).
Let $D_r$ be the subalgebra of $L \big( L^p ( X_0, \mu_0) \big)$
generated by $\bigcup_{k = 1}^r D_k^{(0)},$
and define
$\ph_r \colon
 L \big( L^p (Z^r, \ld^r) \big) \to L \big( L^p ( X_0, \mu_0) \big)$
by $\ph_r (a) = a \otimes 1_{> r}$ according to
the tensor factorization in~(\ref{Eq:LprTLpXr}).
Now set $D = {\ov{ \bigcup_{r = 1}^{\I} D_r }}.$
\end{ntn}

\begin{lem}\label{L-DINisRight}
The objects defined in Notation~\ref{N-dinfty}
have the following properties:
\begin{enumerate}
\item\label{L-DINisRight-1}
For every $r \in \N,$ the map $\ph_r^{(0)}$ is an isometric bijection
from $M^p_d$ to $D_r^{(0)}.$
\item\label{L-DINisRight-1b}
For every $r \in \N,$ the map $\ph_r$ is an isometric bijection
from $L \big( L^p (Z^r, \ld^r) \big)$ to $D_r.$
\item\label{L-DINisRight-2}
We have $D_1 \S D_2 \S \cdots.$
\item\label{L-DINisRight-3}
The algebra~$D$ is isometrically isomorphic to
the spatial $L^p$~UHF algebra of
type $d^{\infty},$
as in Example~\ref{E-pUHF}.
\item\label{L-DINisRight-5}
The algebra~$D$ is separable.
\item\label{L-DINisRight-4}
For $r \in \N$
and $c_1, c_2, \ldots, c_r \in M^p_d,$
we have
\[
\ph_1^{(0)} (c_1) \ph_2^{(0)} (c_2) \cdots \ph_r^{(0)} (c_r)
 = c_1 \otimes c_2 \otimes
   \cdots \otimes c_r \otimes 1_{> r}
\]
with respect to the tensor factorization
\[
L^p (X_0, \mu_0)
  = L^p (Z, \ld)^{\otimes r} \otimes_p L^p (X_r, \mu_r).
\]
\end{enumerate}
\end{lem}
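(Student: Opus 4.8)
The plan is to deduce all six statements from the norm and multiplicativity properties of the $L^p$~tensor product recorded in Remark~\ref{R-LpTens}, applied to the various factorizations of $L^p(X_0,\mu_0)$, together with the characterization of spatial $L^p$~UHF algebras from Example~\ref{E-pUHF}. First I would dispose of parts~(\ref{L-DINisRight-1}) and~(\ref{L-DINisRight-4}). Since each $\ld^k$ and each $\mu_k$ is a probability measure, the spaces $L^p(Z^k,\ld^k)$ and $L^p(X_k,\mu_k)$ are nonzero, so $1_{\leq r-1}$ and $1_{>r}$ are identity operators on nonzero spaces and hence have norm~$1$; the norm formula in Remark~\ref{R-LpTens} then gives $\|\ph_r^{(0)}(a)\| = \|1_{\leq r-1}\| \cdot \|a\| \cdot \|1_{>r}\| = \|a\|$, and multiplicativity gives $\ph_r^{(0)}(a)\ph_r^{(0)}(b) = \ph_r^{(0)}(ab)$, so $\ph_r^{(0)}$ is an isometric homomorphism; it is injective because isometric and surjective onto $D_r^{(0)}$ by definition. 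For part~(\ref{L-DINisRight-4}) one writes $1_{\leq k-1} = 1_{L^p(Z,\ld)}^{\otimes(k-1)}$ and, using $X_k = Z^{r-k}\times X_r$, $1_{>k} = 1_{L^p(Z,\ld)}^{\otimes(r-k)}\otimes 1_{>r}$ with respect to the $r$-fold factorization $L^p(X_0,\mu_0) = L^p(Z,\ld)^{\otimes r}\otimes_p L^p(X_r,\mu_r)$, so that $\ph_k^{(0)}(c_k) = 1^{\otimes(k-1)}\otimes c_k\otimes 1^{\otimes(r-k)}\otimes 1_{>r}$; these elements pairwise commute, and multiplying them out slot by slot yields $c_1\otimes c_2\otimes\cdots\otimes c_r\otimes 1_{>r}$.

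Next I would prove part~(\ref{L-DINisRight-1b}), which is the only step needing real care. The same norm and multiplicativity facts show $\ph_r$ is an isometric homomorphism, so it suffices to identify its range with $D_r$. Using $L^p(X_k,\mu_k) = L^p(Z^{r-k},\ld^{r-k})\otimes_p L^p(X_r,\mu_r)$ for $k\leq r$ one gets $1_{>k} = 1_{L^p(Z^{r-k},\ld^{r-k})}\otimes 1_{>r}$, hence
\[
\ph_k^{(0)}(a) = \ph_r\big(1_{\leq k-1}\otimes a\otimes 1_{L^p(Z^{r-k},\ld^{r-k})}\big);
\]
so each $D_k^{(0)}$ with $k\leq r$ lies in the range of $\ph_r$, and since that range is an algebra it contains $D_r$. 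Conversely, $L(L^p(Z^r,\ld^r))$ is finite dimensional and is algebraically the full matrix algebra $M_d^{\otimes r}$, which is generated as an algebra by its $r$ \emph{slot} copies of $M^p_d$ (products of slot matrix units give all matrix units); applying the homomorphism $\ph_r$ shows that its range is precisely the subalgebra generated by $\bigcup_{k=1}^r D_k^{(0)}$, namely $D_r$. Thus $\ph_r$ is a bijective isometry onto $D_r$, and in particular $D_r$ is finite dimensional and closed.

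Parts~(\ref{L-DINisRight-2}) and~(\ref{L-DINisRight-5}) are then immediate: $D_{r+1}$ is an algebra containing $\bigcup_{k=1}^{r+1} D_k^{(0)}\supseteq\bigcup_{k=1}^r D_k^{(0)}$ and hence contains $D_r$, while $D = \ov{\bigcup_{r=1}^{\I} D_r}$ is the closure of a countable union of finite dimensional, hence separable, subspaces. Finally, for part~(\ref{L-DINisRight-3}) I would observe that Notation~\ref{N-dinfty} simply reproduces the construction of Example~3.8 of~\cite{PhLp2a} for the type $d^{\infty}$: adjoining $D_0 = \C\cdot 1_{L^p(X_0,\mu_0)}$, parts~(\ref{L-DINisRight-1b}) and~(\ref{L-DINisRight-2}) exhibit an increasing chain $D_0\S D_1\S\cdots$ of subalgebras of $D$ with dense union, each $D_r$ isometrically isomorphic to $L(L^p(Z^r,\ld^r)) = M^p_{d^r}$ (as $\ld^r$ is normalized counting measure on the $d^r$-element set $Z^r$, after identifying $Z^r$ with $\{0,1,\ldots,d^r-1\}$), and with $D_r\hookrightarrow D_{r+1}$ identified, via $\ph_r$ and $\ph_{r+1}$, with the embedding $a\mapsto a\otimes 1_{L^p(Z,\ld)}$ of $M^p_{d^r}$ into $M^p_{d^{r+1}}$. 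Since $d^r$ realizes the sequence attached to the type $d^{\infty}$ in Example~\ref{E-pUHF}, Definition~3.9(2) and the uniqueness theorem (Theorem~3.10 of~\cite{PhLp2a}) identify $D$ with the spatial $L^p$~UHF algebra of type $d^{\infty}$. The only nonroutine inputs are the bookkeeping with tensor factorizations in part~(\ref{L-DINisRight-1b}) and the appeal to uniqueness of spatial $L^p$~UHF algebras in part~(\ref{L-DINisRight-3}).
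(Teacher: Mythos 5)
Your proof is correct and follows essentially the same route as the paper's (which is a one-paragraph argument invoking the norm multiplicativity of Remark~\ref{R-LpTens} for parts (\ref{L-DINisRight-1}), (\ref{L-DINisRight-1b}), and~(\ref{L-DINisRight-4}), finite dimensionality of the $D_n$ for~(\ref{L-DINisRight-5}), and the characterization and uniqueness of spatial $L^p$~UHF algebras from~\cite{PhLp2a} for~(\ref{L-DINisRight-3})). You simply supply the details the paper declares ``clear'' or ``the same,'' most usefully the identification of $\operatorname{ran}(\ph_r)$ with $D_r$ via the fact that $M_d^{\otimes r}$ is generated by its slot copies of $M^p_d$.
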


\begin{proof}
For~(\ref{L-DINisRight-1}),
for all $a \in L \big( L^p (Z, \ld) \big)$
we have $\| 1_{\leq r - 1} \otimes a \otimes 1_{> r} \| = \| a \|$
by Remark~\ref{R-LpTens}.
The proof of~(\ref{L-DINisRight-1b})
is the same.
Part~(\ref{L-DINisRight-2}) is obvious.
Given parts (\ref{L-DINisRight-1b}) and~(\ref{L-DINisRight-2}),
part~(\ref{L-DINisRight-3})
follows immediately from Definition~3.5 and Theorem 3.10 of~\cite{PhLp2a}.
Part~(\ref{L-DINisRight-5}) follows from the fact
that the $D_n$ are \fd\  and their union is dense in~$D.$
Part~(\ref{L-DINisRight-4}) is clear.
\end{proof}

\begin{ntn}\label{N-ActionOnD}
We follow Notation~\ref{N-dinfty}.
For a set~$S,$
let $\nu_S$ be counting measure on~$S.$
(Thus $\ld = d^{-1} \nu_Z.$)
Define $X = \Nz \times X_0,$
and equip $X$ with the measure $\mu = \nu_{\Nz} \times \mu_0$
and the product topology.
Define a function $h \colon X \to X$ as follows.
For $m \in \Nz$ and $k_1, k_2, \ldots \in Z,$
write $m = d m_0 + k_0$ with $m_0 \in \Nz$ and $k_0 \in Z,$
and then set
\[
h (m, k_1, k_2, \ldots)
  = (m_0, k_0, k_1, k_2, \ldots ).
\]
Define (justification in
\Lem{L-ActionIsRight}(\ref{L-ActionIsRight-2}) below)
$v \in \LLp$
by
\[
(v \xi) (x) = d^{1 / p} \xi (h^{-1} (x))
\]
for $\xi \in L_p (X, \mu)$ and $x \in X.$
\end{ntn}

\begin{ntn}\label{N-MatUnit}
Recall the algebras ${\ov{M}}_{S}^p$ of Example~\ref{E-MatpS},
and the matrix unit notation from there
and from Example~\ref{E-Mpd}.
Thus, the standard matrix units for
$M^p_d$ are $( e_{j, k} )_{j, k = 0}^{d - 1}.$
Define $B \S \LLp$ by $B = {\ov{M}}_{\Nz}^p \otimes_p D$
as in Example~\ref{E-LpTP}.

For $a \in {\ov{M}}_{\Nz}^p,$
we have the element
\[
a \otimes 1_{> 0}
 \in L \big( L^p ( \Nz \times X_0, \, \nu_{\Nz} \times \mu_0 ) \big)
 = \LLp
\]
as in Remark~\ref{R-LpTens},
but recalling that $1_{L^p (X_0, \mu_0)} = 1_{> 0}$
(Notation~\ref{N-dinfty}).
Similarly, for $b \in D$ we get $1 \otimes b \in \LLp.$
For $c \in M^p_d,$
we write $1 \otimes \ph_r^{(0)} (c)$
as
\[
1 \otimes \ph_r^{(0)} (c)
 = 1 \otimes 1_{\leq r - 1} \otimes c \otimes 1_{> r}.
\]
In the last expression, $c$~is really in position $r + 1$:
the first tensor factor is $1_{l^p (\Nz)},$
and $1_{\leq r - 1}$ is the tensor product of $r - 1$ copies
of $1_{M^p_d}.$
We use similar notation for products
as in \Lem{L-DINisRight}(\ref{L-DINisRight-4}).

For $n \in \Nz,$
we further define $f_n \in B$
by
\[
f_n = \sum_{m = 0}^{d^n - 1} e_{m, m} \otimes 1_{> 0}.
\]
\end{ntn}

\begin{lem}\label{L-ActionIsRight}
The objects defined in Notation~\ref{N-ActionOnD}
and Notation~\ref{N-MatUnit}
have the following properties:
\begin{enumerate}
\item\label{L-ActionIsRight-0}
The algebra~$B$ is separable.
\item\label{L-ActionIsRight-1}
The map $h$ is a \hme,
with inverse given by
\[
h^{-1} (m, k_1, k_2, k_3, \ldots)
  = (d m + k_1, \, k_2, \, k_3, \, \ldots )
\]
for $m \in \Nz$ and $k_1, k_2, \ldots \in Z.$
\item\label{L-ActionIsRight-2}
$v$ is a well defined bijective isometry in $\LLp$
which is spatial in the sense of
Definition~6.4 of~\cite{PhLp1}.
\item\label{L-ActionIsRight-3}
$v B v^{-1} = B.$
\item\label{L-ActionIsRight-4}
For
\[
j, k \in \Nz,
\,\,\,\,\,
r \in \N,
\andeqn
l_1, m_1, l_2, m_2, \ldots, l_r, m_r \in Z,
\]
we have
\begin{align*}
\lefteqn{v^{-1} \big( e_{j, k} \otimes e_{l_{1}, m_{1}}
   \otimes e_{l_{2}, m_{2}}
   \otimes \cdots
   \otimes e_{l_{r}, m_{r}}
   \otimes 1_{> r} \big) v}
  \\
& \hspace*{2em} \mbox{}
 = e_{d j + l_1, \, d k + m_1} \otimes e_{l_{2}, m_{2}}
   \otimes e_{l_{3}, m_{3}}
   \otimes \cdots
   \otimes e_{l_{r}, m_{r}}
   \otimes 1_{> r - 1}.
\end{align*}
If we write
$j = d j_0 + l_0$ and $k = d k_0 + m_0$ with $j_0, k_0 \in \Nz$
and $l_0, m_0 \in Z,$
then (allowing now $r = 0$)
\begin{align*}
\lefteqn{v \big( e_{j, k} \otimes e_{l_{1}, m_{1}}
   \otimes e_{l_{2}, m_{2}}
   \otimes \cdots
   \otimes e_{l_{r}, m_{r}}
   \otimes 1_{> r} \big) v^{-1}}
  \\
& \hspace*{2em} \mbox{}
 = e_{j_0, k_0} \otimes e_{l_{0}, m_{0}}
   \otimes e_{l_{1}, m_{1}}
   \otimes \cdots
   \otimes e_{l_{r}, m_{r}}
   \otimes 1_{> r + 1}.
\end{align*}
\item\label{L-ActionIsRight-5a}
For $n \in \Nz,$ the element $f_n$ is an idempotent.
\item\label{L-ActionIsRight-5b}
For $n \in \Nz,$ we have $f_{n + 1} f_n = f_{n} f_{n + 1} = f_n.$
\item\label{L-ActionIsRight-5}
For $n \in \Nz,$
we have $v^{-1} f_n v = f_{n + 1}.$
\item\label{L-ActionIsRight-6}
$\| v b v^{-1} \| = \| b \|$ for all $b \in B.$
\item\label{L-ActionIsRight-7}
We have
$f_0 B f_0 \subset f_1 B f_1 \subset f_2 B f_2 \subset \cdots$
and $B = {\ov{\bigcup_{n = 0}^{\I} f_n B f_n}}.$
\end{enumerate}
\end{lem}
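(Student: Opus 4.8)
The plan is to verify the items of the lemma by direct computation, grouping them according to which piece of structure they concern. First I would dispose of the soft items: part~\ref{L-ActionIsRight-0} is immediate, since $D$ is separable by Lemma~\ref{L-DINisRight}(\ref{L-DINisRight-5}), ${\ov{M}}^p_{\Nz}$ is separable because the $\Q[i]$-linear span of the matrix units $e_{j, k}$ with $j, k \in \Nz$ is a countable dense subset, and hence the closed span $B = {\ov{M}}^p_{\Nz} \otimes_p D$ of elementary tensors is separable. Parts~\ref{L-ActionIsRight-5a} and~\ref{L-ActionIsRight-5b} follow from the relations $e_{m, m} e_{m', m'} = \dt_{m, m'} e_{m, m}$ in ${\ov{M}}^p_{\Nz}$ together with the idempotency of $1_{> 0}$: one gets $f_n^2 = \sum_{m = 0}^{d^n - 1} e_{m, m} \otimes 1_{> 0} = f_n$, and since $d^n \leq d^{n + 1}$, also $f_{n + 1} f_n = f_n f_{n + 1} = f_n.$

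Next I would treat the geometric items~\ref{L-ActionIsRight-1} and~\ref{L-ActionIsRight-2}. That $h$ has the stated inverse is a one line check: writing $m = d m_0 + k_0$ with $m_0 \in \Nz$ and $k_0 \in Z,$ applying the claimed inverse to $h (m, k_1, k_2, \ldots) = (m_0, k_0, k_1, \ldots)$ returns $(d m_0 + k_0, k_1, k_2, \ldots) = (m, k_1, \ldots),$ and conversely; continuity of $h$ and $h^{-1}$ is clear because $\Nz$ is discrete and both maps are shifts between clopen cylinder sets. For~\ref{L-ActionIsRight-2}, the substantive point is the measure normalization: writing $X_0 = Z \times X_1$ with $\mu_0 = \ld \times \mu_1,$ one sees that $h$ carries a cylinder of $\mu$-measure $d^{-j}$ to one of $\mu$-measure $d^{-(j + 1)},$ so that $(h^{-1})_* \mu = d^{-1} \mu.$ The change of variables formula then gives $\| v \xi \|_p^p = d \int_X | \xi (h^{-1} (x)) |^p \, d \mu (x) = d \cdot d^{-1} \| \xi \|_p^p = \| \xi \|_p^p,$ so $v$ is a well defined isometry; it is bijective with $(v^{-1} \xi) (x) = d^{-1/p} \xi (h (x)),$ and it is a spatial isometry in the sense of Definition~6.4 of~\cite{PhLp1} because it is a composition operator whose constant weight $d^{1/p}$ is exactly the relevant Radon--Nikodym factor.

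Then come the conjugation formulas~\ref{L-ActionIsRight-4}, and with them~\ref{L-ActionIsRight-3}, \ref{L-ActionIsRight-6}, and~\ref{L-ActionIsRight-5}. For~\ref{L-ActionIsRight-4} I would evaluate both sides on a basis vector $\ch_{\{ n \}} \otimes \ch_{\{ a_1 \}} \otimes \cdots$ in $L^p (X),$ using the explicit formulas for $h$ and $h^{-1}$ from~\ref{L-ActionIsRight-1} to track how $v^{\pm 1}$ move the $\Nz$-index and the leading $Z$-coordinates; the two displayed identities are inverse to one another and reduce to the fact that $(n, a) \mapsto d n + a$ is a bijection from $\Nz \times Z$ onto $\Nz.$ Since, by Example~\ref{E-MatpS}, Example~\ref{E-LpTP}, and Lemma~\ref{L-DINisRight}(\ref{L-DINisRight-1b}), the linear span of the operators $e_{j, k} \otimes e_{l_1, m_1} \otimes \cdots \otimes e_{l_r, m_r} \otimes 1_{> r}$ is dense in $B,$ and $v, v^{-1}$ are bounded, the formulas in~\ref{L-ActionIsRight-4} show $v B v^{-1} \S B$ and $v^{-1} B v \S B,$ giving~\ref{L-ActionIsRight-3}; then~\ref{L-ActionIsRight-6} is automatic, since $v$ is an isometric bijection, so $\| v b v^{-1} \| \leq \| b \| = \| v^{-1} (v b v^{-1}) v \| \leq \| v b v^{-1} \|.$ For~\ref{L-ActionIsRight-5}, writing $1_{> 0} = \sum_{l = 0}^{d - 1} e_{l, l} \otimes 1_{> 1}$ and applying the case $r = 1$ of~\ref{L-ActionIsRight-4} gives $v^{-1} (e_{m, m} \otimes 1_{> 0}) v = \sum_{l = 0}^{d - 1} e_{d m + l, \, d m + l} \otimes 1_{> 0},$ and summing over $m = 0, 1, \ldots, d^n - 1$ while using that $\{ d m + l : 0 \leq m < d^n, \ 0 \leq l < d \} = \{ 0, 1, \ldots, d^{n + 1} - 1 \}$ yields $v^{-1} f_n v = f_{n + 1}.$ Finally, for~\ref{L-ActionIsRight-7}: if $b = f_n b f_n,$ then $f_{n + 1} b f_{n + 1} = f_{n + 1} f_n \, b \, f_n f_{n + 1} = f_n b f_n = b$ by~\ref{L-ActionIsRight-5b}, so $f_n B f_n \S f_{n + 1} B f_{n + 1}$; and $f_n$ acts as a left (resp. right) identity on $e_{j, k} \otimes c$ once $j < d^n$ (resp. $k < d^n$), so any finite linear combination of the elementary tensors above lies in $f_n B f_n$ for $n$ large, and density gives $B = {\ov{\bigcup_{n = 0}^{\I} f_n B f_n}}.$ I expect the main obstacle to be item~\ref{L-ActionIsRight-4} --- carrying out the conjugation bookkeeping cleanly and correctly for general $d$ and $r$ --- together with the measure-theoretic normalization in~\ref{L-ActionIsRight-2} that pins down the constant $d^{1/p}$; everything else then follows by density and soft arguments.
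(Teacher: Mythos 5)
Your proposal is correct and follows essentially the same route as the paper: the soft items are dispatched immediately, part~(\ref{L-ActionIsRight-2}) reduces to the measure identity $\mu(h(E)) = d^{-1}\mu(E)$ checked on cylinder sets and extended by a standard approximation/uniqueness argument, part~(\ref{L-ActionIsRight-4}) is the direct computation on characteristic functions of cylinders, and parts (\ref{L-ActionIsRight-3}), (\ref{L-ActionIsRight-5}), (\ref{L-ActionIsRight-6}), and~(\ref{L-ActionIsRight-7}) follow from it by density and the fact that $v$ is an isometric bijection. Your derivation of $v^{-1} f_n v = f_{n+1}$ from the $r = 1$ case of part~(\ref{L-ActionIsRight-4}), and your density argument for part~(\ref{L-ActionIsRight-7}) via matrix units supported in $\{0, \ldots, d^n - 1\}$, match what the paper does (the paper phrases the latter through the projections $z_n \in M^p_{\Nz}$).
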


\begin{proof}
Part~(\ref{L-ActionIsRight-0})
follows from separability of~$M^p_{\Nz}$
(which is clear from its definition)
and of~$D$
(\Lem{L-DINisRight}(\ref{L-DINisRight-5})).
Part~(\ref{L-ActionIsRight-1}) is clear.

We prove part~(\ref{L-ActionIsRight-2}).
Given part~(\ref{L-ActionIsRight-1}),
and since the formula for $v$ gives $v (\ch_E) = \ch_{h (E)},$
all we need to check is that
if $E \S X$ is measurable,
then $\mu (h (E)) = d^{-1} \mu (E).$
This is true for all sets of the form
\[
E = \{ m \} \times \{ ( k_1, k_2, \ldots, k_r ) \} \times X_r
\]
for $m \in \Nz,$ $r \in \N,$ and $k_1, k_2, \ldots, k_r \in Z.$
The characteristic functions of sets of this type
are \ct.
Moreover, for every $g \in C_{\mathrm{c}} (X),$
there is a compact set $K \S X$
and a sequence of continuous functions~$g_n,$
each a linear combination of functions $\ch_E$ with $E$ as above
and $E \S K,$
such that $g_n \to g$ uniformly.
The statement now follows from the Riesz Representation Theorem.

Part~(\ref{L-ActionIsRight-4}) is a computation.
Part~(\ref{L-ActionIsRight-3}) follows from
part~(\ref{L-ActionIsRight-4}),
because the elements considered span a dense subspace of~$B.$
Parts (\ref{L-ActionIsRight-5a})
and~(\ref{L-ActionIsRight-5b})
are immediate.
Part~(\ref{L-ActionIsRight-5}) also follows from
part~(\ref{L-ActionIsRight-4}).
To prove part~(\ref{L-ActionIsRight-6}),
use the fact that $\| v \| = \| v^{-1} \| = 1,$
which is a consequence of part~(\ref{L-ActionIsRight-2}).

We prove~(\ref{L-ActionIsRight-7}).
For $n \in \Nz,$
the inclusion $f_n B f_n \subset f_{n + 1} B f_{n + 1}$
follows from~(\ref{L-ActionIsRight-5b}).
Now define $z_n \in M^p_{\Nz}$ by
$z_n = \sum_{m = 0}^{d^n - 1} e_{m, m}$
for $n \in \Nz,$
so that $f_n = z_n \otimes 1_{> 0}.$
Then it follows directly from the definition
in Example~\ref{E-MatpS}
that
\[
{\ov{M}}^p_{\Nz} = {\ov{\bigcup_{n = 0}^{\I} z_n M^p_{\Nz} z_n}}.
\]
The desired conclusion now follows from the density in~$B$
of the linear span of the elementary tensors.
\end{proof}

\begin{ntn}\label{N-DICrPrd}
We follow Notation~\ref{N-dinfty}, Notation~\ref{N-ActionOnD},
and Notation~\ref{N-MatUnit}.
We define $\bt \in \Aut (B)$ by $\bt (b) = v b v^{-1}$
for all $b \in B.$
Since $\bt$ is an isometric automorphism of~$B$
(by \Lem{L-ActionIsRight}(\ref{L-ActionIsRight-3})
and \Lem{L-ActionIsRight}(\ref{L-ActionIsRight-6})),
we may define $A = F^p (\Z, B, \bt).$
We further identify an element $b \in B$
with $b u_0 \in F^p (\Z, B, \bt),$
as in Remark~\ref{R-Ident}.
(Since $B$ is not unital,
we do not have $u_n \in F^p (\Z, B, \bt).$)
In particular, the elements $f_n$
of Notation~\ref{N-MatUnit} are considered to be in~$A.$
For $a \in A$ and $n \in \Z,$
we write $u_n a$ for $L_n (a)$ and $a u_n$ for $R_n (a),$
as in Notation~\ref{N-Multug}.

We let $v$ also stand for the isometric \rpn{}
of $\Z$ on $L^p (X, \mu)$ given by $n \mapsto v^n,$
and we define
(justification in \Lem{L-CrPrdRight} below)
$\pi = v \ltimes \id_B \colon F^p (\Z, B, \bt) \to \LLp$
to be the representation associated,
as in Theorem~\ref{T-UPropFull}(\ref{T-UPropFull-1}),
with the covariant \rpn{} $(v, \id_B)$
of $(\Z, B, \bt).$
\end{ntn}

\begin{lem}\label{L-CrPrdRight}
The objects defined in Notation~\ref{N-DICrPrd}
have the following properties:
\begin{enumerate}
\item\label{L-CrPrdRight-5}
The algebra~$A$ is separable.
\item\label{L-CrPrdRight-1}
The pair $(v, \id_B)$ is a
contractive covariant \rpn{} of $(\Z, B, \bt).$
\item\label{L-CrPrdRight-2}
The \hm\  $\pi$ exists and is contractive.
\item\label{L-CrPrdRight_uv}
$u_{-1} b u_1 = v^{-1} b v$
and $u_1 b u_{-1} = v b v^{-1}$
for all $b \in B.$
\item\label{L-CrPrdRight-4}
We have
$f_0 A f_0 \subset f_1 A f_1 \subset f_2 A f_2 \subset \cdots$
and $A = {\ov{\bigcup_{n = 0}^{\I} f_n A f_n}}.$
\end{enumerate}
\end{lem}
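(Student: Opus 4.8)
The plan is to verify the five assertions in turn; parts~\ref{L-CrPrdRight-5}--\ref{L-CrPrdRight_uv} are short and part~\ref{L-CrPrdRight-4} carries the weight. For~\ref{L-CrPrdRight-5}, I would run the argument already used at the start of the proof of \Lem{L-RedOneRep}: $B$ is separable by \Lem{L-ActionIsRight}(\ref{L-ActionIsRight-0}), $\Z$ is countable discrete, and $C_{\mathrm{c}} (\Z, B, \bt)$ is dense in $A = F^p(\Z, B, \bt)$ by Theorem~\ref{T-UPropFull}(\ref{T-UPropFull-3}), so the $\Q[i]$-span of $\{ b u_n \colon b \in S_0, \, n \in \Z \}$ for a countable dense $S_0 \S B$ is dense in~$A$. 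For~\ref{L-CrPrdRight-1}: $v$ is a bijective isometry by \Lem{L-ActionIsRight}(\ref{L-ActionIsRight-2}), so $\| v^n \| = 1$ for all $n$ and $n \mapsto v^n$ is a representation of~$\Z$ (continuity is automatic, $\Z$ being discrete); $\id_B$ is the isometric inclusion $B \S \LLp$; and covariance $\id_B(\bt^n(b)) = v^n b v^{-n}$ is the defining relation $\bt(b) = v b v^{-1}$ iterated. For~\ref{L-CrPrdRight-2}: the measure $\mu = \nu_{\Nz} \times \mu_0$ is $\sm$-finite, and $\id_B$ is nondegenerate on $L^p(X, \mu)$ because $(e_{m, m} \otimes 1_{> 0})(\dt_m \otimes \et) = \dt_m \otimes \et$ for $m \in \Nz$ and $\et \in L^p(X_0, \mu_0)$, and such vectors span a dense subspace; hence $(v, \id_B)$ is a nondegenerate $\sm$-finite contractive covariant representation and Theorem~\ref{T-UPropFull}(\ref{T-UPropFull-1}) supplies the contractive $\pi = \rh_{v, \id_B}$ with $\pi \circ \io = v \ltimes \id_B$.

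For~\ref{L-CrPrdRight_uv}: writing $b \in B$ as $b u_0 \in C_{\mathrm{c}} (\Z, B, \bt)$ and applying the explicit formulas of \Lem{L-NormOfMultU} for $L_{\pm 1}$ and $R_{\pm 1}$ gives, in one line, $u_{-1} b u_1 = \bt^{-1}(b)$ and $u_1 b u_{-1} = \bt(b)$; by the definition $\bt(b) = v b v^{-1}$ these equal $v^{-1} b v$ and $v b v^{-1}$. Iterating this (or reading it off \Lem{L-NormOfMultU} directly) yields the form needed below: $u_m b u_{-m} = \bt^m(b)$, equivalently $b u_m = u_m \bt^{-m}(b)$, for all $m \in \Z$.

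Part~\ref{L-CrPrdRight-4} is the real content. The chain $f_0 A f_0 \S f_1 A f_1 \S \cdots$ is immediate from $f_{n+1} f_n = f_n f_{n+1} = f_n$, which holds in~$A$ since it holds in~$B$ by \Lem{L-ActionIsRight}(\ref{L-ActionIsRight-5b}): if $a = f_n a f_n$ then $f_{n+1} a f_{n+1} = a$. For density, since $C_{\mathrm{c}} (\Z, B, \bt)$ is dense in~$A$, it suffices to approximate each $b u_m$ with $b \in B$ and $m \in \Z$ by elements of $\bigcup_n f_n A f_n$. Given $\ep > 0$, use \Lem{L-ActionIsRight}(\ref{L-ActionIsRight-7}) to choose $n$ (enlarging it if necessary so that $n \geq |m|$) and $b' \in f_n B f_n$ with $\| b - b' \| < \ep$; since $R_m$ is isometric (\Lem{L-NormOfMultU}), $\| b u_m - b' u_m \| < \ep$. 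Now $v^{-1} f_j v = f_{j+1}$ (\Lem{L-ActionIsRight}(\ref{L-ActionIsRight-5})) combined with~\ref{L-CrPrdRight_uv} gives $\bt^{-1}(f_j) = f_{j+1}$, hence $f_j u_m = u_m f_{j+m}$ for $m \geq 0$ and $f_j u_{-l} = u_{-l} f_{j-l}$ for $0 \leq l \leq j$. Feeding this into $b' u_m = f_n b' f_n u_m$ and using the idempotent relations shows $b' u_m = f_k (b' u_m) f_k$ with $k = n + m$ when $m \geq 0$ and $k = n$ when $m < 0$, so $b' u_m \in f_k A f_k$. A finite sum $\sum_m b_m u_m$ is then approximated within $f_k A f_k$ for $k$ the largest index arising from its terms, giving $A = \ov{\bigcup_n f_n A f_n}$. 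The only step needing care is exactly this bookkeeping: one must commute the cut-down idempotents $f_j$ past the powers $u_m$, and to do so for negative $m$ one must first enlarge $n$ past $|m|$ so that $\bt^{|m|}(f_n) = f_{n - |m|}$ stays inside the family $(f_j)_{j \geq 0}$ (the element $\bt(f_0)$ is not one of the $f_j$). Everything else is a routine unwinding of definitions.
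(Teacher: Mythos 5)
Your proof is correct and takes essentially the same route as the paper's: parts (1)--(4) by the same citations, and the density half of the last part by the paper's exact argument --- approximate $b$ inside some $f_n B f_n,$ enlarge $n$ past $|m|,$ and commute the cut-down idempotents past $u_m$ using $u_{-1} f_j u_1 = f_{j+1}.$ Your explicit verification that $\id_B$ is nondegenerate (needed to invoke the universal property of the full crossed product) is a detail the paper leaves implicit, but otherwise the two proofs coincide.
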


\begin{proof}
Part~(\ref{L-CrPrdRight-5})
follows from separability of~$B$
(\Lem{L-ActionIsRight}(\ref{L-ActionIsRight-0}))
and the fact that the linear span of all $b u_n,$
with $b \in B$ and $n \in \Z,$
is dense in~$A$
(by Theorem~\ref{T-UPropFull}(\ref{T-UPropFull-3})).
Part~(\ref{L-CrPrdRight-1}) is immediate from
the definitions.
Part~(\ref{L-CrPrdRight-2}) follows
from part~(\ref{L-CrPrdRight-1})
and Theorem~\ref{T-UPropFull}(\ref{T-UPropFull-1}).
Part~(\ref{L-CrPrdRight_uv}) is the definition
of the product in $F^p (\Z, B, \bt).$

We prove~(\ref{L-CrPrdRight-4}).
For $n \in \Nz,$
the inclusion $f_n A f_n \subset f_{n + 1} A f_{n + 1}$
follows from
\Lem{L-ActionIsRight}(\ref{L-ActionIsRight-5b}).
For the density statement,
it suffices to show that for $b \in B$ and $k \in \Z,$
we have $b u_k \in {\ov{\bigcup_{n = 0}^{\I} f_n A f_n}}.$
Let $\ep > 0.$
Choose $m \in \Nz$ and $c \in f_m B f_m$
such that $\| c - b \| < \ep.$
\Wolog{} $m \geq - k.$
Then $u_k^{-1} f_m u_k = f_{m + k}$
by (\ref{L-CrPrdRight_uv})
and Lemma \ref{L-ActionIsRight}(\ref{L-ActionIsRight-5}).
Take $n = \max (m, m + k).$
Then, using Lemma \ref{L-ActionIsRight}(\ref{L-ActionIsRight-5b})
at the third step,
\[
c u_k
 = f_m c f_m u_k
 = f_m c u_k f_{m + k}
 = f_n f_m c u_k f_{m + k} f_n
 \in f_n A f_n,
\]
and $\| c u_k - b u_k \| < \ep.$
\end{proof}

\begin{ntn}\label{N-Omega}
Adopt the notation of Example~\ref{E-Odp}.
Let $\om \colon M_d^p \to L_d$
be the unital \hm\  determined by $\om (e_{j, k}) = s_j t_k$
for $j, k \in \{ 0, 1, \ldots, d - 1 \}.$
\end{ntn}

The following proposition is an easy consequence
of Theorems 7.2 and~7.7 of~\cite{PhLp1}
when $p \neq 1,$
but the case $p = 1$ is not explicitly in~\cite{PhLp1}.

\begin{prp}\label{P_3922_Spt}
Let $p \in [1, \I) \SM \{ 2 \}$
and let $d \in \{ 2, 3, \ldots \}.$
Let $\YCN$ be a \sfm.
Let $\rh \colon L_d \to L ( L^p (Y, \nu) )$
be a unital \hm.
Suppose that for $j = 0, 1, \ldots, d - 1,$
we have $\| \rh (s_j) \| \leq 1$ and $\| \rh (t_j) \| \leq 1.$
Suppose further that,
with $\om$ as in Notation~\ref{N-Omega}, the \hm{}
$\rh \circ \om \colon M_d^p \to L ( L^p (Y, \nu) )$
is contractive.
Then $\rh$ is spatial
in the sense of Definition 7.4(2) of~\cite{PhLp1}.
\end{prp}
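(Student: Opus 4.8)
The plan is the following. For $p \in (1, \I) \SM \{ 2 \}$ the assertion is precisely Theorem~7.7 of~\cite{PhLp1}: its hypotheses are exactly the ones assumed here, once the spatiality of the copy of $M^p_d$ that $\rh$ induces is supplied by Theorem~7.2 of~\cite{PhLp1}. So the only genuinely new case is $p = 1$, and for it I would rerun the same argument. Since that argument is uniform in~$p,$ I describe it once for all $p \in [1, \I) \SM \{ 2 \},$ and then point to the one step where the restriction $p > 1$ in~\cite{PhLp1} has to be re-examined.

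\emph{Step 1: the matrix subalgebra.} The map $\rh \circ \om \colon M^p_d \to L (L^p (Y, \nu))$ is a unital homomorphism, contractive by hypothesis, so (since $p \neq 2$) Theorem~7.2 of~\cite{PhLp1} makes it spatial. Unpacking this: the idempotents $q_j = (\rh \circ \om) (e_{j, j}) = \rh (s_j t_j),$ for $j = 0, 1, \ldots, d - 1,$ are mutually orthogonal, sum to~$1,$ and each is multiplication by the characteristic function of a measurable set $Y_j \S Y,$ the sets $Y_0, \ldots, Y_{d - 1}$ partitioning~$Y$ modulo null sets; moreover $\rh (s_j t_k) = (\rh \circ \om) (e_{j, k})$ is a spatial isometry inducing an isometric bijection between $L^p (Y_k, \nu |_{Y_k})$ and $L^p (Y_j, \nu |_{Y_j}).$ In particular $q_j L^p (Y, \nu)$ is, isometrically, $L^p (Y_j, \nu |_{Y_j}).$

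\emph{Step 2: the generators are spatial isometries with reverses $\rh (t_j).$} From $t_j s_j = 1$ and $\| \rh (s_j) \| \leq 1,$ $\| \rh (t_j) \| \leq 1,$ for any $\xi$ one has $\| \xi \| = \| \rh (t_j) \rh (s_j) \xi \| \leq \| \rh (s_j) \xi \| \leq \| \xi \|,$ so $\rh (s_j)$ is an isometry. Since $q_j \rh (s_j) = \rh (s_j t_j s_j) = \rh (s_j)$ and $q_j = \rh (s_j) \rh (t_j),$ the range of $\rh (s_j)$ is exactly $q_j L^p (Y, \nu) = L^p (Y_j, \nu |_{Y_j})$; moreover $\rh (t_j) \rh (s_j) = \rh (t_j s_j) = 1$ shows that $\rh (t_j)$ inverts $\rh (s_j)$ on that range, while $\rh (t_j) q_k = \rh (t_j s_k t_k) = 0$ for $k \neq j$ shows that it kills the complementary band. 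Thus $\rh (s_j)$ is a surjective isometry between the $L^p$-spaces $L^p (Y, \nu)$ and $L^p (Y_j, \nu |_{Y_j})$; by Lamperti's theorem (Theorem~6.9 of~\cite{PhLp1}, valid for all $p \neq 2$) it is a spatial isometry in the sense of Definition~6.4 of~\cite{PhLp1}, and the description of $\rh (t_j)$ just obtained says exactly that $\rh (t_j)$ is the reverse of $\rh (s_j)$ in the sense of Definition~6.13 of~\cite{PhLp1}.

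\emph{Step 3: conclusion, and the point at issue for $p = 1.$} By Step~1, $\sum_{j = 0}^{d - 1} \rh (s_j) \rh (t_j) = \sum_{j} q_j = 1,$ and by the Leavitt relations $\rh (t_j) \rh (s_k) = \rh (t_j s_k)$ is $0$ for $j \neq k$ and $1$ for $j = k.$ Combined with Step~2 (each $\rh (s_j)$ a spatial isometry and each $\rh (t_j)$ its reverse), these are precisely the requirements in Definition~7.4(2) of~\cite{PhLp1} for $\rh$ to be a spatial representation of~$L_d,$ so $\rh$ is spatial. The only step that is not purely formal, and hence the one to re-examine for $p = 1,$ is Step~1: contractive idempotents on $L^1$-spaces are less rigid than on $L^p$ for $1 < p < \I,$ so one must confirm that the mutual orthogonality and completeness of the family $\{ q_j \}$ together with contractivity of \emph{all} the matrix units $(\rh \circ \om) (e_{j, k})$ still force each $q_j$ to be a band projection and each $\rh (s_j t_k)$ to be spatial (i.e.\ that Theorem~7.2 of~\cite{PhLp1} holds for $p = 1$); granting this, Steps~2 and~3 use only Lamperti's theorem and elementary algebra and go through verbatim for every $p \in [1, \I) \SM \{ 2 \}.$
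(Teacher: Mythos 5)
Your proof is correct and follows essentially the same route as the paper's: both apply Theorem~7.2 of~\cite{PhLp1} to the represented copy of $M^p_d$ and then upgrade to the generators, using $t_j s_j = 1$ together with the two norm bounds to see that each $\rh (s_j)$ is an isometry. The only real difference is that you reprove by hand (via Lamperti applied to the corestriction of $\rh (s_j)$ onto its range band) what the paper obtains by citing Lemma~7.12 of~\cite{PhLp1}, namely that a disjoint representation with each $\rh (s_j)$ isometric is spatial. Your residual caveat about Step~1 at $p = 1$ can be discharged: Theorem~7.2 of~\cite{PhLp1} is stated for all $p \in [1, \I) \SM \{ 2 \}$ (it is Theorem~7.7, the statement about $L_d$ itself, that the author notes is not explicit for $p = 1$), and in fact only its implication $(4) \Rightarrow (5)$ is needed, which already supplies the measurable partition making $\rh (s_j t_j)$ multiplication by $\ch_{Y_j}.$
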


\begin{proof}
The implication from (4) to~(5)
in Theorem~7.2 of~\cite{PhLp1}
provides a measurable partition
$Y = \coprod_{j = 0}^{d - 1} Y_j$
such that $\rh (s_j t_j)$
is multiplication by $\ch_{Y_j}$ for $j = 0, 1, \ldots, d - 1.$
Therefore $\rh$ is disjoint
in the sense of Definition 7.4(1) of~\cite{PhLp1}.
For $j = 0, 1, \ldots, d - 1,$
the relation $t_j s_j = 1$
and the bounds $\| \rh (s_j) \| \leq 1$ and $\| \rh (t_j) \| \leq 1$
imply that $\rh (s_j)$ is an isometry.
Lemma 7.12 of~\cite{PhLp1} now implies that $\rh$ is spatial.
\end{proof}

We will need a related nonunital result.

\begin{lem}\label{L-UniqLd}
Let the hypotheses and notation be as in Proposition~\ref{P_3922_Spt},
except that we require that $\rh$ be nonzero
but not necessarily unital.
Further assume that $L^p (Y, \nu)$ is separable.
Then $\rh$ extends uniquely to an isometric injective \hm{}
${\ov{\rh}} \colon \OP{d}{p}
          \to L \big( L^p (Y, \nu) \big).$
\end{lem}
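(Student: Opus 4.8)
The plan is to reduce the nonunital case to the unital case already handled in Proposition~\ref{P_3922_Spt}, and then invoke the uniqueness theorem for $\OP{d}{p}$ (Theorem~8.7 of~\cite{PhLp1}). First I would observe that the identity $1 = \sum_{j=0}^{d-1} s_j t_j$ in $L_d$ forces $e := \rh(1_{L_d}) = \sum_{j=0}^{d-1} \rh(s_j)\rh(t_j)$ to be an idempotent in $L(L^p(Y,\nu))$, and since $\rh$ is a homomorphism it restricts to a unital homomorphism onto its corner: $\rh(L_d) \subset e L(L^p(Y,\nu)) e$. The key point is that $e$ must in fact be a spatial idempotent, i.e. multiplication by a characteristic function $\ch_{Y_0}$ for some measurable $Y_0 \S Y$. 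To see this, note $\rh(s_j t_j)$ is an idempotent with $\|\rh(s_j t_j)\| \le \|\rh(s_j)\|\,\|\rh(t_j)\| \le 1$; applying the relevant structure result from~\cite{PhLp1} (the implication (4)$\Rightarrow$(5) in Theorem~7.2, exactly as in the proof of Proposition~\ref{P_3922_Spt}, which does not actually use unitality of $\rh$ in producing the partition—one only needs the $\rh(s_j t_j)$ to be commuting spatial idempotents summing to $e$) gives a measurable set $Y_0$ and a partition $Y_0 = \coprod_{j=0}^{d-1} Y_j$ with $\rh(s_j t_j)$ equal to multiplication by $\ch_{Y_j}$, and $e$ equal to multiplication by $\ch_{Y_0}$.

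Next I would pass to $L^p(Y_0, \nu|_{Y_0})$, which is isometrically $e L^p(Y,\nu)$, and there is an obvious isometric identification $e L(L^p(Y,\nu)) e = L(L^p(Y_0, \nu|_{Y_0}))$. The restriction $\rh_0 \colon L_d \to L(L^p(Y_0, \nu|_{Y_0}))$ is now unital, still satisfies $\|\rh_0(s_j)\| \le 1$, $\|\rh_0(t_j)\| \le 1$, and still has $\rh_0 \circ \om$ contractive (these norm conditions are inherited since corners are isometrically embedded). Moreover $L^p(Y_0, \nu|_{Y_0})$ is separable, being a closed subspace of the separable space $L^p(Y,\nu)$ that happens to again be an $L^p$-space. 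By Proposition~\ref{P_3922_Spt}, $\rh_0$ is spatial in the sense of Definition~7.4(2) of~\cite{PhLp1}. Since $\rh$ is nonzero, $Y_0$ has positive measure and $\rh_0$ is nonzero as well.

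Now I would apply Theorem~8.7 of~\cite{PhLp1}: every spatial representation of $L_d$ on a $\sm$-finite $L^p$-space extends to an isometric representation of $\OP{d}{p}$ with its canonical norm, and the norm it induces on $L_d$ is the canonical one. (One should check the $\sm$-finiteness hypothesis: $\nu|_{Y_0}$ is $\sm$-finite since $\nu$ is, so this is automatic.) This gives an isometric injective homomorphism ${\ov{\rh_0}} \colon \OP{d}{p} \to L(L^p(Y_0,\nu|_{Y_0}))$ extending $\rh_0$. Composing with the isometric inclusion $L(L^p(Y_0,\nu|_{Y_0})) = eL(L^p(Y,\nu))e \hookrightarrow L(L^p(Y,\nu))$ yields the desired ${\ov{\rh}} \colon \OP{d}{p} \to L(L^p(Y,\nu))$; it is isometric because it is the composite of an isometric map with an isometric inclusion, injective for the same reason, and it extends $\rh$ since the recipe agrees with $\rh$ on the generators $s_j, t_j$ of $L_d$, which generate a dense subalgebra of $\OP{d}{p}$. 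Uniqueness of the extension is immediate from density of $L_d$ in $\OP{d}{p}$ together with continuity of ${\ov{\rh}}$.

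The main obstacle I anticipate is the first step: verifying carefully that $e = \rh(1_{L_d})$ is a \emph{spatial} idempotent (multiplication by a characteristic function) rather than merely a bounded idempotent, using only the norm bounds on $\rh(s_j), \rh(t_j)$ and contractivity of $\rh \circ \om$. This requires extracting the right portion of the argument behind Theorem~7.2 of~\cite{PhLp1} and checking it goes through without assuming $\rh$ unital—essentially that the $\rh(s_jt_j)$ are mutually orthogonal spatial idempotents, which is where Lamperti-type rigidity ($p \ne 2$) is doing the real work. Everything after that is bookkeeping with corners and an appeal to the established uniqueness theorem.
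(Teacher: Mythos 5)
There is a genuine gap, and it sits exactly where you flagged it: the claim that $e = \rh(1)$ is a \emph{spatial} idempotent (multiplication by a characteristic function $\ch_{Y_0}$) is false in general, and no amount of massaging the proof of Theorem~7.2 of~\cite{PhLp1} will rescue it, because the statement itself fails. Concretely: let $e$ be a contractive projection on $L^p(Y,\nu)$ which is not a multiplication operator but whose range $E$ is isometrically isomorphic to some $L^p(Y_0,\nu_0)$ --- for instance a conditional expectation such as integrating out one variable on $L^p([0,1]^2)$, whose range is $L^p([0,1])$. Fix an isometric isomorphism $u \colon L^p(Y_0,\nu_0) \to E$ and a unital spatial representation $\rh_0$ of $L_d$ on $L^p(Y_0,\nu_0)$, and set $\rh(a) = u\,\rh_0(a)\,u^{-1} e.$ This $\rh$ satisfies every hypothesis of the lemma ($\|\rh(s_j)\|, \|\rh(t_j)\| \leq 1$ and $\rh \circ \om$ contractive, all inherited from $\rh_0$ because $u$ is isometric and $\|e\| = 1$), yet $\rh(1) = e$ is not multiplication by any characteristic function, and neither are the idempotents $\rh(s_j t_j)$. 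So your plan to identify $e L^p(Y,\nu)$ with $L^p(Y_0, \nu|_{Y_0})$ for a literal measurable subset $Y_0 \S Y$ cannot go through.

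The repair is to give up on spatiality of $e$ and use only what is actually true: $e$ is an idempotent with $\|e\| = 1$ (it is nonzero, so $\|e\| \geq 1$; and $e = (\rh \circ \om)(1)$ with $\rh\circ\om$ contractive, so $\|e\| \leq 1$), and by Theorem~3 in Section~17 of~\cite{Lc} the range of a norm-one projection on an $L^p$~space is isometrically isomorphic to $L^p(Y_0,\nu_0)$ for \emph{some} measure space $(Y_0, \mathcal{C}_0, \nu_0)$ --- not presented as a subset of $Y$. Separability of $E$ then lets you take $\nu_0$ to be \sft{} (corollary to Theorem~3 in Section~15 of~\cite{Lc}), which you need before invoking Theorem~8.7 of~\cite{PhLp1}; note that your remark that $\sm$-finiteness is ``automatic'' also relied on $Y_0$ being a subset of $Y$. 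From that point on, your argument --- corestrict to get a unital representation satisfying the hypotheses of Proposition~\ref{P_3922_Spt}, conclude it is spatial, extend by Theorem~8.7 of~\cite{PhLp1}, and get uniqueness from density of $L_d$ in $\OP{d}{p}$ --- is exactly the paper's.
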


\begin{proof}
It is clear that $e = \rh (1)$ is an idempotent in $\LLp.$
Set $E = {\mathrm{ran}} (e).$
The hypotheses imply that $\| e \| = 1.$
It follows from Theorem~3 in Section~17 of~\cite{Lc}
that there is a \msp\  $(Y_0, {\mathcal{C}}_0, \nu_0)$
such that $E$ is isometrically isomorphic to $L^p (Y_0, \nu_0).$
Since $L^p (Y, \nu)$ is separable, so is~$E,$
and therefore we may take $\nu_0$ to be \sft.
(See the corollary to Theorem~3 in Section~15 of~\cite{Lc}.)

The corestriction $\rh_0 \colon L_d \to L (E)$
is now a unital \rpn{} of $L_d$ on $L^p (Y_0, \nu_0)$
which satisfies the hypotheses of Proposition~\ref{P_3922_Spt}.
So $\rh_0$ is spatial.
Now apply Theorem~8.7 of~\cite{PhLp1}.
\end{proof}

\begin{lem}\label{L-OdIsoMdOd}
Let $\YCN$ be a \msp{}
such that $L^p (Y, \nu)$ is separable,
and let
$\rh \colon \OP{d}{p} \to L \big( L^p (Y, \nu) \big)$
be an isometric \hm\  (not necessarily unital).
Let $(T, {\mathcal{D}}, \et)$
be a \msp{}
such that $L^p (T, \et)$ is separable,
and let
$\gm \colon M^p_d \to L ( L^p (T, \et) )$
be an isometric \hm\  (not necessarily unital).
Then there is an isometric isomorphism
$\ps \colon \OP{d}{p}
 \to \gm ( M^p_d ) \otimes_p \rh \big( \OP{d}{p} \big)$
such that for $j = 0, 1, \ldots, d - 1,$
we have
\[
\ps (s_j) = \sum_{l = 0}^{d - 1} \gm (e_{j, l}) \otimes \rh (s_l)
\andeqn
\ps (t_j) = \sum_{l = 0}^{d - 1} \gm (e_{l, j}) \otimes \rh (t_l).
\]
Moreover, for
$j, k \in \{ 0, 1, \ldots, d - 1 \}$ and $a \in \OP{d}{p},$
we have $\ps (s_j a t_k) = \gm (e_{j, k}) \otimes \rh (a).$
\end{lem}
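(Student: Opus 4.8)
The plan is to build the isomorphism on the Leavitt algebra $L_d \S \OP{d}{p}$ first and then invoke the rigidity lemma \Lem{L-UniqLd} to extend it isometrically, after which identifying the range is routine. First I would set, for $j \in \{ 0, 1, \ldots, d - 1 \},$
\[
S_j = \sum_{l = 0}^{d - 1} \gm(e_{j, l}) \otimes \rh(s_l)
\andeqn
T_j = \sum_{l = 0}^{d - 1} \gm(e_{l, j}) \otimes \rh(t_l),
\]
with the tensor products of operators as in Remark~\ref{R-LpTens}, so that $S_j, T_j \in \gm(M^p_d) \otimes_p \rh\big( \OP{d}{p} \big) \S L\big( L^p(T \times Y, \, \et \times \nu) \big).$ Using the matrix unit relations in $M^p_d$ together with (\ref{Eq:Leavitt1})--(\ref{Eq:Leavitt3}), a short computation gives
\[
T_j S_j = \gm(1) \otimes \rh(1), \qquad
T_j S_k = 0 \quad (j \neq k), \qquad
\sum_{j = 0}^{d - 1} S_j T_j = \gm(1) \otimes \rh(1).
\]
Since $\gm(1) \otimes \rh(1)$ is the identity of $\gm(M^p_d) \otimes_p \rh\big( \OP{d}{p} \big),$ these are exactly the Leavitt relations, so there is a unital algebra \hm{} $\ps_0 \colon L_d \to \gm(M^p_d) \otimes_p \rh\big( \OP{d}{p} \big)$ with $\ps_0(s_j) = S_j$ and $\ps_0(t_j) = T_j.$ An induction on word length, using $\sum_l s_l t_l = 1,$ then gives $\ps_0(s_j a t_k) = \gm(e_{j, k}) \otimes \rh(a)$ for all $a \in L_d$ and all $j, k;$ taking $a = 1$ shows that $\ps_0 \circ \om$ is the map $c \mapsto \gm(c) \otimes \rh(1),$ which is isometric because $\gm$ is isometric and $\| \rh(1) \| = 1.$

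Next I would regard $\ps_0$ as a nonzero (not necessarily unital) \rpn{} of $L_d$ on $L^p(T \times Y, \, \et \times \nu),$ which is separable (being the $L^p$-tensor product of the separable spaces $L^p(T, \et)$ and $L^p(Y, \nu),$ by Remark~\ref{R-LpTens}), and apply \Lem{L-UniqLd}. The hypothesis on $\ps_0 \circ \om$ has just been verified, so the one substantial point is to check $\| S_j \| \leq 1$ and $\| T_j \| \leq 1,$ and I expect this to be the main obstacle. For this I would use that $\rh |_{L_d}$ is spatial (a consequence of the defining property of $\OP{d}{p}$ and \Lem{L-UniqLd}) and that $\gm$ is spatial (isometric representations of $M^p_d$ are spatial when $p \neq 2,$ by the Lamperti-type rigidity behind Theorem~6.9 and Lemma~6.15 of~\cite{PhLp1}). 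Spatiality of $\rh|_{L_d}$ yields a measurable partition $Y = \coprod_l Y_l$ with $\rh(s_l t_l)$ equal to multiplication by $\ch_{Y_l}$ and $\rh(s_l)$ an isometry of $L^p(Y, \nu)$ onto $L^p(Y_l);$ spatiality of $\gm$ yields a measurable partition $\coprod_l T_l'$ of the support of $\gm(1)$ with $\gm(e_{l,l})$ equal to multiplication by $\ch_{T_l'}$ and $\gm(e_{j, l})$ an isometry of $L^p(T_l')$ onto $L^p(T_j')$ that annihilates the other pieces. Decomposing $L^p(T \times Y, \, \et \times \nu)$ along $\{ Y_l \}$ for $S_j$ (resp.\ along $\{ T_l' \}$ for $T_j$), one sees that the $d$ summands of $S_j$ carry these pieces into the mutually disjoint direct summands $L^p(T_j' \times Y_l, \, \et \times \nu),$ and likewise for $T_j;$ summing the $p$th powers of the norms over $l$ then gives $\| S_j \xi \|_p \leq \| \xi \|_p$ and $\| T_j \xi \|_p \leq \| \xi \|_p.$

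Granting this, \Lem{L-UniqLd} provides a unique isometric injective \hm{} $\ps \colon \OP{d}{p} \to L\big( L^p(T \times Y, \, \et \times \nu) \big)$ extending $\ps_0.$ Then $\ps$ satisfies the stated formulas for $\ps(s_j)$ and $\ps(t_j)$ by construction, and $\ps(s_j a t_k) = \gm(e_{j, k}) \otimes \rh(a)$ for all $a \in \OP{d}{p}$ by continuity from the case $a \in L_d,$ which is the ``moreover'' clause. Finally, since $\ps$ is isometric and $\OP{d}{p}$ is complete, $\ps\big( \OP{d}{p} \big)$ is a closed subalgebra of $L\big( L^p(T \times Y, \, \et \times \nu) \big)$ containing $\ps_0(L_d),$ hence contained in $\gm(M^p_d) \otimes_p \rh\big( \OP{d}{p} \big);$ conversely it contains every $\gm(e_{j, k}) \otimes \rh(a)$ with $a \in \OP{d}{p},$ and finite linear combinations of such elements are dense in $\gm(M^p_d) \otimes_p \rh\big( \OP{d}{p} \big)$ because the $e_{j, k}$ span $M^p_d.$ Thus $\ps$ is an isometric isomorphism of $\OP{d}{p}$ onto $\gm(M^p_d) \otimes_p \rh\big( \OP{d}{p} \big),$ as required.
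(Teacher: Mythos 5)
Your proposal is correct and follows essentially the same route as the paper: the same candidate generators, the same verification of the Leavitt relations, the same reliance on spatiality of $\rho|_{L_d}$ and $\gamma$ together with a disjoint-supports argument to control the norms of the sums, and the same density argument for surjectivity and for the ``moreover'' formula. The only cosmetic difference is that you route the isometric extension through Lemma~\ref{L-UniqLd} (checking contractivity of the generators and of $\psi_0 \circ \omega$), whereas the paper verifies directly that the representation of $L_d$ is spatial and invokes Theorem~8.7 of \cite{PhLp1}; these amount to the same thing, since the proof of Lemma~\ref{L-UniqLd} consists of exactly that verification.
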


In particular,
the $L^p$~operator analog of the C*~minimal tensor product of
$M^p_d$ and $\OP{d}{p}$
does not depend on how these algebras are represented,
at least if we restrict to separable $L^p$~spaces.

\begin{proof}[Proof of \Lem{L-OdIsoMdOd}]
Recall that $p \in [1, \I) \SM \{ 2 \}.$

The hypotheses imply that both $\rh (1)$ and $\gm (1)$
are idempotents of norm~$1.$
As in the proof of \Lem{L-UniqLd},
we can use Theorem~3 in Section~17 of~\cite{Lc}
to see that $\rh (1) L^p (Y, \nu)$
and $\gm (1) L^p (T, \et)$ are
isometrically isomorphic to $L^p$~spaces
of \sfm s.
Taking the corestrictions of $\rh$ and $\gm,$
we thus reduce to the case in which both $\rh$ and~$\gm$
are unital.
It now follows from Proposition~\ref{P_3922_Spt}
that $\rh |_{L_d}$ is spatial,
and from Theorem~7.2 of~\cite{PhLp1}
that $\gm$ is spatial.
In particular,
we can write $Y = \coprod_{j = 0}^{d - 1} Y_j$
in such a way that for
for $j = 0, 1, \ldots, d - 1,$
the operator $\rh (s_j)$
is a spatial isometry with domain support~$Y,$
range support~$Y_j,$
and reverse $\rh (t_j)$
(in the sense of Definition 6.13 of~\cite{PhLp1}).
Also, we can write $T = \coprod_{j = 0}^{d - 1} T_j$
in such a way that for
$k, l \in \{ 0, 1, \ldots, d - 1 \},$
the operator $\gm (e_{k, l})$
is a spatial partial isometry with domain support~$T_l,$
range support~$T_k,$
and reverse $\gm (e_{l, k}).$

For $j = 0, 1, \ldots, d - 1,$
set
\[
v_j = \sum_{l = 0}^{d - 1} \gm (e_{j, l}) \otimes \rh (s_l)
\andeqn
w_j = \sum_{l = 0}^{d - 1} \gm (e_{l, j}) \otimes \rh (t_l).
\]
One easily checks that the elements
$v_j,$ playing the role of~$s_j,$
and $w_j,$ playing the role of~$t_j,$
satisfy the relations
(\ref{Eq:Leavitt1}), (\ref{Eq:Leavitt2}), and~(\ref{Eq:Leavitt3}).
Therefore there is a \hm{}
$\ph \colon L_d \to
 \gm ( M^p_d ) \otimes_p
            \rh \big( \OP{d}{p} \big)$
such that $\ph (s_j) = v_j$ and $\ph (t_j) = w_j$
for $j = 0, 1, \ldots, d - 1.$
Lemma~6.20 of~\cite{PhLp1}
implies that $\gm (e_{k, l}) \otimes \rh (s_j)$
is a spatial partial isometry with domain support~$T_l \times Y,$
range support~$T_k \times Y_j,$
and reverse $\gm (e_{l, k}) \otimes \rh (t_j).$
Now Lemma~3.8 of~\cite{PhLp2b}
implies that for $j = 0, 1, \ldots, d - 1,$
the operator $v_j$ is a spatial isometry
with
reverse~$w_j.$
That is, $\ph$ is a spatial \rpn{}
in the sense of Definition 7.4(2) of~\cite{PhLp1}.
It now follows from Theorem~8.7 of~\cite{PhLp1}
that $\ph$ extends to an isometric \hm{}
$\ps \colon \OP{d}{p}
 \to \gm ( M^p_d ) \otimes \rh \big( \OP{d}{p} \big).$

We now prove the formula
$\ps (s_j a t_k) = \gm (e_{j, k}) \otimes \rh (a).$
A calculation shows that
$\ps (s_j t_k) = \gm (e_{j, k}) \otimes \rh (1)$
for $j, k \in \{ 0, 1, \ldots, d - 1 \}.$
Now let $j, k, r \in \{ 0, 1, \ldots, d - 1 \}.$
Then
\[
\ps (s_j s_r t_k)
 = \ps (s_j) \ps (s_r t_k)
 = \sum_{l = 0}^{d - 1} [ \gm (e_{j, l}) \otimes \rh (s_l)]
                [ \gm (e_{r, k}) \otimes \rh (1)]
 = \gm (e_{j, k}) \otimes \rh (s_r)
\]
and
\[
\ps (s_j t_r t_k)
 = \ps (s_j t_r) \ps (t_k)
 = \sum_{l = 0}^{d - 1} [ \gm (e_{j, r}) \otimes \rh (1)]
                [ \gm (e_{l, k}) \otimes \rh (t_l)]
 = \gm (e_{j, k}) \otimes \rh (t_r).
\]
For $j, k \in \{ 0, 1, \ldots, d - 1 \},$
the map $a \mapsto s_j a t_j$ is a (nonunital)
continuous endomorphism of~$\OP{d}{p}.$
Since $s_0, s_1, \ldots, s_{d - 1}, t_0, t_1, \ldots, t_{d - 1}$
generate $\OP{d}{p}$
as a Banach algebra,
we conclude that
$\ps (s_j a t_j) = \gm (e_{j, j}) \otimes \rh (a)$
for $j = 0, 1, \ldots, d - 1$ and $a \in \OP{d}{p}.$
For $k = 0, 1, \ldots, d - 1,$
we then get
\[
\ps (s_j a t_k)
 = \ps (s_j a t_j) \ps (s_j t_k)
 = [\gm (e_{j, j}) \otimes \rh (a)] [\gm (e_{j, k}) \otimes \rh (1)]
 = \gm (e_{j, k}) \otimes \rh (a),
\]
as desired.

It remains to prove that $\ps$ is surjective.
The previous paragraph implies that the range of~$\ps$
contains $\gm (e_{j, k}) \otimes \rh (a)$
for all $j, k \in \{ 0, 1, \ldots, d - 1 \}$ and $a \in \OP{d}{p}.$
It follows that $\ps$ has dense range.
Since $\ps$ is isometric, it is surjective.
\end{proof}

\begin{lem}\label{L-MapToCorner}
There exists an isometric isomorphism
$\sm \colon \OP{d}{p} \to f_0 A f_0$
such that for $j = 0, 1, \ldots, d - 1$ we have
(recalling Notation~\ref{N-dinfty},
Notation~\ref{N-ActionOnD}, and Notation~\ref{N-DICrPrd})
\begin{equation}\label{Eq:DfnOfSm}
\sm (s_j) = u_1 ( e_{j, 0} \otimes 1_{> 0} )
\andeqn
\sm (t_j) = ( e_{0, j} \otimes 1_{> 0} ) u_{-1}.
\end{equation}
\end{lem}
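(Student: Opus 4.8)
The plan is to produce the isomorphism $\sm$ by first building a homomorphism $\rh_0$ from $L_d$ into $\LLp$ whose image lies in $f_0 A f_0$, checking it satisfies the hypotheses of Lemma~\ref{L-UniqLd}, and then using that lemma to extend to $\OP{d}{p}$; finally I would identify the range as exactly $f_0 A f_0$. Concretely, set $a_j = u_1(e_{j,0} \otimes 1_{>0})$ and $b_j = (e_{0,j}\otimes 1_{>0}) u_{-1}$ in $A = F^p(\Z, B, \bt)$, and verify that these satisfy the Leavitt relations (\ref{Eq:Leavitt1}), (\ref{Eq:Leavitt2}), (\ref{Eq:Leavitt3}). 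This is a computation in the crossed product using Lemma~\ref{L-CrPrdRight}(\ref{L-CrPrdRight_uv}) (so $u_{-1} c u_1 = v^{-1} c v$, etc.) together with the matrix-unit identities in $B = {\ov{M}}^p_{\Nz} \otimes_p D$ and the explicit conjugation formulas of Lemma~\ref{L-ActionIsRight}(\ref{L-ActionIsRight-4}). For instance, $b_j a_j = (e_{0,j}\otimes 1_{>0}) u_{-1} u_1 (e_{j,0}\otimes 1_{>0}) = (e_{0,j}\otimes 1_{>0})(e_{j,0}\otimes 1_{>0}) = e_{0,0}\otimes 1_{>0}$, and one checks $e_{0,0}\otimes 1_{>0}$ acts as the identity on $f_0 A f_0$ — actually $f_0 = e_{0,0}\otimes 1_{>0}$ since $d^0 = 1$ — so $b_j a_j = f_0$, which plays the role of $1$ in the corner; similarly $b_j a_k = 0$ for $j\ne k$, and $\sum_j a_j b_j = u_1\big(\sum_j e_{j,0}\otimes 1_{>0}\big)\big(\sum_j e_{0,j}\otimes 1_{>0}\big)u_{-1}$, which after using Lemma~\ref{L-ActionIsRight}(\ref{L-ActionIsRight-4}) should collapse to $f_0$ as well.

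Next I would invoke the universal property of $L_d$ to get a homomorphism $\rh \colon L_d \to f_0 A f_0$ with $\rh(s_j) = a_j$, $\rh(t_j) = b_j$. To apply Lemma~\ref{L-UniqLd} I need: (i) $A$ (equivalently $f_0 A f_0$) acts on a separable $L^p$ space — this follows from separability of $A$ (Lemma~\ref{L-CrPrdRight}(\ref{L-CrPrdRight-5})) together with Proposition~\ref{P-SepImpSepRep}, after noting $f_0 A f_0$ is a corner and one can compose with the representation $\pi = v\ltimes\id_B$ restricted to $f_0 A f_0$; (ii) $\|\rh(s_j)\|\le 1$ and $\|\rh(t_j)\|\le 1$ — since $u_1, u_{-1}$ are isometric multipliers by Lemma~\ref{L-NormOfMultU} and $\|e_{j,k}\otimes 1_{>0}\| = 1$ in $B$, these bounds hold; (iii) $\rh\circ\om$ is contractive, where $\om\colon M^p_d \to L_d$ sends $e_{j,k}$ to $s_j t_k$ — here $\rh(\om(e_{j,k})) = a_j b_k = u_1(e_{j,0}\otimes 1_{>0})(e_{0,k}\otimes 1_{>0})u_{-1} = u_1(e_{j,k}\otimes 1_{>0})u_{-1} = v(e_{j,k}\otimes 1_{>0})v^{-1}$ inside $\pi(A)$, and by Lemma~\ref{L-ActionIsRight}(\ref{L-ActionIsRight-4}) this equals $e_{0,0}\otimes e_{j,k}\otimes 1_{>1}$, so $\rh\circ\om$ is, up to the isometric tensor embedding of Remark~\ref{R-LpTens}, just a matrix-unit-preserving copy of the standard representation of $M^p_d$, hence isometric (in particular contractive). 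Then Lemma~\ref{L-UniqLd} gives the isometric injective extension ${\ov{\rh}} \colon \OP{d}{p} \to \LLp$; I set $\sm = {\ov{\rh}}$, regarded as a map into $A$ via $\pi$, or more precisely I argue directly in $A$ that the formulas define an isometric map using Theorem~8.7 of~\cite{PhLp1} as in the proof of Lemma~\ref{L-OdIsoMdOd}.

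Finally I must show $\sm$ is onto $f_0 A f_0$. The range of $\sm$ is closed (it is isometric) and contains all $\sm(s_j a t_k)$; by the same kind of computation used in Lemma~\ref{L-OdIsoMdOd}, $\sm(s_j \cdot t_k)$ for $j=k=0$ together with products recover, inside $f_0 A f_0$, elements of the form $(e_{0,0}\otimes b) u_0$ for $b$ in a dense subalgebra of $D$, and $f_0(c u_n)f_0$ for the elementary tensors $c \in {\ov{M}}^p_{\Nz}$ and $n \in \Z$ spanning a dense subspace of $A$. The point is that $f_0 A f_0$ has a dense subalgebra spanned by $f_0 (c u_n) f_0$, and each such element, being supported on the $(0,0)$ matrix slot, can be rewritten — using $f_0 = e_{0,0}\otimes 1_{>0}$, the conjugation identities, and the Leavitt/matrix relations already verified — as an image under $\sm$ of a word in the $s_j, t_j$. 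Since $\sm$ has closed range and dense range in $f_0 A f_0$, it is surjective.

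The main obstacle I expect is step (iii) combined with the bookkeeping of surjectivity: one must track exactly how conjugation by $v$ (equivalently by $u_1$) shifts tensor factors — the formulas of Lemma~\ref{L-ActionIsRight}(\ref{L-ActionIsRight-4}) move a matrix unit from the ${\ov{M}}^p_{\Nz}$ slot into the first $M^p_d$ slot of $D$ and vice versa — so that verifying $\rh\circ\om$ is isometric amounts to recognizing the resulting map $M^p_d \to B \subset \LLp$ as a standard matrix-unit-preserving representation (which is isometric by Example~\ref{E-Mpd} and Remark~\ref{R-LpTens}), and verifying surjectivity amounts to showing that repeated application of $a\mapsto s_j a t_k$ together with the corner structure of Lemma~\ref{L-CrPrdRight}(\ref{L-CrPrdRight-4}) exhausts a dense subalgebra of $f_0 A f_0$. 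Neither step is conceptually deep, but both require care to avoid sign/index errors, and the separability hypothesis needed for Lemma~\ref{L-UniqLd} must be threaded through via Proposition~\ref{P-SepImpSepRep} applied to the (separable) corner $f_0 A f_0$.
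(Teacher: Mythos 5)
Your proposal is correct and follows essentially the same route as the paper: verify the Leavitt relations for $u_1(e_{j,0}\otimes 1_{>0})$ and $(e_{0,j}\otimes 1_{>0})u_{-1}$, check that $\rh\circ\om$ lands isometrically as $a\mapsto e_{0,0}\otimes a\otimes 1_{>1}$, invoke Lemma~\ref{L-UniqLd} via separable representability, and prove surjectivity by showing the elements $f_0(c\,u_n)f_0$ for elementary tensors $c$ lie in the range. (Only a cosmetic slip: in $\sum_j a_jb_j$ the intermediate expression should be $u_1\bigl(\sum_j e_{j,j}\otimes 1_{>0}\bigr)u_{-1}=u_1f_1u_{-1}=f_0$, not a product of two sums, but your conclusion is the right one.)
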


\begin{proof}
We first check that the elements in~(\ref{Eq:DfnOfSm})
are in $f_0 A f_0.$
Using $u_1^{-1} f_0 u_1 = f_1$
(which follows from
Lemma \ref{L-CrPrdRight}(\ref{L-CrPrdRight_uv})
and Lemma \ref{L-ActionIsRight}(\ref{L-ActionIsRight-5}))
at the first step,
and the definitions of $f_0$ and $f_1$ at the second step,
for $j = 0, 1, \ldots, d - 1$
we get
\[
f_0 u_1 ( e_{j, 0} \otimes 1_{> 0} ) f_0
  = u_1 f_1 ( e_{j, 0} \otimes 1_{> 0} ) f_0
  = u_1 ( e_{j, 0} \otimes 1_{> 0} ).
\]
Therefore $u_1 ( e_{j, 0} \otimes 1_{> 0} ) \in f_0 A f_0.$
The proof that $( e_{0, j} \otimes 1_{> 0} ) u_{-1} \in f_0 A f_0$
is similar.

We next check that the elements in~(\ref{Eq:DfnOfSm})
give a unital \hm\  $\ta \colon L_d \to f_0 A f_0.$
This follows from the calculation,
for $j, k \in \{ 0, 1, \ldots, d - 1 \},$
\[
( e_{0, j} \otimes 1_{> 0} ) u_{-1}
   u_1 ( e_{k, 0} \otimes 1_{> 0} )
= e_{0, j} e_{k, 0}
= \begin{cases}
   0   & j \neq k
        \\
   f_0 & j = k,
\end{cases}
\]
and from the calculation
(using Lemma \ref{L-CrPrdRight}(\ref{L-CrPrdRight_uv})
and Lemma \ref{L-ActionIsRight}(\ref{L-ActionIsRight-5})
at the last step)
\[
\sum_{j = 0}^{d - 1}
  u_1 ( e_{j, 0} \otimes 1_{> 0} )
  ( e_{0, j} \otimes 1_{> 0} ) u_{-1}
= u_1 \left( \sssum{j = 0}{d - 1} e_{j, j} \otimes 1_{> 0} \right) u_{-1}
= u_1 f_1 u_{-1}
= f_0.
\]

Next, we claim that $\ta \circ \om \colon M^p_d \to f_0 A f_0$
is contractive.
To see this, first use
Lemma \ref{L-CrPrdRight}(\ref{L-CrPrdRight_uv})
and Lemma \ref{L-ActionIsRight}(\ref{L-ActionIsRight-4})
at the last step
to check that, for $j, k \in \{ 0, 1, \ldots, d - 1 \},$
we have
\begin{align*}
(\ta \circ \om) (e_{j, k})
& = \ta (s_j t_k)
  = u_1 ( e_{j, 0} \otimes 1_{> 0} )
  ( e_{0, k} \otimes 1_{> 0} ) u_{-1}
        \\
& = u_1 ( e_{j, k} \otimes 1_{> 0} ) u_{-1}
  = e_{0, 0} \otimes e_{j, k} \otimes 1_{> 1}.
\end{align*}
Therefore
$(\ta \circ \om) (a) = e_{0, 0} \otimes a \otimes 1_{> 1}$
for all $a \in M^p_d.$
By Remark~\ref{R-LpTens},
the \hm\  $\ta \circ \om$ is isometric from $M^p_d$ to~$B.$
Since the inclusion of $B$ in~$A$ is contractive
(by the inequality $\| a \| \leq \| a \|_1$ in \Lem{L:FGpCP}),
the claim follows.

The inequality $\| a \| \leq \| a \|_1$
and contractivity of multiplication by $u_1$ and $u_{-1}$
(Lemma~\ref{L-NormOfMultU})
imply that
$\| \ta (s_j) \| \leq 1$ and $\| \ta (t_j) \| \leq 1$
for $j = 0, 1, \ldots, d - 1.$

Since $A$ is separably representable
(using \Lem{L-CrPrdRight}(\ref{L-CrPrdRight-5})
and Proposition~\ref{P-SepImpSepRep}),
we may apply \Lem{L-UniqLd}
to conclude that $\ta$ extends to an
injective isometric \hm{}
$\sm \colon \OP{d}{p} \to f_0 A f_0.$
It remains only to prove that $\sm$ is surjective,
and for this it suffices to prove that
its range $\ran (\sm)$ is dense in $f_0 A f_0.$
By Theorem~\ref{T-UPropFull}(\ref{T-UPropFull-3}),
it is enough to show that
\begin{equation}\label{Eq:ForDRan}
f_0 \big( e_{l, m} \otimes 1_{\leq r - 1}
        \otimes e_{j, k} \otimes 1_{> r} \big)
     u_n f_0
 \in {\mathrm{ran}} (\sm)
\end{equation}
whenever $r \in \N,$ $n \in \Z,$
$j, k \in \{ 0, 1, \ldots, d - 1 \},$
and $l, m \in \Nz.$

We first claim that~(\ref{Eq:ForDRan}) holds when $n = 0.$
In this case, the expression is zero unless $l = m = 0.$
For $r \in \N$ and $j, k \in \{ 0, 1, \ldots, d - 1 \},$
we use Lemma \ref{L-CrPrdRight}(\ref{L-CrPrdRight_uv})
and Lemma \ref{L-ActionIsRight}(\ref{L-ActionIsRight-4})
to get
\begin{align*}
\lefteqn{
\sm (s_0)
   \big[ e_{0, 0} \otimes e_{0, 0} \otimes \cdots \otimes
    e_{0, 0} \otimes e_{j, k} \otimes 1_{> r} \big]
    \sm (t_0)}
    \\
& \hspace*{5em} \mbox{}
  = e_{0, 0} \otimes e_{0, 0} \otimes \cdots \otimes
    e_{0, 0} \otimes e_{0, 0} \otimes e_{j, k} \otimes
          1_{> r + 1}
\end{align*}
(shifting the tensor factor $e_{j, k}$ one space to the right).
Therefore
${\mathrm{ran}} (\sm)$ contains
\[
e_{0, 0} \otimes e_{j, k} \otimes 1_{> 1}
  = \sm (s_j t_k),
\,\,\,\,
e_{0, 0} \otimes e_{0, 0} \otimes e_{j, k} \otimes 1_{> 2},
\,\,\,\,
e_{0, 0} \otimes
	e_{0, 0} \otimes e_{0, 0} \otimes e_{j, k} \otimes 1_{> 3},
\,\,\,\,
\ldots.
\]
The closed subalgebra that these elements generate
is $\{ e_{0, 0} \otimes a \colon a \in D \},$
and the claim for $n = 0$ follows.

We next claim that for $n \in \N$ we have
$u_n f_0 = f_0 (u_1 f_0)^n$
and $f_0 u_{- n} = (f_0 u_{-1})^n f_0.$
The proof is by induction on~$n,$
and the case $n = 0$ is trivial.
For the induction step for the first, use
Lemma \ref{L-CrPrdRight}(\ref{L-CrPrdRight_uv})
and Lemma \ref{L-ActionIsRight}(\ref{L-ActionIsRight-5})
at the third step,
and \Lem{L-ActionIsRight}(\ref{L-ActionIsRight-5b})
at the last step, to get
\[
f_0 (u_1 f_0)^{n + 1}
  = u_n f_0 u_1 f_0
  = u_{n + 1} (u_1^{-1} f_0 u_1) f_0
  = u_{n + 1} f_1 f_0
  = u_{n + 1} f_0.
\]
Similarly,
\[
(f_0 u_{-1})^{n + 1} f_0
  = f_0 u_{-1} f_0 u_{-n}
  = f_0 (u_1^{-1} f_0 u_1) u_{- n - 1}
  = f_0 f_1 u_{- n - 1}
  = f_0 u_{- n - 1}.
\]
This proves the claim.

Now let $n \in \N.$
Then
\[
f_0 \big( e_{l, m} \otimes 1_{\leq r - 1}
        \otimes e_{j, k} \otimes 1_{> r} \big)
     u_n f_0
 = \big[ f_0 \big( e_{l, m} \otimes 1_{\leq r - 1}
        \otimes e_{j, k} \otimes 1_{> r} \big)
     f_0 \big] \cdot (u_1 f_0)^n.
\]
The first factor is in $\ran (\sm)$
by the case already done,
and $(u_1 f_0)^n = \sm (s_0)^n \in \ran (\sm),$
so (\ref{Eq:ForDRan}) holds.

Also,
using Lemma \ref{L-CrPrdRight}(\ref{L-CrPrdRight_uv})
and Lemma \ref{L-ActionIsRight}(\ref{L-ActionIsRight-5}),
\[
f_0 \big( e_{l, m} \otimes 1_{\leq r - 1}
        \otimes e_{j, k} \otimes 1_{> r} \big)
     u_{-n} f_0
 = f_0 \big( e_{l, m} \otimes 1_{\leq r - 1}
        \otimes e_{j, k} \otimes 1_{> r} \big)
     f_n u_{-n}.
\]
If $l \neq 0$ or $m \geq d^n,$
this expression is zero,
hence in $\ran (\sm).$
So we only need to consider
$f_0 \big( e_{0, m} \otimes 1_{\leq r - 1}
        \otimes e_{j, k} \otimes 1_{> r} \big) u_{-n} f_0,$
and under the assumption that there are
$q_0, q_1, \ldots, q_{n - 1} \in \{ 0, 1, \ldots, d - 1 \}$
such that $m = \sum_{i = 0}^{n - 1} q_i d^i.$
Then, using Lemma \ref{L-CrPrdRight}(\ref{L-CrPrdRight_uv})
and Lemma \ref{L-ActionIsRight}(\ref{L-ActionIsRight-4}) repeatedly
at the first step,
and $f_0 u_{- n} = (f_0 u_{-1})^n f_0$ at the second step,
\begin{align*}
& f_0 \big( e_{l, m} \otimes 1_{\leq r - 1}
        \otimes e_{j, k} \otimes 1_{> r} \big)
     u_{-n} f_0
     \\
& \hspace*{3em} {\mbox{}}
 = f_0 u_{-n} \big( e_{0, 0} \otimes e_{0, q_{n - 1}}
     \otimes e_{0, q_{n - 2}} \otimes e_{0, q_0} \otimes 1_{\leq r - 1}
        \otimes e_{j, k} \otimes 1_{> r + n} \big) f_0
     \\
& \hspace*{3em} {\mbox{}}
 = \sm (t_0)^n \cdot \big[ f_0 \big( e_{0, 0} \otimes e_{0, q_{n - 1}}
     \otimes e_{0, q_{n - 2}} \otimes e_{0, q_0} \otimes 1_{\leq r - 1}
        \otimes e_{j, k} \otimes 1_{> r + n} \big) f_0 \big].
\end{align*}
The case $n = 0$ of~(\ref{Eq:ForDRan}),
which we have already proved,
and the fact that $\ran (\sm)$ is an algebra,
show that this expression is in $\ran (\sm).$

Thus (\ref{Eq:ForDRan}) holds for all $j, k, l, m, r, n.$
This completes the proof of surjectivity.
\end{proof}

\begin{ntn}\label{N-DIForIntTw}
Fix a \sfm\  $\YCN$ such that $L^p (Y, \nu)$ is separable,
and a unital isometric \hm{}
$\af_0 \colon \OP{d}{p} \to L (L^p (Y, \nu))$
whose restriction to $L_d$ is spatial.
For $n \in \N,$
define
\[
\af_n \colon
    (M_d)^{\otimes n} \otimes_{\mathrm{alg}} \OP{d}{p}
   \to L \big( L^p (Z^n \times Y, \, \ld^n \times \nu) \big)
\]
to be the tensor product of $n$ copies of the
standard isomorphism $M_d \to L (L^p (Z, \ld))$ with~$\af_0.$
Make $(M_d)^{\otimes n} \otimes_{\mathrm{alg}} \OP{d}{p}$
into an $L^p$~operator algebra by defining $\| a \| = \| \af_n (a) \|$
for
$a \in (M_d)^{\otimes n} \otimes_{\mathrm{alg}} \OP{d}{p},$
and write
$(M^p_d)^{\otimes n} \otimes_p \OP{d}{p}$
for $(M_d)^{\otimes n} \otimes_{\mathrm{alg}} \OP{d}{p}$
equipped with this norm.

Let
$\ps_0 \colon \OP{d}{p}
       \to M^p_d \otimes_p \OP{d}{p}$
be the map $\ps$ of \Lem{L-OdIsoMdOd}.
Set $\et_0 = \id_{\OP{d}{p}}.$
Let $\sm_0 \colon \OP{d}{p} \to A$ be the map~$\sm$
of \Lem{L-MapToCorner},
followed by the inclusion of $f_0 A f_0$ in~$A.$
Define
$\ep_0 \colon \OP{d}{p}
       \to M^p_d \otimes_p \OP{d}{p}$
by $\ep_0 (a) = e_{0, 0} \otimes a$ for $a \in \OP{d}{p}.$

For $m \in \N,$
we adapt standard notation
by writing $\Ad (u_m)$ for the automorphism of~$A$
given by $a \mapsto u_m a u_{- m}$
(even though $u_m$ is not in~$A$).

Now, for $n \in \N,$ inductively define
(justifications in \Lem{L-DIIntTw} below)
\[
\ps_{n} = \id_{M^p_d} \otimes_p \ps_{n - 1}
  \colon (M^p_d)^{\otimes n} \otimes_p \OP{d}{p}
   \to (M^p_d)^{\otimes (n + 1)} \otimes_p \OP{d}{p},
\]
\[
\et_{n} = \ps_{n - 1} \circ \et_{n - 1}
  \colon \OP{d}{p}
   \to (M^p_d)^{\otimes n} \otimes_p \OP{d}{p},
\]
\[
\sm_{n} = \Ad (u_{-1}) \circ \sm_{n - 1} \circ \ps_{n - 1}^{-1}
   \colon (M^p_d)^{\otimes n} \otimes_p \OP{d}{p}
    \to A,
\]
and
\[
\ep_{n} = \ps_{n} \circ \ep_{n - 1} \circ \ps_{n - 1}^{-1}
  \colon (M^p_d)^{\otimes n} \otimes_p \OP{d}{p}
   \to (M^p_d)^{\otimes (n + 1)} \otimes_p \OP{d}{p}.
\]
\end{ntn}

\begin{lem}\label{L-DIIntTw}
For every $m \in \Z,$
the map $\Ad (u_m)$ in Notation~\ref{N-DIForIntTw} is a
well defined isometric automorphism of~$A.$
Moreover, for all $n \in \Nz,$
the maps defined in Notation~\ref{N-DIForIntTw} have the following
properties:
\begin{enumerate}
\item\label{L-DIIntTw-1}
$\ps_n$ is an isometric isomorphism.
\item\label{L-DIIntTw-3}
$\et_{n + 1} = ( \id_{M^p_d} \otimes \et_{n} ) \circ \ps_0.$
\item\label{L-DIIntTw-2}
$\et_{n + 1}$ is an isometric isomorphism.
\item\label{L-DIIntTw-4}
$\sm_n$ is isometric and ${\operatorname{ran}} (\sm_n) = f_n A f_n.$
\item\label{L-DIIntTw-5}
$\ep_n$ is an isometric \hm.
\item\label{L-DIIntTw-6}
$\sm_{n + 1} \circ \ep_{n} = \sm_{n}.$
\item\label{L-DIIntTw-7}
For all $a \in (M^p_d)^{\otimes n} \otimes_p \OP{d}{p},$
we have $\ep_n (a) = e_{0, 0} \otimes a.$
\end{enumerate}
\end{lem}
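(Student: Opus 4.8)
The plan is to prove the statement about $\Ad(u_m)$ directly and then establish (1)--(7) by a single induction on~$n$, with essentially all of the genuine work in the base case $n=0$ and in property~(1). For the maps $\Ad(u_m)$: by \Lem{L-NormOfMultU}, $L_m$ and $R_{-m}$ are isometric on $A=F^p(\Z,B,\bt)$; they are bijective with inverses $L_{-m}$ and $R_m$ (check on $C_{\mathrm{c}}(\Z,B,\bt)$, where $L_gL_{g^{-1}}=\id$ and similarly for $R$), and they commute, so $\Ad(u_m)=L_m\circ R_{-m}$ is an isometric bijection of~$A$ with inverse $\Ad(u_{-m})$. On $C_{\mathrm{c}}(\Z,B,\bt)$ the map $\Ad(u_m)$ sends $\sum_n b_n u_n$ to $\sum_n \bt^m(b_n)u_n$, which is an algebra homomorphism, and it extends by continuity; hence $\Ad(u_m)\in\Aut(A)$ is isometric.

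Base case $n=0$. Property~(1) is \Lem{L-OdIsoMdOd}, with $\gamma$ the standard representation of $M^p_d$ on $L^p(Z,\ld)$ and $\rh=\af_0$, identifying $M^p_d\otimes_p\OP{d}{p}$ with $\gamma(M^p_d)\otimes_p\af_0(\OP{d}{p})$, whose norm is the one induced by $\af_1=\gamma\otimes\af_0$. Since $\et_0=\id$, both $\et_1=\ps_0\circ\et_0$ and $(\id_{M^p_d}\otimes\et_0)\circ\ps_0$ equal $\ps_0$, giving (2) and (3). Property~(4) is \Lem{L-MapToCorner}. Property~(7) is the definition of $\ep_0$, and (5) follows because $\|e_{0,0}\otimes a\|=\|e_{0,0}\|\,\|a\|=\|a\|$ by Remark~\ref{R-LpTens} (using that $\af_0$ is isometric and that the norm on $M^p_d\otimes_p\OP{d}{p}$ is induced by $\af_1$). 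For~(6) with $n=0$: from $\ps_0(s_0at_0)=e_{0,0}\otimes a$ (\Lem{L-OdIsoMdOd}) the map $\ps_0^{-1}\circ\ep_0$ is $a\mapsto s_0at_0$, so $\sm_1\circ\ep_0=\sm_0$ is equivalent to $\sm_0(s_0at_0)=u_1\sm_0(a)u_{-1}$ for all $a\in\OP{d}{p}$; both sides are continuous algebra homomorphisms (the left one because $t_0s_0=1$, the right because $\Ad(u_1)\in\Aut(A)$), so it suffices to check equality on the generators $s_j,t_j$, which is a direct computation from the formulas for $\sm_0$ in \Lem{L-MapToCorner} together with \Lem{L-ActionIsRight}(\ref{L-ActionIsRight-4}), \Lem{L-ActionIsRight}(\ref{L-ActionIsRight-5}), and \Lem{L-CrPrdRight}(\ref{L-CrPrdRight_uv}).

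Inductive step $n\to n+1$. Everything is formal from the recursive definitions and the hypotheses at level~$n$ except property~(1). For~(1): unwinding the recursion gives $\ps_{n+1}=\id_{(M^p_d)^{\otimes(n+1)}}\otimes_p\ps_0$; since $\ps_0|_{L_d}$ is a spatial representation of~$L_d$ (from the proof of \Lem{L-OdIsoMdOd}) and the tensor product of a spatial isometry with an identity operator is again a spatial isometry (the fact from Lemma~6.20 of~\cite{PhLp1} used in that proof), $(\id\otimes\ps_0)|_{L_d}$ is again spatial, so by Theorem~8.7 of~\cite{PhLp1} it induces an isometric homomorphism on~$\OP{d}{p}$, which is surjective by density of elementary tensors; thus $\ps_{n+1}$ is an isometric isomorphism. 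Property~(7): writing $b=\ps_n(a)$, one has $\ep_{n+1}(b)=\ps_{n+1}(\ep_n(a))=\ps_{n+1}(e_{0,0}\otimes a)=e_{0,0}\otimes\ps_n(a)=e_{0,0}\otimes b$, and (5) follows as before. Property~(2): $(\id\otimes\et_{n+1})\circ\ps_0=(\id\otimes\ps_n)\circ(\id\otimes\et_n)\circ\ps_0=\ps_{n+1}\circ\et_{n+1}=\et_{n+2}$ by the hypothesis~(2) at level~$n$, and (3) is immediate. Property~(4): $\sm_{n+1}=\Ad(u_{-1})\circ\sm_n\circ\ps_n^{-1}$ is isometric, and $\ran(\sm_{n+1})=\Ad(u_{-1})(f_nAf_n)=f_{n+1}Af_{n+1}$ since $\Ad(u_{-1})$ is an automorphism of~$A$ and $u_{-1}f_nu_1=v^{-1}f_nv=f_{n+1}$ by \Lem{L-CrPrdRight}(\ref{L-CrPrdRight_uv}) and \Lem{L-ActionIsRight}(\ref{L-ActionIsRight-5}). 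Property~(6): writing $b=\ps_n(c)$, $\ps_{n+1}^{-1}(\ep_{n+1}(b))=\ps_{n+1}^{-1}(e_{0,0}\otimes\ps_n(c))=e_{0,0}\otimes c=\ep_n(c)$, so $\sm_{n+2}(\ep_{n+1}(b))=\Ad(u_{-1})(\sm_{n+1}(\ep_n(c)))=\Ad(u_{-1})(\sm_n(c))=\sm_{n+1}(\ps_n(c))=\sm_{n+1}(b)$, using the hypothesis~(6) at level~$n$ and the definition of $\sm_{n+1}$.

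I expect the main obstacle to be the base case of~(6): verifying $\sm_0(s_0at_0)=u_1\sm_0(a)u_{-1}$ on the generators $s_j,t_j$ is an intricate but routine bookkeeping computation with the tensor-leg shifting identities of \Lem{L-ActionIsRight}. The only other non-formal point is making precise the ``tensor with an identity operator'' step in property~(1), which is handled by the spatial representation machinery of~\cite{PhLp1}; everything else is bookkeeping with the recursive definitions.
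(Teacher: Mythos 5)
Your overall strategy coincides with the paper's: a single induction on~$n$, with the base case disposed of by \Lem{L-OdIsoMdOd}, \Lem{L-MapToCorner} and Remark~\ref{R-LpTens}, the identity $\sm_1 \circ \ep_0 = \sm_0$ checked on the generators of $L_d$, and the isometry statement in the inductive step reduced via Theorem~8.7 of~\cite{PhLp1} to spatiality of a representation of~$L_d.$ Your reformulation of the base case of~(6) as $\sm_0 (s_0 a t_0) = u_1 \sm_0 (a) u_{-1}$ (two continuous homomorphisms agreeing on generators) is a legitimate variant of the paper's reduction via Lemma~2.11 of~\cite{PhLp1}, and your formal manipulations for (2), (3), (4), (6), (7) in the inductive step match the paper's.

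There is, however, a genuine gap at the one point where all the real work sits: the inductive step for property~(1). You write $\ps_{n + 1} = \id_{(M^p_d)^{\otimes (n + 1)}} \otimes_p \ps_0$ and argue that "$(\id \otimes \ps_0)|_{L_d}$ is spatial because the tensor product of a spatial isometry with an identity operator is spatial, so Theorem~8.7 of~\cite{PhLp1} makes it isometric." This does not work as stated, for two reasons. First, the copy of $L_d$ to which your spatiality check applies is $1^{\otimes (n + 1)} \otimes L_d,$ whose images under $\ps_{n+1}$ are indeed of the form $1 \otimes (\text{spatial isometry})$; but this copy of $L_d$ does not generate $(M^p_d)^{\otimes (n + 1)} \otimes_p \OP{d}{p},$ so isometry of $\ps_{n + 1}$ on its closed span says nothing about isometry on the whole domain. (This is exactly the danger the paper flags after Example~\ref{E-LpTP}: an isometric map tensored with an identity need not be isometric for $L^p$~operator tensor norms, since those norms depend on the chosen representations.) Second, the copy of $L_d$ that \emph{does} generate everything is the image under $\et_{n + 1},$ which sends $s_j$ to $\sum_{l} e_{j, l} \otimes \et_n (s_l)$ --- not to an elementary tensor with an identity leg --- so your "identity tensor spatial isometry" fact is not the relevant one. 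The correct verification, which is what the paper does, is to show that $(\af_{n + 2} \circ \et_{n + 2})(s_j) = \sum_{l = 0}^{d - 1} e_{j, l} \otimes (\af_{n + 1} \circ \et_{n + 1}) (s_l)$ is a sum of spatial partial isometries with pairwise disjoint domain supports and pairwise disjoint range supports, hence itself a spatial partial isometry by Lemma~3.8 of~\cite{PhLp2b} (with Lemma~6.20 of~\cite{PhLp1} handling each summand), whence $(\af_{n + 2} \circ \et_{n + 2})|_{L_d}$ is spatial, $\et_{n + 2}$ is isometric by Theorem~8.7 of~\cite{PhLp1}, and finally $\ps_{n + 1} = \et_{n + 2} \circ \et_{n + 1}^{-1}$ is isometric on the whole algebra because $\et_{n + 1}$ is surjective. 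You have all the ingredients for this repair (you already prove $\et_{n + 2} = \ps_{n + 1} \circ \et_{n + 1}$), but the spatiality computation you actually offer is for the wrong elements and must be replaced by the one above.
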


\begin{proof}
The part about $\Ad (u_m)$ follows from
Lemma~\ref{L-NormOfMultU}.

We prove the remaining statements simultaneously by induction on~$n.$

For $n = 0,$
part (\ref{L-DIIntTw-1})
is \Lem{L-OdIsoMdOd},
part~(\ref{L-DIIntTw-4})
is \Lem{L-MapToCorner},
part~(\ref{L-DIIntTw-5}) follows from Remark~\ref{R-LpTens},
and part~(\ref{L-DIIntTw-7}) is the definition of~$\ep_0.$

For parts (\ref{L-DIIntTw-3}) and~(\ref{L-DIIntTw-2}),
we use $\et_0 = \id_{\OP{d}{p}}$ to get
\[
\et_1
 = \ps_0 \circ \et_0
 = \ps_0
 = ( \id_{M^p_d} \otimes \et_{0} ) \circ \ps_0,
\]
as desired for~(\ref{L-DIIntTw-3}).
Also, $\et_1$ is isometric since $\ps_0$ is.

We now prove part~(\ref{L-DIIntTw-6}).
By Lemma~2.11 of~\cite{PhLp1},
it suffices to prove that
$(\sm_1 \circ \ep_0) (s_j) = \sm_0 (s_j)$
for $j = 0, 1, \ldots, d - 1.$

Fix $j \in \{ 0, 1, \ldots, d - 1 \}.$
We have $\ps_0 (s_0 s_j t_0) = e_{0, 0} \otimes s_j$
by \Lem{L-OdIsoMdOd},
so
$(\ps_0^{-1} \circ \ep_0) (s_j) = s_0 s_j t_0.$
Therefore,
using the formula for $\sm_0$ at the second step,
and Lemma \ref{L-CrPrdRight}(\ref{L-CrPrdRight_uv})
and Lemma \ref{L-ActionIsRight}(\ref{L-ActionIsRight-5})
at the third step,
\begin{align*}
(\sm_1 \circ \ep_0) (s_j)
& = \big( \Ad (u_{-1}) \circ \sm_0 \circ \ps_0^{-1} \circ \ep_0 \big)
           (s_0 s_j t_0)
  = (e_{0, 0} \otimes 1_{> 0}) u_1
           (e_{j, 0} \otimes 1_{> 0})
        \\
&
  = u_1 \left( \sssum{l = 0}{d - 1}
           e_{l, l} \otimes 1_{> 0} \right)
           (e_{j, 0} \otimes 1_{> 0})
        \\
& = u_1 (e_{j, 0} \otimes 1_{> 0})
  = \sm_0 (s_j).
\end{align*}
This completes the proof of the case $n = 0.$

Now let $n \in \N,$
and assume that all parts are known for $n - 1.$

The map $\ps_{n}$ is bijective because
$\ps_{n - 1}$ is bijective and $M^p_d$ is \fd.
The map $\et_{n + 1}$ is bijective
because $\ps_{n}$ and $\et_{n}$ are.
Now, to prove part~(\ref{L-DIIntTw-3}),
we use the induction hypothesis at the second step to get
\[
\et_{n + 1}
  = \ps_{n} \circ \et_n
  = \big( \id_{M^p_d} \otimes_p \ps_{n - 1} \big)
   \circ \big( \id_{M^p_d} \otimes \et_{n - 1} \big) \circ \ps_0
  = \big( \id_{M^p_d} \otimes \et_{n} \big) \circ \ps_0,
\]
as desired.

Now we prove that $\et_{n + 1}$ is isometric,
which will finish the proof of part~(\ref{L-DIIntTw-2}).
We know that $\et_{n}$ is isometric.
Therefore $\af_{n} \circ \et_{n}$ is isometric.
Proposition~\ref{P_3922_Spt}
implies that $(\af_{n} \circ \et_{n}) |_{L_d}$ is spatial.
For $j = 0, 1, \ldots, d - 1,$ one checks that
\begin{align*}
(\af_{n + 1} \circ \et_{n + 1}) (s_j)
& = \big[
   \af_{n + 1} \circ (\id_{M^p_d} \otimes \et_{n}) \circ \ps_0 \big]
     (s_j)
     \\
& = \sum_{l = 0}^{d - 1}
       e_{j, l} \otimes (\af_{n} \circ \et_{n}) (s_l)
  \in L \big( L^p (Z^{n + 1} \times Y, \, \ld^{n + 1} \times \nu) \big).
\end{align*}
This operator is a sum of spatial partial isometries
with disjoint domain supports and disjoint range supports.
By Lemma~3.8 of~\cite{PhLp2b},
it is a spatial partial isometry whose reverse
is the sum of the reverses of the summands, that is,
\[
\sum_{l = 0}^{d - 1}
       e_{l, j} \otimes (\af_{n} \circ \et_{n}) (t_l)
  = \big[
   \af_{n + 1} \circ (\id_{M^p_d} \otimes \et_{n}) \circ \ps_0 \big]
     (t_j)
  = (\af_{n + 1} \circ \et_{n + 1}) (t_j).
\]
Thus, $(\af_{n + 1} \circ \et_{n + 1}) |_{L_d}$
is a spatial representation of~$L_d.$
Theorem~8.7 of~\cite{PhLp1} now implies
that $\af_{n + 1} \circ \et_{n + 1}$ is isometric.
Therefore $\et_{n + 1}$ is isometric,
as desired.

It is now clear that $\ps_{n} = \et_{n + 1} \circ \et_{n}^{-1}$
is isometric,
which finishes part~(\ref{L-DIIntTw-1}).

The maps $\sm_{n}$ and $\ep_{n}$ are isometric,
since they are compositions of isometric maps.
The statement about the range of
$\sm_{n}$ follows from the induction hypothesis
and
$u_{-1} f_{n - 1} u_1 = f_{n}$
(Lemma \ref{L-CrPrdRight}(\ref{L-CrPrdRight_uv})
and Lemma \ref{L-ActionIsRight}(\ref{L-ActionIsRight-5})).

We now prove part~(\ref{L-DIIntTw-6}).
Using the induction hypothesis at the third step,
we get
\begin{align*}
\sm_{n + 1} \circ \ep_{n}
& = \big( \Ad (u_{-1}) \circ \sm_n \circ \ps_n^{-1} \big)
     \circ \big( \ps_{n} \circ \ep_{n - 1} \circ \ps_{n - 1}^{-1} \big)
         \\
& = \Ad (u_{-1}) \circ \sm_n \circ \ep_{n - 1} \circ \ps_{n - 1}^{-1}
  = \Ad (u_{-1}) \circ \sm_{n - 1} \circ \ps_{n - 1}^{-1}
  = \sm_{n}.
\end{align*}

It remains to prove part~(\ref{L-DIIntTw-7}).
We compute:
\begin{align*}
\ep_{n} (a)
& = \big( \ps_{n} \circ \ep_{n - 1} \circ \ps_{n - 1}^{-1} \big) (a)
  = \big( ( \id_{M^p_d} \otimes_p \ps_{n - 1} )
       \circ \ep_{n - 1} \circ \ps_{n - 1}^{-1} \big) (a)
        \\
& = \big( \id_{M^p_d} \otimes_p \ps_{n - 1} \big)
           \big( e_{0, 0} \otimes \ps_{n - 1}^{-1} (a) \big)
  = e_{0, 0} \otimes a,
\end{align*}
as desired.
\end{proof}

\begin{cor}\label{C_3924_ContrIsIso}
Let $\XBM$ be a \sfm{}
such that $L^p (X, \mu)$ is separable.
Let $\rh \colon A \to \LLp$
be a nonzero but not necessarily unital contractive \hm.
Then $\rh$ is isometric.
\end{cor}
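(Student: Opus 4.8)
The plan is to reduce the statement to the rigidity of $\OP{d}{p}$ already encoded in \Lem{L-UniqLd}, by exploiting the exhaustion $A = \ov{\bigcup_{n = 0}^{\I} f_n A f_n}$ of \Lem{L-CrPrdRight}(\ref{L-CrPrdRight-4}) together with the isometric identifications of the corners $f_n A f_n$ with $\OP{d}{p}$ provided by \Lem{L-MapToCorner} and \Lem{L-DIIntTw}. First I would note that, since $\rh$ is contractive, it suffices to show that $\rh$ restricts to an isometric map on $f_n A f_n$ for all sufficiently large~$n$. Indeed, given $a \in A$ and $\ep > 0,$ choose $n$ and $b \in f_n A f_n$ with $\| a - b \| < \ep$; then $\| \rh (a) \| \geq \| \rh (b) \| - \| \rh (a - b) \| \geq \| b \| - \ep \geq \| a \| - 2 \ep$ provided $\rh |_{f_n A f_n}$ is isometric, and combined with $\| \rh (a) \| \leq \| a \|$ this forces $\| \rh (a) \| = \| a \|.$

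Next, since $f_0 A f_0 \S f_1 A f_1 \S \cdots$ and $\rh \neq 0,$ continuity forces $\rh$ to be nonzero on $f_{n_0} A f_{n_0}$ for some $n_0,$ hence nonzero on $f_n A f_n$ for every $n \geq n_0.$ Fix such an~$n.$ By \Lem{L-DIIntTw}(\ref{L-DIIntTw-4}), $\sm_n$ is an isometric isomorphism of $(M^p_d)^{\otimes n} \otimes_p \OP{d}{p}$ onto $f_n A f_n,$ and by \Lem{L-DIIntTw}(\ref{L-DIIntTw-2}) together with $\et_0 = \id$, the map $\et_n$ is an isometric isomorphism of $\OP{d}{p}$ onto $(M^p_d)^{\otimes n} \otimes_p \OP{d}{p}.$ Hence $\sm_n \circ \et_n$ is an isometric isomorphism of $\OP{d}{p}$ onto $f_n A f_n,$ and $\vartheta_n = \rh \circ \sm_n \circ \et_n \colon \OP{d}{p} \to \LLp$ is a nonzero contractive \hm{} on a separable $L^p$~space.

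Then I would apply \Lem{L-UniqLd} to $\vartheta_n |_{L_d}.$ Its hypotheses hold: $\| \vartheta_n (s_j) \| \leq \| s_j \| \leq 1$ and $\| \vartheta_n (t_j) \| \leq \| t_j \| \leq 1$ because $s_j$ is a spatial isometry and $t_j$ its reverse in $\OP{d}{p}$ (see~\cite{PhLp1}); and $\vartheta_n \circ \om$ is contractive on $M^p_d$ because $\om \colon M^p_d \to \OP{d}{p}$ is isometric, so $\sm_n \circ \et_n \circ \om$ is isometric and $\rh$ is contractive. (The fact that $\om$ is isometric can be recorded directly: choosing separable isometric representations $\rh_0, \gm_0$ of $\OP{d}{p}$ and applying \Lem{L-OdIsoMdOd}, the resulting isometric isomorphism $\ps$ sends $\om (a) = \sum_{j, k} c_{j, k} s_j t_k$ to $\gm_0 (a) \otimes \rh_0 (1),$ which has norm $\| a \|.$) Thus \Lem{L-UniqLd} shows that $\vartheta_n |_{L_d}$ extends to an isometric \hm{} on $\OP{d}{p};$ since $L_d$ is dense in $\OP{d}{p}$ and $\vartheta_n$ is continuous, this extension is $\vartheta_n$ itself. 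Therefore $\vartheta_n$ is isometric, and since $\sm_n \circ \et_n$ is an isometric isomorphism onto $f_n A f_n,$ we conclude that $\rh |_{f_n A f_n}$ is isometric, which completes the proof by the first paragraph.

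I expect the only points needing care to be organizational rather than substantive: making sure $\rh$ does not vanish on the small corners (handled by passing to $n \geq n_0$), and checking the hypotheses of \Lem{L-UniqLd} for $\vartheta_n$ — in particular the contractivity of $\vartheta_n \circ \om,$ which reduces to the isometry of the canonical copy of $M^p_d$ in $\OP{d}{p}.$ Everything else is a routine assembly of isometric identifications already established earlier in this section.
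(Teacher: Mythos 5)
Your proof is correct and follows essentially the same route as the paper: exhaust $A$ by the corners $f_n A f_n,$ identify each (via $\sm_n$ and $\et_n$) isometrically with $\OP{d}{p},$ and invoke Lemma~\ref{L-UniqLd}. The only differences are cosmetic --- the paper establishes $\rh (f_n) \neq 0$ for every~$n$ by the algebraic identities $f_n = (u_{-n} f_0) f_0 (f_0 u_n)$ and $f_0 = (u_n f_n) f_n (f_n u_{-n}),$ where you get nonvanishing on all sufficiently large corners by density (which suffices), and you spell out the hypothesis check for Lemma~\ref{L-UniqLd} that the paper leaves implicit.
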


\begin{proof}
We first claim that $\rh (f_n) \neq 0$ for all $n \in \Nz.$
{}From Lemma \ref{L-CrPrdRight}(\ref{L-CrPrdRight_uv})
and Lemma \ref{L-ActionIsRight}(\ref{L-ActionIsRight-6}),
we get $f_n = (u_{-n} f_0) f_0 (f_0 u_n)$
and $f_0 = (u_n f_n) f_n (f_n u_{-n}).$
Since $u_{-n} f_0,$ $f_0 u_n,$ $u_n f_n,$ and $f_n u_{-n}$
are all in~$A,$
we see that if $\rh (f_n) = 0$ for some $n \in \Nz,$
then $\rh (f_0) = 0,$
and then $\rh (f_n) = 0$ for all $n \in \Nz.$
Lemma \ref{L-CrPrdRight}(\ref{L-CrPrdRight-4})
and continuity of~$\rh$
then imply that $\rh = 0.$

For all $n \in \Nz,$
it follows that $\rh |_{f_n A f_n}$
is a nonzero contractive \hm.
Lemma \ref{L-DIIntTw}(\ref{L-DIIntTw-2})
implies that $f_n A f_n$ is isometrically isomorphic to~$\OP{d}{p}.$
So $\rh |_{f_n A f_n}$ is isometric by \Lem{L-UniqLd}.
Since $n$ is arbitrary,
$\rh$ is isometric by Lemma \ref{L-CrPrdRight}(\ref{L-CrPrdRight-4}).
\end{proof}

\begin{cor}\label{C-FullToRed}
The map
$\kp_{\mathrm{r}} \colon
  A = F^p (\Z, B, \bt) \to F^p_{\mathrm{r}} (\Z, B, \bt)$
is an isometric bijection.
\end{cor}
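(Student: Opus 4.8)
The plan is to deduce this from Corollary~\ref{C_3924_ContrIsIso} together with the universal properties of the full and reduced crossed products. By Lemma~\ref{L-CompOfCrPrd}, the map $\kp_{\mathrm{r}} \colon F^p (\Z, B, \bt) \to F^p_{\mathrm{r}} (\Z, B, \bt)$ is a contractive homomorphism with dense range, so it suffices to prove that $\kp_{\mathrm{r}}$ is isometric; surjectivity then follows from density of the range together with the fact that an isometric map has closed range. To prove $\kp_{\mathrm{r}}$ is isometric, by Theorem~\ref{T-UPropFull}(\ref{T-UPropFull-2}) it is enough to show that every nondegenerate $\sm$-finite contractive covariant representation $(v', \pi')$ of $(\Z, B, \bt)$ factors (up to the norm it induces) through $\kp_{\mathrm{r}}$; equivalently, it suffices to exhibit a single regular (hence ``reduced'') contractive covariant representation whose induced representation of $A = F^p (\Z, B, \bt)$ is isometric.

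First I would invoke Lemma~\ref{L-RedOneRep}: since $(\Z, B, \bt)$ is separable (by Lemma~\ref{L-ActionIsRight}(\ref{L-ActionIsRight-0})) and nondegenerately representable, there is a $\sm$-finite nondegenerate isometric representation $\pi_0 \colon B \to L(L^p(X,\mu))$ whose associated regular covariant representation $(w, \rho)$ has the property that $w \ltimes \rho$ is nondegenerate and isometric on $F^p_{\mathrm{r}} (\Z, B, \bt)$. Compose with $\kp_{\mathrm{r}}$ to get a contractive representation $\sigma := (w \ltimes \rho) \circ \kp_{\mathrm{r}}$ of $A = F^p (\Z, B, \bt)$ on $L^p(X,\mu)$, with $L^p(X,\mu)$ separable (enlarging $X$ if necessary, using Lemma~\ref{L-RedOneRep} gives a $\sm$-finite space, and by Remark~\ref{R_3916_SepSft} together with separability of $A$ from Lemma~\ref{L-CrPrdRight}(\ref{L-CrPrdRight-5}) we may arrange separability, or simply pass to a separable invariant sublattice as in Proposition~\ref{P-SepImpSepRep}). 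This $\sigma$ is nonzero since $w \ltimes \rho$ is nondegenerate and $\kp_{\mathrm{r}}$ has dense range. Now apply Corollary~\ref{C_3924_ContrIsIso}: a nonzero contractive homomorphism from $A$ to $L^p(X,\mu)$ with $L^p(X,\mu)$ separable is automatically isometric. Hence $\sigma$ is isometric. But $\| \sigma(a) \| = \| (w \ltimes \rho)(\kp_{\mathrm{r}}(a)) \| \le \| \kp_{\mathrm{r}}(a) \| \le \| a \|$ for all $a \in A$, and isometry of $\sigma$ forces both inequalities to be equalities. In particular $\| \kp_{\mathrm{r}}(a) \| = \| a \|$ for all $a \in A$, so $\kp_{\mathrm{r}}$ is isometric.

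Finally, since $\kp_{\mathrm{r}}$ is isometric its range is closed, and by Lemma~\ref{L-CompOfCrPrd} its range is dense in $F^p_{\mathrm{r}} (\Z, B, \bt)$, so $\kp_{\mathrm{r}}$ is surjective. Therefore $\kp_{\mathrm{r}}$ is an isometric bijection, as claimed.

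The main obstacle I anticipate is the separability bookkeeping needed to legitimately apply Corollary~\ref{C_3924_ContrIsIso}, whose hypothesis requires the representing $L^p$~space to be separable. The representation furnished by Lemma~\ref{L-RedOneRep} is $\sm$-finite but need not a priori be separable; one must either check that the construction there can be run with a separable space (which it can, since $A$ is separable by Lemma~\ref{L-CrPrdRight}(\ref{L-CrPrdRight-5}) and one takes a countable dense set in the construction, with each piece chosen via the separable reduction of Proposition~\ref{P-SepImpSepRep}), or restrict $w \ltimes \rho$ to a suitable $(w\ltimes\rho)(A)$-invariant separable sublattice as in the proof of Proposition~\ref{P-SepImpSepRep} while preserving the property that the restricted representation of $A$ is still isometric. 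Everything else is a routine chase through the stated universal properties.
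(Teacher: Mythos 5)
Your proof is correct and follows essentially the same route as the paper: both arguments combine the contractivity and dense range of $\kp_{\mathrm{r}}$ from Lemma~\ref{L-CompOfCrPrd} with Corollary~\ref{C_3924_ContrIsIso} applied to the composite of $\kp_{\mathrm{r}}$ with a nonzero contractive representation of $F^p_{\mathrm{r}} (\Z, B, \bt)$ on a separable $\sigma$-finite $L^p$~space. The separability bookkeeping you flag is handled in the paper more directly---$F^p_{\mathrm{r}} (\Z, B, \bt)$ is itself separable, hence separably representable by Proposition~\ref{P-SepImpSepRep}---and in your fallback fix you need not preserve isometry when cutting down to a separable invariant sublattice, since Corollary~\ref{C_3924_ContrIsIso} upgrades any nonzero contractive restriction to an isometric one automatically.
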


\begin{proof}
By \Lem{L-CompOfCrPrd},
this map is contractive and has dense range.
The algebra $F^p_{\mathrm{r}} (\Z, B, \bt)$
is clearly separable,
so separably representable by Proposition~\ref{P-SepImpSepRep}.
Apply Corollary~\ref{C_3924_ContrIsIso}.
\end{proof}

Although we won't need this,
it also follows
(using Lemma \ref{L-CrPrdRight}(\ref{L-CrPrdRight-2}))
that the \rpn~$\pi$ of Notation~\ref{N-DICrPrd} is isometric.

\begin{thm}\label{T_3924_Iso}
There is an isometric isomorphism
$\gm \colon {\ov{M}}^p_{\Nz} \otimes_{p} \OP{d}{p} \to A$
such that $\gm (e_{0, 0} \otimes 1) = f_0$
and $\gm (e_{0, 0} \otimes a) = \sm_0 (a)$
for all $a \in \OP{d}{p}.$
\end{thm}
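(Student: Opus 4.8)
The plan is to identify both ${\ov{M}}^p_{\Nz}\otimes_p\OP{d}{p}$ and $A$ with the direct limit (Proposition~\ref{P-DLim}) of the sequence of algebras $(M^p_d)^{\otimes n}\otimes_p\OP{d}{p}$, $n\in\Nz$, equipped with the connecting maps $\ep_n$ of Notation~\ref{N-DIForIntTw}, and then to invoke the universal property of the direct limit.

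\emph{The crossed product as a direct limit.} Put $A_n=f_nAf_n$. By \Lem{L-CrPrdRight}(\ref{L-CrPrdRight-4}) these are closed subalgebras of $A$ with $A_n\S A_{n+1}$ and ${\ov{\bigcup_{n=0}^{\I}A_n}}=A$, so Corollary~\ref{C-ClUIsDLim} identifies $A$ with $\Dirlim(A_n,\mathrm{incl})$. By \Lem{L-DIIntTw}(\ref{L-DIIntTw-4}) the map $\sm_n$ of Notation~\ref{N-DIForIntTw} is an isometric isomorphism of $(M^p_d)^{\otimes n}\otimes_p\OP{d}{p}$ onto $A_n$, the maps $\ep_n$ are isometric homomorphisms (\Lem{L-DIIntTw}(\ref{L-DIIntTw-5})), and $\sm_{n+1}\circ\ep_n=\sm_n$ (\Lem{L-DIIntTw}(\ref{L-DIIntTw-6})). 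Thus $(\sm_n)_n$ is an isomorphism of directed systems from $\big((M^p_d)^{\otimes n}\otimes_p\OP{d}{p},\,\ep_n\big)$ to $(A_n,\mathrm{incl})$, and therefore $A=\Dirlim\big((M^p_d)^{\otimes n}\otimes_p\OP{d}{p},\,\ep_n\big)$, with the maps to the limit being the $\sm_n$.

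\emph{The matrix algebra as a direct limit.} For $n\in\Nz$ set $F_n=\{0,1,\ldots,d^n-1\}\S\Nz$. The $F_n$ are cofinal in the directed set of finite subsets of $\Nz$, so, arguing as in the proof of \Lem{L-KthyTPM}, ${\ov{M}}^p_{\Nz}\otimes_p\OP{d}{p}=\Dirlim_n\big(M^p_{F_n}\otimes_p\OP{d}{p},\,\iota_n\big)$, where $\iota_n\colon M^p_{F_n}\otimes_p\OP{d}{p}\to M^p_{F_{n+1}}\otimes_p\OP{d}{p}$ is the inclusion. Identifying $F_n$ with $Z^n$ by sending $m=\sum_{i=0}^{n-1}q_id^i$ to $(q_{n-1},q_{n-2},\ldots,q_0)$, and using Remark~\ref{R-UnNormalize}, \Lem{L-MatpSMap}, Remark~\ref{R-LpTens}, and the representation independence of $M^p_d\otimes_p\OP{d}{p}$ noted after \Lem{L-OdIsoMdOd}, one obtains isometric isomorphisms
\[
\theta_n\colon M^p_{F_n}\otimes_p\OP{d}{p}\to (M^p_d)^{\otimes n}\otimes_p\OP{d}{p},\qquad
e_{m,m'}\otimes a\longmapsto e_{q_{n-1},q'_{n-1}}\otimes\cdots\otimes e_{q_0,q'_0}\otimes a,
\]
with $\theta_0$ the map $e_{0,0}\otimes a\mapsto a$. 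Since the base-$d$ digits of $m\in F_n$, viewed inside $F_{n+1}$, are $(q_0,\ldots,q_{n-1},0)$, \Lem{L-DIIntTw}(\ref{L-DIIntTw-7}) yields $\ep_n\circ\theta_n=\theta_{n+1}\circ\iota_n$ on the linear span of the elementary tensors, hence everywhere by continuity.

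\emph{Conclusion.} The family $(\theta_n)_n$ is an isomorphism of directed systems, so the universal property of the direct limit produces an isometric isomorphism $\gm\colon{\ov{M}}^p_{\Nz}\otimes_p\OP{d}{p}\to A$ with $\gm|_{M^p_{F_n}\otimes_p\OP{d}{p}}=\sm_n\circ\theta_n$ for every $n$. For $n=0$ this gives $\gm(e_{0,0}\otimes a)=\sm_0(a)$ for $a\in\OP{d}{p}$, and since $\sm_0=\sm$ of \Lem{L-MapToCorner} carries $1_{\OP{d}{p}}$ to $f_0$, we also get $\gm(e_{0,0}\otimes 1)=f_0$. The step requiring genuine care is the compatibility $\ep_n\circ\theta_n=\theta_{n+1}\circ\iota_n$: one must order the tensor factors in $\theta_n$ so that the ``new'' top base-$d$ digit lands in the slot where $\ep_n$ inserts $e_{0,0}$ (the ``highest digit leftmost'' convention above does this, after which \Lem{L-DIIntTw}(\ref{L-DIIntTw-7}) finishes it), and one must keep track of the various normalizations of counting measure so that each $\theta_n$ is genuinely isometric and not merely an algebra isomorphism; this bookkeeping is the only nontrivial part of the argument.
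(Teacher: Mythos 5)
Your proposal is correct and follows essentially the same route as the paper: both realize $A$ as the direct limit of the corners $f_nAf_n\cong (M^p_d)^{\otimes n}\otimes_p\OP{d}{p}$ with connecting maps $\ep_n$, realize ${\ov{M}}^p_{\Nz}\otimes_p\OP{d}{p}$ as the direct limit of $M^p_{\{0,\ldots,d^n-1\}}\otimes_p\OP{d}{p}$ via Corollary~\ref{C-ClUIsDLim} and Lemma~\ref{L-MatpSMap}, and match the two systems by the base-$d$ digit identification with the highest digit in the slot where $\ep_n$ inserts $e_{0,0}$ (your $\theta_n$ is the inverse of the paper's $\gm_{g_n,\OP{d}{p}}\circ\io_n$). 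The compatibility $\ep_n\circ\theta_n=\theta_{n+1}\circ\iota_n$ and the normalization bookkeeping you flag are exactly the points the paper's proof checks.
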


\begin{proof}
For $n \in \Nz,$
set $T_n = \{ 0, 1, \ldots, d^n - 1 \} \S \Nz.$
Define $g_n \colon Z^n \to T_n$
by
\[
g_n (j_1, j_2, \ldots, j_n) = \sum_{l = 1}^n j_l d^{n - l}
\]
for $j_1, j_2, \ldots, j_n \in \Z.$
Lemma~\ref{L-MatpSMap} provides isometric homomorphisms
\[
\gm_{g_n, \OP{d}{p}} \colon
 M^p_{Z^n} \otimes_{p} \OP{d}{p} \to M^p_{T_n} \otimes_{p} \OP{d}{p}
\andeqn
\xi_n \colon
 M^p_{T_n} \otimes_{p} \OP{d}{p}
   \to M^p_{T_{n + 1}} \otimes_{p} \OP{d}{p},
\]
the first being an isomorphism and the second
coming from the inclusion of $T_n$ in $T_{n + 1}.$
There is an isometric isomorphism
$\io_n \colon (M^p_d)^{\otimes n} \otimes_p \OP{d}{p}
  \to M^p_{Z^n} \otimes_{p} \OP{d}{p}$
which comes from the identification of
$L^p (Z^n, \ld^n)$ with $L^p (Z, \ld)^{\otimes n},$
such that
for $a \in \OP{d}{p}$ and
$j_1, j_2, \ldots, j_n, k_1, k_2, \ldots, k_n \in \Z,$
we have
\[
\io_n \big( e_{(j_1, j_2, \ldots, j_n),
       \, (k_1, k_2, \ldots, k_n)} \otimes a \big)
 = e_{j_1, k_1} \otimes e_{j_2, k_2} \otimes \cdots \otimes e_{j_n, k_n}
      \otimes a. 
\]
Set $\gm_n = \gm_{g_n, \OP{d}{p}} \circ \io_n.$
One checks easily that $\xi_n \circ \gm_n = \gm_{n + 1} \circ \ep_n.$

Using Lemma \ref{L-DIIntTw}(\ref{L-DIIntTw-4})
and Lemma \ref{L-DIIntTw}(\ref{L-DIIntTw-6})
for the bottom part,
we now get a commutative diagram,
in which the maps in the bottom row are the inclusions,
all maps are isometric, and all vertical maps are bijective:
\[
\xymatrix{
M_{T_0}^p \otimes_p \OP{d}{p} \ar[r]^{\xi_0}
  & M_{T_1}^p \otimes_p \OP{d}{p} \ar[r]^{\xi_1}
  & M_{T_2}^p \otimes_p \OP{d}{p} \ar[r]^{\xi_2}
  & \cdots \\
\OP{d}{p} \ar[r]^{\ep_0} \ar[d]_{\sm_0} \ar[u]^{\gm_0}
  & M_d^p \otimes_p \OP{d}{p} \ar[r]^{\ep_1} \ar[d]_{\sm_1} \ar[u]^{\gm_1}
  & (M_d^p)^{\otimes 2} \otimes_p \OP{d}{p}
          \ar[r]^{\ep_2} \ar[d]_{\sm_2} \ar[u]^{\gm_2}
  & \cdots  \\
f_0 A f_0 \ar[r]
  & f_1 A f_1 \ar[r]
  & f_2 A f_2 \ar[r]
  & \cdots.
}
\]
Therefore the vertical maps induce isometric isomorphisms
of the direct limits.
By Corollary~\ref{C-ClUIsDLim} and Example~\ref{E-MatpS},
the direct limit of the top row
is ${\ov{M}}^p_{\Nz} \otimes_{p} \OP{d}{p}.$
By Corollary~\ref{C-ClUIsDLim} and 
Lemma \ref{L-CrPrdRight}(\ref{L-CrPrdRight-4}),
the direct limit of the bottom row is~$A.$
Thus we get isometric isomorphisms
\[
\gm_{\I} \colon
 \Dirlim_n (M_d^p)^{\otimes n} \otimes_p \OP{d}{p}
     \to {\ov{M}}^p_{\Nz} \otimes_{p} \OP{d}{p}
\andeqn
\sm_{\I} \colon
 \Dirlim_n (M_d^p)^{\otimes n} \otimes_p \OP{d}{p}
     \to A.
\]
Set $\gm = \sm_{\I} \circ \gm_{\I}^{-1}.$
\end{proof}

\begin{cor}\label{C-KThyA}
The map
$\kp_{\mathrm{r}} \circ \sm_0 \colon
  \OP{d}{p} \to F^p_{\mathrm{r}} (\Z, B, \bt)$
is an isomorphism on K-theory
such that
$(\kp_{\mathrm{r}} \circ \sm_0)_* ([1]) = [\kp_{\mathrm{r}} (f_0)].$
\end{cor}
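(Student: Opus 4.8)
The plan is to realize $\sm_0$ as the composite of the corner embedding of Lemma~\ref{L-KthyTPM} with the isometric isomorphism $\gm$ of Theorem~\ref{T_3924_Iso}, conclude that $(\sm_0)_*$ is an isomorphism on K-theory, and then tack on $\kp_{\mathrm{r}}$, which is an isomorphism on K-theory by Corollary~\ref{C-FullToRed}.

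First I would invoke Theorem~\ref{T_3924_Iso}: it provides an isometric isomorphism $\gm \colon {\ov{M}}^p_{\Nz} \otimes_{p} \OP{d}{p} \to A$ with $\gm(e_{0, 0} \otimes a) = \sm_0(a)$ for all $a \in \OP{d}{p}.$ Let $\ep \colon \OP{d}{p} \to {\ov{M}}^p_{\Nz} \otimes_{p} \OP{d}{p}$ be the homomorphism $\ep(a) = e_{0, 0} \otimes a,$ where $\OP{d}{p}$ is represented as in the construction of Theorem~\ref{T_3924_Iso} (which may be taken separable by Proposition~\ref{P-SepImpSepRep}, though Lemma~\ref{L-KthyTPM} does not require this). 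Then $\gm \circ \ep = \sm_0.$ By Lemma~\ref{L-KthyTPM} (with $S = \Nz$ and $s_0 = 0$), $\ep_*$ is an isomorphism on K-theory, and $\gm_*$ is an isomorphism on K-theory because $\gm$ is an isomorphism of Banach algebras; hence $(\sm_0)_* = \gm_* \circ \ep_*$ is an isomorphism on K-theory.

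Next I would use Corollary~\ref{C-FullToRed}: the map $\kp_{\mathrm{r}} \colon A = F^p(\Z, B, \bt) \to F^p_{\mathrm{r}}(\Z, B, \bt)$ is an isometric bijection, hence an isomorphism of Banach algebras, so $(\kp_{\mathrm{r}})_*$ is an isomorphism on K-theory. Composing, $(\kp_{\mathrm{r}} \circ \sm_0)_* = (\kp_{\mathrm{r}})_* \circ (\sm_0)_*$ is an isomorphism on K-theory.

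Finally, for the identity class: taking $a = 1$ in $\gm(e_{0, 0} \otimes a) = \sm_0(a)$ and using $\gm(e_{0, 0} \otimes 1) = f_0$ gives $\sm_0(1) = f_0,$ so $(\sm_0)_*([1]) = [f_0]$ in $K_0(A),$ and therefore $(\kp_{\mathrm{r}} \circ \sm_0)_*([1]) = (\kp_{\mathrm{r}})_*([f_0]) = [\kp_{\mathrm{r}}(f_0)].$ I do not expect a genuine obstacle here; the only point to keep straight is that the representation chosen for $\OP{d}{p}$ in forming ${\ov{M}}^p_{\Nz} \otimes_p \OP{d}{p}$ be the same one used in Theorem~\ref{T_3924_Iso}, so that the factorization $\gm \circ \ep = \sm_0$ is literally the asserted one — but since Lemma~\ref{L-KthyTPM} is insensitive to the choice of representation, this is automatic, and the remainder is a routine diagram chase together with the functoriality of K-theory for Banach algebras.
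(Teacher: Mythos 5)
Your proposal is correct and follows essentially the same route as the paper: factor $\sm_0$ through the corner embedding $a \mapsto e_{0,0} \otimes a$ via the isometric isomorphism $\gm$ of Theorem~\ref{T_3924_Iso}, apply Lemma~\ref{L-KthyTPM} to see the corner embedding is a K-theory isomorphism, and finish with Corollary~\ref{C-FullToRed}. Your extra remarks on the choice of representation and separability are harmless elaborations of what the paper leaves implicit.
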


\begin{proof}
It follows from Theorem~\ref{T_3924_Iso}
and \Lem{L-KthyTPM}
that $\sm_0 \colon \OP{d}{p} \to A$ is an isomorphism on K-theory,
and clearly
$(\sm_0)_* ([1]) = [f_0].$
Apply Corollary~\ref{C-FullToRed}.
\end{proof}

\begin{thm}\label{T-KThyOpd}
Let $p \in [1, \I)$
and let $d \in \{ 2, 3, \ldots \}.$
Then $K_1 \big( \OP{d}{p} \big) = 0$
and there is an isomorphism
$K_0 \big( \OP{d}{p} \big) \to \Z / (d - 1) \Z$
which sends $[1] \in K_0 \big( \OP{d}{p} \big)$
to the standard generator $1 + (d - 1) \Z.$
\end{thm}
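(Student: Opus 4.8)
The plan is to combine the realization of $\OP{d}{p}$ as (a corner of) a reduced $L^p$~operator crossed product with the Pimsner--Voiculescu exact sequence. Assume first that $p \neq 2,$ so that all the constructions of this section apply; the case $p = 2,$ in which $\OP{d}{p}$ is the usual Cuntz algebra~$\OA{d},$ is the classical computation of Cuntz~\cite{Cu2}. By Corollary~\ref{C-KThyA}, the map $\kp_{\mathrm{r}} \circ \sm_0 \colon \OP{d}{p} \to F^p_{\mathrm{r}} (\Z, B, \bt)$ is an isomorphism on K-theory sending $[1]$ to $[\kp_{\mathrm{r}} (f_0)],$ so the task reduces to computing $K_* \big( F^p_{\mathrm{r}} (\Z, B, \bt) \big)$ and locating the class of $\kp_{\mathrm{r}} (f_0)$ in it.

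First I would compute $K_* (B).$ Since $B = {\ov{M}}^p_{\Nz} \otimes_p D$ (Notation~\ref{N-MatUnit}) with $D$ the spatial $L^p$~UHF algebra of type~$d^{\infty},$ Lemma~\ref{L-KthyTPM} applied with $s_0 = 0$ shows that $a \mapsto e_{0, 0} \otimes a$ induces an isomorphism $K_* (D) \to K_* (B).$ By Theorem~\ref{P-KStdUHF}, $K_1 (D) = 0$ and $K_0 (D)$ is isomorphic, via a map carrying $[1_D]$ to~$1,$ to the additive subgroup $R := \bigcup_{n \geq 0} d^{-n} \Z \subset \Q.$ As $f_0 = e_{0, 0} \otimes 1_{> 0},$ we conclude $K_1 (B) = 0$ and $K_0 (B) \cong R$ with $[f_0]$ corresponding to~$1.$

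Next I would determine the endomorphism $\id - (\bt^{-1})_*$ of $K_0 (B).$ By Lemma~\ref{L-ActionIsRight}(\ref{L-ActionIsRight-5}), $\bt^{-1} (f_0) = v^{-1} f_0 v = f_1 = \sum_{m = 0}^{d - 1} e_{m, m} \otimes 1_{> 0}$; the summands are mutually orthogonal idempotents, each Murray--von Neumann equivalent in $B$ to $e_{0, 0} \otimes 1_{> 0} = f_0$ by conjugating with $e_{0, m} \otimes 1_{> 0} \in B.$ Hence $(\bt^{-1})_* [f_0] = [f_1] = d [f_0].$ An additive endomorphism of $R$ is determined by its value at~$1$ (because $d$ is invertible in~$R$), so $(\bt^{-1})_*$ is multiplication by~$d$ and $\id - (\bt^{-1})_*$ is multiplication by $1 - d.$ This is injective on the torsion-free group~$R,$ and since $d \equiv 1 \pmod{d - 1}$ while $\gcd (d, d - 1) = 1,$ the composite $\Z \hookrightarrow R \to R / (d - 1) R$ is surjective with kernel $(d - 1) \Z$; thus $R / (d - 1) R \cong \Z / (d - 1) \Z$ via the class of~$1.$

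Finally I would feed this into Theorem~\ref{T-PV} for the system $(\Z, B, \bt).$ Using $K_1 (B) = 0,$ the six-term sequence collapses: $\ep_* \colon K_0 (B) \to K_0 \big( F^p_{\mathrm{r}} (\Z, B, \bt) \big)$ is surjective with kernel $\operatorname{im} (\id - (\bt^{-1})_*),$ and the connecting map is an isomorphism $K_1 \big( F^p_{\mathrm{r}} (\Z, B, \bt) \big) \to \ker (\id - (\bt^{-1})_*) = 0.$ Hence $K_1 \big( F^p_{\mathrm{r}} (\Z, B, \bt) \big) = 0$ and $K_0 \big( F^p_{\mathrm{r}} (\Z, B, \bt) \big) \cong R / (d - 1) R \cong \Z / (d - 1) \Z,$ with $\ep_* [f_0]$ mapping to $1 + (d - 1) \Z.$ Since $\ep (f_0) = f_0 u_0 = \kp_{\mathrm{r}} (f_0),$ Corollary~\ref{C-KThyA} then gives $K_1 \big( \OP{d}{p} \big) = 0$ and an isomorphism $K_0 \big( \OP{d}{p} \big) \to \Z / (d - 1) \Z$ taking $[1]$ to $1 + (d - 1) \Z.$ The main obstacle I expect is the identification of $(\bt^{-1})_*$ with multiplication by~$d$ while correctly tracking $[f_0]$ through the various identifications; after that the argument is just assembly of results already established in this section.
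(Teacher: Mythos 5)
Your proposal is correct and follows essentially the same route as the paper: Corollary~\ref{C-KThyA} plus Lemma~\ref{L-KthyTPM} and Theorem~\ref{P-KStdUHF} to get $K_0 (B) \cong \Z \big[ \tfrac{1}{d} \big]$ with $[f_0] \mapsto 1$ and $K_1 (B) = 0,$ the identification of $(\bt^{-1})_*$ with multiplication by~$d$ via $f_1,$ the Pimsner--Voiculescu sequence of Theorem~\ref{T-PV}, and the isomorphism $\Z / (d - 1) \Z \cong \Z \big[ \tfrac{1}{d} \big] / (d - 1) \Z \big[ \tfrac{1}{d} \big].$ Your added details (the explicit Murray--von Neumann equivalences giving $[f_1] = d [f_0],$ and the separate disposal of the case $p = 2$) are correct elaborations of steps the paper leaves implicit.
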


\begin{proof}
It follows from \Lem{L-KthyTPM}
and Theorem~\ref{P-KStdUHF}
that $K_1 (B) = 0$
and that there is an isomorphism
$\et \colon \Z \big[ \tfrac{1}{d} \big] \to K_0 (B)$
such that $\et (1) = [f_0].$
Therefore $\et (d) = [f_1].$
We have $\bt (f_1) = f_0$
by \Lem{L-ActionIsRight}(\ref{L-ActionIsRight-5}),
so $(\bt^{-1})_*$ is multiplication by~$d.$
The exact sequence~(\ref{Eq:PVLp})
of Theorem~\ref{T-PV}
thus becomes
\[
0 \longrightarrow K_1 \big( F^p_{\mathrm{r}} (\Z, B, \bt) \big)
  \longrightarrow \Z \big[ \tfrac{1}{d} \big]
  \xrightarrow{\,\, 1 - d \,\,} \Z \big[ \tfrac{1}{d} \big]
  \longrightarrow K_0 \big( F^p_{\mathrm{r}} (\Z, B, \bt) \big)
  \longrightarrow 0.
\]
Therefore $K_1 \big( F^p_{\mathrm{r}} (\Z, B, \bt) \big) = 0$
and there is an isomorphism
from
$\Z \big[ \tfrac{1}{d} \big] / (d - 1) \Z \big[ \tfrac{1}{d} \big]$
to $K_0 \big( F^p_{\mathrm{r}} (\Z, B, \bt) \big)$
which sends $1$ to~$[f_0].$
Now apply Corollary~\ref{C-KThyA}
and use the fact that the map $\Z \to \Z \big[ \tfrac{1}{d} \big]$
induces an isomorphism
\[
\Z / (d - 1) \Z
 \to \Z \big[ \tfrac{1}{d} \big] / (d - 1) \Z \big[ \tfrac{1}{d} \big].
\]
This completes the proof.
\end{proof}

\section{Some open problems}\label{Sec_Problems}

\indent
There are many open problems suggested by the general theory.
We single out just a few.

\begin{qst}\label{Q-FullVsRed}
Let $p \in [1, \I) \SM \{ 2 \},$
let $(G, A, \af)$ be a nondegenerately $\sm$-finitely representable
isometric $G$-$L^p$~operator algebra.
(See Definition~\ref{D:Action} and Definition~\ref{D-LpRep}.)
Is the map
$\kp_{\mathrm{r}} \colon
 F^p (G, A, \af) \to F^p_{\mathrm{r}} (G, A, \af)$
of \Lem{L-CompOfCrPrd}
necessarily surjective?
If $G$ is amenable,
is this map necessarily injective?
Surjective? Isometric?
(In any of these questions,
does it help to assume that $G$ is discrete,
$G = \Z,$
or $A = \C$?)
If $G$ is finite,
does it follow that $\kp_{\mathrm{r}}$ is isometric?
(In Remark~\ref{R-FGpCP},
we showed that $\kp_{\mathrm{r}}$ is bijective,
but not that it is isometric.)
\end{qst}

Positive results may well hold only in fairly special circumstances.

\begin{qst}\label{Q-SimplMinHme}
Let $X$ be a \cms,
and let $h \colon X \to X$ be a \mh.
Define $\af \in \Aut (C (X))$ by $\af (f) = f \circ h^{-1}$
for $f \in C (X).$
As in Notation~\ref{N-TransfGpLpAlg},
abbreviate $F^p (\Z, \, C (X), \, \af)$ to $F^p (\Z, X, h)$
and
$F^p_{\mathrm{r}} (\Z, \, C (X), \, \af)$
to $F^p_{\mathrm{r}} (\Z, X, h).$

Is $F^p (\Z, X, h)$ simple?
(The algebra $F^p_{\mathrm{r}} (\Z, X, h)$ is simple
by Theorem~\ref{T-FreeMinSimp},
and we do not know whether it is different from $F^p (\Z, X, h).$)
Can there ever be a nonzero \ct\  \hm{}
from $F^{p_1} (\Z, X_1, h_1)$ to $F^{p_2} (\Z, X_2, h_2)$
or to $F^{p_2}_{\mathrm{r}} (\Z, X_2, h_2)$
with $p_1 \neq p_2$
and $h_1$ and $h_2$ both minimal?
\end{qst}

\begin{qst}\label{Q-InfoOnMin}
Let $h \colon X \to X,$
$F^p (\Z, X, h),$
and $F^p_{\mathrm{r}} (\Z, X, h)$ be as in Question~\ref{Q-SimplMinHme}.
What information about $h$ can one recover from the
isomorphism class or isometric isomorphism class
of $F^p (\Z, X, h)$ and $F^p_{\mathrm{r}} (\Z, X, h)$?
\end{qst}

\begin{qst}\label{Q_3217_Tsr}
Let $h \colon X \to X,$
$F^p (\Z, X, h),$
and $F^p_{\mathrm{r}} (\Z, X, h)$ be as in Question~\ref{Q-SimplMinHme}.
Suppose $X$ is the Cantor set.
Does it follow that the invertible elements of
$F^p_{\mathrm{r}} (\Z, X, h)$ are dense?
That is, does $F^p_{\mathrm{r}} (\Z, X, h)$ have stable rank one?
This is true for $p = 2,$
by~\cite{Pt2}.
If $X = S^1$ and $h$ is an irrational rotation,
does it follow that the invertible elements of
$F^p_{\mathrm{r}} (\Z, X, h)$ are dense?
This is true for $p = 2,$
by~\cite{Pt1}.
\end{qst}

In the case $p = 2,$
stable rank one holds much more generally.
For $p = 2,$ both the special examples in Question~\ref{Q_3217_Tsr}
also have real rank zero.
A unital C*-algebra
has real rank zero \ifo{} it is an exchange ring,
by Theorem~7.2 of~\cite{AGOP},
and the definition of an exchange ring
(see the beginning of Section~1 of~\cite{AGOP})
makes sense for general unital rings.
So it seems reasonable to ask the following:

\begin{qst}\label{Q_3218_CPExch}
In the examples of Question~\ref{Q_3217_Tsr},
is $F^p_{\mathrm{r}} (\Z, X, h)$ an exchange ring?
\end{qst}

\begin{qst}\label{Q_3216_K1Inv0}
Let $A$ be a purely infinite simple unital Banach algebra.
Let $\inv (A)$ denote its invertible group,
and let $\inv_0 (A)$ denote the connected component of $\inv (A)$
which contains~$1.$
Does it follow that the map $\inv (A) / \inv_0 (A) \to K_1 (A)$
is an isomorphism?
\end{qst}

The corresponding result for $K_0$ is true,
by an argument in the proof of Corollary 5.15 of~\cite{PhLp2a}.

\begin{qst}\label{Q-Takai}
Let $p \in [1, \I).$
Let $\af \colon G \to \Aut (A)$
be an isometric action of a second countable locally compact abelian group
on a separable nondegenerately representable $L^p$~operator algebra.
In Theorem~\ref{T-DualpFull} and Theorem~\ref{T-DualpRed},
we have constructed dual actions
\[
{\widehat{\af}} \colon
  {\widehat{G}} \to \Aut \big( F^p (G, A, \af) \big)
\andeqn
{\widehat{\af}} \colon
  {\widehat{G}} \to \Aut \big( F^p_{\mathrm{r}} (G, A, \af) \big).
\]
Is there an analog of Takai duality~\cite{Tki}
for the crossed products by these actions?
\end{qst}

\end{document}

Unused reference:

\bibitem{Bl3} B.~Blackadar,
{\emph{K-Theory for Operator Algebras}}, 2nd ed.,
MSRI Publication Series~{\textbf{5}},
Cambridge University Press, Cambridge, New York, Melbourne, 1998.